\theoremstyle{plain}
\newtheorem{thm}{Theorem}[section]
\newtheorem{cor}[thm]{Corollary}
\newtheorem{lem}[thm]{Lemma}
\newtheorem{prop}[thm]{Proposition}
\theoremstyle{definition}
\theoremstyle{remark}
\newtheorem{rem}{Remark}[section]
\newtheorem{ex}{Example}[section]
\numberwithin{equation}{section}
\newcommand{\ra}{\rightarrow}
\newcommand{\Ra}{\Rightarrow}
\newcommand{\Lra}{\Leftrightarrow}
\begin{document}

\title{Computing cutoff times of birth and death chains}

\author[G.-Y. Chen]{Guan-Yu Chen$^1$}

\author[L. Saloff-Coste]{Laurent Saloff-Coste$^2$}


\address{$^1$Department of Applied Mathematics, National Chiao Tung University, Hsinchu 300, Taiwan}
\email{gychen@math.nctu.edu.tw}

\address{$^2$Malott Hall, Department of Mathematics, Cornell University, Ithaca, NY 14853-4201}
\email{lsc@math.cornell.edu}

\keywords{Birth and death chains, Cutoff phenomenon, Mixing times}

\subjclass[2000]{60J10,60J27}

\begin{abstract}
Earlier work by Diaconis and Saloff-Coste gives a spectral criterion for a maximum separation cutoff to occur for birth and death chains. Ding, Lubetzky and Peres gave a related criterion for a maximum total variation cutoff to occur in the same setting. Here, we provide complementary results which allow us to compute the cutoff times and windows in a variety of examples.

\end{abstract}

\maketitle

\section{Introduction}\label{s-intro}

Let $\mathcal{X}$ be a finite set and $K$ be the transition matrix of a discrete time Markov chain on $\mathcal{X}$. For $t\in[0,\infty)$, set
\[
 H_t=e^{-t(I-K)}=e^{-t}\sum_{i=0}^\infty \frac{t^i}{i!}K^i.
\]
If $(X_m)_{m=0}^\infty$ is a Markov chain on $\mathcal{X}$ with transition matrix $K$ and $N_t$ is a Poisson process independent of $(X_m)_{m=0}^\infty$ with parameter $1$, then $H_t(x,\cdot)$ is the distribution of $X_{N_t}$ given $X_0=x$. It is well-known that if $K$ is irreducible with stationary distribution $\pi$, then
\[
 \lim_{t\ra\infty}H_t(x,y)=\pi(y),\quad\forall x,y\in\mathcal{X}.
\]
If $K$ is assumed further aperiodic, then
\[
 \lim_{m\ra\infty}K^m(x,y)=\pi(y),\quad\forall x,y\in\mathcal{X}.
\]
For simplicity, we use the triple $(\mathcal{X},K,\pi)$ to denote a discrete time irreducible Markov chain on $\mathcal{X}$ with transition matrix $K$ and stationary distribution $\pi$ and use $(\mathcal{X},H_t,\pi)$ to denote the associated continuous time chain introduced above.

In this paper, we consider the convergence of Markov chains in both total variation distance and separation. Let $\mu,\nu$ be two probabilities on $\mathcal{X}$. The total variation distance between $\mu,\nu$ and separation of $\mu$ w.r.t. $\nu$ are defined by
\[
 \|\mu-\nu\|_{\text{\tiny TV}}:=\max_{A\subset\mathcal{X}}\{\mu(A)-\nu(A)\},\quad
 \text{sep}(\mu,\nu):=\max_{x\in\mathcal{X}}\{1-\mu(x)/\nu(x)\}.
\]
With initial state $x$, the total variation distance and separation are defined by
\[
 d_{\text{\tiny TV}}(x,m):=\|K^m(x,\cdot)-\pi\|_{\text{\tiny TV}},\quad d_{\text{\tiny sep}}(x,m):=\text{sep}(K^m(x,\cdot),\pi).
\]
As these quantities are non-increasing in $m$, it is reasonable to consider the corresponding mixing time, which are defined by
\[
 T_{\text{\tiny TV}}(x,\epsilon):=\min\{m\ge 0|d_{\text{\tiny TV}}(x,m)\le\epsilon\}
\]
and
\[
 T_{\text{\tiny sep}}(x,\epsilon):=\min\{m\ge 0|d_{\text{\tiny sep}}(x,m)\le\epsilon\},
\]
for any $\epsilon\in(0,1)$. We define the maximum total variation distance and maximum separation by
\[
 d_{\text{\tiny TV}}(m):=\max_{x\in\mathcal{X}}d_{\text{\tiny TV}}(x,m),\quad d_{\text{\tiny sep}}(m):=\max_{x\in\mathcal{X}}d_{\text{\tiny sep}}(x,m).
\]
The corresponding mixing times are defined in a similar way and are denoted by $T_{\text{\tiny TV}}(\epsilon)$ and $T_{\text{\tiny sep}}(\epsilon)$. For the associated continuous time chains, we use $d_{\text{\tiny TV}}^{(c)}$, $d_{\text{\tiny sep}}^{(c)}$, $T_{\text{\tiny TV}}^{(c)}$ and $T_{\text{\tiny sep}}^{(c)}$. The inequalities,
\[
 d_{\text{\tiny TV}}(m)\le d_{\text{\tiny sep}}(m)\le 1-(1-2d_{\text{\tiny TV}}(m))^2,
\]
provide comparisons between the maximum total variation distance and maximum separation. As a consequence, one has
\[
 T_{\text{\tiny TV}}(\epsilon)\le T_{\text{\tiny sep}}(\epsilon)\le 2T_{\text{\tiny TV}}(\epsilon/4),\quad \forall \epsilon\in(0,1).
\]
Those results also apply for the continuous time chain and we refer the reader to \cite{AF} for detailed discussions and to \cite{LPW08} for various techniques in estimating the mixing times.

A birth and death chain on $\{0,1,...,n\}$ with transition rates $p_i,q_i,r_i$ is a Markov chain with transition matrix $K$ satisfying
\[
 K(i,i+1)=p_i,\quad K(i,i-1)=q_i,\quad K(i,i)=r_i,\quad\forall 0\le i\le n,
\]
where $p_i+q_i+r_i=1$ and $p_n=q_0=0$. Conventionally, $p_i,q_i,r_i$ are called the birth, death and holding rates at $i$. In the above setting, it is easy to see that $K$ is irreducible if and only if $p_iq_{i+1}>0$ for $0\le i<n$ and the unique stationary distribution $\pi$ satisfies $\pi(i)=c(p_0\cdots p_{i-1})/(q_1\cdots q_i)$, where $c$ is a normalizing constant such that $\sum_i\pi(i)=1$. Ding {\it et al.} proved in \cite{DLP10} that, over all initial states, separation is maximized when the chain starts at $0$ or $n$ and Diaconis and Saloff-Coste provided a formula for maximum separation in \cite{DS06}. As a consequence, the mixing time for maximum separation (and then for the maximum total variation distance) is comparable with the sum of reciprocals of non-zero eigenvalues of $I-K$. In \cite{CSal13-2}, Chen and Saloff-Coste showed that both mixing times are of the same order as the maximum expected hitting time to the median of $\pi$ over all initial distributions concentrated on the boundary points.

The cutoff phenomenon was first observed by Aldous and Diaconis in 1980s. For a formal definition, if $d$ is the total variation distance or separation either in the maximum case or with a specified initial state, a family of irreducible Markov chains $(\mathcal{X}_n,K_n,\pi_n)_{n=1}^\infty$ is said to present a cutoff in $d$, or a $d$-cutoff, if there is a sequence of positive integers $(t_n)_{n=1}^\infty$ such that
\[
 \forall \epsilon\in(0,1),\quad\lim_{n\ra\infty}\frac{T_{n,d}(\epsilon)}{t_n}=1,
\]
where $T_{n,d}$ is the mixing time in $d$ of the $n$th chain. A family that presents a cutoff in $d$ is said to have a $(t_n,b_n)$ cutoff in $d$ or a $(t_n,b_n)$ $d$-cutoff if $t_n>0,b_n>0$, $b_n/t_n\ra 0$ and
\[
 \forall \epsilon\in(0,1),\quad\limsup_{n\ra\infty}\frac{|T_{n,d}(\epsilon)-t_n|}{b_n}<\infty.
\]
In either case, the sequence $(t_n)_{n=1}^\infty$ is called a cutoff time and, in the latter case, the sequence $(b_n)_{n=1}^\infty$ is called the window with respect to $(t_n)_{n=1}^\infty$. The definition of cutoffs for families of continuous time chains is similar and we refer the reader to \cite{D96cutoff,CSal08} for an introduction and a detailed discussion of cutoffs.

Return to birth and death chains. To avoid the confusion of the total variation distances (resp. separation) in the maximum case and with a specified initial states, we use $\mathcal{F}$ and $\mathcal{F}_c$ for families of birth and death chains without starting states specified and write $\mathcal{F}^L,\mathcal{F}_c^L$ and $\mathcal{F}^R,\mathcal{F}_c^R$ respectively for families of chains started at the left and right boundary states. Diaconis and Saloff-Coste obtained in \cite{DS06} a spectral criterion for the existence of the separation cutoff and we cite part of their results in the following.

\begin{thm}\cite[Theorems 5.1-6.1]{DS06}\label{t-sep}
For $n=1,2,...$, let $K_n$ be the transition matrix of an irreducible birth and death chain on $\{0,1,...,n\}$ and $\lambda_{n,1},...,\lambda_{n,n}$ be the non-zero eigenvalues of $I-K_n$. Set
\[
 t_n=\sum_{i=1}^n\frac{1}{\lambda_{n,i}},\quad \lambda_n=\min_{1\le i\le n}\lambda_{n,i},\quad\sigma_n^2=\sum_{i=1}^n\frac{1}{\lambda_{n,i}^2},\quad \rho_n^2=\sum_{i=1}^n\frac{1-\lambda_{n,i}}{\lambda_{n,i}^2}.
\]
Let $\mathcal{F}$ be the family $(K_n)_{n=1}^\infty$ and $\mathcal{F}_c$ be the family of associated continuous time chains.
\begin{itemize}
\item[(1)] $\mathcal{F}_c^L$  has a separation cutoff if and only if $t_n\lambda_n\ra\infty$.

\item[(2)] Suppose $K_n(i,i+1)+K_n(i+1,i)\le 1$ for all $i,n$. Then, $\mathcal{F}^L$  has a separation cutoff if and only if $t_n\lambda_n\ra\infty$.
\end{itemize}

Furthermore, if $t_n\lambda_n\ra\infty$, then $\mathcal{F}_c^L$ has a $(t_n,\sigma_n)$ separation cutoff and, under the assumption of \textnormal{(2)}, $\mathcal{F}^L$ have a $(t_n,\max\{\rho_n,1\})$ separation cutoff.
\end{thm}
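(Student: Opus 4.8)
The plan is to reduce the whole statement to the concentration behaviour of a single nonnegative random variable built from the spectrum, after which the cutoff and the window drop out of Chebyshev's and Markov's inequalities. The essential input --- and the step I expect to be the genuine obstacle --- is an exact expression for separation from the left endpoint. By Karlin--McGregor total positivity the map $j\mapsto H_t(0,j)/\pi(j)$ is nonincreasing for a birth and death chain, so $d_{\text{\tiny sep}}^{(c)}(0,t)=1-H_t(0,n)/\pi(n)$; feeding this into Diaconis--Fill strong stationary duality for the start $\delta_0$ yields $d_{\text{\tiny sep}}^{(c)}(0,t)=\mathbb P(W_n>t)$, where $W_n=\sum_{i=1}^n E_i$ and $E_1,\dots,E_n$ are independent with $E_i$ exponential of rate $\lambda_{n,i}$. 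In discrete time the same holds under the hypothesis of part~(2), which is precisely the condition that the rows $K_n(i,\cdot)$ are stochastically increasing in $i$ (for a birth and death chain this is equivalent to $p_i+q_{i+1}\le1$): then $j\mapsto K_n^m(0,j)/\pi(j)$ is nonincreasing, the Diaconis--Fill dual chain has nonnegative transition probabilities, and $d_{\text{\tiny sep}}(0,m)=\mathbb P(\tau_n>m)$ where $\tau_n$ is the absorption time of the dual, whose probability generating function is $\mathbb E z^{\tau_n}=\prod_{i=1}^n \frac{\lambda_{n,i}z}{1-(1-\lambda_{n,i})z}$. From these one reads off $\mathbb E W_n=\mathbb E\tau_n=t_n$, $\operatorname{Var}(W_n)=\sigma_n^2$, $\operatorname{Var}(\tau_n)=\rho_n^2$, and the bookkeeping identity $\sigma_n^2=\rho_n^2+t_n$. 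Establishing this representation, in particular checking that stochastic monotonicity is exactly what makes the discrete dual a legitimate chain, is where the work lies; everything that follows is soft.

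For the ``if'' halves and the windows, the only analytic facts needed are $\sigma_n^2=\sum_i\lambda_{n,i}^{-2}\le\lambda_n^{-1}\sum_i\lambda_{n,i}^{-1}=t_n/\lambda_n$ and $\rho_n^2\le\sigma_n^2$. Assume $t_n\lambda_n\to\infty$. Then $\sigma_n/t_n\le(t_n\lambda_n)^{-1/2}\to0$ while $\sigma_n\to\infty$ (since $\sigma_n^2\ge n/4$ and $t_n\ge n/2\to\infty$). For fixed $\epsilon\in(0,1)$, Chebyshev gives $\mathbb P(W_n>t_n+a\sigma_n)\le a^{-2}$ and $\mathbb P(W_n\le t_n-a\sigma_n)\le a^{-2}$, so choosing $a=a(\epsilon)$ large enough forces $|T_{\text{\tiny sep}}^{(c)}(0,\epsilon)-t_n|\le a(\epsilon)\sigma_n$ for all large $n$ --- the asserted $(t_n,\sigma_n)$ separation cutoff for $\mathcal F_c^L$. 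The discrete case runs identically with $\tau_n,\rho_n$ in place of $W_n,\sigma_n$, the only new point being that rounding to an integer mixing time adds an $O(1)$ term; when $\rho_n$ is bounded this $O(1)$ is the true order of the window, which is why part~(2) records $\max\{\rho_n,1\}$.

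For the ``only if'' halves I argue by contraposition. Suppose $t_n\lambda_n\not\to\infty$ and pass to a subsequence along which $t_n\lambda_n\le C<\infty$; since $t_n\to\infty$ this forces $\lambda_n\to0$, and since $\lambda_n^{-1}$ is one of the summands of $t_n$ we have $t_n/C\le\lambda_n^{-1}\le t_n$. In continuous time $W_n\ge E_{i^\ast}$, an exponential of rate $\lambda_n$, so $d_{\text{\tiny sep}}^{(c)}(0,t)\ge e^{-\lambda_n t}\ge e^{-Ct/t_n}$ and hence $T_{\text{\tiny sep}}^{(c)}(0,\epsilon)\ge(t_n/C)\log(1/\epsilon)$, while Markov's inequality gives $T_{\text{\tiny sep}}^{(c)}(0,\epsilon)\le t_n/\epsilon$. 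Therefore, along the subsequence, $T_{\text{\tiny sep}}^{(c)}(0,\epsilon_1)/T_{\text{\tiny sep}}^{(c)}(0,\epsilon_2)\ge C^{-1}\epsilon_2\log(1/\epsilon_1)$ for any $\epsilon_1<\epsilon_2$, and taking $\epsilon_2=\tfrac12$ with $\epsilon_1$ small enough that $\log(1/\epsilon_1)>4C$ makes the right-hand side $\ge2$; since a cutoff would force this ratio to tend to $1$, $\mathcal F_c^L$ has no separation cutoff. For $\mathcal F^L$ one can either repeat the anti-concentration step with the slowest mode of the dual chain (its escape rate being $\lambda_n$, with $\lambda_n\in(0,1)$ for large $n$) or, more cleanly, transfer the continuous conclusion through the Poissonisation identity $d_{\text{\tiny sep}}^{(c)}(0,t)=\mathbb E\,d_{\text{\tiny sep}}(0,N_t)$, $N_t\sim\mathrm{Poisson}(t)$ (valid here since separation is maximised at $n$ in both chains), which shows that a separation cutoff for $\mathcal F^L$ at time $t_n$ produces one for $\mathcal F_c^L$, and then one applies part~(1).
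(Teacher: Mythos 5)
Your argument is correct and is essentially the route the paper takes (it cites \cite{DS06} for parts (1)--(2) and the $(t_n,\sigma_n)$ window, and proves the $(t_n,\max\{\rho_n,1\})$ claim in the appendix by the same means): separation from $0$ is the tail of the fastest strong stationary time, a sum of independent exponentials (resp.\ geometrics, under monotonicity) with parameters $\lambda_{n,i}$, so Chebyshev gives the windows, and the inequalities $\max\{\rho_n^2,1/\lambda_n^2\}\le\sigma_n^2\le t_n/\lambda_n$ together with $t_n\ge n/2$ give the equivalence with $t_n\lambda_n\to\infty$. The one point worth making explicit is that your discrete-time representation (and the nonnegativity of $\rho_n^2$ as a variance) requires the eigenvalues of $K_n$ to lie in $[0,1)$, which is itself a consequence of the hypothesis $K_n(i,i+1)+K_n(i+1,i)\le 1$; that is precisely where \cite{DS06} uses monotonicity.
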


\begin{rem}\label{r-t-sep}
In Theorem \ref{t-sep}, the $(t_n,\max\{\rho_n,1\})$ separation cutoff of $\mathcal{F}^L$ is not discussed in \cite{DS06} but is an implicit result of the techniques therein. We give a proof of this fact in the appendix for completion. In the proof that there is a $(t_n,\max\{\rho_n,1\})$ separation cutoff, we show that
\[
 \mathcal{F}^L \text{ has a cutoff} \quad\Lra\quad \rho_n=o(t_n)\quad\Lra\quad
 \max\{\rho_n,1/\lambda_n\}=o(t_n).
\]
\end{rem}

\begin{rem}\label{r-sep2}
For any irreducible birth and death chain, it was proved in \cite{DLP10} that the maximum separation of the associated continuous time chain is attained when the initial state is any of the boundary states. This is also true for the discrete time case if the transition matrix $K$ satisfies $\min_iK(i,i)\ge 1/2$. As a result, if $\mathcal{F},\mathcal{F}_c$ and $t_n,\lambda_n$ are as in Theorem \ref{t-sep}, then
\begin{itemize}
\item[(1)] $\mathcal{F}_c$ has a maximum separation cutoff if and only if $t_n\lambda_n\ra\infty$.

\item[(2)] Assuming that $\inf_{i,n}K_n(i,i)\ge 1/2$, $\mathcal{F}$ has a maximum separation cutoff if and only if $t_n\lambda_n\ra\infty$.
\end{itemize}
\end{rem}


For cutoffs in the maximum total variation, Ding, Lubetzky and Peres provide the following criterion in \cite{DLP10}.

\begin{thm}\cite[Corollary 2 and Theorem 3]{DLP10}\label{t-tv}
Let $\mathcal{F},\mathcal{F}_c,\lambda_n$ be as in Theorem \ref{t-sep} and let $T_{n,\textnormal{\tiny TV}},T_{n,\textnormal{\tiny TV}}^{(c)}$ be the maximum total variation mixing time of the $n$th chains.
\begin{itemize}
\item[(1)] $\mathcal{F}_c$ has a maximum total variation cutoff if and only if $T_{n,\textnormal{\tiny TV}}^{(c)}(\epsilon)\lambda_n\ra\infty$ for some $\epsilon\in(0,1)$.

\item[(2)] Assume that $\inf_{i,n}K_n(i,i)>0$. Then, $\mathcal{F}$ has a maximum total variation cutoff if and only if $T_{n,\textnormal{\tiny TV}}(\epsilon)\lambda_n\ra\infty$ for some $\epsilon\in(0,1)$.
\end{itemize}
\end{thm}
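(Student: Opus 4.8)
\textit{Proof proposal.} This is the ``product condition'' criterion for a maximum total variation cutoff, and I would establish its two implications separately. The direction \emph{cutoff $\Rightarrow$ product condition} uses nothing about the birth and death structure. If $f_n$ is an eigenfunction of $I-K_n$ for its smallest nonzero eigenvalue $\lambda_n$, then $H_tf_n=e^{-\lambda_n t}f_n$ and $\pi_n(f_n)=0$, so for every $x$,
\[
 \|H_t(x,\cdot)-\pi_n\|_{\text{\tiny TV}}\ \ge\ \frac{|H_tf_n(x)-\pi_n(f_n)|}{2\|f_n\|_\infty}\ =\ \frac{e^{-\lambda_n t}\,|f_n(x)|}{2\|f_n\|_\infty},
\]
whence $d_{\text{\tiny TV}}^{(c)}(t)\ge\tfrac12 e^{-\lambda_n t}$ after maximising over $x$. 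If the family had a cutoff with cutoff time $s_n$ but $\lambda_n s_n$ stayed bounded along a subsequence, then $d_{\text{\tiny TV}}^{(c)}(2s_n)\ge\tfrac12 e^{-2\lambda_n s_n}$ would stay bounded away from $0$ along it, contradicting the cutoff; hence $\lambda_n s_n\to\infty$, and since $s_n$ is comparable to $t_n$ and to $T_{n,\text{\tiny TV}}^{(c)}(\epsilon)$ for each fixed $\epsilon$, the product condition follows. For the discrete chain I would first pass to the lazy chain $(I+K_n)/2$ — which changes $t_n$ and $\lambda_n$ only by fixed factors, has holding at least $1/2$, and is permitted by $\inf_{i,n}K_n(i,i)>0$ — so that all nontrivial eigenvalues are nonnegative and $d_{\text{\tiny TV}}(m)\ge\tfrac12(1-\lambda_n/2)^m$; the argument is then identical.

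For the converse, \emph{product condition $\Rightarrow$ cutoff}, I would lean on the separation results already available. Because the maximum separation and maximum total variation mixing times are both comparable to $t_n=\sum_i 1/\lambda_{n,i}$, the hypothesis is equivalent to $t_n\lambda_n\to\infty$, hence, by Theorem~\ref{t-sep} and Remark~\ref{r-sep2}, to the presence of a $(t_n,\sigma_n)$ maximum separation cutoff; note $\sigma_n\le\sqrt{t_n/\lambda_n}=o(t_n)$. From $d_{\text{\tiny TV}}\le d_{\text{\tiny sep}}$ this already gives $T_{n,\text{\tiny TV}}^{(c)}(\epsilon)\le t_n+O(\sigma_n)=t_n(1+o(1))$, so the theorem reduces to the matching lower bound $d_{\text{\tiny TV}}^{(c)}((1-\delta)t_n)\ge 1-\epsilon$ for large $n$. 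For this I would start the chain at the endpoint — say $0$ — maximising the expected hitting time $\mathbb{E}_0\tau_{\kappa_n}$ of a median $\kappa_n$ of $\pi_n$; by the hitting-time description of separation and \cite{CSal13-2}, this quantity is of order $t_n$, in fact asymptotic to it. Because the walk lives on a path, $\{\tau_j\le t\}$ is exactly the event that it has reached $j$ by time $t$, so for every $j$,
\[
 d_{\text{\tiny TV}}^{(c)}(0,t)\ \ge\ \pi_n(\{j,\dots,n\})-\mathbb{P}_0(\tau_j\le t).
\]
By the classical spectral representation of birth and death passage times (Karlin--McGregor, Keilson, Fill), $\tau_j$ started at $0$ is a sum of independent exponential variables, and writing $\tau_n=\tau_j+(\tau_n-\tau_j)$ as a sum of two independent pieces gives $\operatorname{Var}_0\tau_j\le\operatorname{Var}_0\tau_n=\sigma_n^2\le t_n/\lambda_n=o(t_n^2)$; hence Chebyshev's inequality yields $\mathbb{P}_0(\tau_j\le t)\to0$ whenever $t\le\mathbb{E}_0\tau_j-\omega(\sigma_n)$. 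Choosing $j=j_n$ just below $\kappa_n$ so that simultaneously $\mathbb{E}_0\tau_{j_n}\ge(1-\delta/2)t_n$ and $\pi_n(\{0,\dots,j_n\})\to0$, the displayed inequality gives $d_{\text{\tiny TV}}^{(c)}(0,t)\to1$ for $t\le(1-\delta)t_n$. The discrete-time statement then follows by de-Poissonisation, the uniform-holding hypothesis guaranteeing that $H_t$ and $K^m$ live on a common time scale up to constants.

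The eigenvalue lower bound and the Chebyshev concentration of $\tau_j$ are routine; the \textbf{main obstacle} is the very last step — producing the state $j_n$ — which amounts to the geometric fact that the final stretch over which $\pi_n(\{0,\dots,j\})$ climbs from $o(1)$ up to the median value $\tfrac12$ costs only $o(t_n)$ in expected-hitting-time distance. Equivalently, one must show that $j\mapsto\mathbb{E}_0\tau_j$ and $j\mapsto\pi_n(\{0,\dots,j\})$ increase compatibly on a scale set by the relaxation time $1/\lambda_n$, so that the slack inherited from the product condition upgrades the crude estimate $d_{\text{\tiny TV}}^{(c)}(0,t)\ge\tfrac12-o(1)$ (all that the choice $j=\kappa_n$ gives) to the full $d_{\text{\tiny TV}}^{(c)}(0,t)\ge1-o(1)$; this also absorbs the comparison between $\mathbb{E}_0\tau_{\kappa_n}$ and $t_n$. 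Carrying this out, together with the reduction that the maximum total variation distance is controlled by an endpoint start and the de-Poissonisation estimates, is precisely the delicate part of the argument in \cite{DLP10}.
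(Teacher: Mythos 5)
This theorem is quoted from \cite{DLP10}; the paper offers no proof of it, so your proposal can only be judged on its own terms. The forward implication (cutoff $\Rightarrow$ product condition) is fine: the eigenfunction lower bound $d^{(c)}_{\text{\tiny TV}}(t)\ge\tfrac12e^{-\lambda_nt}$ combined with the definition of cutoff is the standard argument, and lazification handles the discrete case.

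The converse is where the proposal breaks. You take the upper bound $T^{(c)}_{n,\text{\tiny TV}}(\epsilon)\le(1+o(1))t_n$ from $d_{\text{\tiny TV}}\le d_{\text{\tiny sep}}$ and the separation cutoff at $t_n=\sum_i\lambda_{n,i}^{-1}$, and then try to match it with the lower bound $d^{(c)}_{\text{\tiny TV}}(0,(1-\delta)t_n)\to1$. That lower bound is false in general, so no choice of $j_n$ can produce it. The maximum total variation cutoff time is $\theta_n=\max\{\mathbb{E}_0\widetilde{\tau}^{(n)}_{M_n},\mathbb{E}_n\widetilde{\tau}^{(n)}_{M_n}\}$ (Theorem \ref{t-tv2}), which satisfies only $t_n/2\lesssim\theta_n\le(1+o(1))t_n$ and equals $\sim t_n/2$ for symmetric chains: for the Metropolis chain of the binomial distribution in Section \ref{s-ex}, the separation cutoff is at $\tfrac12n\log n\sim t_n$ while the total variation cutoff is at $\tfrac14n\log n$, so $d^{(c)}_{\text{\tiny TV}}((1-\delta)t_n)\to0$ rather than $1$. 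Relatedly, your assertion that $\mathbb{E}_0\tau_{\kappa_n}$ is asymptotic to $t_n$ is wrong; Corollary \ref{c-sep} controls the sum $\mathbb{E}_0\widetilde{\tau}_{\kappa_n}+\mathbb{E}_n\widetilde{\tau}_{\kappa_n}\sim t_n$, not either summand. (A smaller slip: $\mathrm{Var}_0\widetilde{\tau}_n$ is governed by the eigenvalues of the principal submatrix with row and column $n$ deleted, not by the $\lambda_{n,i}$, and since $\pi_n(n)$ may be exponentially small it need not be comparable to $\sigma_n^2$; what saves the variance estimate for $j\le\kappa_n$ is monotonicity together with Corollary \ref{c-sep} applied at $\kappa_n$.) The missing ingredient for the converse is an \emph{upper} bound on total variation by hitting times, $d^{(c)}_{\text{\tiny TV}}(i,t)\le\mathbb{P}_i(\max\{\widetilde{\tau}_j,\widetilde{\tau}_k\}>t)+1-\pi([j,k])$ (Lemma \ref{l-tv}), which rests on the unimodality of $x\mapsto H_t(0,x)/\pi_n(x)$ (Lemma \ref{l-unimodal}) and replaces the separation upper bound; with it one sandwiches $T^{(c)}_{n,\text{\tiny TV}}(\epsilon)$ around $\theta_n$ with window $O(\sigma_n)$, as in Theorem \ref{t-tvmix}. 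That, not the fine choice of $j_n$, is the real content of \cite{DLP10}.
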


\begin{rem}\label{r-tv}
For any birth and death chain started at the left or right boundary state, the total variation distance can be different and the biased random walk with constant birth and death rates is a typical example. Further, the maximum total variation distance over all initial states is not necessarily attained at boundary states and a birth and death chain with valley stationary distribution, a distribution which is decreasing on $\{0,...,M\}$ and increasing on $\{M,...,n\}$ for some $0<M<n$, could illustrate this observation. This is very different from the case of separation and we refer the readers to Sections \ref{s-bs} and \ref{s-comp} for more discussions.
\end{rem}

To state our main results, we need the following notation. For $n\in\mathbb{N}$, let $\mathcal{X}_n=\{0,1,...,n\}$ and $(X_m^{(n)})_{m=0}^\infty$ be an irreducible birth and death chain on $\mathcal{X}_n$ with transition matrix $K_n$ and stationary distribution $\pi_n$. Let $N_t$ be a Poisson process independent of $(X_m^{(n)})$ with parameter $1$. For $i\in\mathcal{X}_n$, set
\begin{equation}\label{eq-tauc}
 \tau^{(n)}_i=\inf\{m\ge 0|X_m^{(n)}=i\},\quad\widetilde{\tau}^{(n)}_i=\inf\{t\ge 0|X_{N_t}^{(n)}=i\}.
\end{equation}
For $j\in\mathcal{X}_n$, let $\mathbb{E}_j$ and $\text{Var}_j$ denote the conditional expectation and variance given $X^{(n)}_0=j$.

\begin{rem}
It follows from the definition of $\tau_i^{(n)},\widetilde{\tau}_i^{(n)}$ that $\mathbb{E}_j\tau^{(n)}_i=\mathbb{E}_j\widetilde{\tau}^{(n)}_i$ for all $i,j\in\mathcal{X}_n$. See \cite{AF} for more information of the hitting times $\tau_i^{(n)},\widetilde{\tau}_i^{(n)}$.
\end{rem}

\begin{thm}\label{t-sep2}
Let $\mathcal{F},\mathcal{F}_c,\lambda_n$ be as in Theorem \ref{t-sep} and $\tau^{(n)}_i,\widetilde{\tau}^{(n)}_i$ be the hitting times in \textnormal{(\ref{eq-tauc})}. For $n\ge 1$, let $M_n\in\{0,1,...,n\}$ and set
\[
 s_n=\mathbb{E}_0\widetilde{\tau}^{(n)}_{M_n}+\mathbb{E}_n\widetilde{\tau}^{(n)}_{M_n}
 =\mathbb{E}_0\tau^{(n)}_{M_n}+\mathbb{E}_n\tau^{(n)}_{M_n}
\]
and
\[
 b_n^2=\textnormal{Var}_0\widetilde{\tau}^{(n)}_{M_n}
 +\textnormal{Var}_n\widetilde{\tau}^{(n)}_{M_n},\quad
 c_n^2=\textnormal{Var}_0\tau^{(n)}_{M_n}
 +\textnormal{Var}_n\tau^{(n)}_{M_n}.
\]
Suppose that
\begin{equation}\label{eq-pin}
 \inf_{n\ge 1}\pi_n([0,M_n])>0,\quad\inf_{n\ge 1}\pi_n([M_n,n])>0.
\end{equation}
Then, the following properties hold.
\begin{itemize}
\item[(1)] $\mathcal{F}_c^L$ has a separation cutoff if and only if $s_n\lambda_n\ra\infty$ if and only if $s_n/b_n\ra\infty$. Furthermore, if $s_n/b_n\ra\infty$, then $\mathcal{F}_c^L$ has a $(s_n,b_n)$ separation cutoff.

\item[(2)] Assume that $K_n(i,i+1)+K_n(i+1,i)\le 1$ for all $i,n$. Then, $\mathcal{F}^L$ has a separation cutoff if and only if $s_n\lambda_n\ra\infty$ if and only if $s_n/c_n\ra\infty$. Furthermore, if $s_n/c_n\ra\infty$, then $\mathcal{F}^L$ has a $(s_n,\max\{c_n,1/\lambda_n\})$ separation cutoff.

\end{itemize}
\end{thm}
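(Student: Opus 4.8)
The plan is to reduce Theorem \ref{t-sep2} to Theorem \ref{t-sep} by showing that the quantity $s_n$ built from hitting times to $M_n$ is comparable to $t_n = \sum_i 1/\lambda_{n,i}$, and that the variances $b_n^2, c_n^2$ are comparable to $\sigma_n^2, \rho_n^2$ respectively, up to errors that are negligible once a cutoff occurs. The key structural fact about birth and death chains, going back to Karlin--McGregor and used heavily in \cite{DS06} and \cite{DLP10}, is that for a chain started at $0$, the hitting time $\tau_n^{(n)}$ of the far endpoint is distributed as a sum of independent exponentials (in continuous time) or geometrics (in the appropriate discrete setting) whose rates are the eigenvalues of the corresponding substochastic block; more generally, $\widetilde\tau_{M_n}^{(n)}$ started from $0$ and from $n$ decompose as sums of independent exponential random variables whose rates are eigenvalues of the two sub-chains on $[0,M_n]$ and $[M_n,n]$. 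Consequently $\mathbb{E}_0\widetilde\tau_{M_n}^{(n)} + \mathbb{E}_n\widetilde\tau_{M_n}^{(n)} = \sum 1/\mu_{n,j}$ over all these eigenvalues $\mu_{n,j}$, and $b_n^2 = \sum 1/\mu_{n,j}^2$.

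First I would make precise the comparison between $\{\mu_{n,j}\}$ (eigenvalues of the two sub-chains relative to the absorbing state $M_n$) and $\{\lambda_{n,i}\}$ (nonzero eigenvalues of $I-K_n$). Using interlacing/monotonicity of Dirichlet eigenvalues under the decomposition at $M_n$, together with the hypothesis \eqref{eq-pin} that $M_n$ splits the stationary mass so that both halves have mass bounded away from $0$, one gets that the multiset $\{\mu_{n,j}\}$ and $\{\lambda_{n,i}\}$ differ in a controlled way: each $\mu_{n,j}$ is bounded below by $\lambda_n$, and $\sum 1/\mu_{n,j}$ differs from $t_n$ by at most an additive $O(1/\lambda_n)$ (morally one extra eigenvalue near $\lambda_n$ is removed/added when passing to the split chain). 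Hence $s_n \asymp t_n$ up to additive $O(1/\lambda_n)$, and likewise $b_n^2 = \sigma_n^2 + O(1/\lambda_n^2)$, $c_n^2 = \rho_n^2 + O(1/\lambda_n^2)$. The mass hypothesis is exactly what is needed to prevent $s_n$ from being much smaller than $t_n$: if $M_n$ were at an endpoint with tiny stationary mass, $s_n$ could collapse while $t_n$ stays large.

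Given these comparisons, part (1) follows: $s_n\lambda_n\to\infty \Leftrightarrow t_n\lambda_n\to\infty$ (since the discrepancy is $O(1/\lambda_n)$, negligible against $t_n$ once $t_n\lambda_n\to\infty$, and if $t_n\lambda_n$ stays bounded then so does $s_n\lambda_n$), and by Theorem \ref{t-sep}(1) this is equivalent to a separation cutoff for $\mathcal{F}_c^L$. The equivalence $s_n\lambda_n\to\infty \Leftrightarrow s_n/b_n\to\infty$ comes from the elementary two-sided bound $\lambda_n^{-1} \le b_n \le \sigma_n \le$ (something like) $\lambda_n^{-1}\sqrt{t_n\lambda_n}$ valid for sums of the form $\sum 1/\mu^2$ versus $\sum 1/\mu$ with all $\mu \ge \lambda_n$; concretely $b_n \ge 1/\mu_{\min}$ and $b_n^2 \le (1/\mu_{\min}) s_n$, so $s_n/b_n \ge s_n\mu_{\min}/\sqrt{s_n\mu_{\min}} \cdot(\cdot)$— I would phrase it as: $b_n^2 \le \lambda_n^{-1} s_n$ gives $s_n/b_n \ge \sqrt{s_n\lambda_n}$, while $b_n \ge \lambda_n^{-1}$ gives $s_n/b_n \le s_n\lambda_n$, hence $s_n/b_n\to\infty \Leftrightarrow s_n\lambda_n\to\infty$. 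Finally the window statement: $\sigma_n$ and $b_n$ differ by $O(1/\lambda_n) = o(s_n)$, and when a cutoff occurs $1/\lambda_n = o(\sigma_n)$ (this is in Remark \ref{r-t-sep}'s analogue for $\sigma_n$, or follows from $\sigma_n \ge \lambda_n^{-1}\sqrt{t_n\lambda_n}\to\infty$ relative to $1/\lambda_n$), so $b_n \asymp \sigma_n$ and a $(t_n,\sigma_n)$ cutoff is the same as an $(s_n,b_n)$ cutoff. Part (2) is identical in structure, invoking Theorem \ref{t-sep}(2) and Remark \ref{r-t-sep}, with $c_n$ in place of $b_n$ and the window $\max\{\rho_n,1\}$ rewritten as $\max\{c_n,1/\lambda_n\}$ using $c_n^2 = \rho_n^2 + O(1/\lambda_n^2)$ together with $\rho_n^2 \ge \sigma_n^2 - t_n \ge \lambda_n^{-2} - \lambda_n^{-1}$, so that $\max\{\rho_n,1\}$ and $\max\{c_n,1/\lambda_n\}$ are comparable.

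The main obstacle I anticipate is the spectral decomposition bookkeeping in the second paragraph: establishing cleanly that the eigenvalue multiset of the two sub-chains split at $M_n$ is, for the purposes of $\sum 1/\mu$ and $\sum 1/\mu^2$, within an additive $O(1/\lambda_n)$ and $O(1/\lambda_n^2)$ of the original, \emph{uniformly in $n$}, and that this uses hypothesis \eqref{eq-pin} in an essential way. One has to be careful that the "dual" description of the hitting time of $M_n$ as a sum of independent exponentials (Fill's / Diaconis--Miclo's strong stationary duality, or the Karlin--McGregor representation applied to each half) is being applied to the correct sub-chains and that the discrete-time version (needed for part (2), where one wants geometrics and the condition $K_n(i,i+1)+K_n(i+1,i)\le 1$ guarantees the relevant eigenvalues lie in $[0,1]$) is handled with the same care as in \cite{DS06,DLP10}. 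Once that comparison is nailed down, everything else is elementary inequalities plus a citation to Theorem \ref{t-sep}.
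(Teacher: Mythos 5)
Your proposal follows essentially the same route as the paper's proof: the Brown--Shao eigenvalue representation of the hitting times (Lemma \ref{l-meanvar}), interlacing together with a quantitative lower bound on the bottom eigenvalue of the chain split at $M_n$ (Lemma \ref{l-gap}, where hypothesis (\ref{eq-pin}) enters exactly as you anticipate), the resulting comparisons $s_n=t_n+O(1/\lambda_n)$, $b_n\asymp\sigma_n$, $c_n^2=\rho_n^2+O(1/\lambda_n^2)$, and a transfer of Theorem \ref{t-sep} and Remark \ref{r-t-sep}. One correction to your intuition: interlacing gives $\mu_{n,j}\le\lambda_{n,j}$, hence $s_n\ge t_n$ always and $\mu_{n,1}\le\lambda_n$ (not $\ge\lambda_n$); what (\ref{eq-pin}) buys, via $\mu_{n,1}\ge\tfrac{a_n}{4}\lambda_n$ with $a_n=\min\{\pi_n([0,M_n]),\pi_n([M_n,n])\}$, is that $s_n$ cannot be much \emph{larger} than $t_n$, not that it cannot collapse.
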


\begin{rem}\label{r-sep}
Let $\sigma_n,\rho_n$ be the constants in Theorem \ref{t-sep}. Let $M_n,M_n'\in\{0,1,...,n\}$ and $b_n,c_n,b_n',c_n'$ be the constants in Theorem \ref{t-sep2} defined accordingly. Suppose $M_n,M_n'$ satisfy (\ref{eq-pin}). Then,
\[
 b_n\asymp b_n'\asymp \sigma_n,\quad \max\{c_n,1/\lambda_n\}\asymp\max\{c_n',1/\lambda_n\}\asymp\max\{\rho_n,1/\lambda_n\},
\]
where $u_n\asymp v_n$ means that both sequences, $u_n/v_n$ and $v_n/u_n$, are bounded. See Corollary \ref{c-sep} for a proof. Comparing Theorems \ref{t-sep} and \ref{t-sep2}, one can see that the cutoff window for $\mathcal{F}_c^L$ is unchanged up to some universal multiples but the cutoff window for $\mathcal{F}^L$ can have a bigger order in Theorem \ref{t-sep2} due to the change of the cutoff time.
\end{rem}

In total variation, we have the following result.

\begin{thm}\label{t-tv2}
Let $\mathcal{F},\mathcal{F}_c,\lambda_n$ be as in Theorem \ref{t-sep} and $\tau^{(n)}_i,\widetilde{\tau}^{(n)}_i$ be the hitting times in \textnormal{(\ref{eq-tauc})}. Let $M_n\in\{0,1,...,n\}$ and set
\[
 \theta_n=\max\left\{\mathbb{E}_0\tau^{(n)}_{M_n},\mathbb{E}_n\tau^{(n)}_{M_n}\right\}
 =\max\left\{\mathbb{E}_0\widetilde{\tau}^{(n)}_{M_n},\mathbb{E}_n\widetilde{\tau}^{(n)}_{M_n}\right\}
\]
and
\[
 \alpha_n^2=\max\left\{\textnormal{Var}_0\widetilde{\tau}^{(n)}_{M_n},
 \textnormal{Var}_n\widetilde{\tau}^{(n)}_{M_n}\right\}
\]
and
\[
 \beta_n^2=\max\left\{\textnormal{Var}_0\tau^{(n)}_{M_n},
 \textnormal{Var}_n\tau^{(n)}_{M_n}\right\}.
\]
Suppose
\begin{equation}\label{eq-pin2}
 \inf_{n\ge 1}\pi_n([0,M_n])>0,\quad\inf_{n\ge 1}\pi_n([M_n,n])>0.
\end{equation}
In the maximum total variation distance:
\begin{itemize}
\item[(1)] $\mathcal{F}_c$ has a cutoff if and only if $\theta_n\lambda_n\ra\infty$ if and only if $\theta_n/\alpha_n\ra\infty$. Furthermore, if $\mathcal{F}_c$ has a cutoff, then $\mathcal{F}_c$ has a $(\theta_n,\alpha_n)$ cutoff.

\item[(2)] Assume that $\inf_{i,n}K_n(i,i)>0$. Then, $\mathcal{F}$ has a cutoff if and only if $\theta_n\lambda_n\ra\infty$ if and only if $\theta_n/\beta_n\ra\infty$. Furthermore, if $\mathcal{F}$ has a cutoff, then $\mathcal{F}$ has a $(\theta_n,\beta_n)$ cutoff.
\end{itemize}
\end{thm}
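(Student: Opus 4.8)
\emph{Proof plan.} The plan is to deduce the three equivalences in each part from Theorem \ref{t-tv} by a short chain of comparisons, and then to establish the windows by a monotone coupling together with the concentration of a first passage time. First, since $\theta_n$, $\alpha_n^2$, $\beta_n^2$ are each the larger of two nonnegative quantities whose sum is the corresponding $s_n$, $b_n^2$, $c_n^2$ of Theorem \ref{t-sep2}, one has $\theta_n\asymp s_n$, $\alpha_n\asymp b_n$, $\beta_n\asymp c_n$. Next I would invoke Corollary \ref{c-sep} --- whose proof rests on the classical representation of $\widetilde\tau^{(n)}_{M_n}$ started at $0$ (resp.\ at $n$) as a sum of independent exponentials with rates the eigenvalues of $I-K_n$ restricted to $\{0,\dots,M_n-1\}$ (resp.\ to $\{M_n+1,\dots,n\}$), on the analogous representation in discrete time as a sum of independent geometric variables (available since $\inf_{i,n}K_n(i,i)>0$), on Cauchy interlacing for the removal of the index-$M_n$ row and column of $I-K_n$, and on the fact that under (\ref{eq-pin2}) the bottom eigenvalues of these restrictions are comparable to $\lambda_n$ --- to get $s_n\asymp t_n$, $b_n\asymp\sigma_n$ and $\max\{c_n,1/\lambda_n\}\asymp\max\{\rho_n,1/\lambda_n\}$; in particular $\theta_n\asymp t_n$, $\alpha_n\asymp\sigma_n$, and, reading off variances in the representation, $\alpha_n\gtrsim1/\lambda_n$, $\beta_n\gtrsim1/\lambda_n$. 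Since $T^{(c)}_{n,\text{\tiny TV}}(\epsilon)\asymp t_n$ for every fixed $\epsilon$, and $T_{n,\text{\tiny TV}}(\epsilon)\asymp t_n$ when $\inf_{i,n}K_n(i,i)>0$, Theorem \ref{t-tv} combined with $\theta_n\asymp t_n$ gives ``a cutoff occurs $\Leftrightarrow\theta_n\lambda_n\to\infty$'', and ``$\theta_n\lambda_n\to\infty\Leftrightarrow\theta_n/\alpha_n\to\infty$'' as well as ``$\theta_n\lambda_n\to\infty\Leftrightarrow\theta_n/\beta_n\to\infty$'' will follow by elementary manipulations from $\sigma_n,\alpha_n,\beta_n\gtrsim1/\lambda_n$, Cauchy--Schwarz, and the criteria of Theorems \ref{t-sep}--\ref{t-sep2} (cf.\ Remark \ref{r-t-sep}).

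\emph{The cutoff windows.} Assume a cutoff occurs, so $\theta_n/\alpha_n\to\infty$ in the continuous case. For the lower bound on the mixing time, I would fix an endpoint $x_n^\ast\in\{0,n\}$ realizing $\mathbb{E}_{x_n^\ast}\widetilde\tau^{(n)}_{M_n}=\theta_n$ and, for $t<\theta_n$, compare the position of the chain started at $x_n^\ast$ with the bulk of $\pi_n$: by the first-passage representation and the cutoff hypothesis, $\widetilde\tau^{(n)}_{M_n}$ concentrates around $\theta_n$ with fluctuations of order $\alpha_n$, so at $t=\theta_n-C\alpha_n$ the chain has, with probability close to $1$, not yet reached $M_n$; together with (\ref{eq-pin2}) --- which pins $M_n$ to the bulk of $\pi_n$ --- and a quantile estimate for $\pi_n$, this should force $d^{(c)}_{\text{\tiny TV}}(t)>\epsilon$ for $C=C(\epsilon)$ large, i.e.\ $T^{(c)}_{n,\text{\tiny TV}}(\epsilon)\ge\theta_n-C\alpha_n$. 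For the matching upper bound, realize all the chains together with a stationary chain $Y$ in the monotone grand coupling, so that each $Z^x$ lies between $Z^0$ and $Z^n$ while $Z^0\le Y\le Z^n$ (this is what handles the interior maximizers noted in Remark \ref{r-tv}); then $\{Z^x_t\ne Y_t\}\subseteq\{Z^0_t\ne Y_t\}\cup\{Z^n_t\ne Y_t\}$, so $d^{(c)}_{\text{\tiny TV}}(t)\le\mathbb{P}(Z^0_t<Y_t)+\mathbb{P}(Z^n_t>Y_t)$. One then has to bound the coalescence time of $Z^0$ with $Y$ by the first passage $\widetilde\tau^{(n)}_{M_n}$ of $Z^0$ to $M_n$ plus an additive $O(1/\lambda_n)=O(\alpha_n)$ correction, and symmetrically for $Z^n$; since $\widetilde\tau^{(n)}_{M_n}$ concentrates around $\theta_n$ within $O(\alpha_n)$, this yields $d^{(c)}_{\text{\tiny TV}}(\theta_n+C\alpha_n)\le\epsilon$ for $C=C(\epsilon)$ large, hence $T^{(c)}_{n,\text{\tiny TV}}(\epsilon)\le\theta_n+C\alpha_n$.

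\emph{Discrete time.} Part (2) should run along the same lines with the geometric first-passage representation in place of the exponential one; the hypothesis $\inf_{i,n}K_n(i,i)>0$ is what secures aperiodicity, the validity of the discrete-time representation, the comparability $T_{n,\text{\tiny TV}}(\epsilon)\asymp t_n$ used above, and the concentration of $\tau^{(n)}_{M_n}$ about its mean at scale $\beta_n$. The monotone coupling and quantile arguments carry over, now producing a $(\theta_n,\beta_n)$ cutoff.

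\emph{Main obstacle.} The hard part will be matching time scales at the resolution of the window. On the upper side this is the claim that coalescence of the extremal chains with the stationary chain overshoots the first passage to $M_n$ by only $O(1/\lambda_n)=O(\alpha_n)$ (resp.\ $O(\beta_n)$), rather than by a further multiple of $\theta_n$; on the lower side it is the claim that the chain started at the extremal state stays at total variation distance exceeding $\epsilon$ from $\pi_n$ until within $O(\alpha_n)$ of $\theta_n$, for \emph{every} $\epsilon\in(0,1)$. Both rest on the balance condition (\ref{eq-pin2}), which keeps $M_n$ in the bulk of $\pi_n$, and on the cutoff hypothesis $\theta_n/\alpha_n\to\infty$; the estimate $\alpha_n\gtrsim1/\lambda_n$ read off from the first-passage representation is exactly what makes the relaxation-time correction fit inside the window.
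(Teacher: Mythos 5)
Your reduction of the three equivalences to Theorem \ref{t-tv} via the comparisons $\theta_n\asymp s_n\asymp t_n$, $\alpha_n\asymp\sigma_n$, $\alpha_n,\beta_n\gtrsim1/\lambda_n$ is sound and close in substance to the paper, which instead routes through Theorem \ref{t-sep2} and the equivalence (from \cite{DLP10}) of maximum total variation and maximum separation cutoffs. The discrete-time reduction via the $\delta$-lazy chain is also essentially what the paper does (using \cite{CSal13-1} and Remark \ref{r-var} to get $\beta_n^{(\delta)}\asymp\beta_n$). The problem is the window.

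For the upper bound, your key step --- that under the monotone grand coupling the coalescence time of $Z^0$ with the stationary chain $Y$ exceeds $\widetilde\tau^{(n)}_{M_n}$ by only $O(1/\lambda_n)$ with high probability --- is not proved and is doubtful as stated: after $Z^0$ reaches $M_n$, $Y$ sits in $[M_n,n]$ with probability bounded below, and the time for the two to meet is then governed by $\mathbb{E}_n\widetilde\tau^{(n)}_{M_n}$-type quantities, which can be of order $\theta_n$ rather than $1/\lambda_n$ (only the \emph{sum} $\mathbb{E}_0\widetilde\tau_i+\mathbb{E}_n\widetilde\tau_i-t_n$ is controlled by $1/\lambda_n$ in Corollary \ref{c-sep}). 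The paper avoids coalescence entirely: by Lemma \ref{l-tv}, $d^{(c)}_{\textnormal{\tiny TV}}(i,t)\le\mathbb{P}_i(\max\{\widetilde\tau_j,\widetilde\tau_k\}>t)+1-\pi([j,k])$ for quantile states $j\le k$, so the mixing time from $0$ is bounded by $\mathbb{E}_0\widetilde\tau_{k}$ plus a Chebyshev term, with no relaxation-time correction. For the lower bound, concentration of $\widetilde\tau^{(n)}_{M_n}$ only forces $d^{(c)}_{\textnormal{\tiny TV}}(\theta_n-C\alpha_n)\ge\pi_n([M_n,n])-o(1)$, a fixed level bounded away from $1$; this gives $T(\epsilon)\ge\theta_n-C\alpha_n$ only for $\epsilon$ below that level, not for all $\epsilon\in(0,1)$ as the definition of a $(\theta_n,\alpha_n)$ cutoff requires. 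To cover all $\epsilon$ one must pass to quantile states $x_n\le M_n\le y_n$ with $\pi_n([0,x_n]),\pi_n([y_n,n])\ge\epsilon/3$ and show that the traversal times $\mathbb{E}_{x_n}\widetilde\tau^{(n)}_{y_n}$, $\mathbb{E}_{y_n}\widetilde\tau^{(n)}_{x_n}$ are themselves $O(\alpha_n)$; this is the quantitative heart of the paper's proof, obtained from the explicit formula $\alpha_n(i)\gtrsim\sum_{\ell=x_n}^{y_n-1}(p_{n,\ell}\pi_n(\ell))^{-1}\gtrsim\max\{\mathbb{E}_{x_n}\widetilde\tau^{(n)}_{y_n},\mathbb{E}_{y_n}\widetilde\tau^{(n)}_{x_n}\}$ via Lemma \ref{l-meanvar2} and (\ref{eq-hittime2}), and it is absent from your sketch. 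Both missing estimates are exactly where the theorem's content lies, so the proposal as written does not yet constitute a proof.
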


\begin{rem}\label{r-wcomp2}
In Theorem \ref{t-tv2}, if $\delta=\inf_{i,n}K_n(i,i)$, then $\delta\alpha_n^2\le \beta_n^2\le\alpha_n^2$. See Remark \ref{r-wcomp} for details.
\end{rem}

\begin{rem}\label{r-exp1}
Let $\mathcal{F}=(\mathcal{X}_n,K_n,\pi_n)_{n=1}^\infty$ be a family of irreducible birth and death chains with $\mathcal{X}_n=\{0,1,...,n\}$. For $a\in(0,1)$, set $M_n(a)$ be a state in $\mathcal{X}_n$ satisfying
\[
 \pi_n([0,M_n(a)])\ge a,\quad\pi_n([M_n(a),n])\ge 1-a.
\]
By Theorem \ref{t-sep} and Remark \ref{r-sep2}, if $\mathcal{F}_c$ has a cutoff in maximum separation, then
\begin{equation}\label{eq-limhit1}
 \lim_{n\ra\infty}\frac{\mathbb{E}_0\widetilde{\tau}^{(n)}_{M_n(a)}
 +\mathbb{E}_n\widetilde{\tau}^{(n)}_{M_n(a)}}
 {\mathbb{E}_0\widetilde{\tau}^{(n)}_{M_n(b)}+\mathbb{E}_n\widetilde{\tau}^{(n)}_{M_n(b)}}=1,
 \quad\forall 0<a<b<1.
\end{equation}
From Theorem \ref{t-tv2}, if $\mathcal{F}_c$ has a cutoff in the maximum total variation, then
\begin{equation}\label{eq-limhit2}
 \lim_{n\ra\infty}\frac{\max\{\mathbb{E}_0\widetilde{\tau}^{(n)}_{M_n(a)},
 \mathbb{E}_n\widetilde{\tau}^{(n)}_{M_n(a)}\}}
 {\max\{\mathbb{E}_0\widetilde{\tau}^{(n)}_{M_n(b)},\mathbb{E}_n\widetilde{\tau}^{(n)}_{M_n(b)}\}}=1,
 \quad\forall 0<a<b<1.
\end{equation}
But, the converse of these statements are not necessarily true. For example, let
\[
 K_n(i,i+1)=K_n(i+1,i)=1/2,\quad\forall 0<i<n,\quad K_n(n,n)=1/2,
\]
and
\[
 K_n(0,1)=K_n(1,0)=\xi_n,\quad K_n(0,0)=1-\xi_n,\quad K_n(1,1)=1/2-\xi_n,
\]
where $\xi_n\in(0,1/2)$. Note that $K_n$ can be regarded as the transition matrix of a simple random walk on $\mathcal{X}_n$ with specific transitions at the boundary states and a bottleneck between $0$ and $1$ when $\xi_n$ is small. It is clear that the stationary distribution satisfies $\pi_n(i)=1/(n+1)$ for all $0\le i\le n$. After some computations, one has, for $n$ large enough,
\[
 M_n(a)\asymp n\asymp (n-M_n(a)).
\]
This implies
\[
 \mathbb{E}_0\widetilde{\tau}^{(n)}_{M_n(a)}=\frac{1}{\xi_n}+M_n(a)(M_n(a)+1)-2\asymp \frac{1}{\xi_n}+n^2
\]
and
\[
 \mathbb{E}_n\widetilde{\tau}^{(n)}_{M_n(a)}=(n-M_n(a))[n-M_n(a)+1]\asymp n^2.
\]
Let $p_{n,i},q_{n,i},r_{n,i}$ and $\lambda_n$ be the transition rates and the spectral gap of $K_n$. By Theorem 1.2 in \cite{CSal13-2}, we have
\begin{equation}\label{eq-gap}
 \frac{1}{\lambda_n}\asymp\max\left\{\max_{j:j<M_n}\sum_{k=j}^{M_n-1}\frac{\pi_n([0,j])}{\pi_n(k)p_{n,k}},
 \max_{j:j>M_n}\sum_{k=M_n+1}^j\frac{\pi_n([j,n])}{\pi_n(k)q_{n,k}}\right\},
\end{equation}
where $M_n=\lfloor n/2\rfloor$. This implies
\[
 \frac{1}{\lambda_n}\asymp \frac{1}{\xi_n}+n^2.
\]
As a consequence of Theorems \ref{t-sep2} and \ref{t-tv2}, $\mathcal{F}_c$ has neither a maximum separation cutoff nor a maximum total variation cutoff. Let $s_n$ and $\theta_n$ be the constants in Theorems \ref{t-sep2} and \ref{t-tv2}. If $n^2\xi_n\ra 0$, then
\[
 s_n\sim \theta_n\sim\mathbb{E}_0\widetilde{\tau}^{(n)}_{M_n(a)}\sim\frac{1}{\xi_n},\quad\forall a\in(0,1).
\]
The above example illustrates that (\ref{eq-limhit1}) and (\ref{eq-limhit2}) are necessary but not sufficient for the existence of the corresponding cutoffs.
\end{rem}

The following theorem describes one of the main applications of Theorems \ref{t-sep2}-\ref{t-tv2}.

\begin{thm}\label{t-randomm}
Consider a family $\mathcal{F}=(\mathcal{X}_n,K_n,\pi_n)_{n=1}^\infty$ of irreducible birth and death chains with $\mathcal{X}_n=\{0,1,...,n\}$. For $n\ge 1$, let $(\Omega_n,\mathbb{P}^{(n)})$ be a probability space and $C_{n,1},...,C_{n,n}:\Omega_n\ra(0,1)$ be independent and identically distributed random variables. For $\omega_n\in\Omega_n$ and $0\le i\le n$, let $(\mathcal{X}_n,L_n^{(\omega_n)},\pi_n)$ be a Markov chain given by
\[
 \begin{cases}
 L^{(\omega_n)}_n(i,i+1)=K_n(i,i+1)C_{n,i+1}(\omega_n),\\
 L^{(\omega_n)}_n(i+1,i)=K_n(i+1,i)C_{n,i+1}(\omega_n),\\
 L^{(\omega_n)}_n(i,i)=1-L^{(\omega_n)}_n(i,i+1)-L^{(\omega_n)}_n(i,i-1),
 \end{cases}
\]
and, for $\omega=(\omega_1,\omega_2,...)\in\prod_{n=1}^\infty\Omega_n$, let $\mathcal{F}^{(\omega)}=(\mathcal{X}_n,L_n^{(\omega_n)},\pi_n)_{n=1}^\infty$. Let $\mathcal{F}_c,\mathcal{F}_c^{(\omega)}$ be the continuous time families associated with $\mathcal{F},\mathcal{F}^{(\omega)}$. For $n\ge 1$, set $\mu_n=\mathbb{E}(1/C_{n,1})$, $\nu_n^2=\textnormal{Var}(1/C_{n,1})$ and let $\theta_n,\alpha_n,\beta_n$ be the constants in Theorem \ref{t-tv2}.
\begin{itemize}
\item[(1)] If $\mathcal{F}_c$ has a maximum total variation cutoff and $\nu_n\alpha_n=o(\mu_n\theta_n)$, then there is a sequence $E_n\subset\Omega_n$ such that $\mathbb{P}^{(n)}(E_n)\ra 1$ and, for any $\omega\in\prod_{n=1}^\infty E_n$, $\mathcal{F}^{(\omega)}_c$ has a maximum total variation cutoff with cutoff time $\mu_n\theta_n$.

\item[(2)] Assuming $\inf_{n,i}K_n(i,i)>0$ and replacing $\alpha_n$ by $\beta_n$, the statement in (1) also holds for the families $\mathcal{F},\mathcal{F}^{(\omega)}$.
\end{itemize}
\end{thm}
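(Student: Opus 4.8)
The plan is to apply Theorem~\ref{t-tv2} directly to the perturbed families $\mathcal{F}^{(\omega)}$ and $\mathcal{F}^{(\omega)}_c$, keeping the \emph{same} states $M_n$. This is legitimate: multiplying the two transition weights on the edge $\{i,i+1\}$ by the common factor $C_{n,i+1}(\omega_n)$ leaves the detailed--balance ratios $K_n(i,i+1)/K_n(i+1,i)$ unchanged, so $\pi_n$ remains the stationary distribution of $L^{(\omega_n)}_n$ and hypothesis~(\ref{eq-pin2}) continues to hold; and since $C_{n,j}\in(0,1)$ we have $L^{(\omega_n)}_n(i,i)=1-K_n(i,i+1)C_{n,i+1}-K_n(i,i-1)C_{n,i}\ge K_n(i,i)$, so the holding assumption required in part~(2) transfers automatically. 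It therefore suffices to exhibit sets $E_n$ with $\mathbb{P}^{(n)}(E_n)\to1$ such that, for every $\omega\in\prod_nE_n$, the quantities $\theta_n^{(\omega)},\alpha_n^{(\omega)},\beta_n^{(\omega)}$ attached by Theorem~\ref{t-tv2} to $L^{(\omega_n)}_n$ satisfy $\theta_n^{(\omega)}\sim\mu_n\theta_n$ together with $\theta_n^{(\omega)}/\alpha_n^{(\omega)}\to\infty$ in case (1) (respectively $\theta_n^{(\omega)}/\beta_n^{(\omega)}\to\infty$ in case (2)); Theorem~\ref{t-tv2} then produces a $(\theta_n^{(\omega)},\alpha_n^{(\omega)})$-cutoff (resp.\ a $(\theta_n^{(\omega)},\beta_n^{(\omega)})$-cutoff), and in particular a cutoff with cutoff time $\theta_n^{(\omega)}\sim\mu_n\theta_n$.

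The hitting-time data of $L^{(\omega_n)}_n$ are explicit. Write $a_{n,k}=\mathbb{E}^{K_n}_{k-1}\tau_k$ for $1\le k\le M_n$ and $a_{n,k}=\mathbb{E}^{K_n}_{k}\tau_{k-1}$ for $M_n<k\le n$. From the classical birth-and-death formula $\mathbb{E}_{k-1}\tau_k=\pi([0,k-1])/(\pi(k-1)K(k-1,k))$ and its mirror image, together with $\pi_n$ being unchanged, one gets
\[
 \mathbb{E}^{L_n}_0\tau_{M_n}=\sum_{k=1}^{M_n}\frac{a_{n,k}}{C_{n,k}(\omega_n)},\qquad
 \mathbb{E}^{L_n}_n\tau_{M_n}=\sum_{k=M_n+1}^{n}\frac{a_{n,k}}{C_{n,k}(\omega_n)},
\]
two sums of independent random variables over disjoint index sets, with $\sum_{k\le M_n}a_{n,k}=\mathbb{E}^{K_n}_0\tau_{M_n}$ and $\sum_{k>M_n}a_{n,k}=\mathbb{E}^{K_n}_n\tau_{M_n}$. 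Their means are $\mu_n\mathbb{E}^{K_n}_0\tau_{M_n}$ and $\mu_n\mathbb{E}^{K_n}_n\tau_{M_n}$, and their variances are $\nu_n^2\sum_{k\le M_n}a_{n,k}^2$ and $\nu_n^2\sum_{k>M_n}a_{n,k}^2$; so each sum concentrates (by Chebyshev) around $\mu_n$ times the corresponding unperturbed expected hitting time, and hence $\theta_n^{(\omega)}$, the maximum of the two sums, concentrates around $\mu_n\theta_n$, as soon as these variances are $o(\mu_n^2\theta_n^2)$.

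To handle both the concentration and the new window I will use the \emph{universal} two-sided estimate $\textnormal{Var}_j\widetilde\tau_{j+1}\asymp(\mathbb{E}_j\widetilde\tau_{j+1})^2$, valid for any irreducible birth-and-death chain and any neighboring pair $(j,j+1)$, together with its discrete analogue $\textnormal{Var}_j\tau_{j+1}\asymp(\mathbb{E}_j\tau_{j+1})^2$ when holding rates are bounded below. Combining this with the independence of the increments $\widetilde\tau_{j+1}-\widetilde\tau_j$ of a birth-and-death chain (strong Markov property), so that $\textnormal{Var}_0\widetilde\tau_{M_n}=\sum_{k\le M_n}\textnormal{Var}_{k-1}\widetilde\tau_k$ and the mirror identity holds at $n$, gives $\sum_k a_{n,k}^2\asymp\alpha_n^2$ for $K_n$, and similarly $\sum_k a_{n,k}^2\asymp\beta_n^2$ under the holding bound (via the Poissonization identity $\textnormal{Var}\,\widetilde\tau=\textnormal{Var}\,\tau+\mathbb{E}\,\tau$). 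Fix $\varepsilon_n\to0$ (to be chosen below) and let $E_n$ be the event on which $|\mathbb{E}^{L_n}_0\tau_{M_n}-\mu_n\mathbb{E}^{K_n}_0\tau_{M_n}|\le\varepsilon_n\mu_n\theta_n$, the analogous bound at $n$ holds, and $\sum_k a_{n,k}^2/C_{n,k}^2\le\varepsilon_n^{-1}(\mu_n^2+\nu_n^2)\sum_k a_{n,k}^2$ (for both sums). Chebyshev's and Markov's inequalities give $\mathbb{P}^{(n)}(E_n)\ge 1-C\nu_n^2\alpha_n^2/(\varepsilon_n\mu_n\theta_n)^2-2\varepsilon_n$. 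On $E_n$ one has $\theta_n^{(\omega)}=(1+O(\varepsilon_n))\mu_n\theta_n$; moreover, applying $\textnormal{Var}^{L_n}_{k-1}\widetilde\tau_k\le(\mathbb{E}^{L_n}_{k-1}\widetilde\tau_k)^2=a_{n,k}^2/C_{n,k}^2$ (coefficient of variation $\le1$, now for the chain $L^{(\omega_n)}_n$) and summing over $k$, $(\alpha_n^{(\omega)})^2\le\varepsilon_n^{-1}(\mu_n^2+\nu_n^2)\sum_k a_{n,k}^2\lesssim\varepsilon_n^{-1}(\mu_n^2+\nu_n^2)\alpha_n^2$, and likewise $(\beta_n^{(\omega)})^2\lesssim\varepsilon_n^{-1}(\mu_n^2+\nu_n^2)\beta_n^2$ under the holding bound.

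It remains to choose $\varepsilon_n$ and conclude. Since $\mathcal{F}_c$ has a cutoff, Theorem~\ref{t-tv2}(1) forces $\alpha_n=o(\theta_n)$, hence $\sqrt{\mu_n^2+\nu_n^2}\,\alpha_n\le\mu_n\alpha_n+\nu_n\alpha_n=o(\mu_n\theta_n)$ using the hypothesis $\nu_n\alpha_n=o(\mu_n\theta_n)$; so we may take $\varepsilon_n\to0$ with $\varepsilon_n\gg\nu_n\alpha_n/(\mu_n\theta_n)$ and $\varepsilon_n\gg(\mu_n^2+\nu_n^2)\alpha_n^2/(\mu_n\theta_n)^2$. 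Then $\mathbb{P}^{(n)}(E_n)\to1$, and on $E_n$ one has $\theta_n^{(\omega)}/\alpha_n^{(\omega)}\gtrsim\varepsilon_n^{1/2}\mu_n\theta_n/(\sqrt{\mu_n^2+\nu_n^2}\,\alpha_n)\to\infty$. For any $\omega\in\prod_nE_n$, Theorem~\ref{t-tv2}(1) applied to $\mathcal{F}^{(\omega)}$ then yields the desired maximum total variation cutoff with cutoff time $\theta_n^{(\omega)}\sim\mu_n\theta_n$; part~(2) is word for word the same, with $\alpha$ replaced by $\beta$, with $\beta_n=o(\theta_n)$ supplied by Theorem~\ref{t-tv2}, and with the holding bound invoked throughout. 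The step I expect to be most delicate is the uniform estimate $\textnormal{Var}_j\widetilde\tau_{j+1}\asymp(\mathbb{E}_j\widetilde\tau_{j+1})^2$ (and its discrete counterpart): the upper bound follows from Keilson's representation of a birth-and-death passage time as a sum of independent exponential (resp.\ geometric) variables, whose squared coefficient of variation is at most $1$, but the matching lower bound requires isolating inside $\widetilde\tau_{j+1}$ a genuinely exponential ingredient — coming from the geometrically distributed number of failed excursions below $j$ together with the exponential holding time at $j$ — whose mean is comparable to $\mathbb{E}_j\widetilde\tau_{j+1}$.
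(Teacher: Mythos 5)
Your overall architecture is the same as the paper's (keep the same $M_n$, note that $\pi_n$ and hypothesis (\ref{eq-pin2}) survive the perturbation, show the perturbed expected hitting times concentrate around $\mu_n\theta_n$ and the perturbed variances are $o((\mu_n\theta_n)^2)$, then invoke Theorem \ref{t-tv2}), and the concentration half is sound, because there you only need the one-sided bound $\sum_k a_{n,k}^2\le\alpha_n^2$. But the control of the new window rests on the claimed universal estimate $\textnormal{Var}_j\widetilde{\tau}_{j+1}\asymp(\mathbb{E}_j\widetilde{\tau}_{j+1})^2$, and the direction you actually use, $\textnormal{Var}_j\widetilde{\tau}_{j+1}\le(\mathbb{E}_j\widetilde{\tau}_{j+1})^2$, is false. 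By Lemma \ref{l-meanvar2}, $\textnormal{Var}_i(\widetilde{\tau}_{i+1})=\frac{1}{p_i\pi(i)}\sum_{\ell\le i}\pi(\ell)[\mathbb{E}_\ell\widetilde{\tau}_{i+1}+\mathbb{E}_\ell\widetilde{\tau}_i]\ge\frac{\pi([0,i])}{p_i\pi(i)}\,\mathbb{E}_i\widetilde{\tau}_{i+1}=(\mathbb{E}_i\widetilde{\tau}_{i+1})^2$, so the squared coefficient of variation of an increment is always at least $1$, and it can be arbitrarily large: in the bottleneck chain of Remark \ref{r-exp1} one has $\mathbb{E}_1\widetilde{\tau}_2=4$ but $\textnormal{Var}_1\widetilde{\tau}_2=4/\xi_n+16$. (The Keilson/Brown--Shao sum-of-independent-exponentials representation applies to $\widetilde{\tau}_{j+1}$ started at $0$, not to the increment started at $j$; you have the delicacy inverted in your last sentence, since the lower bound is the easy true one.) Consequently $\sum_k a_{n,k}^2\asymp\alpha_n^2$ also fails --- for the simple random walk the left side is of order $n^3$ while $\alpha_n^2\asymp n^4$ --- and your bound $(\alpha_n^{(\omega)})^2\le\sum_k a_{n,k}^2/C_{n,k}^2$ is unjustified: $\textnormal{Var}_0^{L_n}\widetilde{\tau}_{M_n}$ is genuinely a double sum, comparable to $\sum_{i\le j<M_n}\pi_n([0,i])^2/(\pi_n(i)p_{n,i}C_{n,i+1}\pi_n(j)p_{n,j}C_{n,j+1})$, whose off-diagonal part generally dominates the diagonal you retain. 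So the step that is supposed to give $\theta_n^{(\omega)}/\alpha_n^{(\omega)}\to\infty$ collapses.

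The repair is what the paper does: take as the random variable the full double sum $V_{n,1}^2=\sum_{i\le j<M_n}\pi_n([0,i])^2/(\pi_n(i)p_{n,i}C_{n,i+1}\pi_n(j)p_{n,j}C_{n,j+1})$ (and its mirror $V_{n,2}^2$), use independence of the $C_{n,k}$ to get $\mathbb{E}V_{n,1}^2\le(\mu_n^2+\nu_n^2)\alpha_n^2$ (off-diagonal terms contribute $\mu_n^2$, diagonal ones $\mu_n^2+\nu_n^2$), and apply Markov's inequality to conclude $V_{n,1}\le b_n(\mu_n+\nu_n)\alpha_n$ off an event of vanishing probability, where $b_n\to\infty$ is chosen with $b_n(\mu_n+\nu_n)\alpha_n=o(\mu_n\theta_n)$; this choice is possible exactly because $\alpha_n=o(\theta_n)$ (from the cutoff of $\mathcal{F}_c$) and $\nu_n\alpha_n=o(\mu_n\theta_n)$. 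With that substitution for your window estimate, the rest of your argument (including the transfer of the holding-rate hypothesis in part (2), which is correct) goes through.
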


\begin{rem}
In Theorem \ref{t-randomm}, $L_n$ can be regarded as a random birth and death chain obtained by applying i.i.d. random slowdowns on $K_n$ without changing the stationary distribution.
\end{rem}

\begin{rem}
Theorem \ref{t-randomm} also holds in maximum separation.
\end{rem}

The remaining of this article is organized in the following way. Sections \ref{s-sep} and \ref{s-tv} contain the proofs of Theorems \ref{t-sep2} and \ref{t-tv2} respectively. The proof of Theorem \ref{t-randomm} is given in Section \ref{s-rbdc}. We also introduce another randomization of simple random walks on paths and discuss its cutoff and mixing time. In Section \ref{s-bs}, we consider families of chains started at one boundary states and provide criteria for the existence of a total variation cutoff and formulas for the cutoff time. We discuss the distinction between maximum total variation cutoffs and cutoffs from a boundary state and illustrate this with several examples in Section \ref{s-comp}. The main results of Section \ref{s-bs} are proved in Section \ref{s-proof}. In Section \ref{s-ex}, we apply the developed theory to compute the cutoff time of some classical examples. Some useful lemmas and auxiliary results are gathered in the appendix.


\section{Cutoff in separation}\label{s-sep}

This section is dedicated to the proof of Theorem \ref{t-sep2} and we need the following two lemmas. The first lemma concerns the mean and variance of hitting times and the second lemma provides a comparison of spectral gaps.

\begin{lem}\label{l-meanvar}
Let $K$ be the transition matrix of an irreducible birth and death chain on $\{0,1,...,n\}$. For $1\le i\le n$, let $\beta^{(i)}_1,...,\beta^{(i)}_i$ be the eigenvalues of the submatrix of $I-K$ indexed by $\{0,...,i-1\}$ and set
\begin{equation}\label{eq-taui}
 \tau_i=\min\{m\ge 0|X_m=i\},\quad\widetilde{\tau}_i=\inf\{t\ge 0|X_{N_t}=i\},
\end{equation}
where $(X_m)_{m=0}^\infty$ is a Markov chain with transition matrix $K$ and $N_t$ is a Poisson process independent of $X_m$ with parameter $1$. Then, $\beta^{(i)}_j\in(0,2)$ for all $1\le j\le i$ and
\begin{equation}\label{eq-mean}
 \mathbb{E}_0\tau_i=\mathbb{E}_0\widetilde{\tau}_i=\sum_{j=1}^i\frac{1}{\beta^{(i)}_j},
\end{equation}
and
\begin{equation}\label{eq-var}
 \textnormal{Var}_0(\tau_i)=\sum_{j=1}^i\frac{1-\beta^{(i)}_j}{\left(\beta^{(i)}_j\right)^2},\quad
\begin{aligned}
 \textnormal{Var}_0(\widetilde{\tau}_i)=\sum_{j=1}^i\frac{1}{\left(\beta^{(i)}_j\right)^2}.
\end{aligned}
\end{equation}
\end{lem}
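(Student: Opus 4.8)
The plan is to recognize $\tau_i$ as the absorption time for the chain killed upon hitting $i$, started at $0$, and to compute its distribution via the spectral decomposition of the substochastic kernel. Let $Q$ be the submatrix of $K$ indexed by $\{0,1,\dots,i-1\}$, i.e.\ the kernel of the chain restricted to states strictly below $i$, and let $A=I-Q$, which has eigenvalues $\beta^{(i)}_1,\dots,\beta^{(i)}_i$. First I would verify that $A$ is invertible and that its eigenvalues lie in $(0,2)$: because the birth and death chain is irreducible, from every state in $\{0,\dots,i-1\}$ one can reach $i$, so $Q$ is a strictly substochastic irreducible matrix and its spectral radius is $<1$; since $Q$ is similar (via the symmetrizing diagonal matrix built from $\pi$) to a symmetric matrix, its eigenvalues are real and lie in $(-1,1)$, hence those of $A=I-Q$ lie in $(0,2)$. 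This gives the claim $\beta^{(i)}_j\in(0,2)$.

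Next I would set up the generating-function / moment computation. For the discrete-time hitting time $\tau_i$ from $0$, the tail probabilities are governed by powers of $Q$: $\mathbb{P}_0(\tau_i>m)=\sum_{k}Q^m(0,k)=(Q^m\mathbf{1})(0)$. Hence $\mathbb{E}_0\tau_i=\sum_{m\ge 0}(Q^m\mathbf{1})(0)=\bigl((I-Q)^{-1}\mathbf{1}\bigr)(0)=(A^{-1}\mathbf{1})(0)$, and similarly $\mathbb{E}_0[\tau_i(\tau_i-1)]=2\bigl((I-Q)^{-2}Q\mathbf{1}\bigr)(0)=2(A^{-2}Q\mathbf{1})(0)$, from which $\mathrm{Var}_0(\tau_i)=(2A^{-2}Q\mathbf{1}+A^{-1}\mathbf{1}-(A^{-1}\mathbf{1})^2)(0)$. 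The key simplification — and here is the crux of the argument — is that although $A^{-1}$ is not a scalar, the component $(A^{-1}\mathbf{1})(0)$ and the relevant quadratic quantity collapse to symmetric functions of the eigenvalues. One clean way to see this is to use the known fact (Karlin–McGregor; see also Fill, Diaconis–Miclo, or Miclo's work on birth-and-death hitting times) that for a birth and death chain $\tau_i$ started at $0$ is distributed as an independent \emph{sum} of $i$ geometric random variables with parameters $\beta^{(i)}_1,\dots,\beta^{(i)}_i$ (each taking values in $\{1,2,\dots\}$ with success probability $\beta^{(i)}_j$, when these lie in $(0,1]$; the general statement for $\beta\in(0,2)$ is via the appropriate signed/complex-valued but real-in-sum representation). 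Granting this representation, $\mathbb{E}_0\tau_i=\sum_j 1/\beta^{(i)}_j$ is immediate, and $\mathrm{Var}_0(\tau_i)=\sum_j (1-\beta^{(i)}_j)/(\beta^{(i)}_j)^2$ follows from the variance of a geometric. I would either cite this representation from the literature or, to keep the paper self-contained, derive the two identities directly by expanding $(A^{-1}\mathbf{1})(0)$ via the characteristic polynomial of $Q$ together with the three-term-recurrence structure of a tridiagonal matrix (Cramer's rule gives $(A^{-1}\mathbf{1})(0)$ as a ratio of two determinants that telescopes).

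Finally, for the continuous-time hitting time $\widetilde{\tau}_i$, I would use the relation $\widetilde{\tau}_i=\sum_{k=1}^{\tau_i}\xi_k$ where the $\xi_k$ are i.i.d.\ mean-$1$ exponentials independent of $\tau_i$ (the Poisson clock makes each discrete step take an independent Exp$(1)$ amount of time). Then $\mathbb{E}_0\widetilde{\tau}_i=\mathbb{E}_0\tau_i$ (so \eqref{eq-mean} holds), and by the law of total variance,
\[
 \mathrm{Var}_0(\widetilde{\tau}_i)=\mathbb{E}_0[\tau_i]\,\mathrm{Var}(\xi_1)+\mathrm{Var}_0(\tau_i)\,(\mathbb{E}\xi_1)^2=\mathbb{E}_0\tau_i+\mathrm{Var}_0(\tau_i)=\sum_{j=1}^i\frac{1}{\beta^{(i)}_j}+\sum_{j=1}^i\frac{1-\beta^{(i)}_j}{(\beta^{(i)}_j)^2}=\sum_{j=1}^i\frac{1}{(\beta^{(i)}_j)^2},
\]
which is exactly the stated formula for $\mathrm{Var}_0(\widetilde{\tau}_i)$. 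Equivalently, in continuous time $\widetilde{\tau}_i$ is an independent sum of $i$ exponentials with rates $\beta^{(i)}_j$, giving both the mean and the variance at once. I expect the main obstacle to be justifying the geometric/exponential sum representation (or the equivalent determinant telescoping) cleanly when some $\beta^{(i)}_j$ exceed $1$; the cleanest route is probably to prove the two scalar identities $(A^{-1}\mathbf{1})(0)=\sum_j 1/\beta^{(i)}_j$ and the corresponding second-moment identity purely algebraically from the tridiagonal structure, sidestepping any probabilistic interpretation of individual "geometric" factors, and then invoke law-of-total-variance for the continuous-time case.
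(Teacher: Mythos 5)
Your proposal is correct and follows essentially the same route as the paper: an elementary linear-algebra argument (Perron--Frobenius plus symmetrizability, versus the paper's direct left-eigenvector estimate) to place the $\beta^{(i)}_j$ in $(0,2)$, followed by an appeal to the Karlin--McGregor/Brown--Shao spectral representation of the hitting time for the moment identities --- the paper itself simply cites Brown and Shao at exactly this point. Your explicit passage from the discrete to the continuous case via $\widetilde{\tau}_i=\sum_{k=1}^{\tau_i}\xi_k$ and the law of total variance is a clean way to make precise what the paper leaves implicit.
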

\begin{proof}
Let $\widetilde{K}$ be the submatrix of $K$ indexed by $\{0,1,...,i-1\}$. Let $\beta$ be an eigenvalue of $\widetilde{K}$ and $x=(x_0,...,x_{i-1})$ be a left eigenvector associated with $\beta$. That is,
\[
 \begin{cases}\beta x_j=K(j-1,j)x_{j-1}+K(j,j)x_j+K(j+1,j)x_{j+1},\quad\forall 0<j<i-1,\\
 \beta x_0=K(0,0)x_0+K(1,0)x_1,\\
 \beta x_{i-1}=K(i-2,i-1)x_{i-2}+K(i-1,i-1)x_{i-1}.\end{cases}
\]
By the irreducibility of $K$, if $x_{i-1}=0$, then $x_j=0$ for all $0\le j<i$. This implies $x_{i-1}\ne 0$ and then
\[
 |\beta|\sum_{j=0}^{i-1}|x_j|\le\sum_{j=0}^{i-1}|x_j|-K(i-1,i)|x_{i-1}|<\sum_{j=0}^{i-1}|x_j|.
\]
Since $x$ is an eigenvector of $K$, $\sum_j|x_j|>0$ and thus $|\beta|<1$. This proves that $\beta^{(i)}_j\in(0,2)$ for all $1\le j\le i$. For (\ref{eq-mean}) and (\ref{eq-var}), note that the distribution of $\widetilde{\tau}_i$ was given by Brown and Shao in \cite{BS87} and the technique therein also applies for $\tau_i$. This leads to the desired identities, where we refer the reader to their work for details.
\end{proof}

\begin{rem}\label{r-var}
In Lemma \ref{l-meanvar}, the first equality of (\ref{eq-var}) implies
\[
 \sum_{j=1}^i\frac{1}{(\beta^{(i)}_j)^2}\ge \sum_{j=1}^i\frac{1}{\beta^{(i)}_j},\quad\forall j\ge 1.
\]
\end{rem}

\begin{lem}\label{l-gap}
Let $K$ be the transition matrix of an irreducible birth and death chain on $\{0,1,...,n\}$ with stationary distribution $\pi$. For $0\le i\le n$, let $L_i$ be the sub-matrix of $K$ obtained by removing the row and column of $K$ indexed by state $i$. Let $\lambda_1<\cdots<\lambda_n$ be the non-zero eigenvalues of $I-K$ and $\lambda^{(i)}_1\le\cdots\le\lambda^{(i)}_n$ be the eigenvalues of $I-L_i$. Then,
\[
 \lambda^{(i)}_j\le \lambda_j\le\lambda^{(i)}_{j+1}\le\lambda_{j+1},\quad\forall 1\le j<n,
\]
and
\[
 \left(\frac{\min\{\pi([0,i]),\pi([i,n])\}}{4}\right)\lambda_1\le\lambda^{(i)}_1\le\lambda_1.
\]
In particular, if $M$ is a median of $\pi$, i.e. $\pi([0,M])\ge 1/2$ and $\pi([M,n])\ge 1/2$, then $\lambda_1/8\le \lambda^{(M)}_1\le\lambda_1$.
\end{lem}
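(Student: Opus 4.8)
The plan is to prove the three assertions in turn, with the Cauchy interlacing theorem doing the combinatorial work and a variational (Rayleigh quotient) argument controlling the smallest eigenvalue. First I would recall that $I-K$ is similar, via conjugation by the diagonal matrix $D=\textnormal{diag}(\sqrt{\pi(0)},\dots,\sqrt{\pi(n)})$, to a symmetric matrix $A$; and that removing the row and column indexed by $i$ from $K$ corresponds to removing the same row and column from $A$, yielding a principal submatrix $A^{(i)}$ of size $n$. Since $I-K$ has eigenvalues $0=\lambda_0<\lambda_1<\cdots<\lambda_n$ (the strict inequalities and simplicity coming from the fact that a birth and death chain is a tree/path chain, so eigenvalues are simple) and $I-L_i$ is conjugate to $A^{(i)}$, Cauchy interlacing gives
\[
 \lambda_{j-1}\le \lambda^{(i)}_j\le \lambda_j,\qquad 1\le j\le n,
\]
under the obvious indexing of the $\lambda^{(i)}_j$'s by $1,\dots,n$ and of the full spectrum by $0,1,\dots,n$. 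Reindexing and using $\lambda_0=0\le\lambda^{(i)}_1$ this is exactly the chain of inequalities $\lambda^{(i)}_j\le\lambda_j\le\lambda^{(i)}_{j+1}\le\lambda_{j+1}$ claimed in the statement.

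For the upper bound $\lambda^{(i)}_1\le\lambda_1$ there is nothing more to do (it is the $j=1$ case above, with $\lambda_0=0$ on the left). The substantive point is the lower bound $\lambda^{(i)}_1\ge \frac14\min\{\pi([0,i]),\pi([i,n])\}\,\lambda_1$. Here I would work with the Dirichlet form: $\lambda^{(i)}_1$ is the smallest Dirichlet eigenvalue on $\mathcal{X}_n\setminus\{i\}$, i.e.
\[
 \lambda^{(i)}_1=\min\left\{\frac{\mathcal{E}(f,f)}{\sum_x f(x)^2\pi(x)}\;:\; f(i)=0,\ f\not\equiv 0\right\},
\]
where $\mathcal{E}(f,f)=\tfrac12\sum_{x\sim y}(f(x)-f(y))^2\pi(x)K(x,y)$ is the full Dirichlet form. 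Let $f$ achieve this minimum, so $f(i)=0$. Compare with $\lambda_1$, which is the minimum of $\mathcal{E}(g,g)/\textnormal{Var}_\pi(g)$ over non-constant $g$; take $g=f$. The numerators agree, so I need $\textnormal{Var}_\pi(f)\ge \tfrac14\min\{\pi([0,i]),\pi([i,n])\}\sum_x f(x)^2\pi(x)$. Writing $m=\pi(f)=\mathbb{E}_\pi f$ and using $\textnormal{Var}_\pi(f)=\sum_x f(x)^2\pi(x)-m^2$, it suffices to show $m^2\le\bigl(1-\tfrac14 p\bigr)\sum_x f(x)^2\pi(x)$ where $p=\min\{\pi([0,i]),\pi([i,n])\}$. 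Because $f(i)=0$ and $f$ has a sign change forced by the path structure only if it changes sign — but in general the support of $f$ on $[0,i)$ and on $(i,n]$ can be exploited: on at least one of the two sides the restriction of $f$ contributes; more precisely, since $\pi([0,i])+\pi([i,n])\ge 1$ (they overlap at $i$) and $f(i)=0$, one of the two "one-sided mass-weighted $L^2$ norms" is at least $\tfrac12\sum_x f(x)^2\pi(x)$, and on that side Cauchy–Schwarz against the indicator gives $m^2\le\bigl(\text{that side's }\pi\text{-mass}\bigr)\cdot(\text{one-sided }L^2)$-type bounds. The cleanest route is: by Cauchy–Schwarz, $m^2=\bigl(\sum_{x\ne i}f(x)\pi(x)\bigr)^2\le \sum_{x\ne i}f(x)^2\pi(x)$; to upgrade the factor $1$ to $1-\tfrac14 p$ I would split the sum at $i$, apply Cauchy–Schwarz on each side separately with the genuine one-sided masses $\pi([0,i])$ and $\pi([i,n])$ (each $<1$), and combine. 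I expect this bookkeeping — getting exactly the constant $\tfrac14$ rather than something weaker — to be the only real obstacle; it is the kind of estimate that appears in Diaconis–Saloff-Coste \cite{DS06}, and I would cite or reproduce their computation. The final assertion about a median $M$ is immediate: $\pi([0,M])\ge 1/2$ and $\pi([M,n])\ge 1/2$ give $p\ge 1/2$, hence $\tfrac14 p\ge 1/8$, so $\lambda^{(M)}_1\ge\lambda_1/8$.
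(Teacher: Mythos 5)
Your interlacing step is fine and is essentially what the paper does (it cites Theorem 4.3.8 of Horn--Johnson after symmetrizing by $\mathrm{diag}(\sqrt{\pi(0)},\dots,\sqrt{\pi(n)})$), and it correctly yields both the chain of inequalities and the upper bound $\lambda_1^{(i)}\le\lambda_1$. The gap is in the lower bound. You reduce it to the claim that the Dirichlet minimizer $f$ (with $f(i)=0$) satisfies $\textnormal{Var}_\pi(f)\ge\tfrac14 p\,\pi(f^2)$, where $p=\min\{\pi([0,i]),\pi([i,n])\}$, and you propose to extract this from Cauchy--Schwarz using only the constraint $f(i)=0$. That cannot work, and the ``bookkeeping'' you defer is not bookkeeping: for $f\equiv c$ on $\mathcal{X}\setminus\{i\}$ and $f(i)=0$ one has $\pi(f)^2=(1-\pi(i))\,\pi(f^2)$, i.e.\ $\textnormal{Var}_\pi(f)=\pi(i)\,\pi(f^2)$. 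So any estimate that uses only $f(i)=0$ can produce at best the constant $\pi(i)$, which is in general far smaller than $p/4$ (take $i$ the median of the uniform distribution on a long path). Your one-sided splitting of $\pi(f)$ into $m_-+m_+$ runs into the same wall: it recovers $\pi(f)^2\le(1-\pi(i))\pi(f^2)$ and nothing better.

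The missing idea is structural: $L_i$ is block diagonal, since deleting row and column $i$ from a tridiagonal matrix disconnects $[0,i-1]$ from $[i+1,n]$. Hence an eigenvector $f$ of $I-L_i$ for $\lambda_1^{(i)}$ may be chosen supported entirely on one block, say $[0,i-1]$; its zero-extension then vanishes on $[i,n]$, a set of $\pi$-measure at least $p$, so Cauchy--Schwarz gives $\pi(f)^2\le\pi([0,i-1])\,\pi(f^2)\le(1-p)\,\pi(f^2)$, whence $\textnormal{Var}_\pi(f)\ge p\,\pi(f^2)$ and $\lambda_1^{(i)}=\mathcal{E}(f,f)/\pi(f^2)\ge p\,\lambda_1$ --- in fact stronger than the stated $\tfrac{p}{4}\lambda_1$. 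With this fix your route is correct and genuinely different from the paper's: the paper instead sandwiches both $\lambda_1$ and $\lambda_1^{(i)}$ between multiples of the same weighted-Hardy quantity $C(i)$ (Proposition A.2 and Theorem 3.8 of \cite{CSal13-2}), which is where its factor $4$ originates; the corrected variational argument is more elementary and gives a sharper constant, while the Hardy route has the side benefit of the explicit formula for $C(i)$ that the paper reuses elsewhere.
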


The proof of Lemma \ref{l-gap} is based on a weighted Hardy inequality obtained in \cite{CSal13-2} and is discussed in the appendix. In what follows, for any two sequences of positive reals $a_n,b_n$, we write $a_n=o(b_n)$ if $a_n/b_n\ra 0$ and write $a_n=O(b_n)$ if $a_n/b_n$ is bounded. In the case that $a_n=O(b_n)$ and $b_n=O(a_n)$, we write $a_n\asymp b_n$ instead.

\begin{proof}[Proof of Theorem \ref{t-sep2}]
Let $\lambda_{n,i},\lambda_n,t_n,\sigma_n,\rho_n$ be constants in Theorem \ref{t-sep}. Note that, for $n\ge 2$,
\[
 \max\{\rho_n^2,1/\lambda_n^2\}\le \sigma_n^2=\sum_{i=1}^n\frac{1}{\lambda_{n,i}^2}\le \frac{t_n}{\lambda_n}.
\]
This implies
\begin{equation}\label{eq-comp}
 \sqrt{t_n\lambda_n}\le\frac{t_n}{\sigma_n}\le\frac{t_n}{\max\{\rho_n,1/\lambda_n\}}\le t_n\lambda_n.
\end{equation}
As a consequence, we have
\begin{equation}\label{eq-equi}
 t_n\lambda_n\ra\infty\quad\Lra\quad \sigma_n=o(t_n)\quad\Lra\quad \max\{\rho_n,1/\lambda_n\}=o(t_n).
\end{equation}

Next, let $s_n,b_n,c_n$ be constants in Theorem \ref{t-sep2}. Observe that
\[
 1/\lambda_n\le\max\{\rho_n,1/\lambda_n\}\le \sigma_n.
\]
Set $a_n=\min\{\pi_n([0,M_n]),\pi_n([M_n,n])\}$. By Lemmas \ref{l-meanvar} and \ref{l-gap}, one has
\[
 t_n\le s_n\le t_n+\frac{4}{a_n\lambda_n}\le t_n+\frac{4\sigma_n}{a_n}
\]
and
\[
 \sigma_n^2\le b_n^2\le \sigma_n^2+\left(\frac{4}{a_n\lambda_n}\right)^2
 \le\frac{17\sigma_n^2}{a_n^2}.\notag
\]
According to the assumption of (\ref{eq-pin}), we have $a_n\asymp 1$ and this implies
\[
 t_n\lambda_n\ra\infty\quad\Lra\quad s_n\lambda_n\ra\infty
\]
and
\begin{equation}\label{eq-equi2}
 |t_n-s_n|=O(\sigma_n),\quad |t_n-s_n|=O(\max\{\rho_n,1/\lambda_n\}),\quad
 b_n\asymp \sigma_n.
\end{equation}
As a consequence of (\ref{eq-equi}) and (\ref{eq-equi2}), we obtain
\begin{equation}\label{eq-equi3}
 t_n\lambda_n\ra\infty\quad\Lra\quad b_n=o(s_n)\quad\Lra\quad \max\{c_n,1/\lambda_n\}=o(s_n).
\end{equation}
The first equivalence of (\ref{eq-equi3}) proves the criterion for cutoff in (1). For (2), if $\mathcal{F}^L$ has a separation cutoff, then Theorem \ref{t-sep} implies $t_n\lambda_n\ra\infty$. By the last identity in (\ref{eq-equi3}), we obtain $c_n=o(s_n)$. To see the inverse direction, observe that the mapping $u\mapsto (1-u)/u^2$ is decreasing on $(0,2]$ and $\lambda_{n,i}\in(0,2)$ for all $1\le i\le n$. In the same reasoning as before, Lemmas \ref{l-meanvar} and \ref{l-gap} yield
\begin{equation}\label{eq-equi4}
 \rho_n^2\le c_n^2\le\rho_n^2+\frac{1-a_n\lambda_n/4}{(a_n\lambda_n/4)^2}
 +\frac{\lambda_{n,n}-1}{\lambda_{n,n}^2}\le \rho_n^2+\frac{17}{a_n^2\lambda_n^2}.
\end{equation}
By the first inequality of (\ref{eq-equi4}), if $c_n=o(s_n)$, then $\rho_n=o(s_n)$. Accompanied with the facts,
\[
 s_n=t_n+\frac{4}{a_n\lambda_n}\le\left(1+\frac{4}{a_n}\right)t_n,\quad a_n\asymp 1,
\]
we obtain $\rho_n=o(t_n)$. By Remark \ref{r-t-sep}, $\mathcal{F}^L$ has a separation cutoff.

To see a window, we recall Corollary 2.5(v) of \cite{CSal08}, which says that if a family has a $(t_n,\sigma_n)$ cutoff and
\[
 b_n=o(t_n)\,\, (\text{or }b_n=o(s_n)),\quad |t_n-s_n|=O(b_n),\quad \sigma_n=O(b_n),
\]
then the family has a $(s_n,b_n)$ cutoff. By Theorem \ref{t-sep}, the desired cutoff for $\mathcal{F}^L_c$ is given by the first and third identities in (\ref{eq-equi2}), while the desired cutoff for $\mathcal{F}^L$ is provided by the second identity in (\ref{eq-equi2}), the third identity in (\ref{eq-equi3}) and the following observations
\[
 \max\{\rho_n,1/\lambda_n\}\asymp \max\{c_n,1/\lambda_n\},\quad \max\{\rho_n,1\}=O(\max\{c_n,1/\lambda_n\}),
\]
which are implies by (\ref{eq-equi4}) and the fact $\lambda_n\le 2$.
\end{proof}

In the following corollary, we summarize some useful comparison between the variances of hitting times and the windows of cutoffs obtained in the proof of Theorem \ref{t-sep2}.

\begin{cor}\label{c-sep}
Let $K$ be the transition matrix of an irreducible birth and death chain on $\{0,1,...,n\}$ with stationary distribution  $\pi$ and $\tau_i,\widetilde{\tau}_i$ be the hitting times in \textnormal{(\ref{eq-taui})}. Suppose $\lambda_1,...,\lambda_n$ be non-zero eigenvalues of $I-K$ and set
\[
 t=\sum_{i=1}^n\frac{1}{\lambda_i},\quad\sigma^2=\sum_{i=1}^n\frac{1}{\lambda_i^2},\quad \rho^2=\sigma^2-t,
 \quad \lambda=\min_{1\le i\le n}\lambda_i.
\]
Then, for $0\le i\le n$,
\[
 t\le\mathbb{E}_0\widetilde{\tau}_i+\mathbb{E}_n\widetilde{\tau}_i=\mathbb{E}_0\tau_i+\mathbb{E}_n\tau_i
 \le t+\frac{4}{a(i)\lambda}
\]
and
\[
 \sigma^2\le\textnormal{Var}_0\widetilde{\tau}_i+\textnormal{Var}_n\widetilde{\tau}_i\le\frac{17\sigma^2}
 {a(i)^2},\quad \rho^2\le \textnormal{Var}_0\tau_i+\textnormal{Var}_n\tau_i\le\rho^2+\frac{17}{a(i)^2\lambda^2},
\]
where $a(i)=\min\{\pi([0,i]),\pi([i,n])\}$.
\end{cor}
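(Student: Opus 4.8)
The plan is to reduce the corollary to Lemmas \ref{l-meanvar} and \ref{l-gap}, exactly as was done inside the proof of Theorem \ref{t-sep2}; no new idea is needed. The starting point is that for a birth and death chain, deleting state $i$ disconnects $\{0,\dots,i-1\}$ from $\{i+1,\dots,n\}$: the matrix $L_i$ of Lemma \ref{l-gap} (obtained from $K$ by removing the row and column of $i$) is block diagonal, its two diagonal blocks being the submatrices of $K$ indexed by $\{0,\dots,i-1\}$ and by $\{i+1,\dots,n\}$. Consequently the eigenvalues $\lambda^{(i)}_1\le\cdots\le\lambda^{(i)}_n$ of $I-L_i$ are the union, with multiplicity, of the eigenvalues of $I-K$ restricted to those two blocks, and all of them lie in $(0,2)$ by Lemma \ref{l-meanvar} applied to each block. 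We also index the $\lambda_j$ increasingly, so that $\lambda=\lambda_1$.

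Next I would express the quantities in the statement in terms of the $\lambda^{(i)}_j$. Applying Lemma \ref{l-meanvar} to the block $\{0,\dots,i-1\}$ (for $\mathbb{E}_0\tau_i$, $\textnormal{Var}_0\tau_i$, $\textnormal{Var}_0\widetilde{\tau}_i$), and applying it to the block $\{i+1,\dots,n\}$ after the relabelling $k\mapsto n-k$, under which the chain and its hitting times are preserved so that $\mathbb{E}_n\tau_i$ becomes $\mathbb{E}_0\tau_{n-i}$ for the relabelled chain (for $\mathbb{E}_n\tau_i$, $\textnormal{Var}_n\tau_i$, $\textnormal{Var}_n\widetilde{\tau}_i$), one obtains
\[
 \mathbb{E}_0\widetilde{\tau}_i+\mathbb{E}_n\widetilde{\tau}_i
 =\mathbb{E}_0\tau_i+\mathbb{E}_n\tau_i=\sum_{j=1}^n\frac{1}{\lambda^{(i)}_j},\qquad
 \textnormal{Var}_0\widetilde{\tau}_i+\textnormal{Var}_n\widetilde{\tau}_i=\sum_{j=1}^n\frac{1}{(\lambda^{(i)}_j)^2},
\]
\[
 \textnormal{Var}_0\tau_i+\textnormal{Var}_n\tau_i=\sum_{j=1}^n\frac{1-\lambda^{(i)}_j}{(\lambda^{(i)}_j)^2}
 =\sum_{j=1}^n\frac{1}{(\lambda^{(i)}_j)^2}-\sum_{j=1}^n\frac{1}{\lambda^{(i)}_j}.
\]

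Finally I would feed in Lemma \ref{l-gap}. The interlacing $\lambda^{(i)}_j\le\lambda_j\le\lambda^{(i)}_{j+1}$ gives $\lambda^{(i)}_j\le\lambda_j$ for every $j$, which immediately yields the three lower bounds $t$, $\sigma^2$, $\rho^2$ (for the last one using that $u\mapsto(1-u)/u^2$ is decreasing on $(0,2]$); it also gives $\lambda^{(i)}_j\ge\lambda_{j-1}$ for $j\ge2$, so the three sums restricted to $j\ge2$ are bounded respectively by $\sum_{j=1}^{n-1}1/\lambda_j\le t$, by $\sum_{j=1}^{n-1}1/\lambda_j^2\le\sigma^2$, and by $\sum_{j=1}^{n-1}(1-\lambda_j)/\lambda_j^2=\rho^2-(1-\lambda_n)/\lambda_n^2\le\rho^2+\tfrac14$. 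The single leftover term at $j=1$ is handled by the quantitative estimate $\lambda^{(i)}_1\ge(a(i)/4)\lambda$ of Lemma \ref{l-gap}: it gives $1/\lambda^{(i)}_1\le4/(a(i)\lambda)$ and $1/(\lambda^{(i)}_1)^2\le16/(a(i)^2\lambda^2)$, whence the mean bound; then one checks $16/(a(i)^2\lambda^2)\le16\sigma^2/a(i)^2$ (using $\sigma^2\ge1/\lambda^2$ and $a(i)\le1$) for the $\widetilde{\tau}_i$ variance, and $(1-\lambda^{(i)}_1)/(\lambda^{(i)}_1)^2\le16/(a(i)^2\lambda^2)$ together with $\tfrac14\le1/(a(i)^2\lambda^2)$ (valid since $a(i)\lambda\le2$) for the $\tau_i$ variance, which assemble into the stated factors $17/a(i)^2$ and the term $17/(a(i)^2\lambda^2)$. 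I do not expect a substantive obstacle: the corollary merely repackages the estimates already obtained for Theorem \ref{t-sep2}, and the only points requiring care are the block-diagonal identification of $L_i$ with the correct use of Lemma \ref{l-meanvar} on the reversed block, and the index bookkeeping in the interlacing that isolates exactly one leftover term.
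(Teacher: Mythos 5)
Your proposal is correct and follows essentially the same route as the paper: the corollary is exactly the set of inequalities established inside the proof of Theorem \ref{t-sep2} by combining Lemma \ref{l-meanvar} (spectral formulas for the mean and variance of hitting times on each block) with Lemma \ref{l-gap} (interlacing plus the quantitative bound $\lambda^{(i)}_1\ge a(i)\lambda/4$). You merely make explicit some bookkeeping the paper leaves implicit — the block-diagonal identification of $L_i$, the reversal for the right block, and the isolation of the single leftover eigenvalue — and your constants check out.
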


To determine a cutoff time and a window using Theorem \ref{t-sep2}, one needs to compute the mean and variance of the hitting time to some state given that the chain starts at one boundary state. Explicit formulas on both terms are available using the Markov property and we summarize them in Lemma \ref{l-meanvar2}.

The next proposition discusses the cutoff times obtained in Theorem \ref{t-sep2} and provides a universal lower bound on the corresponding windows using the transition rates and the stationary distribution.

\begin{prop}\label{p-hittime}
Let $K$ be the transition matrix of a birth and death chain on $\{0,1,...,n\}$ with transition rates $p_i,q_i,r_i$. Let $\tau_i,\widetilde{\tau}_i$ be the hitting times in \textnormal{(\ref{eq-taui})} and set
\[
 s(i)=\mathbb{E}_0\widetilde{\tau}_i+\mathbb{E}_n\widetilde{\tau}_i,\quad b(i)^2=\textnormal{Var}_0(\widetilde{\tau}_i)
 +\textnormal{Var}_n(\widetilde{\tau}_i).
\]
Suppose $K$ is irreducible with stationary distribution $\pi$ and spectral gap $\lambda$. Let $M\in\{0,1,...,n\}$ be a state satisfying $\pi([0,M])\ge 1/2$ and $\pi([M,n])\ge 1/2$. Then, for $0\le i\le j\le M$,
\begin{equation}\label{eq-hittime1}
 s(i)-s(j)=\sum_{\ell=i}^{j-1}\frac{1-2\pi([0,\ell])}{p_\ell\pi(\ell)}\ge 0,
\end{equation}
and, for $0\le i\le n$,
\begin{equation}\label{eq-hittime2}
 b(i)\ge\frac{1}{\lambda}\ge\frac{1}{2}\max_{0\le j\le M\le k\le n}\max\left\{\sum_{\ell=j}^{M-1}\frac{\pi([0,j])}{p_\ell\pi(\ell)},
 \sum_{\ell=M+1}^k\frac{\pi([k,n])}{q_\ell\pi(\ell)}\right\}.
\end{equation}
\end{prop}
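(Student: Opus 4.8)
The plan is to compute $s(i)$ explicitly to get \eqref{eq-hittime1}, and to bound $b(i)$ and $1/\lambda$ separately for \eqref{eq-hittime2}.

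For \eqref{eq-hittime1} I would start from the classical one-step identities for birth and death chains. Writing $h_k=\mathbb{E}_k\tau_{k+1}$, first-step analysis gives $p_kh_k=1+q_kh_{k-1}$ with $h_0=1/p_0$, and detailed balance $p_{k-1}\pi(k-1)=q_k\pi(k)$ turns this, by induction on $k$, into $h_k=\pi([0,k])/(p_k\pi(k))$; symmetrically $\mathbb{E}_k\tau_{k-1}=\pi([k,n])/(q_k\pi(k))$. Summing along a path (and recalling $\mathbb{E}_0\widetilde\tau_i=\mathbb{E}_0\tau_i$ etc.), one obtains $\mathbb{E}_0\tau_i=\sum_{\ell=0}^{i-1}\pi([0,\ell])/(p_\ell\pi(\ell))$ and $\mathbb{E}_n\tau_i=\sum_{\ell=i+1}^{n}\pi([\ell,n])/(q_\ell\pi(\ell))$. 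Reindexing the second sum via detailed balance once more, $\pi([\ell,n])/(q_\ell\pi(\ell))=\pi([\ell,n])/(p_{\ell-1}\pi(\ell-1))$, it becomes $\sum_{\ell=i}^{n-1}(1-\pi([0,\ell]))/(p_\ell\pi(\ell))$, so that for every $0\le i\le n$,
\[
 s(i)=\sum_{\ell=0}^{i-1}\frac{\pi([0,\ell])}{p_\ell\pi(\ell)}+\sum_{\ell=i}^{n-1}\frac{1-\pi([0,\ell])}{p_\ell\pi(\ell)}.
\]
Subtracting the identity for $j$ from the one for $i$ telescopes to $s(i)-s(j)=\sum_{\ell=i}^{j-1}(1-2\pi([0,\ell]))/(p_\ell\pi(\ell))$, and nonnegativity is immediate since $\ell\le j-1\le M-1$ forces $\pi([0,\ell])\le\pi([0,M-1])=1-\pi([M,n])\le 1/2$.

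For the first inequality in \eqref{eq-hittime2}, $b(i)\ge 1/\lambda$, I would combine Lemmas \ref{l-meanvar} and \ref{l-gap}. By \eqref{eq-var}, $\textnormal{Var}_0(\widetilde\tau_i)=\sum_j 1/(\beta^{(i)}_j)^2$, where the $\beta^{(i)}_j$ are the eigenvalues of the principal submatrix of $I-K$ on $\{0,\dots,i-1\}$; applying the same formula to the reflected chain $x\mapsto n-x$ shows $\textnormal{Var}_n(\widetilde\tau_i)=\sum 1/\gamma^2$ over the eigenvalues $\gamma$ of the principal submatrix of $I-K$ on $\{i+1,\dots,n\}$. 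Since deleting state $i$ disconnects $\{0,\dots,i-1\}$ from $\{i+1,\dots,n\}$, the matrix $I-L_i$ of Lemma \ref{l-gap} is block diagonal with exactly these two blocks, so
\[
 b(i)^2=\sum_{\mu\in\textnormal{Spec}(I-L_i)}\frac{1}{\mu^2}\ge\frac{1}{(\lambda^{(i)}_1)^2},
\]
and Lemma \ref{l-gap} gives $\lambda^{(i)}_1\le\lambda_1=\lambda$, whence $b(i)\ge 1/\lambda$; the cases $i=0,n$ are the same, with one block empty.

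For the second inequality in \eqref{eq-hittime2}, I would use the variational identity $1/\lambda=\sup_f \textnormal{Var}_\pi(f)/\mathcal{E}(f,f)$ for the reversible chain, with $\mathcal{E}(f,f)=\langle(I-K)f,f\rangle_\pi=\sum_\ell\pi(\ell)p_\ell(f(\ell+1)-f(\ell))^2$. Given $j\le M$, take the ramp $g$ equal to $G:=\sum_{\ell=j}^{M-1}1/(p_\ell\pi(\ell))$ on $[0,j]$, with decrements $g(\ell)-g(\ell+1)=1/(p_\ell\pi(\ell))$ for $j\le\ell\le M-1$, and $g\equiv 0$ on $[M,n]$. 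Then $\mathcal{E}(g,g)=\sum_{\ell=j}^{M-1}1/(p_\ell\pi(\ell))=G$ exactly, while $\textnormal{Var}_\pi(g)=\tfrac12\sum_{x,y}\pi(x)\pi(y)(g(x)-g(y))^2\ge\pi([0,j])\pi([M,n])G^2\ge\tfrac12\pi([0,j])G^2$ (keeping only pairs with $x\in[0,j]$, $y\in[M,n]$ and vice versa, and using $\pi([M,n])\ge 1/2$); hence $1/\lambda\ge\tfrac12\pi([0,j])G=\tfrac12\sum_{\ell=j}^{M-1}\pi([0,j])/(p_\ell\pi(\ell))$. A mirror-image ramp ($0$ on $[0,M]$, increments $1/(p_\ell\pi(\ell))$ for $M\le\ell\le k-1$, constant on $[k,n]$), together with $p_\ell\pi(\ell)=q_{\ell+1}\pi(\ell+1)$, yields $1/\lambda\ge\tfrac12\sum_{\ell=M+1}^{k}\pi([k,n])/(q_\ell\pi(\ell))$ for each $k\ge M$; taking the maximum over $j$ and $k$ finishes the proof. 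The parts needing the most care are the two detailed-balance reindexings—they are what makes the telescoping in \eqref{eq-hittime1} clean and what converts $p$-weights into $q$-weights in \eqref{eq-hittime2}—and the calibration of the test ramps so that the Dirichlet form is computed exactly and the median condition delivers the constant $1/2$; everything else is routine bookkeeping.
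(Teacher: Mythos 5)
Your proof is correct. For \eqref{eq-hittime1} and the first inequality of \eqref{eq-hittime2} you follow essentially the same path as the paper: the formula $\mathbb{E}_\ell\tau_{\ell+1}=\pi([0,\ell])/(p_\ell\pi(\ell))$ is exactly Lemma \ref{l-meanvar2} (you rederive it by first-step analysis, which is fine), and your observation that $I-L_i$ splits into the two principal blocks on $\{0,\dots,i-1\}$ and $\{i+1,\dots,n\}$, so that $b(i)^2=\sum_{\mu\in\mathrm{Spec}(I-L_i)}\mu^{-2}\ge (\lambda_1^{(i)})^{-2}\ge\lambda^{-2}$ via the interlacing in Lemma \ref{l-gap}, is precisely what the paper means by ``obvious from Lemmas \ref{l-meanvar}--\ref{l-gap}.'' Where you genuinely diverge is the second inequality of \eqref{eq-hittime2}: the paper simply cites Theorem A.1 of \cite{CSal13-2}, a weighted Hardy inequality giving two-sided bounds on $1/\lambda$, whereas you prove the needed lower bound directly from the Rayleigh quotient $1/\lambda=\sup_f\mathrm{Var}_\pi(f)/\mathcal{E}(f,f)$ with explicit ramp test functions calibrated so that $\mathcal{E}(g,g)=G$ exactly and the median condition $\pi([M,n])\ge 1/2$ (resp. $\pi([0,M])\ge 1/2$) supplies the factor $1/2$. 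Your calculation checks out (the detailed-balance reindexings and the restriction of the variance sum to cross pairs are all correct), and it buys a self-contained proof of exactly the inequality used here; the cited Hardy-inequality result is stronger (it also gives the matching upper bound on $1/\lambda$, which the paper uses elsewhere, e.g.\ in \eqref{eq-gap}), but for this proposition your elementary test-function argument suffices.
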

\begin{proof}
(\ref{eq-hittime1}) is given by Lemma \ref{l-meanvar2} and the first inequality of (\ref{eq-hittime2}) is obvious from Lemmas \ref{l-meanvar}-\ref{l-gap}, while the second inequality of (\ref{eq-hittime2}) is cited from Theorem A.1 of \cite{CSal13-2}.
\end{proof}

\begin{rem}
Let $s_n,t_n$ be the constants in Theorems \ref{t-sep}-\ref{t-sep2}. By Corollary \ref{c-sep}, one has $s_n-t_n\ge 0$ and, by (\ref{eq-hittime1}), the difference $s_n-t_n$ is minimized when $M_n$ satisfies
\[
 \pi_n([0,M_n])\ge 1/2,\quad \pi_n([M_n,n])\ge 1/2.
\]
\end{rem}


\section{Cutoff in total variation}\label{s-tv}

This section is dedicated to the proof of Theorem \ref{t-tv2}. Throughout the rest of this article, we will write $\mathbb{P}_i$ to denote the probability given the initial state $i$. First, recall two useful bounds on the total variation.

\begin{lem}\cite[Proposition 3.8 and Equation (3.5)]{CSal13-2}\label{l-tv}
Consider a continuous time birth and death chain on $\{0,1,...,n\}$ with stationary distribution $\pi$. For $0\le i\le n$, let $\widetilde{\tau}_i$ be the first hitting time to state $i$ and $d^{(c)}_{\textnormal{\tiny TV}}(i,t)$ be the total variation distance at time $t$ with initial state $i$. Then, for $0\le i\le n$ and $0\le j\le k\le n$,
\[
 d^{(c)}_{\textnormal{\tiny TV}}(i,t)\le \mathbb{P}_i(\max\{\widetilde{\tau}_j,\widetilde{\tau}_k\}>t)+1-\pi([j,k])
\]
and
\[
 d^{(c)}_{\textnormal{\tiny TV}}(0,t)\ge\mathbb{P}_0(\widetilde{\tau}_i>t)-\pi([0,i-1]).
\]
\end{lem}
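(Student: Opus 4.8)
The plan is to prove the two inequalities separately. The lower bound is an immediate consequence of a good choice of test set together with the unit‑step nature of a birth‑and‑death path, while the upper bound will come from coupling the chain with a stationary copy, the coupling being analysed only on the event that the stationary copy starts inside $[j,k]$.

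First I would dispose of the lower bound. Take the test set $A=\{i,i+1,\dots,n\}$. Started from $0$, the chain $(X_{N_t})$ can be in $A$ at time $t$ only if it has already hit $i$, since it moves by unit steps and cannot jump over $i$; hence $\{X_{N_t}\in A\}\subseteq\{\widetilde\tau_i\le t\}$ and $H_t(0,A)=\mathbb{P}_0(X_{N_t}\in A)\le\mathbb{P}_0(\widetilde\tau_i\le t)$. Using $d^{(c)}_{\text{\tiny TV}}(0,t)\ge\pi(A)-H_t(0,A)$ this gives
\[
 d^{(c)}_{\text{\tiny TV}}(0,t)\ \ge\ \pi([i,n])-\mathbb{P}_0(\widetilde\tau_i\le t)\ =\ \mathbb{P}_0(\widetilde\tau_i>t)-\pi([0,i-1]),
\]
which is the asserted inequality.

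For the upper bound, let $(X_t)$ be the continuous‑time chain from $i$ and let $(Y_t)$ be a stationary continuous‑time chain with $Y_0\sim\pi$, realised jointly with $(X_t)$; any such coupling yields $d^{(c)}_{\text{\tiny TV}}(i,t)\le\mathbb{P}(X_t\ne Y_t)$. Conditioning on the value of $Y_0$ and bounding the contribution of $\{Y_0\notin[j,k]\}$ by $\pi([j,k]^{c})=1-\pi([j,k])$, it suffices to construct, for each fixed $z\in[j,k]$, a coupling of the chain from $i$ with the chain from $z$ whose coalescence time is at most $\sigma:=\max\{\widetilde\tau_j,\widetilde\tau_k\}$ (the functional of the $i$‑chain appearing in the statement), so that $\mathbb{P}(X_t\ne Y_t\mid Y_0=z)\le\mathbb{P}_i(\sigma>t)$. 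The idea is that a birth‑and‑death path started at $i$ which has visited both $j$ and $k$ has in fact visited every state of $[\,i\wedge j,\ i\vee k\,]\ni(i+z)/2$ by time $\sigma$; one then pairs the two chains by a reflection‑type rule in which they move toward one another until they collide, so that coalescence occurs no later than the first time the $i$‑chain reaches the mid‑point of $\{i,z\}$, which is $\le\sigma$. Integrating over $z\in[j,k]$ and adding the $1-\pi([j,k])$ term produces the claimed bound.

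The step I expect to be the main obstacle is making this reflection‑type coupling rigorous for a general, possibly non‑symmetric birth‑and‑death chain. The naive \emph{monotone} coupling does not suffice: two monotonically coupled paths started from $i$ and $z$ may both drift above $k$ and never meet, so coalescence is not controlled by any hitting time of the $i$‑chain. For a reflection coupling the ``mirror centre'' $X_t+Y_t$ is preserved only when the up‑ and down‑rates agree; in general one must either track the drift of $X_t+Y_t$ until coalescence and absorb the resulting slack, or replace the naive construction by a more robust one built from the intertwining/strong‑stationary structure of one‑dimensional chains, in the spirit of the duality underlying Theorem \ref{t-sep}. The remaining ingredients — the conditioning on $Y_0$, the parity nuisance when $i+z$ is odd, the bookkeeping of the $1-\pi([j,k])$ term, and passing between the discrete chain and its continuous‑time version — are routine.
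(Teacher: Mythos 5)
Your lower bound is correct and is the standard argument: taking $A=[i,n]$, the nearest-neighbour path from $0$ cannot enter $A$ without first hitting $i$, so $H_t(0,A)\le\mathbb{P}_0(\widetilde{\tau}_i\le t)$, and the claim follows from $d^{(c)}_{\text{\tiny TV}}(0,t)\ge\pi(A)-H_t(0,A)$.

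The upper bound, however, is not proved. Your reduction is sound: it would indeed suffice to couple, for each $z\in[j,k]$, the chain from $i$ with the chain from $z$ so that coalescence occurs by $\sigma=\max\{\widetilde{\tau}_j,\widetilde{\tau}_k\}$ (a functional of the $i$-chain alone). But the construction you offer is the crux, and it is missing. A reflection coupling that forces the two copies to meet at the midpoint of $\{i,z\}$ exists only when the up- and down-rates are site-independent and symmetric, so that $X_t+Y_t$ is conserved; for a general birth and death chain it is not a coupling of the correct marginals. The monotone coupling does not help either: when the lower chain first reaches $k$ the upper chain may already be above $k$, so coalescence is not forced by time $\sigma$. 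You acknowledge this obstacle but do not resolve it, so the key step of the upper bound is absent.

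The cited proof avoids couplings altogether and is worth knowing. Write $d^{(c)}_{\text{\tiny TV}}(i,t)=\sum_y\bigl[\pi(y)-H_t(i,y)\bigr]_+$. For every $y\in[j,k]$ the nearest-neighbour structure forces $\widetilde{\tau}_y\le\sigma$ $\mathbb{P}_i$-almost surely, since in reaching both $j$ and $k$ from $i$ the path sweeps out all of $[j,k]$. The strong Markov property at $\widetilde{\tau}_y$ combined with the spectral inequality $H_s(y,y)\ge\pi(y)$ (valid for reversible chains in continuous time) gives $H_t(i,y)\ge\mathbb{P}_i(\widetilde{\tau}_y\le t)\,\pi(y)$, hence $\bigl[\pi(y)-H_t(i,y)\bigr]_+\le\pi(y)\,\mathbb{P}_i(\widetilde{\tau}_y>t)\le\pi(y)\,\mathbb{P}_i(\sigma>t)$. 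Summing this over $y\in[j,k]$ and bounding the contribution of $y\notin[j,k]$ by $1-\pi([j,k])$ yields the stated inequality. If you insist on a coupling route, you would first need to establish $\|H_t(i,\cdot)-H_t(z,\cdot)\|_{\text{\tiny TV}}\le\mathbb{P}_i(\sigma>t)$ for all $z\in[j,k]$ by some other argument; your proposal as written does not supply one.
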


Based on the above lemma, we may bound the maximum total variation mixing time using the expected hitting times.

\begin{thm}\label{t-tvmix}
Let $\pi,\widetilde{\tau}_i$ be as in Lemma \ref{l-tv} and set
\[
 \theta(i)=\max\{\mathbb{E}_0\widetilde{\tau}_i,\mathbb{E}_n\widetilde{\tau}_i\},\quad \alpha(i)^2=\max\{\textnormal{Var}_0\widetilde{\tau}_i,\textnormal{Var}_n\widetilde{\tau}_i\}.
\]
The maximum total variation mixing time satisfies
\[
 T^{(c)}_{\textnormal{\tiny TV}}(\epsilon_1)\le \theta(j)+\mathbb{E}_j\widetilde{\tau}_k+\mathbb{E}_k\widetilde{\tau}_j
 +\sqrt{\left(\tfrac{2}{\delta}-1\right)}\max\{\alpha(j),\alpha(k)\}
\]
and
\[
 T^{(c)}_{\textnormal{\tiny TV}}(\epsilon_2)\ge\theta(j)-\mathbb{E}_k\widetilde{\tau}_j
 -\sqrt{\left(\tfrac{1}{\delta}-1\right)}\max\{\alpha(j),\alpha(k)\},
\]
for any $0\le j\le k\le n$ and $\delta\in(0,1)$, where $\epsilon_1=1-\pi([j,k])+\delta$ and $\epsilon_2=\min\{\pi([j,n]),\pi([0,k])\}-\delta$.
\end{thm}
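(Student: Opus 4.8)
The plan is to prove Theorem~\ref{t-tvmix} by combining the two inequalities in Lemma~\ref{l-tv} with a Chebyshev-type tail bound for the hitting times $\widetilde{\tau}_i$, and then optimizing the free parameter. The key structural fact I would exploit is that, for a birth and death chain started at a boundary state, the hitting time $\widetilde{\tau}_i$ (with $X_0=0$ and $i\ge 0$, or $X_0=n$ and $i\le n$) is a sum of independent exponential-type passage times through successive levels; more precisely, by Lemma~\ref{l-meanvar} applied to the appropriate sub-chain, $\widetilde{\tau}_i$ starting from $0$ is distributed as a sum of independent exponential random variables with rates $\beta^{(i)}_1,\dots,\beta^{(i)}_i$, so it is a genuine sum of i.i.d.-in-structure nonnegative independent pieces. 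Consequently one has the one-sided concentration: for such a sum $S$ with mean $\mathbb{E}S$ and variance $\mathrm{Var}(S)$, and any $u>0$,
\[
 \mathbb{P}(S>\mathbb{E}S+u\sqrt{\mathrm{Var}(S)})\le \frac{1}{1+u^2},\qquad
 \mathbb{P}(S<\mathbb{E}S-u\sqrt{\mathrm{Var}(S)})\le \frac{1}{1+u^2},
\]
which is exactly the Chebyshev-type bound for sums of independent nonnegative variables (the sharper version giving $1/(1+u^2)$ rather than $1/u^2$). Taking $u^2=\tfrac{2}{\delta}-1$ makes the first probability at most $\delta/2$, and $u^2=\tfrac{1}{\delta}-1$ makes the second at most $\delta$; these are the choices that produce the constants $\sqrt{2/\delta-1}$ and $\sqrt{1/\delta-1}$ in the statement.

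For the upper bound on $T^{(c)}_{\textnormal{\tiny TV}}(\epsilon_1)$: start from the first inequality of Lemma~\ref{l-tv}, namely $d^{(c)}_{\textnormal{\tiny TV}}(i,t)\le \mathbb{P}_i(\max\{\widetilde{\tau}_j,\widetilde{\tau}_k\}>t)+1-\pi([j,k])$ for any $0\le j\le k\le n$. Since the chain started from the left boundary passes through $j$ before $k$, and by strong Markov $\widetilde{\tau}_k = \widetilde{\tau}_j + (\text{passage }j\to k)$, one bounds $\mathbb{P}_0(\max\{\widetilde{\tau}_j,\widetilde{\tau}_k\}>t)=\mathbb{P}_0(\widetilde{\tau}_k>t)$; writing $\widetilde{\tau}_k$ under $\mathbb{P}_0$ as the independent sum of the $0\to j$ piece and the $j\to k$ piece, its mean is $\mathbb{E}_0\widetilde{\tau}_j+\mathbb{E}_j\widetilde{\tau}_k$ and its variance is $\mathrm{Var}_0\widetilde{\tau}_j+\mathrm{Var}_j\widetilde{\tau}_k$; one crudely bounds the variance of the $j\to k$ piece and the mean of the full thing so that at time $t=\theta(j)+\mathbb{E}_j\widetilde{\tau}_k+\mathbb{E}_k\widetilde{\tau}_j+\sqrt{2/\delta-1}\max\{\alpha(j),\alpha(k)\}$ the concentration bound forces $\mathbb{P}_0(\widetilde{\tau}_k>t)\le\delta$; the symmetric argument from state $n$ gives the same; for interior states $i$ one again uses $\mathbb{P}_i(\max\{\widetilde{\tau}_j,\widetilde{\tau}_k\}>t)\le \mathbb{P}_i(\widetilde{\tau}_j>t)+\mathbb{P}_i(\widetilde{\tau}_k>t)$ together with monotonicity of hitting-time means from a boundary (i.e. $\mathbb{E}_i\widetilde{\tau}_j\le\mathbb{E}_0\widetilde{\tau}_j$ when $i\le j$, and similarly on the right) and the same concentration. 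Taking the maximum over $i$ and using $d^{(c)}_{\textnormal{\tiny TV}}(i,t)\le\delta+1-\pi([j,k])=\epsilon_1$ yields $T^{(c)}_{\textnormal{\tiny TV}}(\epsilon_1)\le t$.

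For the lower bound on $T^{(c)}_{\textnormal{\tiny TV}}(\epsilon_2)$: apply the second inequality of Lemma~\ref{l-tv}, $d^{(c)}_{\textnormal{\tiny TV}}(0,t)\ge \mathbb{P}_0(\widetilde{\tau}_i>t)-\pi([0,i-1])$, with $i=j$ (and also the mirror-image bound from state $n$, using $d^{(c)}_{\textnormal{\tiny TV}}(n,t)\ge\mathbb{P}_n(\widetilde{\tau}_k>t)-\pi([k+1,n])$). Using $\mathbb{E}_0\widetilde{\tau}_j = \theta(j)$ or controlling it via $\theta(j)$ and the triangle-type estimate $\mathbb{E}_0\widetilde{\tau}_j\ge\theta(j)-(\text{something})$ — this is where the extra term $\mathbb{E}_k\widetilde{\tau}_j$ enters, handling the case where the maximizing boundary for $\theta(j)$ is $n$ rather than $0$, via $\mathbb{E}_n\widetilde{\tau}_j\le \mathbb{E}_k\widetilde{\tau}_j + \mathbb{E}_0\widetilde{\tau}_j$ or a similar decomposition — one shows that at time $t=\theta(j)-\mathbb{E}_k\widetilde{\tau}_j-\sqrt{1/\delta-1}\max\{\alpha(j),\alpha(k)\}$ the lower concentration bound gives $\mathbb{P}(\widetilde{\tau}_j>t)\ge 1-\delta$ from the relevant boundary, hence $d^{(c)}_{\textnormal{\tiny TV}}(t)\ge 1-\delta-\pi([0,j-1])=\pi([j,n])-\delta$, and symmetrically $\ge\pi([0,k])-\delta$; taking the better of the two and noting this exceeds $\epsilon_2$ forces $T^{(c)}_{\textnormal{\tiny TV}}(\epsilon_2)\ge t$.

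The main obstacle I anticipate is the bookkeeping in the lower bound: one must correctly track which boundary state achieves the maximum defining $\theta(j)$ and show that starting the chain there still gives an overwhelming lower tail for $\widetilde{\tau}_j$ at the claimed time, and in particular justify why the correction is exactly $\mathbb{E}_k\widetilde{\tau}_j$ (and why $\max\{\alpha(j),\alpha(k)\}$ rather than just $\alpha(j)$ suffices as the variance scale). The cleanest route is to observe that for a birth and death chain $\widetilde{\tau}_j$ from state $n$ equals $\widetilde{\tau}_k$ from $n$ plus $\widetilde{\tau}_j$ from $k$ (independent pieces), so $\mathrm{Var}_n\widetilde{\tau}_j=\mathrm{Var}_n\widetilde{\tau}_k+\mathrm{Var}_k\widetilde{\tau}_j\ge\mathrm{Var}_k\widetilde{\tau}_j$ and $\mathbb{E}_n\widetilde{\tau}_j=\mathbb{E}_n\widetilde{\tau}_k+\mathbb{E}_k\widetilde{\tau}_j$; and symmetrically on the left. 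Once these additive decompositions are in place, the required inequalities reduce to the one-sided Chebyshev bound plus elementary manipulation, and the constants fall out exactly as stated. The other, milder, technical point is confirming the $1/(1+u^2)$ form of the tail bound for sums of independent nonnegative random variables (a standard consequence of one-sided Chebyshev optimized over a shift), which I would either cite or prove in a line.
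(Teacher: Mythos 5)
Your proposal is correct and follows essentially the same route as the paper: both arguments combine the two inequalities of Lemma \ref{l-tv} with the one-sided Chebyshev bound and the additive decomposition of boundary hitting times ($\mathbb{E}_n\widetilde{\tau}_j=\mathbb{E}_n\widetilde{\tau}_k+\mathbb{E}_k\widetilde{\tau}_j$, etc.), splitting the starting state into the cases $i\le j$, $i\ge k$, and $j<i<k$ exactly as you describe. The only cosmetic difference is that you invoke the Brown--Shao sum-of-independent-exponentials structure to justify the $1/(1+u^2)$ tail bound, whereas Cantelli's one-sided Chebyshev inequality holds for any random variable with finite variance and is all the paper uses.
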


\begin{proof}
We first consider the upper bound. Set $\epsilon_1=1-\pi([j,k])+\delta$. By Lemma \ref{l-tv}, if $i\le j$, then
\[
 d^{(c)}_{\text{\tiny TV}}(i,t)\le\mathbb{P}_0(\widetilde{\tau}_k>t)+1-\pi([j,k]).
\]
As a result of the one-sided Chebyshev inequality, this implies
\[
 T^{(c)}_{\text{\tiny TV}}(i,\epsilon_1)\le \mathbb{E}_0\widetilde{\tau}_k+\sqrt{\left(\tfrac{1}{\delta}-1\right)}\alpha(k).
\]
Similarly, if $i\ge k$, then
\[
 T^{(c)}_{\text{\tiny TV}}(i,\epsilon_1)\le \mathbb{E}_n\widetilde{\tau}_j+\sqrt{\left(\tfrac{1}{\delta}-1\right)}\alpha(j).
\]
Note that, in the case $j<i<k$,
\[
 \mathbb{P}_i(\max\{\widetilde{\tau}_j,\widetilde{\tau}_k\}>t)
 \le\mathbb{P}_i(\widetilde{\tau}_k>t)+\mathbb{P}_i(\widetilde{\tau}_j>t)
 \le\mathbb{P}_j(\widetilde{\tau}_k>t)+\mathbb{P}_k(\widetilde{\tau}_j>t).
\]
This implies
\[
 T^{(c)}_{\text{\tiny TV}}(i,\epsilon_1)\le \mathbb{E}_j\widetilde{\tau}_k+\mathbb{E}_k\widetilde{\tau}_j
 +\sqrt{\left(\tfrac{2}{\delta}-1\right)}\max\{\alpha(j),\alpha(k)\}.
\]
Combining all above gives the desired upper bound.

For the lower bound, set $\epsilon_2=\min\{\pi([j,n]),\pi([0,k])\}-\delta$. By the second inequality of Lemma \ref{l-tv}, one has
\[
 d^{(c)}_{\text{\tiny TV}}(0,t)\ge \pi([j,n])-\mathbb{P}_0(\widetilde{\tau}_j\le t).
\]
Setting $t=\mathbb{E}_0\widetilde{\tau}_j-\sqrt{(1/\delta-1)}\alpha(j)$ in the above inequality derives
\[
 d^{(c)}_{\text{\tiny TV}}(0,t)\ge \pi([j,n])-\delta\ge\epsilon_2.
\]
This implies
\[
 T^{(c)}_{\text{\tiny TV}}(\epsilon_2)\ge T^{(c)}_{\text{\tiny TV}}(0,\epsilon_2)\ge \mathbb{E}_0\widetilde{\tau}_j-\sqrt{(\tfrac{1}{\delta}-1)}\alpha(j).
\]
Similarly, for $k\ge j$, we have
\[
 T^{(c)}_{\text{\tiny TV}}(\epsilon_2)\ge \mathbb{E}_n\widetilde{\tau}_k-\sqrt{(\tfrac{1}{\delta}-1)}\alpha(k)=
 \mathbb{E}_n\widetilde{\tau}_j-\mathbb{E}_k\widetilde{\tau}_j-\sqrt{(\tfrac{1}{\delta}-1)}\alpha(k).
\]
Both inequalities combine to the desired lower bound.
\end{proof}

\begin{proof}[Proof of Theorem \ref{t-tv2}(Continuous time case)] It has been shown in \cite{DLP10} that separation is maximized when the chain started at any of the boundary states and the maximum total variation cutoff is equivalent to the maximum separation cutoff. It is clear that the constants, $s_n$ and $b_n$, in Theorem \ref{t-sep2} are respectively of the same order as the constants, $\theta_n$ and $\alpha_n$, in Theorem \ref{t-tv2}. As a consequence of Theorem \ref{t-sep2}, $\mathcal{F}_c$ has a cutoff in the maximum total variation if and only if $\theta_n\lambda_n\ra\infty$ if and only if $\theta_n/\alpha_n\ra\infty$.

To see a cutoff time and a window, we assume in the following that $\theta_n/\alpha_n\ra\infty$. Set
\[
 \epsilon_0=\inf_n\min\{\pi_n([0,M_n]),\pi_n([M_n,n])\}.
\]
For $\epsilon\in(0,\epsilon_0)$, we may choose $x_n,y_n$ such that
\[
 \pi_n([0,x_n])\ge \frac{\epsilon}{3},\quad\pi_n([x_n,n])\ge 1-\frac{\epsilon}{3},\quad\pi_n([0,y_n])\ge 1-\frac{\epsilon}{3},\quad\pi_n([y_n,n])\ge\frac{\epsilon}{3}.
\]
Clearly, $x_n\le y_n$. Replacing $j,k,\delta$ with $x_n,y_n,\epsilon/3$ in Theorem \ref{t-tvmix} yields
\[
 T^{(c)}_{n,\textnormal{\tiny TV}}(\epsilon)\le\theta_n(x_n)+\mathbb{E}_{x_n}\widetilde{\tau}^{(n)}_{y_n}
 +\mathbb{E}_{y_n}\widetilde{\tau}^{(n)}_{x_n}+\sqrt{\frac{6}{\epsilon}}
 \max\{\alpha_n(x_n),\alpha_n(y_n)\},
\]
where
\[
 \theta_n(j):=\max\{\mathbb{E}_0\widetilde{\tau}^{(n)}_j,\mathbb{E}_n\widetilde{\tau}^{(n)}_j\},\quad \alpha_n^2(j)=\max\{\textnormal{Var}_0\widetilde{\tau}^{(n)}_j,\textnormal{Var}_n\widetilde{\tau}^{(n)}_j\}.
\]
In the above notations, $\theta_n=\theta_n(M_n)$ and $\alpha_n=\alpha_n(M_n)$. Since $x_n\le M_n\le y_n$, one has
\[
 \mathbb{E}_{n}\widetilde{\tau}^{(n)}_{x_n}=\mathbb{E}_{n}\widetilde{\tau}^{(n)}_{M_n}
 +\mathbb{E}_{M_n}\widetilde{\tau}^{(n)}_{x_n},\quad\mathbb{E}_{0}\widetilde{\tau}^{(n)}_{M_n}=
 \mathbb{E}_{0}\widetilde{\tau}^{(n)}_{x_n}+\mathbb{E}_{x_n}\widetilde{\tau}^{(n)}_{M_n}.
\]
Note that, for any positive reals $a,b,c,d$,
\[
 |\max\{a+b,c\}-\max\{a,c+d\}|\le\max\{b,d\}.
\]
This implies
\[
 |\theta_n(x_n)-\theta_n|\le\mathbb{E}\widetilde{\tau}^{(n)}_{M_n}
 +\mathbb{E}_{M_n}\widetilde{\tau}^{(n)}_{x_n}\le\mathbb{E}_{x_n}\widetilde{\tau}^{(n)}_{y_n}
 +\mathbb{E}_{y_n}\widetilde{\tau}^{(n)}_{x_n}.
\]
According to the definition of $x_n,y_n,M_n$, Corollary \ref{c-sep} implies
 \[
 \quad\alpha_n(x_n)\asymp \alpha_n\asymp \alpha_n(y_n).
\]
Let $p_{n,\ell},q_{n,\ell}$ be the birth and death rates of the $n$th chain. The replacement of $j,M,k$ with $x_n,M_n,y_n$ in (\ref{eq-hittime2}) yields that, for any $0\le i\le n$,
\begin{align}
 \alpha_n(i)&\ge\frac{1}{2\sqrt{2}}\max\left\{\sum_{\ell=x_n}^{M_n-1}\frac{\pi_n([0,x_n])}{p_{n,\ell}\pi_n(\ell)},
 \sum_{\ell=M_n+1}^{y_n}\frac{\pi_n([y_n,n])}{q_{n,\ell}\pi_n(\ell)}\right\}\notag\\
 &\ge\frac{\epsilon}{12\sqrt{2}}\sum_{\ell=x_n}^{y_n-1}\frac{1}{p_{n,\ell}\pi_n(\ell)}
 =\frac{\epsilon}{12\sqrt{2}}\sum_{\ell=x_n+1}^{y_n}\frac{1}{q_{n,\ell}\pi_n(\ell)}\notag\\
 &\ge\frac{\epsilon}{12\sqrt{2}}\max\{\mathbb{E}_{x_n}\widetilde{\tau}^{(n)}_{y_n},
 \mathbb{E}_{y_n}\widetilde{\tau}^{(n)}_{x_n}\},\notag
\end{align}
where the second inequality uses the fact $q_{n,\ell}\pi_n(\ell)=p_{n,\ell-1}\pi(\ell-1)$ and the last inequality applies the first identity in Lemma \ref{l-meanvar2}. As a consequence, we may conclude from the above discussions that
\[
 T^{(c)}_{n,\textnormal{\tiny TV}}(\epsilon)-\theta_n\le\left(\frac{48\sqrt{2}}{\epsilon}+\sqrt{\frac{6}{\epsilon}}\right)
 \max\{\alpha_n(x_n),\alpha_n(y_n)\}\asymp \alpha_n,
\]
for all $\epsilon\in(0,\epsilon_0)$. In a similar statement, one can show, by the second part of Theorem \ref{t-tvmix}, that
\[
 \theta_n-T^{(c)}_{n,\textnormal{\tiny TV}}(1-\epsilon)\le\left(\frac{36\sqrt{2}}{\epsilon}+\sqrt{\frac{3}{\epsilon}}\right)
 \max\{\alpha_n(x_n),\alpha_n(y_n)\}
 =O(\alpha_n),
\]
for all $\epsilon\in(0,\epsilon_0)$. This proves the $(\theta_n,\alpha_n)$ cutoff for $\mathcal{F}_c$.
\end{proof}

\begin{proof}[Proof of Theorem \ref{t-tv2}(Discrete time case)]
We will use the result in the continuous time case and \cite{CSal13-1} to deal with the discrete time case. Set
\[
 \delta=\inf_{n,i}K_n(i,i),\quad K_n^{(\delta)}=(K_n-\delta I)/(1-\delta).
\]
In the assumption for discrete time case, we have $\delta\in(0,1)$. Let $\mathcal{X}_n=\{0,1,...,n\}$, $\mathcal{F}^{(\delta)}=(\mathcal{X}_n,K_n^{(\delta)},\pi_n)_{n=1}^\infty$ and $\mathcal{F}^{(\delta)}_c$ be the family of continuous time chains associated with $\mathcal{F}^{(\delta)}$. It was proved in \cite{CSal13-1} (See Theorems 3.1 and 3.3) that, in the maximum total variation,
\begin{equation}\label{eq-cut1}
 \mathcal{F}\text{ has a cutoff}\quad\Lra\quad \mathcal{F}^{(\delta)}_c\text{ has a cutoff}
\end{equation}
and
\begin{equation}\label{eq-cut2}
 \mathcal{F}\text{ has a $(t_n,b_n)$ cutoff}\quad\Lra\quad \mathcal{F}^{(\delta)}_c\text{ has a $((1-\delta)t_n,b_n)$ cutoff}.
\end{equation}
Let $\widetilde{\tau}_i^{(n,\delta)}$ be the hitting time to state $i$ of the continuous time chain associated with $K_n^{(\delta)}$ and $\mathbb{E}_i,\textnormal{Var}_i$ be the conditional expectation and variance given the initial state $i$. Set
\[
 \theta_n^{(\delta)}=\max\left\{\mathbb{E}_0\widetilde{\tau}_{M_n}^{(n,\delta)},
 \mathbb{E}_n\widetilde{\tau}_{M_n}^{(n,\delta)}\right\},\quad
 \beta_n^{(\delta)}=\max\left\{\textnormal{Var}_0\widetilde{\tau}_{M_n}^{(n,\delta)},
 \textnormal{Var}_n\widetilde{\tau}_{M_n}^{(n,\delta)}\right\}.
\]
For $\mathcal{F}^{(\delta)}_c$, it has been proved in the continuous time case that
\[
 \mathcal{F}^{(\delta)}_c\text{ has a cutoff}\quad\Lra\quad \theta_n^{(\delta)}\lambda_n^{(\delta)}\ra\infty \quad\Lra\quad\theta_n^{(\delta)}/\beta_n^{(\delta)}\ra\infty,
\]
where $\lambda_n^{(\delta)}$ is the smallest non-zero eigenvalue of $I-K_n^{(\delta)}$. Furthermore, if it holds true that $\theta_n^{(\delta)}/\beta_n^{(\delta)}\ra\infty$, then $\mathcal{F}_c^{(\delta)}$ has a $(\theta_n^{(\delta)},\beta_n^{(\delta)})$ cutoff. As a result of (\ref{eq-cut1}) and (\ref{eq-cut2}), we have
\[
 \mathcal{F}\text{ has a cutoff}\quad\Lra\quad \theta_n^{(\delta)}/\beta_n^{(\delta)}\ra\infty,
\]
and, further, if the right side holds, then $\mathcal{F}$ has a $(\theta_n^{(\delta)}/(1-\delta),\beta_n^{(\delta)})$ cutoff.

Let $\lambda_n,\theta_n,\beta_n$ be the constants in Theorem \ref{t-tv2}. Clearly, $\lambda_n=(1-\delta)\lambda_n^{(\delta)}$. To finish the proof, it suffices to show that
\begin{equation}\label{eq-cut3}
 \theta_n^{(\delta)}=(1-\delta)\theta_n,\quad \beta_n^{(\delta)}\asymp\beta_n.
\end{equation}
Let $p_{n,i},q_{n,i},r_{n,i}$ be the transition rates of $K_n$ and $p_{n,i}^{(\delta)},q_{n,i}^{(\delta)},r_{n,i}^{(\delta)}$ be the transition rates of $K_n^{(\delta)}$. It is clear that
\[
 p_{n,i}^{(\delta)}=p_{n,i}/(1-\delta),\quad q_{n,i}^{(\delta)}=q_{n,i}/(1-\delta),\quad r_{n,i}^{(\delta)}=(r_{n,i}-\delta)/(1-\delta).
\]
The first equality of (\ref{eq-cut3}) is an immediate result of the first identity of Lemma \ref{l-meanvar2}. To see the second part of (\ref{eq-cut3}), let $\lambda_{n,1},...,\lambda_{n,n}$ be eigenvalues of the submatrix of $I-K_n$ obtained by removing the $M_n$-th row and column. Clearly, $\lambda_{n,1}/(1-\delta),...,\lambda_{n,n}/(1-\delta)$ are eigenvalues of the submatrix of $I-K_n^{(\delta)}$ obtained by removing the $M_n$-th row and column. As a consequence of Lemma \ref{l-meanvar}, we have
\[
 \beta_n^2\asymp\sum_{i=1}^n\frac{1-\lambda_{n,i}}{\lambda_{n,i}^2},\quad \left(\beta_n^{(\delta)}\right)^2\asymp\sum_{i=1}^n\frac{1}{\lambda_{n,i}^2}.
\]
Note that the application of Remark \ref{r-var} on the chain $(\mathcal{X}_n,K_n^{(\delta)},\pi_n)$ says
\[
 (1-\delta)\sum_{i=1}^n\frac{1}{\lambda_{n,i}^2}\ge\sum_{i=1}^n\frac{1}{\lambda_{n,i}}.
\]
This implies $\beta_n\asymp\beta_n^{(\delta)}$.
\end{proof}


\section{A randomization of birth and death chains}\label{s-rbdc}

This section gives two nontrivial examples as applications of theorems in the introduction. The first example is stated in Theorem \ref{t-randomm} and we discuss its proof in the following.

\begin{proof}[Proof of Theorem \ref{t-randomm}]
The proofs for $\mathcal{F}_c$ and $\mathcal{F}$ are similar and we consider only the continuous time case. Let $M_n,\theta_n,\alpha_n$ be as in Theorem \ref{t-tv2}. For convenience, we let $(p_{n,i},q_{n,i},r_{n,i})$ be the transition rates of $K_n$. For $n\ge 1$, set
\[
 \theta_{n,1}=\sum_{i=0}^{M_n-1}\frac{\pi_n([0,i])}{\pi_n(i)p_{n,i}},\quad \theta_{n,2}=\sum_{i=M_n+1}^{n}\frac{\pi_n([i,n])}{\pi_n(i)q_{n,i}}
\]
and
\[
 \alpha_{n,1}^2=\sum_{i=0}^{M_n-1}\sum_{j=i}^{M_n-1}\frac{\pi_n([0,i])^2}{\pi_n(i)p_{n,i}\pi_n(j)p_{n,j}},\,
 \alpha_{n,1}^2=\sum_{i=M_n+1}^{n}\sum_{j=M_n+1}^i\frac{\pi_n([i,n])^2}{\pi_n(i)q_{n,i}\pi_n(j)q_{n,j}}.
\]
It is clear from Lemma \ref{l-meanvar2} that
\[
 \theta_n=\max\{\theta_{n,1},\theta_{n,2}\},\quad \alpha_n=\max\{\alpha_{n,1},\alpha_{n,2}\}.
\]
Without loss of generality, we may assume that $\theta_n=\theta_{n,1}$. For $n\ge 1$, let $U_{n,1},V_{n,1}$ be positive random variables defined by
\[
 U_{n,1}=\sum_{i=0}^{M_n-1}\frac{\pi_n([0,i])}{\pi_n(i)p_{n,i}C_{n,i+1}},\,
 V_{n,1}^2=\sum_{i=0}^{M_n-1}\sum_{j=i}^{M_n-1}\frac{\pi_n([0,i])^2}{\pi_n(i)p_{n,i}C_{n,i+1}\pi_n(j)p_{n,j}C_{n,j+1}}.
\]
By the independency of $C_{n,i}$, one may compute
\[
 \mathbb{E}U_{n,1}=\mu_n\theta_{n,1}=\mu_n\theta_n,\quad \text{Var}(U_{n,1})=\nu_n^2\alpha_{n,1}^2\le\nu_n^2\alpha_n^2
\]
and
\begin{align}
 \mathbb{E}V_{n,1}^2&=\sum_{0\le i<j\le M_n-1}\frac{\pi_n([0,i])^2}{\pi_n(i)p_{n,i}\pi_n(j)p_{n,j}}\mu_n^2
 +\sum_{i=0}^{M_n-1}\frac{\pi_n([0,i])^2}{\pi_n(i)^2p_{n,i}^2}(\mu_n^2+\nu_n^2)\notag\\
 &\le (\mu_n^2+\nu_n^2)\alpha_{n,1}^2\le[(\mu_n+\nu_n)\alpha_{n,1}]^2.\notag
\end{align}
The estimation for $\mathbb{E}V_n^2$ implies
\[
 \mathbb{E}V_{n,1}\le\sqrt{\mathbb{E}V_{n,1}^2}\le(\mu_n+\nu_n)\alpha_{n,1}\le(\mu_n+\nu_n)\alpha_n.
\]
Set $a_n=\sqrt{(\mu_n\theta_n)/(\nu_n\alpha_n)}$, $b_n=\sqrt{(\mu_n\theta_n)/[(\mu_n+\nu_n)\alpha_n]}$ and
\[
 E_{n,1}=\{\omega_n\in\Omega_n:|U_{n,1}(\omega_n)-\mu_n\theta_n|<a_n\nu_n\alpha_n,\,
 V_{n,1}(\omega_n)<b_n(\mu_n+\nu_n)\alpha_n\}.
\]
Since $\mathcal{F}_c$ has a maximum total variation cutoff, Theorem \ref{t-tv2} implies $\alpha_n=o(\theta_n)$. In the assumption of $(\nu_n\alpha_n)=o(\mu_n\theta_n)$, it is easy to see that, for $\omega_n\in E_{n,1}$,
\[
 U_{n,1}(\omega_n)\sim\mu_n\theta_n,\quad V_{n,1}(\omega_n)=o(\mu_n\theta_n).
\]
By the Chebyshev and Markov inequalities, the fact that $a_n,b_n\ra\infty$ yields $\mathbb{P}^{(n)}(E_{n,1})\ra 1$.

In the same way, we set
\[
 U_{n,2}=\sum_{i=M_n+1}^{n}\frac{\pi_n([i,n])}{\pi_n(i)q_{n,i}C_{n,i}},\,
 V_{n,2}^2=\sum_{i=M_n+1}^{n}\sum_{j=M_n+1}^{i}\frac{\pi_n([i,n])^2}{\pi_n(i)q_{n,i}C_{n,i}\pi_n(j)q_{n,j}C_{n,j}},
\]
and
\[
 E_{n,2}=\{\omega_n\in\Omega_n:U_{n,2}(\omega_n)<\mu_n\theta_n+a_n\nu_n\alpha_n,\,
 V_{n,2}(\omega_n)<b_n(\mu_n+\nu_n)\alpha_n\}.
\]
A similar reasoning as before yields that $\mathbb{P}^{(n)}(E_{n,2})\ra 1$ and, for $\omega_n\in E_{n,2}$,
\[
 U_{n,2}(\omega_n)\le \mu_n\theta_n(1+o(1)),\quad V_{n,2}(\omega_n)=o(\mu_n\theta_n).
\]
As consequence, if we set $E_n=E_{n,1}\cap E_{n,2}$, then $\mathbb{P}^{(n)}(E_n)\ra\infty$ and, for $\omega_n\in E_n$,
\[
 \max\{U_{n,1},U_{n,2}\}\sim \mu_n\theta_n,\quad \max\{V_{n,1},V_{n,2}\}=o(\mu_n\theta_n).
\]
The maximum total variation cutoff for $\mathcal{F}_c^{(\omega)}$ and the cutoff time $\mu_n\theta_n$ are immediate from Theorem \ref{t-tv2}.
\end{proof}

\begin{rem}
From the proof given above, one can derive a variation of Theorem \ref{t-randomm}. Namely, under the assumption of $\nu_n\alpha_n=o(\mu_n\theta_n)$, if $\mathcal{F}_c$ has no maximum total variation cutoff (resp. maximum separation cutoff), then there is a sequence $E_n\subset\Omega_n$ satisfying $\mathbb{P}^{(n)}(E_n)\ra 1$ such that $\mathcal{F}_c^{(\omega)}$ has no maximum total variation cutoff (resp. maximum separation cutoff) for $\omega\in\prod_{n=1}^\infty E_n$. Note that, the requirement $\nu_n\alpha_n=o(\mu_n\theta_n)$ and the assumption of no cutoff will imply the existence of a subsequence, say $i_n$, such that $\nu_{i_n}=o(\mu_{i_n})$. As a result of the Chebyshev inequality, $1/C_{i_n,1}-\mathbb{E}(1/C_{i_n,1})$ converges in probability to $0$. This turns $\mathcal{F}_c^{(\omega)}$ into a lazy version of $\mathcal{F}_c$ with high probability.
\end{rem}

Note that the hypothesis of $\nu_n\alpha_n=o(\mu_n\theta_n)$ requires the existence of a second moment of $1/C_{n,1}$. Next, we give an example where $1/C_{n,1}$ does not have a finite first moment.

\begin{thm}\label{t-randomsrw}
For $n\ge 1$, let $C_{n,1},...,C_{n,n}$ be i.i.d. uniform random variables over $(0,1)$ defined on $(\Omega_n,\mathbb{P}^{(n)})$. For $\omega=(\omega_1,\omega_2,...)\in\prod_n\Omega_n$, let $\mathcal{F}^{(\omega)}=(\mathcal{X}_n,K_n^{(\omega_n)},\pi_n)_{n=1}^\infty$ be a family of birth and death chains with $\mathcal{X}_n=\{0,1,...,n\}$ and
\[
 \begin{cases}K_n^{(\omega_n)}(i,i+1)=K(i+1,i)=C_{n,i+1}/2,\quad\forall 0\le i<n,\\ K_n^{(\omega_n)}(i,i)=1-K_n^{(\omega_n)}(i,i+1)-K_n^{(\omega_n)}(i,i-1),\quad\forall i.\end{cases}
\]
Let $\mathcal{F}_c^{(\omega)}$ be the family of continuous time chains associated with $\mathcal{F}^{(\omega)}$ and, for $\omega_n\in\Omega_n$, let $T_{n,\textnormal{\tiny TV}}^{c}(\omega_n,\cdot)$ be the maximum total variation mixing time for $(\mathcal{X}_n,K_n^{(\omega_n)},\pi_n)$. Then, there is a sequence $E_n\subset\Omega_n$ satisfying $\mathbb{P}^{(n)}(E_n)\ra 1$ such that, for any $\omega=(\omega_1,\omega_2,...)\in\prod_{n=1}^\infty E_n$, the family $\mathcal{F}_c^{(\omega)}$ has no maximum total variation cutoff and $T_{n,\textnormal{\tiny TV}}^c(\omega_n,\epsilon)\asymp n^2\log n$ for $\epsilon\in(0,1/10)$.
\end{thm}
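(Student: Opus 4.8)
The plan is to take $M_n=\lfloor n/2\rfloor$ — a median of the stationary law, which detailed balance identifies as the uniform measure $\pi_n(i)=1/(n+1)$ — and to feed the correct orders of magnitude of $s_n$ and $\lambda_n$ into Theorem \ref{t-tv2} and the Diaconis--Saloff-Coste separation formula, after establishing those orders with $\mathbb{P}^{(n)}$-probability tending to $1$. First I would record, using Lemma \ref{l-meanvar2} and the rates $p_{n,i}=q_{n,i}=C_{n,i+1}/2$, that
\[
 s_n=\mathbb{E}_0\widetilde{\tau}^{(n)}_{M_n}+\mathbb{E}_n\widetilde{\tau}^{(n)}_{M_n}
 =2\sum_{k=1}^{M_n}\frac{k}{C_{n,k}}+2\sum_{k=M_n+1}^{n}\frac{n+1-k}{C_{n,k}},
\]
and, via (\ref{eq-gap}), that $1/\lambda_n$ is comparable, with absolute constants, to the maximum over $0\le j<M_n$ of $2(j+1)\sum_{k=j+1}^{M_n}1/C_{n,k}$ and its mirror image on the right half. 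Since $\inf_n\pi_n([0,M_n])\ge 1/2$ and $\inf_n\pi_n([M_n,n])\ge 1/2$, the hypotheses of Theorems \ref{t-sep}--\ref{t-tv2} hold. The probabilistic target is: with probability $\to 1$, $s_n\asymp n^2\log n$ and $1/\lambda_n\asymp n^2\log n$, the implied constants being absolute.

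For this, set $W_{n,k}=1/C_{n,k}$; these are i.i.d.\ with $\mathbb{P}(W_{n,k}>t)=1/t$ for $t\ge 1$, so $\mathbb{E}W_{n,k}=\infty$ and the sums above are not concentrated near their (infinite) means. The elementary input is the law of large numbers for such heavy-tailed sums, $\sum_{i=1}^N W_i/(N\log N)\to 1$ in probability: truncating each $W_i$ at level $N\log N$ makes the truncated variance $O(N^2\log N)=o\big((N\log N)^2\big)$, so one-sided Chebyshev concentrates the truncated part, while $\mathbb{P}(\exists\,i\le N:W_i>N\log N)=O(1/\log N)\to 0$ discards the rest; hence $\mathbb{P}\big(\tfrac12 N\log N\le\sum_{i=1}^N W_i\le 2N\log N\big)\to 1$. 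A dyadic decomposition of $\{1,\dots,m\}$ into blocks $I_\ell=(m2^{-\ell},m2^{-(\ell-1)}]$ upgrades this to the weighted bound $\sum_{k=1}^m kW_k\asymp m^2\log m$ with probability $\to 1$: for the upper bound $\sum_{k\in I_\ell}kW_k\le m2^{-(\ell-1)}\sum_{k\in I_\ell}W_k\lesssim m^2 2^{-2\ell}\log m$, summed over the $O(\log m)$ blocks; for the lower bound keep only $k\in[m/2,m]$. Applying this with $m=M_n\asymp n$, and independently with the block of $C$'s on the right half, gives $s_n\asymp n^2\log n$ with probability $\to 1$; applying the unweighted lower bound to the term $j=\lfloor M_n/2\rfloor$ in (\ref{eq-gap}) gives $1/\lambda_n\gtrsim n^2\log n$, while $1/\lambda_n\le t_n\le s_n$ supplies the matching upper bound.

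Now fix absolute constants $c,C>0$ and let $E_n=\{\,c\,n^2\log n\le 1/\lambda_n,\ s_n\le C\,n^2\log n\,\}$, so that $\mathbb{P}^{(n)}(E_n)\to 1$ (set $E_n=\Omega_n$ for the finitely many small $n$ where this fails), and fix $\omega\in\prod_n E_n$. On $E_n$, $\theta_n\le s_n$ so $\theta_n\lambda_n\le s_n\lambda_n\le C/c$ is bounded; hence $\theta_n\lambda_n\not\to\infty$ and Theorem \ref{t-tv2}(1) shows $\mathcal{F}_c^{(\omega)}$ has no maximum total variation cutoff. For the mixing time, recall from \cite{DS06} that the maximum separation of the continuous-time chain satisfies $d^{(c)}_{\text{\tiny sep}}(t)=\mathbb{P}(S_n>t)$, where $S_n=\sum_{i=1}^n\mathcal{E}_i$ with $\mathcal{E}_i$ independent exponential of rate $\lambda_{n,i}$, so $\mathbb{E}S_n=t_n$. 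Markov gives $T^{(c)}_{n,\text{\tiny sep}}(\epsilon)\le t_n/\epsilon$, whence on $E_n$, $T_{n,\textnormal{\tiny TV}}^{c}(\omega_n,\epsilon)\le T^{(c)}_{n,\text{\tiny sep}}(\epsilon)\le t_n/\epsilon\le (C/\epsilon)\,n^2\log n$. For the lower bound keep the slowest coordinate: $d^{(c)}_{\text{\tiny sep}}(t)\ge\mathbb{P}(\mathcal{E}_{i^\ast}>t)=e^{-\lambda_n t}$ with $\lambda_n=\min_i\lambda_{n,i}$. Fix $\epsilon\in(0,1/10)$ and put $t=\tfrac12\,c\,\log\tfrac1{4\epsilon}\cdot n^2\log n>0$; on $E_n$, $\lambda_n t\le\tfrac12\log\tfrac1{4\epsilon}$, so $d^{(c)}_{\text{\tiny sep}}(t)\ge(4\epsilon)^{1/2}>4\epsilon$, hence $T^{(c)}_{n,\text{\tiny sep}}(4\epsilon)>t$. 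Since $T_{n,\textnormal{\tiny TV}}^{c}(\omega_n,\epsilon)\ge\tfrac12 T^{(c)}_{n,\text{\tiny sep}}(4\epsilon)$ (the total variation--separation comparison, valid as $4\epsilon<1$), we obtain $T_{n,\textnormal{\tiny TV}}^{c}(\omega_n,\epsilon)>\tfrac14\,c\,\log\tfrac1{4\epsilon}\cdot n^2\log n$. Combining the two bounds, $T_{n,\textnormal{\tiny TV}}^{c}(\omega_n,\epsilon)\asymp n^2\log n$ for every $\epsilon\in(0,1/10)$.

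I expect the main obstacle to be the heavy-tail estimates of the second paragraph: because $1/C_{n,1}$ has infinite mean, the sums controlling $s_n$ and $\lambda_n$ are genuinely not concentrated, so one must truncate at the precisely correct scale — $\asymp m^2\log m$ for the weighted sum, $\asymp N\log N$ for the unweighted one, small enough that Chebyshev still concentrates the truncated part and large enough that the deleted tail contributes nothing with probability $1-o(1)$ — and combine this with the dyadic block decomposition to absorb the weights, while keeping every constant absolute so that "$\asymp n^2\log n$" is meaningful uniformly in $n$ along the chosen $\omega$. A secondary point worth flagging is that, the cutoff being absent, the generic bound $T^{(c)}_{n,\textnormal{\tiny TV}}(\epsilon)=\theta_n+O(\alpha_n)$ underlying Theorem \ref{t-tv2} is useless here for the lower bound (one has $\alpha_n\asymp\theta_n$); it is the slowest-mode estimate $d^{(c)}_{\text{\tiny sep}}(t)\ge e^{-\lambda_n t}$ together with the boundedness of $t_n\lambda_n$ that actually pins the mixing time down from below.
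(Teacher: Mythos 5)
Your argument is correct, but it reaches both conclusions by a genuinely different route than the paper. For the absence of cutoff, the paper conditions on $\overline{\Omega}_n=\{C_{n,i}>1/(n\log n)\ \forall i\}$ — which is exactly your truncation of the Pareto variables $1/C_{n,k}$ at level $n\log n$, repackaged as a change of measure — and then extracts the \emph{sharp} asymptotics $\mathbb{E}_0\widetilde{\tau}^{(n)}_{M_n(a)}\sim a^2n^2\log n$ for $a=1/4$ and $a=1/2$; the contradiction with cutoff comes from the necessary condition (\ref{eq-limhit2}) of Remark \ref{r-exp1}, since the ratio of the two hitting-time maxima tends to $9/4\ne 1$. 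You instead work at the single median $M_n=\lfloor n/2\rfloor$, need only the order of magnitude of $s_n$ and of $1/\lambda_n$ via (\ref{eq-gap}), and kill the cutoff by bounding $\theta_n\lambda_n\le s_n\lambda_n\le C/c$ and quoting the spectral criterion of Theorem \ref{t-tv2}(1); this is more robust (no constants needed) at the price of having to control the spectral gap from below, which the paper avoids entirely. For the mixing-time order the paper simply cites Theorems 3.1 and 3.9 of \cite{CSal13-2}, whereas your derivation from the eigenvalue representation of separation (\cite{DS06}: Markov's inequality on $S_n$ for the upper bound, the slowest exponential mode $e^{-\lambda_n t}$ for the lower bound, glued to total variation by the two comparison inequalities from the introduction) is self-contained and, incidentally, makes transparent why the absence of cutoff and the boundedness of $t_n\lambda_n$ are two faces of the same estimate. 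Your closing remark that the window bound $T^{(c)}_{n,\textnormal{\tiny TV}}(\epsilon)=\theta_n+O(\alpha_n)$ is useless here is exactly right.

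One small repair is needed in your dyadic upper bound for $\sum_{k\le m}kW_k$: the event $\sum_{k\in I_\ell}W_k\lesssim |I_\ell|\log|I_\ell|$ fails with probability of order $1/\log|I_\ell|$, which is not small for the short blocks, so the union bound over all $O(\log m)$ blocks does not give probability $1-o(1)$. The decomposition is in fact unnecessary: the crude bound $\sum_{k\le m}kW_k\le m\sum_{k\le m}W_k$ together with your unweighted law of large numbers already yields the upper bound $O(m^2\log m)$, and a single global truncation at $m\log m$ followed by Chebyshev applied to the weighted truncated sum (truncated variance $\lesssim m^4\log m=o((m^2\log m)^2)$) gives both bounds in one stroke. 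The lower bounds, restricted to $k\in[m/2,m]$, are fine as written.
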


\begin{proof}
Let $M_n\in\mathcal{X}_n$ and $U_{n,1},U_{n,2}$ be as in the proof of Theorem \ref{t-randomm}. For $n\ge 1$, set
\[
 \overline{\Omega}_n=\left\{C_{n,i}>\frac{1}{n\log n},\forall 1\le i\le n\right\},\quad \overline{\mathbb{P}}^{(n)}(\cdot)=\mathbb{P}^{(n)}(\cdot|\overline{\Omega}_n),
\]
where $\overline{\mathbb{P}}^{(n)}$ is the conditional probability of $\mathbb{P}^{(n)}$ given $\overline{\Omega}_n$. Clearly, $\mathbb{P}^{(n)}(\overline{\Omega}_n)=(1-1/n\log n)^n\ra 1$ and, in $\overline{\mathbb{P}}^{(n)}$, $C_{n,1},...,C_{n,n}$ are i.i.d. uniformly distributed over $(1/n\log n,1)$. Let $\overline{\mathbb{E}}$ and $\overline{\text{Var}}$ be the expectation and variance taken in $\overline{\mathbb{P}}^{(n)}$. It is an easy exercise to compute
\[
 \overline{\mathbb{E}}(1/C_{n,1})=\frac{\log n+\log\log n}{1-1/n\log n}\sim \log n
\]
and
\[
 \overline{\text{Var}}(1/C_{n,1})=n\log n-(\overline{\mathbb{E}}(1/C_{n,1}))^2\sim n\log n,
\]
This implies that, if $M_n\ra\infty$ and $n-M_n\ra\infty$, then
\[
 \overline{\mathbb{E}}U_{n,1}\sim M_n^2\log n,\quad\overline{\mathbb{E}}U_{n,2}\sim(n-M_n)^2\log n,
\]
and
\[
 \overline{\text{Var}}(U_{n,1})\sim M_n^2n\log n,\quad \overline{\text{Var}}(U_{n,2})\sim(n-M_n)^2n\log n.
\]
For $a\in (0,1)$, if $M_n=\lfloor an\rfloor$, we write $U_{n,i}^{(a)}$ for $U_{n,i}$. As a result of the above computation, we obtain
\[
 \overline{\mathbb{E}}U_{n,1}^{(a)}\sim a^2n^2\log n,\quad \overline{\mathbb{E}}U_{n,2}^{(a)}\sim(1-a)^2n^2\log n,
\]
and
\[
 \overline{\text{Var}}(U_{n,1}^{(a)})\sim a^2n^3\log n,\quad \overline{\text{Var}}(U_{n,2}^{(a)})\sim(1-a)^2n^3\log n.
\]
For $n\ge 1$, let
\[
 E_n=\left\{\omega_n\in A_n:|U_{n,1}^{(a)}-a^2n^2\log n|<n^{3/2}\log n,\text{ for }a=1/4,1/2\right\}.
\]
It is easy to show that $\overline{\mathbb{P}}^{(n)}(E_n)\ra 1$ and, hence, $\mathbb{P}^{(n)}(E_n)\ge \mathbb{P}^{(n)}(A_n)\overline{\mathbb{P}}^{(n)}(E_n)\ra 1$. Furthermore, for $\omega_n\in E_n$,
\[
 \max\{U_{n,1}^{(1/2)}(\omega_n),U_{n,2}^{(1/2)}(\omega_n)\}\sim\frac{n^2\log n}{4}
\]
and
\[
 \max\{U_{n,1}^{(1/4)}(\omega_n),U_{n,2}^{(1/4)}(\omega_n)\}\sim\frac{9 n^2\log n}{16}.
\]
By Remark \ref{r-exp1}, $\mathcal{F}_c^{(\omega)}$ has no maximum total variation cutoff for $\omega\in\prod_nE_n$. The order of the mixing time is given by Theorems 3.1 and 3.9 of \cite{CSal13-2}.
\end{proof}

\begin{rem}
We refer the reader to \cite{DW13} for another randomization of birth and death chains, which is different from the one considered in Theorem \ref{t-randomsrw}.
\end{rem}


\section{Chains started at boundary states}\label{s-bs}

For continuous time birth and death chains, \cite{DLP10} shows that separation reaches its maximum when the initial state is any of the boundary states. This is not true in the case of total variation and it is easy to construct counterexamples. In this section, we discuss the total variation cutoff for families of birth and death chains started at a boundary state. As before, we use $\mathcal{F}$ and $\mathcal{F}_c$ for families of birth and death chains without starting states specified and write $\mathcal{F}^L,\mathcal{F}_c^L$ and $\mathcal{F}^R,\mathcal{F}_c^R$ respectively for families of chains started at the left and right boundary states.

The following theorem displays a list of equivalent conditions for the total variation cutoff. It is worthwhile to note that some of these conditions are very similar to the conditions in Theorem \ref{t-tv2}.

\begin{thm}\label{t-tv3}
Let $\mathcal{F}=(\mathcal{X}_n,K_n,\pi_n)_{n=1}^\infty$ be a family of irreducible birth and death chains with $\mathcal{X}_n=\{0,1,...,n\}$ and $\mathcal{F}_c$ be the family of associated continuous time chains in $\mathcal{F}$. For $n\ge 1$, let $\widetilde{\tau}^{(n)}_i$ be the first hitting time to state $i$ of the $n$th chain in $\mathcal{F}_c$ and, for $a\in(0,1)$, let $M_n(a)$ be a state in $\mathcal{X}_n$ satisfying
\[
 \pi_n([0,M_n(a)])\ge a,\quad \pi_n([M_n(a),n])\ge 1-a,
\]
and let $\lambda_n(a)$ be the smallest eigenvalue of the submatrix of $I-K_n$ indexed by states $0,...,M_n(a)-1$. Set
\[
 u_n(a)=\mathbb{E}_0\widetilde{\tau}^{(n)}_{M_n(a)},\quad v_n^2(a)=\textnormal{Var}_0\widetilde{\tau}^{(n)}_{M_n(a)}.
\]
Assume that $\pi_n(0)\ra 0$. Then, the following are equivalent.
\begin{itemize}
\item[(1)] $\mathcal{F}_c^L$ has a total variation cutoff.

\item[(2)] $u_n(a)/v_n(a)\ra\infty$ for all $a\in(0,1)$.

\item[(3)] $u_n(a)\lambda_n(a)\ra\infty$ for all $a\in(0,1)$.

\item[(4)] There are $a\in(0,1)$ and a positive sequence $(t_n)_{n=1}^\infty$ satisfying
\[
 t_n=O(u_n(c)),\quad  \forall c\in(0,1)
\]
and
\[
 \lim_{n\ra\infty}\mathbb{P}_0\left(\widetilde{\tau}^{(n)}_{M_n(a)}>(1-\epsilon)t_n\right)=1,\quad\forall \epsilon\in(0,1),
\]
and, for any $b\in(a,1)$, there is $\alpha_b\in(0,1)$ such that
\[
 \limsup_{n\ra\infty}\mathbb{P}_0\left(\widetilde{\tau}^{(n)}_{M_n(b)}>(1+\epsilon)t_n\right)\le\alpha_b,\quad\forall
 \epsilon>0,
\]
where $\mathbb{P}_i$ denotes the probability given the initial state $i$.
\end{itemize}

Furthermore, if \textnormal{(2)} or \textnormal{(3)} holds, then $\mathcal{F}_c^L$ has a cutoff with cutoff time $(u_n(a))_{n=1}^\infty$ for any $a\in(0,1)$. If \textnormal{(4)} holds, then $\mathcal{F}_c^L$ has a cutoff with cutoff time $(t_n)_{n=1}^\infty$.
\end{thm}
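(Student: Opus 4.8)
The plan is to establish the cycle of implications $(1)\Leftrightarrow(2)\Leftrightarrow(3)$ first, and then handle $(4)$ as a separate equivalence together with the statement about the cutoff time. The main engine will be the hitting-time bounds of Lemma \ref{l-tv} applied to the single starting state $0$, combined with the one-sided Chebyshev inequality as in the proof of Theorem \ref{t-tvmix}, but now \emph{asymmetrically} since only the left boundary is relevant. Throughout I would fix $a\in(0,1)$ and abbreviate $M_n=M_n(a)$, $u_n=u_n(a)$, $v_n=v_n(a)$, $\lambda_n=\lambda_n(a)$; note that by Lemma \ref{l-meanvar} applied to the submatrix on $\{0,\dots,M_n-1\}$ one has $u_n=\sum_j 1/\beta_j^{(M_n)}$ and $v_n^2=\sum_j 1/(\beta_j^{(M_n)})^2$, whence the standard inequalities $1/\lambda_n\le v_n$, $v_n^2\le u_n/\lambda_n$, and therefore
\[
 \sqrt{u_n\lambda_n}\le u_n/v_n\le u_n\lambda_n,
\]
which immediately gives $(2)\Leftrightarrow(3)$ and also shows that $u_n/v_n\to\infty$ for one $a$ forces it for all $a$ once we know $u_n(a)$ and $v_n(a)$ are comparable across different $a$'s; this comparability is exactly the content of Corollary \ref{c-sep} and Lemma \ref{l-gap} (using $\pi_n(0)\to 0$ to control the constant $a(i)$ away from $0$ after possibly replacing $a$ by something in a compact subinterval).

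For $(2)\Rightarrow(1)$ with cutoff time $u_n$: given $\epsilon\in(0,1)$, pick $x_n\le M_n(a)\le y_n$ realizing $\pi_n$-mass thresholds $\epsilon/2$ and $1-\epsilon/2$ as in the proof of Theorem \ref{t-tv2}; the upper bound in Lemma \ref{l-tv} with $i=0$ gives $d^{(c)}_{\text{\tiny TV}}(0,t)\le\mathbb{P}_0(\widetilde\tau_{y_n}>t)+\epsilon$, and one-sided Chebyshev plus the decomposition $\mathbb{E}_0\widetilde\tau_{y_n}=\mathbb{E}_0\widetilde\tau_{M_n}+\mathbb{E}_{M_n}\widetilde\tau_{y_n}$ together with the variance comparison $\text{Var}_0\widetilde\tau_{y_n}\asymp v_n^2$ (Corollary \ref{c-sep}) shows $T^{(c)}_{n,\text{\tiny TV}}(2\epsilon)\le u_n+O_\epsilon(v_n)$. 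For the matching lower bound use the second inequality of Lemma \ref{l-tv} at state $0$ with $i=x_n$: $d^{(c)}_{\text{\tiny TV}}(0,t)\ge\mathbb{P}_0(\widetilde\tau_{x_n}>t)-\epsilon$, and since $\mathbb{E}_0\widetilde\tau_{x_n}=u_n-\mathbb{E}_{x_n}\widetilde\tau_{M_n}$ with $\mathbb{E}_{x_n}\widetilde\tau_{M_n}=O_\epsilon(v_n)$ (again Corollary \ref{c-sep} / Proposition \ref{p-hittime}), one-sided Chebyshev gives $T^{(c)}_{n,\text{\tiny TV}}(\text{something}<1)\ge u_n-O_\epsilon(v_n)$. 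Since $v_n=o(u_n)$ by hypothesis, these combine to a $(u_n,v_n)$ cutoff, which in particular is a cutoff with cutoff time $u_n$ for every $a$ (the different $u_n(a)$ being asymptotically equivalent, again by Corollary \ref{c-sep}). The implication $(1)\Rightarrow(3)$ is the routine "no spectral gap separation, no cutoff" direction: if $u_n(a)\lambda_n(a)\not\to\infty$ along a subsequence, the eigentime/variance structure of $\widetilde\tau_{M_n}$ forces $v_n\asymp u_n$ along that subsequence (since $\sum 1/\beta_j^2$ is dominated by $(\sum 1/\beta_j)/\lambda_n$ which is $O(u_n^2)$), and a second-moment / anti-concentration argument for the hitting time — $\widetilde\tau_{M_n}$ is a sum of independent exponentials with rates $\beta_j^{(M_n)}$, so when the smallest rate carries a positive fraction of the mean it has fluctuations of order its mean — rules out concentration of $d^{(c)}_{\text{\tiny TV}}(0,\cdot)$, contradicting cutoff; alternatively invoke Theorem \ref{t-sep} / Remark \ref{r-sep2} in the form already available.

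Finally, for condition $(4)$: the direction $(2)\Rightarrow(4)$ is immediate by taking $t_n=u_n(a)$ for any fixed $a$ — the hypothesis $t_n=O(u_n(c))$ holds because $u_n(a)\asymp u_n(c)$, the first limit $\mathbb{P}_0(\widetilde\tau_{M_n(a)}>(1-\epsilon)u_n(a))\to1$ is the lower concentration of the hitting time (Chebyshev with $v_n=o(u_n)$), and for $b>a$ the $\limsup$ bound holds with $\alpha_b$ as small as we like, again by concentration, so in fact the $\limsup$ is $0$. The substantive direction is $(4)\Rightarrow(1)$ with cutoff time $t_n$: the lower-bound part of Lemma \ref{l-tv} at $0$ with $i=M_n(a)$ and the first hypothesis of $(4)$ give $d^{(c)}_{\text{\tiny TV}}(0,(1-\epsilon)t_n)\ge 1-a-o(1)$, i.e.\ $T^{(c)}_{n,\text{\tiny TV}}(\eta)\ge(1-\epsilon)t_n$ for any $\eta<1-a$; for the upper bound, apply Lemma \ref{l-tv} with $j,k$ realizing mass thresholds slightly above/below $b$ and $1-b'$, bound $\mathbb{P}_0(\widetilde\tau_k>(1+\epsilon)t_n)$ using the third hypothesis of $(4)$ (noting $k$ can be taken $\ge M_n(b)$ so $\mathbb{P}_0(\widetilde\tau_k>t)\le$ something controlled by $\alpha_b$ after one more Markov step and the $t_n=O(u_n)$ bound to absorb the extra $\mathbb{E}_{M_n(b)}\widetilde\tau_k=O(v_n)=o(t_n)$), and then \emph{iterate}: the point is that $\mathbb{P}_0(\widetilde\tau_{M_n(b)}>(1+\epsilon)t_n)\le\alpha_b<1$ for all $b$, and by the Markov/regeneration structure of birth and death hitting times one can boost this to $\mathbb{P}_0(\widetilde\tau_{M_n(b)}>(k+k\epsilon)t_n)\le\alpha_b^k\to0$, so $d^{(c)}_{\text{\tiny TV}}(0,Ct_n)\to0$ for $C$ large, and then a separate argument (or the already-proven equivalence $(1)\Leftrightarrow(2)$, once we know $\mathcal{F}_c^L$ has \emph{a} cutoff) pins the cutoff time at $t_n$ rather than $Ct_n$ by comparing $t_n$ with $u_n(a)$: the first hypothesis forces $t_n\le(1+o(1))u_n(a)$ and the concentration from $(2)$ forces $t_n\ge(1-o(1))u_n(a)$. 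The main obstacle I anticipate is precisely this last step — turning the one-sided $\limsup\le\alpha_b<1$ in $(4)$ into genuine $o(1)$ decay and then reconciling the two candidate cutoff times $t_n$ and $u_n(a)$; this is where the submultiplicativity / regeneration property of first-hitting times of birth and death chains (each $\widetilde\tau_{M_n(b)}$ dominates a sum of i.i.d.\ copies of excursion times by the strong Markov property at successive level crossings) does the real work, and care is needed because the "i.i.d.\ copies" are only stochastically dominated, not equal in law.
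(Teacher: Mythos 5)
Your treatment of $(2)\Leftrightarrow(3)$ and of $(2)\Rightarrow(1)$ (with cutoff time $u_n(a)$) matches the paper's argument and is sound. The two hard implications, however, both have genuine gaps. For $(1)\Rightarrow(3)$: all that cutoff gives you through Lemma \ref{l-tv} is the pair of \emph{partial} bounds $\liminf_n\mathbb{P}_0(\widetilde{\tau}^{(n)}_{M_n(a)}>(1-\epsilon)t_n)\ge a$ and $\limsup_n\mathbb{P}_0(\widetilde{\tau}^{(n)}_{M_n(a)}>(1+\epsilon)t_n)\le a$, because the comparison between $d^{(c)}_{\textnormal{\tiny TV}}(0,\cdot)$ and the hitting-time tail is lossy by $\pi_n([0,M_n(a)-1])$ on one side and $1-\pi_n([0,M_n(a)])$ on the other. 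These bounds are perfectly consistent with $\widetilde{\tau}^{(n)}_{M_n(a)}/t_n$ having a nondegenerate limit law crossing level $a$ at $1$, so "cutoff plus anti-concentration of the hitting time yields a contradiction" does not close; the whole difficulty of the theorem is upgrading $\liminf\ge a$ to a genuine limit $=1$. The paper does this via $(1)\Rightarrow(4)$, using the monotonicity of $i\mapsto H_t(0,i)/\pi(i)$ (Lemma \ref{l-unimodal}) to get $\mathbb{P}_0(X_{n,(1-\eta)t_n}\le M_n(a))\to1$, a time-discretization/Markov-property iteration, and the two-sided exit-time formula of Lemma \ref{l-hit3}; none of this appears in your sketch. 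Your fallback of invoking Theorem \ref{t-sep}/Remark \ref{r-sep2} also fails: those concern \emph{separation}, and separation cutoff from a boundary state is not equivalent to total variation cutoff from that state (the biased random walk of Section \ref{s-ex} is a counterexample on the right endpoint).

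For $(4)\Rightarrow(1)$: the submultiplicativity $\mathbb{P}_0(\widetilde{\tau}_{M}>ks)\le\mathbb{P}_0(\widetilde{\tau}_{M}>s)^k$ is correct, but it only yields $T^{(c)}_{n,\textnormal{\tiny TV}}(0,\eta)=O(t_n)$ together with the lower bound $(1-\epsilon)t_n$, i.e.\ a pre-cutoff, not a cutoff. Your proposed repair --- "invoke the already-proven equivalence $(1)\Leftrightarrow(2)$ once we know $\mathcal{F}_c^L$ has a cutoff" --- is circular, since establishing that there \emph{is} a cutoff is exactly what $(4)\Rightarrow(1)$ must do. The paper instead proves $(4)\Rightarrow(3)$ directly: by the Brown--Shao representation, $\widetilde{\tau}^{(n)}_{M_n(b)}=T_n(b)+S_n(b)$ with $T_n(b)$ exponential of rate $\lambda_n(b)$ independent of $S_n(b)$, and playing the two probability hypotheses of $(4)$ against each other across the shift $C/\lambda_n(b)$ forces $t_n\lambda_n(b)\ge C/(2\epsilon)$, hence $t_n\lambda_n(b)\to\infty$; the conclusion then follows from $(3)\Rightarrow(2)\Rightarrow(1)$. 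A smaller point: the comparability $u_n(a)\asymp u_n(b)$ across different $a,b$ that you use in several places is \emph{not} unconditional (cf.\ Remark \ref{r-cutoff}); in the paper it is derived from Lemma \ref{l-hit2} under the standing hypothesis $(2)$ via the estimate $0\le u_n(b)-u_n(a)\le 4b\,v_n(b)/a$.
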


The discrete time version of the previous theorem can be stated as follows.

\begin{thm}\label{t-tv4}
Let $\mathcal{F},M_n(a),\lambda_n(a)$ be as in Theorem \ref{t-tv3}. For $n\ge 1$, let $\tau^{(n)}_i$ be the first hitting time to state $i$ of the $n$th chain in $\mathcal{F}$ and, for $a\in(0,1)$, set
\[
 u_n(a)=\mathbb{E}_0\tau^{(n)}_{M_n(a)},\quad w_n^2(a)=\textnormal{Var}_0\tau^{(n)}_{M_n(a)}.
\]
Assume that $\pi_n(0)\ra 0$, $\inf_{i,n}K_n(i,i)>0$ and $u_n(a)\ra\infty$ for some $a\in(0,1)$. Then, the conclusion in Theorem \ref{t-tv3} remains true for the family $\mathcal{F}^L$.
\end{thm}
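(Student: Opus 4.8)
The plan is to deduce Theorem~\ref{t-tv4} from the continuous-time statement of Theorem~\ref{t-tv3} through the laziness device already used in the discrete-time part of the proof of Theorem~\ref{t-tv2}. Set $\delta=\inf_{n,i}K_n(i,i)\in(0,1)$ and $K_n^{(\delta)}=(K_n-\delta I)/(1-\delta)$, let $\mathcal{F}^{(\delta)}=(\mathcal{X}_n,K_n^{(\delta)},\pi_n)_{n=1}^\infty$, and let $\mathcal{F}_c^{(\delta)}$ be the associated continuous-time family. The stationary distributions are unchanged, so $\pi_n(0)\ra 0$ still holds for $\mathcal{F}^{(\delta)}$. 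By \cite[Theorems 3.1 and 3.3]{CSal13-1} (whose proofs apply verbatim with the maximum over initial states replaced by a fixed initial state, since the comparison between the discrete $K_n$-chain after $m$ steps and the continuous $K_n^{(\delta)}$-chain at time $(1-\delta)m$ is uniform in the starting state), $\mathcal{F}^L$ has a total variation cutoff if and only if $\mathcal{F}_c^{(\delta),L}$ does, and $\mathcal{F}^L$ has a $(t_n,b_n)$ cutoff if and only if $\mathcal{F}_c^{(\delta),L}$ has a $((1-\delta)t_n,b_n)$ cutoff. Applying Theorem~\ref{t-tv3} to $\mathcal{F}^{(\delta)}$ (legitimate since $\pi_n(0)\ra 0$) yields four equivalent conditions $(1^\delta)$--$(4^\delta)$ for $\mathcal{F}_c^{(\delta),L}$, written in terms of $u_n^{(\delta)}(a)=\mathbb{E}_0\widetilde{\tau}^{(n,\delta)}_{M_n(a)}$, $v_n^{(\delta)}(a)^2=\mathrm{Var}_0\widetilde{\tau}^{(n,\delta)}_{M_n(a)}$ and $\lambda_n^{(\delta)}(a)$, the smallest eigenvalue of the submatrix of $I-K_n^{(\delta)}$ on $\{0,\dots,M_n(a)-1\}$, with cutoff time $u_n^{(\delta)}(a)$.

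The next step is to translate these quantities into the ones appearing in Theorem~\ref{t-tv4}. Since $I-K_n^{(\delta)}=(I-K_n)/(1-\delta)$, the submatrix of $I-K_n^{(\delta)}$ on $\{0,\dots,M_n(a)-1\}$ is $(1-\delta)^{-1}$ times that of $I-K_n$; writing $\beta_1,\dots,\beta_{M_n(a)}$ for the eigenvalues of the latter, Lemma~\ref{l-meanvar} gives $\lambda_n^{(\delta)}(a)=\lambda_n(a)/(1-\delta)$, $u_n^{(\delta)}(a)=(1-\delta)u_n(a)$ and $v_n^{(\delta)}(a)^2=(1-\delta)^2\sum_j\beta_j^{-2}$, while $w_n(a)^2=\sum_j(1-\beta_j)\beta_j^{-2}=\sum_j\beta_j^{-2}-\sum_j\beta_j^{-1}$. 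Applying Remark~\ref{r-var} to $K_n^{(\delta)}$ yields $(1-\delta)\sum_j\beta_j^{-2}\ge\sum_j\beta_j^{-1}$, hence $\delta\sum_j\beta_j^{-2}\le w_n(a)^2\le\sum_j\beta_j^{-2}$, so $v_n^{(\delta)}(a)\asymp w_n(a)$ with constants depending only on $\delta$. Therefore $u_n^{(\delta)}(a)\lambda_n^{(\delta)}(a)=u_n(a)\lambda_n(a)$ and $u_n^{(\delta)}(a)/v_n^{(\delta)}(a)\asymp u_n(a)/w_n(a)$, so conditions $(2^\delta)$ and $(3^\delta)$ are exactly conditions (2) and (3) of Theorem~\ref{t-tv3} read for $\mathcal{F}^L$, and the cutoff time $u_n^{(\delta)}(a)$ of $\mathcal{F}_c^{(\delta),L}$ becomes $u_n^{(\delta)}(a)/(1-\delta)=u_n(a)$ for $\mathcal{F}^L$. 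This settles the equivalence of (1), (2), (3) and the value of the cutoff time in those cases.

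It remains to bring in condition (4), and this is the delicate point. I would transfer it by comparing the hitting times directly through the laziness: letting $N=\tau^{(n,\delta)}_{M_n(a)}$ be the number of $K_n^{(\delta)}$-steps needed to go from $0$ to $M_n(a)$, one couples so that $\tau^{(n)}_{M_n(a)}=\sum_{k=1}^{N}G_k$ with $(G_k)$ i.i.d.\ geometric of mean $1/(1-\delta)$, and $\widetilde{\tau}^{(n,\delta)}_{M_n(a)}=\sum_{k=1}^{N}E_k$ with $(E_k)$ i.i.d.\ $\mathrm{Exp}(1)$, both families independent of $N$. The hypotheses $u_n(a)\ra\infty$ and $\inf_{i,n}K_n(i,i)>0$ keep the relevant values of $N$ from degenerating, so a concentration estimate shows $\sum_{k=1}^{N}G_k=(1+o(1))\,N/(1-\delta)$ and $\sum_{k=1}^{N}E_k=(1+o(1))\,N$ with fluctuations negligible against $t_n\asymp u_n(c)$; consequently the three probabilistic requirements in (4) for $\tau^{(n)}$ with candidate $t_n$ are equivalent to those in $(4^\delta)$ for $\widetilde{\tau}^{(n,\delta)}$ with candidate $(1-\delta)t_n$, while $t_n=O(u_n(c))$ matches $(1-\delta)t_n=O(u_n^{(\delta)}(c))$ because $u_n^{(\delta)}(c)=(1-\delta)u_n(c)$. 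An equivalent route is to reproduce the implications $(1)\Ra(4)$ and $(4)\Ra(3)$ from the proof of Theorem~\ref{t-tv3} in the discrete setting, since those arguments use only hitting-time tail bounds of the type in Lemma~\ref{l-tv} together with the one-sided Chebyshev inequality, all of which have direct discrete analogues. The main obstacle is exactly this handling of (4): making sure the geometric/Poisson time change does not distort the cutoff window, which is why the extra hypotheses $u_n(a)\ra\infty$ and $\inf_{i,n}K_n(i,i)>0$ are imposed.
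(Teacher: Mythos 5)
Your overall strategy---pass to $K_n^{(\delta)}=(K_n-\delta I)/(1-\delta)$, apply Theorem \ref{t-tv3} to the associated continuous-time family, and translate via $u_n^{(\delta)}(a)=(1-\delta)u_n(a)$, $\lambda_n^{(\delta)}(a)=\lambda_n(a)/(1-\delta)$ and $v_n^{(\delta)}(a)\asymp w_n(a)$ (the last by Remark \ref{r-var}, as in Remark \ref{r-wcomp})---is exactly the paper's, and your reading of conditions (2), (3) and of the cutoff time is correct. The genuine gap is at the very first step: the claim that the cutoff equivalence between $\mathcal{F}^L$ and $\mathcal{F}_c^{(\delta),L}$ follows ``verbatim'' from \cite[Theorems 3.1 and 3.3]{CSal13-1}. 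The fixed-initial-state version of that comparison is \cite[Theorem 3.4]{CSal13-1}, and it carries a prerequisite invisible in the maximum case: the total variation mixing times of the $n$th chains started at $0$, in both discrete time and for the continuous-time family $\mathcal{F}_c^{(\delta)}$, must tend to infinity for some $\epsilon$ (the paper notes this hypothesis was even omitted in the original reference). Without it the lazy/continuous transfer can fail, since for bounded mixing times integer rounding and Poissonization are not negligible. Deriving this prerequisite from $u_n(a)\ra\infty$ is where the paper spends most of its proof: it combines the lower bound of Lemma \ref{l-tv} with the anti-concentration estimate of Lemma \ref{l-hit1} to get $\liminf_n T^{(c,\delta)}_{n,\textnormal{\tiny TV}}(0,\epsilon)/u_n(a)>0$ for $\epsilon$ small, and then a Poisson-process comparison to transfer this to the discrete mixing time. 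Your proposal invokes $u_n(a)\ra\infty$ only when treating condition (4); it must also be used here, and the use is not a one-line remark.

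Your handling of (4) is in substance the paper's: the paper uses the identity $\mathbb{P}_0(\widetilde{\tau}^{(n,\delta)}_i>s)=\mathbb{P}_0(\tau^{(n)}_i>N_s)$ for a rate-$1/(1-\delta)$ Poisson process together with the weak law of large numbers, and your geometric/exponential coupling is an equivalent formulation; the fluctuations are $O(\sqrt{t_n})=o(t_n)$, which suffices because (4) only involves the scales $(1\pm\epsilon)t_n$. So the proposal is salvageable, but as written the equivalence of (1) with $(1^{\delta})$---and hence with (2) and (3)---rests on an unproved transfer.
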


\begin{rem}
One can see from the proof of Theorem \ref{t-tv3} that the condition $\pi_n(0)\ra 0$ is necessary for the existence of cutoff of $\mathcal{F}_c^L$. This is also true for the family $\mathcal{F}^L$ in Theorem \ref{t-tv4}.
\end{rem}

\begin{rem}\label{r-pin->0}
Let $\mathcal{F},\mathcal{F}_c$ be as in Theorem \ref{t-tv3} and $(p_{n,i},q_{n,i},r_{n,i})$ be the transition rates of the $n$th chains in $\mathcal{F}$. Let $M_n\in\mathcal{X}_n$ be a sequence of states satisfying (\ref{eq-pin2}), that is,
\[
 \inf_{n\ge 1}\pi_n([0,M_n])>0,\quad\inf_{n\ge 1}\pi_n([M_n,n])>0,
\]
and $x_n\in\{0,n\}$ be a boundary state fulfilling the following equation
\[
 \max\{\mathbb{E}_0\widetilde{\tau}^{(n)}_{M_n},\mathbb{E}_n\widetilde{\tau}^{(n)}_{M_n}\}
 =\mathbb{E}_{x_n}\widetilde{\tau}^{(n)}_{M_n}.
\]
By Lemma \ref{l-meanvar2} and Theorem A.1 of \cite{CSal13-2}, if $x_n=0$, then
\[
 \mathbb{E}_{x_n}\widetilde{\tau}^{(n)}_{M_n}=\sum_{i=0}^{M_n-1}\frac{\pi_n([0,i])}{\pi_n(i)p_{n,i}}
 \le\sum_{i=0}^{M_n-1}\frac{1}{\pi_n(i)p_{n,i}}
\]
and
\begin{align}
 \frac{1}{\lambda_n}&\ge\min\{\pi_n([0,M_n]),\pi_n([M_n,n])\}\times
 \max_{j:j<M_n}\sum_{i=j}^{M_n-1}
 \frac{\pi_n([0,j])}{\pi_n(i)p_{n,i}}\notag\\
 &\ge\min\{\pi_n([0,M_n]),\pi_n([M_n,n])\}\times\pi_n(0)\sum_{i=0}^{M_n-1}
 \frac{1}{\pi_n(i)p_{n,i}}\notag
\end{align}
This implies
\[
 \mathbb{E}_{x_n}\widetilde{\tau}^{(n)}_{M_n}\lambda_n\le\frac{1}{\min\{\pi_n([0,M_n]),\pi_n([M_n,n])\}\pi_n(0)}.
\]
In a similar way, this inequality also holds in the case $x_n=n$. As a consequence of Theorem \ref{t-tv2}, if $\mathcal{F}_c$ has a maximum total variation cutoff, then $\pi_n(x_n)\ra 0$. The above discussion also holds for $\mathcal{F}$ with the assumption $\inf_{n,i}K_n(i,i)>0$.
\end{rem}

\begin{rem}\label{r-cutoffitme}
Let $\mathcal{F}_c^L$ and $\mathcal{F}^L$ be the families in Theorems \ref{t-tv3} and \ref{t-tv4}. If $\mathcal{F}_c^L$ (resp. $\mathcal{F}^L$) has a total variation cutoff with cutoff time $t_n$, then
\[
 t_n\sim\mathbb{E}_0\widetilde{\tau}^{(n)}_{M_n},\quad(\text{resp. } t_n\sim\mathbb{E}_0\tau^{(n)}_{M_n},)
\]
where $M_n\in\mathcal{X}_n$ is any sequence satisfying
\begin{equation}\label{eq-mn2}
 \inf_{n\ge 1}\pi_n([0,M_n])>0,\quad \inf_{n\ge 1}\pi_n([M_n,n])>0.
\end{equation}
In particular, if $\mathcal{F}_c^L$ (resp. $\mathcal{F}^L$) has a total variation cutoff with bounded cutoff time, then
\[
 \mathbb{E}_0\widetilde{\tau}^{(n)}_{M_n}=O(1),\quad (\text{resp. }\mathbb{E}_0\tau^{(n)}_{M_n}=O(1),)
\]
for any sequence $M_n\in\mathcal{X}_n$ satisfying (\ref{eq-mn2}).
\end{rem}

\begin{rem}\label{r-cutoff}
Let $\mathcal{F}_c^L$ be the family in Theorems \ref{t-tv3}. If $\mathcal{F}_c^L$ has a total variation cutoff, then $u_n(a)\sim u_n(b)$ for all $a,b\in(0,1)$, or equivalently
\[
 \mathbb{E}_{M_n(a)}\widetilde{\tau}^{(n)}_{M_n(b)}=o\left(\mathbb{E}_0\widetilde{\tau}^{(n)}_{M_n(c)}\right),\quad\forall
 a,b,c\in(0,1).
\]
This is also true for $\mathcal{F}^L$ with the assumption in Theorem \ref{t-tv4}. But, the converse is not necessarily true. For an illustration, recall the example in Remark \ref{r-exp1}. It has been proved that
\[
 \mathbb{E}_0\widetilde{\tau}^{(n)}_{M_n(a)}\asymp \frac{1}{\lambda_n}\asymp \frac{1}{\xi_n}+n^2,\quad
 \forall a\in(0,1).
\]
By Lemma \ref{l-meanvar2}, one may compute
\[
 \text{Var}_0\widetilde{\tau}^{(n)}_1=\frac{1}{\xi_n^2}
\]
and
\[
 \text{Var}_1\widetilde{\tau}^{(n)}_{M_n(a)}\ge\sum_{i=1}^{M_n(a)-1}\frac{1}{K_n(i,i+1)\pi_n(i)}
 \sum_{\ell=1}^{i}\pi_n(\ell)\mathbb{E}_\ell\widetilde{\tau}^{(n)}_{i+1}\asymp n^4.
\]
Along with the fact $\text{Var}_0\widetilde{\tau}^{(n)}_i\le(\mathbb{E}_0\widetilde{\tau}^{(n)}_i)^2$, we may conclude from the above computations that $\text{Var}_0\widetilde{\tau}^{(n)}_{M_n(a)}\asymp \xi_n^{-2}+n^4$ for all $a\in(0,1)$. By Theorem \ref{t-tv3}, this implies that the family $\mathcal{F}_c^L$ has no total variation cutoff. It has been shown in Remark \ref{r-exp1} that if $n^2\xi_n\ra 0$, then $\mathbb{E}_0\widetilde{\tau}^{(n)}_{M_n(a)}\sim\xi_n^{-1}$ for all $a\in(0,1)$.
\end{rem}

\begin{rem}\label{r-wcomp}
Let $v_n(a)$ and $w_n(a)$ be the constants in Theorems \ref{t-tv3} and \ref{t-tv4}. It is remarkable that if $\delta=\inf_{i,n}K_n(i,i)>0$, then $\delta v_n^2(a)\le w_n^2(a)\le v_n^2(a)$ for all $a\in(0,1)$. To see this, we let $\beta^{(n)}_1,...,\beta^{(n)}_{M_n}$ be the eigenvalues of the submatrix of $I-K_n$ indexed by $0,...,M_n(a)-1$. By Lemma \ref{l-meanvar}, $\beta^{(n)}_i>0$ for all $i$ and
\[
 v_n^2(a)=\sum_{i=1}^{M_n(a)}\frac{1}{(\beta^{(n)}_i)^2},\quad w_n^2(a)=\sum_{i=1}^{M_n(a)}\frac{1-\beta^{(n)}_i}{(\beta^{(n)}_i)^2}.
\]
Clearly, $w_n^2(a)\le v_n^2(a)$. For the lower bound of $w_n^2(a)$, set $K_n^{(\delta)}=(K_n-\delta I)/(1-\delta)$. Note that $K_n^{(\delta)}$ is also a stochastic matrix and the submatrix of $I-K_n^{(\delta)}$ indexed by $0,...,M_n(a)-1$ has eigenvalues $\beta^{(n)}_1/(1-\delta),...,\beta^{(n)}_{M_n(a)}/(1-\delta)$. By Remark \ref{r-var}, we have
\[
 (1-\delta)\sum_{i=1}^{M_n(a)}\frac{1}{(\beta^{(n)}_i)^2}\ge\sum_{i=1}^{M_n(a)}\frac{1}{\beta^{(n)}_i}
\]
and this implies $w_n^2(a)\ge \delta v_n^2(a)$.
\end{rem}

\begin{rem}
Note that, in Theorems \ref{t-tv3} and \ref{t-tv4}, if one chooses $\mathbb{E}_0\widetilde{\tau}^{(n)}_{M_n(a)}$ and $\mathbb{E}_0\tau^{(n)}_{M_n(a)}$ as the cutoff times, the square roots of $\text{Var}_0\widetilde{\tau}^{(n)}_{M_n(a)}$ and $\text{Var}_0\tau^{(n)}_{M_n(a)}$ are no longer suitable for the respective cutoff windows. This is very different from the conclusion in Theorem \ref{t-tv2} and we refer the reader to Example \ref{ex-cutoff3} for an illustration of this observation.
\end{rem}

The next corollary provides a way of selecting cutoff windows.

\begin{cor}\label{c-tv}
Let $\mathcal{F}_c,u_n(a),v_n(a)$ be as in Theorem \ref{t-tv3}. If $\mathcal{F}_c^L$ has a total variation cutoff and $b_n>0$ is a sequence satisfying
\[
 b_n=o(u_n(a)),\quad v_n(a)=O(b_n),\quad\forall a\in(0,1),
\]
then $\mathcal{F}_c^L$ has a $(u_n(a),b_n)$ total variation cutoff. The above statement is also true for $\mathcal{F}^L$ under the assumption of $\inf_{n,i}K_n(i,i)>0$ and $\inf_nb_n>0$ and the replacement of $v_n(a)$ by $w_n(a)$ in Theorem \ref{t-tv4}.
\end{cor}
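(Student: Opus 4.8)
The plan is to bracket the boundary-start mixing time of the $n$th chain between $u_n(a)-O(b_n)$ and $u_n(a)+O(b_n)$ for every threshold; since $v_n(a)=O(b_n)=o(u_n(a))$ already forces condition~(2) of Theorem \ref{t-tv3}, that theorem supplies the cutoff with time $u_n(a)$, and the brackets then pin the window. Fix $a\in(0,1)$ and take $M_n(c)=\min\{i:\pi_n([0,i])\ge c\}$, which is monotone in $c$ and satisfies $\pi_n([0,M_n(c)])\ge c$, $\pi_n([M_n(c),n])\ge 1-c$. Write $p_{n,\ell}$ for the birth rates of the $n$th chain and $d^{(c)}_{\textnormal{\tiny TV}}(0,t)$ for its continuous-time total variation distance from $0$. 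Given $\epsilon\in(0,1)$, pick $b=b(\epsilon)\in[a,1)$ with $1-b<\epsilon/2$ and $a'=a'(\epsilon)\in(0,a]$ with $a'<(1-\epsilon)/2$; note $\pi_n([0,M_n(a')-1])=1-\pi_n([M_n(a'),n])\le a'$. Lemma \ref{l-tv} (using $\widetilde{\tau}^{(n)}_0=0$) gives
\[
 d^{(c)}_{\textnormal{\tiny TV}}(0,t)\le\mathbb{P}_0\bigl(\widetilde{\tau}^{(n)}_{M_n(b)}>t\bigr)+(1-b),\qquad d^{(c)}_{\textnormal{\tiny TV}}(0,t)\ge\mathbb{P}_0\bigl(\widetilde{\tau}^{(n)}_{M_n(a')}>t\bigr)-a',
\]
so, applying the one-sided Chebyshev inequality to $\widetilde{\tau}^{(n)}_{M_n(b)}$ (mean $u_n(b)$, variance $v_n^2(b)$) and to $\widetilde{\tau}^{(n)}_{M_n(a')}$ (mean $u_n(a')$, variance $v_n^2(a')$), with $\lambda=\lambda(\epsilon)$ large enough that $1/(1+\lambda^2)<\tfrac12\min\{\epsilon,1-\epsilon\}$, we obtain
\[
 u_n(a')-\lambda v_n(a')\ \le\ T^{(c)}_{n,\textnormal{\tiny TV}}(0,\epsilon)\ \le\ u_n(b)+\lambda v_n(b).
\]

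It remains to show $u_n(b)-u_n(a)=O(b_n)$ and $u_n(a)-u_n(a')=O(b_n)$. By the Markov property, for $c\ge a$, $u_n(c)-u_n(a)=\mathbb{E}_{M_n(a)}\widetilde{\tau}^{(n)}_{M_n(c)}=\sum_{\ell=M_n(a)}^{M_n(c)-1}\frac{\pi_n([0,\ell])}{p_{n,\ell}\pi_n(\ell)}$, which lies between $0$ and $\sum_{\ell=M_n(a)}^{M_n(c)-1}\frac{1}{p_{n,\ell}\pi_n(\ell)}$. On the other hand, the explicit variance formula of Lemma \ref{l-meanvar2} gives $v_n^2(c)=\textnormal{Var}_0\widetilde{\tau}^{(n)}_{M_n(c)}=\sum_{0\le i\le j<M_n(c)}\frac{\pi_n([0,i])^2}{p_{n,i}\pi_n(i)\,p_{n,j}\pi_n(j)}$; keeping only the terms with $M_n(a)\le i\le j<M_n(c)$, bounding $\pi_n([0,i])\ge\pi_n([0,M_n(a)])\ge a$ there, and using $\sum_{i\le j}x_ix_j\ge\tfrac12(\sum_i x_i)^2$, we get
\[
 v_n^2(c)\ \ge\ \frac{a^2}{2}\Bigl(\sum_{\ell=M_n(a)}^{M_n(c)-1}\frac{1}{p_{n,\ell}\pi_n(\ell)}\Bigr)^{2}\ \ge\ \frac{a^2}{2}\bigl(u_n(c)-u_n(a)\bigr)^{2},
\]
so $0\le u_n(c)-u_n(a)\le\tfrac{\sqrt2}{a}v_n(c)$ for $c\ge a$, and symmetrically $0\le u_n(a)-u_n(a')\le\tfrac{\sqrt2}{a'}v_n(a')$ for $a'\le a$. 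Combining with the hypothesis $v_n(c)=O(b_n)$ for all $c\in(0,1)$ (used at $c=b$ and $c=a'$) and the bracketing display, we find that for each $\epsilon\in(0,1)$ there is a finite $C_\epsilon$ with $|T^{(c)}_{n,\textnormal{\tiny TV}}(0,\epsilon)-u_n(a)|\le C_\epsilon b_n$ for all large $n$; together with $b_n>0$, $u_n(a)>0$ and $b_n=o(u_n(a))$ this is precisely the asserted $(u_n(a),b_n)$ total variation cutoff of $\mathcal{F}^L_c$.

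For the discrete family $\mathcal{F}^L$ with $\delta:=\inf_{i,n}K_n(i,i)>0$, I would reduce to the continuous case exactly as in the proof of Theorem \ref{t-tv2}: with $K_n^{(\delta)}=(K_n-\delta I)/(1-\delta)$ and $\mathcal{F}^{(\delta)}_c$ the associated continuous family, the Poissonization comparison of \cite{CSal13-1}---which applies verbatim to the boundary-start total variation, and is the point where the hypothesis $\inf_n b_n>0$ is used---turns a $(t_n,b_n)$ cutoff of $\mathcal{F}^L$ into a $((1-\delta)t_n,b_n)$ cutoff of $\mathcal{F}^{(\delta),L}_c$ and conversely, while $\mathbb{E}_0\widetilde{\tau}^{(n,\delta)}_{M_n(a)}=(1-\delta)u_n(a)$ by Lemma \ref{l-meanvar2} and $\textnormal{Var}_0\widetilde{\tau}^{(n,\delta)}_{M_n(a)}\asymp w_n^2(a)$ by the submatrix-eigenvalue computation in the proof of Theorem \ref{t-tv2} (cf.\ Remarks \ref{r-var} and \ref{r-wcomp}); applying the continuous-time statement just proved to $\mathcal{F}^{(\delta),L}_c$ then yields the claim. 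The one step I expect to demand real care is the variance estimate of the preceding paragraph---that the mean passage time between any two levels of positive stationary mass is dominated by the corresponding standard deviation $v_n(c)$---everything else being a routine combination of Lemma \ref{l-tv}, the one-sided Chebyshev inequality, and Theorem \ref{t-tv3}.
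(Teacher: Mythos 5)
Your proof is correct and follows essentially the same route as the paper: bracket $T^{(c)}_{n,\textnormal{\tiny TV}}(0,\epsilon)$ via Lemma \ref{l-tv} plus one-sided Chebyshev (i.e.\ the display (\ref{eq-mixing})), and control $\mathbb{E}_{M_n(a)}\widetilde{\tau}^{(n)}_{M_n(c)}$ by $O(v_n(c))=O(b_n)$ — the paper gets this last estimate by citing the lower bound in Lemma \ref{l-hit2}, which you instead re-derive from the explicit variance formula, and the discrete reduction via \cite{CSal13-1} is identical. Two cosmetic slips do not affect the argument: the variance identity you quote holds only up to a factor of $2$ (you use only the valid lower bound), and the ``symmetric'' estimate should read $u_n(a)-u_n(a')\le\tfrac{\sqrt2}{a'}v_n(a)$ rather than $v_n(a')$, which is harmless since the hypothesis covers every level.
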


\begin{ex}\label{ex-cutoff3}
Let $\mathcal{F}=(\mathcal{X}_n,K_n,\pi_n)_{n=1}^\infty$ be a family of birth and death chains for which $\mathcal{X}_n=\{0,1,...,n\}$, $\pi_n(i)=2^{-n}\binom{n}{i}$ and
\[
 \begin{cases}
 K_n(i,i+1)=1-\frac{i}{n},\quad K_n(i+1,i)=\frac{i+1}{n}\quad\text{for }i\ne M_n,\\
 K_n(M_n,M_n+1)=c_n\left(1-\frac{M_n}{n}\right),\quad  K_n(M_n,M_n)=(1-c_n)\left(1-\frac{M_n}{n}\right),\\
 K_n(M_n+1,M_n)=\frac{c_n(M_n+1)}{n},\quad
 K_n(M_n+1,M_n+1)=\frac{(1-c_n)(M_n+1)}{n},
 \end{cases}
\]
where $c_n\in(0,1)$ and $M_n\in\mathcal{X}_n$ is a state satisfying $\pi_n([0,M_n])\ge 1/4$ and $\pi_n([M_n,n])\ge 3/4$. Let $\mathcal{F}_c$ be the family associated with $\mathcal{F}$ and $\widetilde{\tau}^{(n)}_i$ be the first hitting time to state $i$ of the $n$th chain in $\mathcal{F}_c$. We will also use $M_n(a)$ with $a\in(0,1)$ to denote a state satisfying $\pi_n([0,M_n(a)])\ge a$ and $\pi_n([M_n(a),n])\ge 1-a$. When $c_n=1$, $(\mathcal{X}_n,K_n,\pi_n)$ is the Ehrenfest chain on $\{0,1,...,n\}$. The spectral information of the Ehrenfest chain is well-studied and it is easy to derive by Lemma \ref{l-gap} that
\[
 \mathbb{E}_0\widetilde{\tau}^{(n)}_{\lfloor n/2\rfloor}=\frac{1}{4}n\log n+O(n),\quad
 \text{Var}_0\widetilde{\tau}^{(n)}_{\lfloor n/2\rfloor}\asymp n^2.
\]
One may use Stirling's formula to show that, for $0<a<b<1$,
\[
 \left|\frac{n}{2}-M_n(a)\right|\asymp\sqrt{n},\quad \pi_n(i)\asymp\frac{1}{\sqrt{n}}\quad\text{uniformly for }M_n(a)\le i\le M_n(b).
\]
By Lemmas \ref{l-meanvar2}, \ref{l-gap} and \ref{l-hit2}, this implies that, for $a\in(0,1)$,
\begin{equation}\label{eq-ehren}
 \mathbb{E}_0\widetilde{\tau}^{(n)}_{M_n(a)}=\frac{1}{4}n\log n+O(n),\quad
 \text{Var}_0\widetilde{\tau}^{(n)}_{M_n(a)}\asymp n^2.
\end{equation}
When $c_n$ is small, $(\mathcal{X}_n,K_n,\pi_n)$ is the modification of the Ehrenfest chain with bottleneck between states $M_n$ and $M_n+1$. In the following, we will discuss the total variation cutoff and the cutoff window of $\mathcal{F}_c^L$ when $c_n$ is small.

First, we consider the total variation cutoff of $\mathcal{F}_c^L$. By Lemma \ref{l-meanvar2} and (\ref{eq-ehren}), one can show without difficulty that, for $a\in(0,1/2)$,
\begin{equation}\label{eq-ehren2}
 \mathbb{E}_0\widetilde{\tau}^{(n)}_{M_n(a)}=\frac{1}{4}n\log n+O(n),\quad
 \text{Var}_0\widetilde{\tau}^{(n)}_{M_n(a)}\asymp n^2,
\end{equation}
and, for $a\in(1/2,1)$,
\begin{equation}\label{eq-ehren3}
 \mathbb{E}_0\widetilde{\tau}^{(n)}_{M_n(a)}=\frac{1}{4}n\log n+O(n)+\frac{1+o(1)}{2c_n\pi_n(M_n)},\quad
 \text{Var}_0\widetilde{\tau}^{(n)}_{M_n(a)}\asymp n^2+\frac{n}{c_n^2},
\end{equation}
where $\pi_n(M_n)\asymp 1/\sqrt{n}$. By Theorem \ref{t-tv3}, $\mathcal{F}_c^L$ has a total variation cutoff if and only if $c_n\sqrt{n}\log n\ra\infty$.

Next, we discuss the cutoff window of $\mathcal{F}_c^L$. Assume that $c_n\sqrt{n}\log n\ra\infty$. By Corollary \ref{c-tv} and Equations (\ref{eq-ehren2}) and (\ref{eq-ehren3}), $\mathcal{F}_c^L$ has a $(\frac{1}{4}n\log n,\max\{\sqrt{n}/c_n,n\})$ total variation cutoff. We will prove that the window is optimal when $c_n\sqrt{n}\ra 0$. Suppose $c_n\sqrt{n}\ra 0$ and set
\[
 s_n=\mathbb{E}_0\widetilde{\tau}^{(n)}_{M_n},\quad t_n=\mathbb{E}_0\widetilde{\tau}^{(n)}_{M_n+1},\quad a_n^2=\text{Var}^{(n)}_0\widetilde{\tau}^{(n)}_{M_n},\quad b_n^2=\text{Var}^{(n)}_0\widetilde{\tau}^{(n)}_{M_n+1}.
\]
Let $T_{n,\text{\tiny TV}}^c(0,\epsilon)$ be the total variation mixing time of the $n$th chain in $\mathcal{F}_c^L$ and recall (\ref{eq-mixing}) in the following
\[
 T_{n,\text{\tiny TV}}^{(c)}(0,\epsilon)\begin{cases}\le\mathbb{E}_0\widetilde{\tau}^{(n)}_i
 +\sqrt{(\tfrac{1-\delta}{\delta})\text{Var}_0(\widetilde{\tau}^{(n)}_i)}&\text{for } \epsilon=\delta+\pi_n([i+1,n])\\
 \ge\mathbb{E}_0\widetilde{\tau}^{(n)}_i-\sqrt{(\tfrac{\delta}{1-\delta})\text{Var}_0(\widetilde{\tau}^{(n)}_i)}&
 \text{for }\epsilon=\delta-\pi_n([0,i-1])\end{cases}.
\]
In the first inequality, the replacement of $i=M_n$ and $\delta=1/8$ implies
\[
 T_{n,\text{\tiny TV}}^c(0,7/8)\le s_n+3a_n.
\]
In the second inequality, the replacement of $i=M_n+1$ and $\delta=3/8$ gives
\[
 T_{n,\text{\tiny TV}}^c(0,1/8)\ge t_n-\frac{4}{5}b_n.
\]
These two inequalities yield
\[
 T_{n,\text{\tiny TV}}^c(0,1/8)-T_{n,\text{\tiny TV}}^c(0,7/8)\ge \mathbb{E}_{M_n}\widetilde{\tau}^{(n)}_{M_n+1}-3a_n-\frac{4}{5}b_n.
\]
Under the assumption that $c_n\sqrt{n}\ra 0$, one may compute using Lemma \ref{l-meanvar2} that
\[
 a_n\asymp n,\quad b_n\sim \mathbb{E}_{M_n}\widetilde{\tau}^{(n)}_{M_n+1}\asymp\frac{\sqrt{n}}{c_n}=\frac{n}{c_n\sqrt{n}}.
\]
Consequently, when $c_n\sqrt{n}\ra 0$, the cutoff window can be $\text{Var}_0\widetilde{\tau}^{(n)}_{M_n(a)}$ for any $a\in (1/4,1)$ but not for $a\in(0,1/4)$. Similar observation also happens in $\mathcal{F}_c^R$.

We would like to point out an interesting observation arising from the bottleneck effect in this example. Compared with the case $c_n=1$ for all $n$, when $c_n$ is of order bigger than $1/\sqrt{n}$, $\mathcal{F}_c^L$ has a cutoff with the same cutoff time and window. When $c_n$ is of order between $1/\sqrt{n}$ and $1/\sqrt{n}\log n$, $\mathcal{F}_c^L$ has a cutoff with the same cutoff time but different (larger) cutoff window. When $c_n$ is of order smaller than $1/\sqrt{n}\log n$, the cutoff of $\mathcal{F}_c^L$ disappears.
\end{ex}

The proofs of Theorems \ref{t-tv3} and \ref{t-tv4} and Corollary \ref{c-tv} are complicated and are given in Section \ref{s-proof}.

\section{Comparison of total variation cutoffs}\label{s-comp}

In this section, we make a comparison of cutoffs introduced in Sections \ref{s-tv} and \ref{s-bs}. To avoid confusion, we use $\mathcal{F},\mathcal{F}_c$ to denote families of birth and death chains without initial states specified and let $\mathcal{F}^L,\mathcal{F}_c^L$ and $\mathcal{F}^R,\mathcal{F}_c^R$ be families of chains started at respectively left and right boundary states. The following theorem is an immediate corollary of Theorems \ref{t-tv3} and \ref{t-tv4} and the proof is given in the end of this section.

\begin{thm}\label{t-tv5}
Let $\mathcal{F}=(\mathcal{X}_n,K_n,\pi_n)_{n=1}^\infty$ be a family of irreducible birth and death chains with $\mathcal{X}_n=\{0,...,n\}$ and $\mathcal{F}_c$ be the family of continuous time chains associated with $\mathcal{F}$. For any sequence $S=(x_n)_{n=1}^\infty$ with $x_n\in\mathcal{X}_n$, let $\mathcal{F}^S,\mathcal{F}_c^S$ be the families of chains in $\mathcal{F},\mathcal{F}_c$ for which the $n$th chain started at $x_n$.
\begin{itemize}
\item[(1)] If $\mathcal{F}_c^L$ and $\mathcal{F}_c^R$ have a total variation cutoff with cutoff time $r_n$ and $s_n$, then $\mathcal{F}_c$ has a maximum total variation cutoff with cutoff time $t_n$, where $t_n=\max\{r_n,s_n\}$.

\item[(2)] Let $M_n\in\mathcal{X}_n$ be a sequence of states satisfying
\[
 \inf_{n\ge 1}\pi_n([0,M_n])>0,\quad\inf_{n\ge 1}\pi_n([M_n,n])>0
\]
and let $S=(x_n)_{n=1}^\infty$, where $x_n\in\{0,n\}$ is a state such that
\[
 \max\left\{\mathbb{E}_{0}\widetilde{\tau}^{(n)}_{M_n},\mathbb{E}_{n}\widetilde{\tau}^{(n)}_{M_n}\right\}
 =\mathbb{E}_{x_n}\widetilde{\tau}^{(n)}_{M_n}
\]
and $\widetilde{\tau}^{(n)}_i$ is the first hitting time to state $i$ of the $n$th chain in $\mathcal{F}_c$. If $\mathcal{F}_c$ has a maximum total variation cutoff with cutoff time $t_n$, then $\mathcal{F}_c^S$ has a total variation cutoff with cutoff time $t_n$. In particular, $\mathcal{F}_c^S$ has a $(\mathbb{E}_{x_n}\widetilde{\tau}^{(n)}_{M_n},b_n)$ total variation cutoff with $b_n^2=\max\{\textnormal{Var}_0\widetilde{\tau}^{(n)}_{M_n},\textnormal{Var}_n\widetilde{\tau}^{(n)}_{M_n}\}$.
\end{itemize}
The above statements also apply for $\mathcal{F}$ under the assumption $\inf_{n,i}K_n(i,i)>0$.
\end{thm}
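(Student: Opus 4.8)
The plan is to read both parts off Theorems~\ref{t-tv3}--\ref{t-tv4}, using the hitting-time machinery already assembled (Lemmas~\ref{l-meanvar}--\ref{l-gap}, Corollary~\ref{c-sep}, Proposition~\ref{p-hittime}, Remarks~\ref{r-cutoff} and~\ref{r-wcomp}, and Corollary~\ref{c-tv}). For $a\in(0,1)$ let $M_n(a)$ be the least state with $\pi_n([0,M_n(a)])\ge a$, so $\pi_n([M_n(a),n])>1-a$, and set $u_n(a)=\mathbb{E}_0\widetilde{\tau}^{(n)}_{M_n(a)}$, $v_n^2(a)=\textnormal{Var}_0\widetilde{\tau}^{(n)}_{M_n(a)}$ as in Theorem~\ref{t-tv3}.

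\emph{Part (1).} The lower bound is immediate from $T^{(c)}_{n,\textnormal{\tiny TV}}(\epsilon)\ge\max\{T^{(c)}_{n,\textnormal{\tiny TV}}(0,\epsilon),T^{(c)}_{n,\textnormal{\tiny TV}}(n,\epsilon)\}$, which the cutoffs of $\mathcal{F}_c^L$ and $\mathcal{F}_c^R$ make $\sim\max\{r_n,s_n\}=t_n$. For the upper bound, fix $\epsilon\in(0,1)$, put $j=M_n(\epsilon/3)$, and let $k$ be any state with $\pi_n([0,k])\ge1-\epsilon/3$ and $\pi_n([k,n])\ge\epsilon/3$; then $j\le k$ and $\pi_n([j,k])\ge1-2\epsilon/3$. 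Applying Lemma~\ref{l-tv}, and using that for a chain started at $i$ one has $\widetilde{\tau}^{(n)}_j\le\widetilde{\tau}^{(n)}_k$ when $i\le j$ and $\widetilde{\tau}^{(n)}_k\le\widetilde{\tau}^{(n)}_j$ when $i\ge k$, together with the standard stochastic monotonicity $\mathbb{P}_i(\widetilde{\tau}^{(n)}_j>t)\le\mathbb{P}_n(\widetilde{\tau}^{(n)}_j>t)$ ($i\ge j$) and $\mathbb{P}_i(\widetilde{\tau}^{(n)}_k>t)\le\mathbb{P}_0(\widetilde{\tau}^{(n)}_k>t)$ ($i\le k$), one gets
\[
 d^{(c)}_{n,\textnormal{\tiny TV}}(i,t)\le\mathbb{P}_n(\widetilde{\tau}^{(n)}_j>t)+\mathbb{P}_0(\widetilde{\tau}^{(n)}_k>t)+\tfrac{2\epsilon}{3}
\]
for every state $i$. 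The cutoffs force $\pi_n(0),\pi_n(n)\to0$ (the remark following Theorem~\ref{t-tv4}), so Theorem~\ref{t-tv3}(2) applies to $\mathcal{F}_c^L$ and, by reflection, to $\mathcal{F}_c^R$: it gives $v_n(1-\epsilon/3)=o(u_n(1-\epsilon/3))$ with $u_n(1-\epsilon/3)\sim r_n$ (as $u_n(\cdot)$ is a cutoff time for $\mathcal{F}_c^L$), so Chebyshev yields $\mathbb{P}_0(\widetilde{\tau}^{(n)}_k>(1+\delta)r_n)\to0$ for all $\delta>0$, and symmetrically $\mathbb{P}_n(\widetilde{\tau}^{(n)}_j>(1+\delta)s_n)\to0$. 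Taking $t=(1+\delta)t_n$ gives $\sup_id^{(c)}_{n,\textnormal{\tiny TV}}(i,(1+\delta)t_n)\le2\epsilon/3+o(1)$, whence $\limsup_nT^{(c)}_{n,\textnormal{\tiny TV}}(\epsilon)/t_n\le1$; combined with the lower bound this is the asserted $(t_n)$ cutoff. The discrete version is identical, with Theorem~\ref{t-tv4} in place of Theorem~\ref{t-tv3}.

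\emph{Part (2).} Splitting $\mathbb N$ into $\{n:x_n=0\}$ and $\{n:x_n=n\}$, using the reflection $i\mapsto n-i$, and noting that a cutoff with a prescribed cutoff time restricts to any subsequence, we may assume $x_n=0$ for all $n$, i.e. $\mathbb{E}_0\widetilde{\tau}^{(n)}_{M_n}\ge\mathbb{E}_n\widetilde{\tau}^{(n)}_{M_n}$; write $\theta_n=\mathbb{E}_0\widetilde{\tau}^{(n)}_{M_n}$ and $\alpha_n^2=\max\{\textnormal{Var}_0\widetilde{\tau}^{(n)}_{M_n},\textnormal{Var}_n\widetilde{\tau}^{(n)}_{M_n}\}$. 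By Theorem~\ref{t-tv2} the maximum total variation cutoff of $\mathcal{F}_c$ gives $\theta_n\lambda_n\to\infty$ (hence $1/\lambda_n=o(\theta_n)$), $\alpha_n=o(\theta_n)$ and $t_n\sim\theta_n$, and by Remark~\ref{r-pin->0} it gives $\pi_n(0)\to0$; so Theorem~\ref{t-tv3} is available for $\mathcal{F}_c^L$ and the plan is to verify its condition~(2). The key point is the uniform estimate $u_n(a)\asymp\theta_n$ and $v_n(a)=O(\alpha_n)$ for each fixed $a\in(0,1)$. The upper bounds follow from Corollary~\ref{c-sep} applied at $i=M_n(a)$ (legitimate since $\min\{\pi_n([0,M_n(a)]),\pi_n([M_n(a),n])\}\ge\min\{a,1-a\}>0$) and at $i=M_n$, because both $\mathbb{E}_0\widetilde{\tau}^{(n)}_{M_n(a)}+\mathbb{E}_n\widetilde{\tau}^{(n)}_{M_n(a)}$ and $\theta_n$ are $\asymp\sum_i1/\lambda_{n,i}$, and the corresponding variance sums are $\asymp\sum_i1/\lambda_{n,i}^2\asymp\alpha_n^2$. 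For the lower bound $\theta_n=O(u_n(a))$: if $M_n(a)\le M_n$, the strong Markov property gives $\theta_n=u_n(a)+\mathbb{E}_{M_n(a)}\widetilde{\tau}^{(n)}_{M_n}$, and the passage time $\mathbb{E}_{M_n(a)}\widetilde{\tau}^{(n)}_{M_n}=\sum_{\ell=M_n(a)}^{M_n-1}\pi_n([0,\ell])/(p_{n,\ell}\pi_n(\ell))$ is $\le C_a/\lambda_n$ by the lower bound~(\ref{eq-hittime2}) of Proposition~\ref{p-hittime} (splitting at a median of $\pi_n$ and using $\pi_n([0,M_n(a)])\ge a$ and $\pi_n([M_n,n])$ bounded below), hence $o(\theta_n)$ and $u_n(a)\sim\theta_n$; if $M_n(a)\ge M_n$, monotonicity of $j\mapsto\mathbb{E}_0\widetilde{\tau}^{(n)}_j$ gives $u_n(a)\ge\theta_n$ directly.

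Granting these comparisons, $u_n(a)/v_n(a)\to\infty$ for every $a$ (as $\theta_n/\alpha_n\to\infty$), so Theorem~\ref{t-tv3} yields a total variation cutoff for $\mathcal{F}_c^L$; its cutoff time is $u_n(a)$, and for $a$ small enough that $M_n(a)\le M_n$ one has $u_n(a)\sim\theta_n\sim t_n$, so $t_n$ is a cutoff time. Finally Corollary~\ref{c-tv} with $b_n=\alpha_n$ — valid since $\alpha_n=o(u_n(a))$ and $v_n(a)=O(\alpha_n)$ — gives a $(u_n(a),\alpha_n)$ cutoff; since $|\theta_n-u_n(a)|=O(1/\lambda_n)=O(\alpha_n)$ this is a $(\theta_n,\alpha_n)$ cutoff, which (recalling $x_n=0$) is exactly the announced $(\mathbb{E}_{x_n}\widetilde{\tau}^{(n)}_{M_n},b_n)$ cutoff with $b_n^2=\max\{\textnormal{Var}_0\widetilde{\tau}^{(n)}_{M_n},\textnormal{Var}_n\widetilde{\tau}^{(n)}_{M_n}\}$. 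The discrete case is the same, using Theorem~\ref{t-tv4}, Remark~\ref{r-wcomp} to compare $w_n(a)$ with $\beta_n$, and the discrete-time forms of the above hitting-time estimates in~\cite{CSal13-2}. The main obstacle is precisely the comparison $u_n(a)\asymp\theta_n$: a priori the expected passage from $0$ to $M_n(a)$ could be negligible against that to $M_n$ (a bottleneck sitting between $M_n(a)$ and $M_n$), and what makes the argument work is that the cutoff hypothesis $\theta_n\lambda_n\to\infty$, fed into~(\ref{eq-hittime2}), pins the leftover stretch $\mathbb{E}_{M_n(a)}\widetilde{\tau}^{(n)}_{M_n}$ to order $1/\lambda_n=o(\theta_n)$ and so excludes this; a minor nuisance is handling the possibly alternating sequence $(x_n)$ one value at a time.
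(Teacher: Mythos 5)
Your proposal is correct and follows essentially the same route as the paper: reduce to boundary-started chains, feed the hitting-time concentration supplied by Theorems \ref{t-tv3}--\ref{t-tv4} into the mean/variance criterion of Theorem \ref{t-tv2} for part (1), and for part (2) reflect to the left-boundary case and verify condition (2) of Theorem \ref{t-tv3} by showing the mid-passage $\mathbb{E}_{M_n(a)}\widetilde{\tau}^{(n)}_{M_n}$ is negligible. The only cosmetic differences are that you unroll the upper bound of part (1) directly from Lemma \ref{l-tv} rather than citing Theorem \ref{t-tv2}, and you control the mid-passage via the spectral-gap bound (\ref{eq-hittime2}) and $\theta_n\lambda_n\to\infty$, whereas the paper uses the variance lower bound of Lemma \ref{l-hit2}; both are valid and available.
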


\begin{rem}
Let $\mathcal{F}_c,\widetilde{\tau}^{(n)}_i,M_n(a)$ be as in Theorem \ref{t-tv3}. By Theorem \ref{t-tv5}(2) and Remark \ref{r-cutoff}, if $\mathcal{F}_c$ has a maximum total variation cutoff, then
\[
 \mathbb{E}_{M_n(a)}\widetilde{\tau}^{(n)}_{M_n(b)}
 =o\left(\max\left\{\mathbb{E}_{0}\widetilde{\tau}^{(n)}_{M_n(c)},
 \mathbb{E}_{0}\widetilde{\tau}^{(n)}_{M_n(c)}\right\}\right),\quad\forall a,b,c\in(0,1).
\]
\end{rem}

The following example gives counterexamples to the converse of (1) and (2) in Theorem \ref{t-tv5}.

\begin{ex}\label{ex-cutoff2}
Consider the family $\mathcal{F}=(\mathcal{X}_n,K_n,\pi_n)_{n=1}^\infty$, where $\mathcal{X}_n=\{0,1,...,n\}$ and
\[
 \begin{cases}K_n(i,i+1)=1-\frac{i}{2n},\quad\forall 0\le i<n,\,i\ne i_n,\\
 K_n(i+1,i)=\frac{i+1}{2n},\quad\forall 0\le i<n-1,\,i\ne i_n,\quad K_n(n,n-1)=1,\\
 K_n(i_n,i_n+1)=c_n(1-\frac{i_n}{2n}),\quad K_n(i_n+1,i_n)=c_n\frac{i_n+1}{2n},\\
 K_n(i_n,i_n)=(1-c_n)(1-\frac{i_n}{2n}),\quad K_n(i_n+1,i_n+1)=(1-c_n)\frac{i_n+1}{2n},\end{cases}
\]
with $0\le i_n<n$ and $c_n\in[0,1]$, and
\[
 \pi_n(i)=2^{1-2n}\binom{2n}{i},\quad\forall 0\le i<n,\quad \pi_n(n)=2^{-2n}\binom{2n}{n}.
\]
As before, we use $M_n(a)$ to denote a state in $\mathcal{X}_n$ satisfying $\pi_n([0,M_n(a)])\ge a$ and $\pi_n([M_n(a),n])\ge 1-a$ and let $\widetilde{\tau}^{(n)}_i$ be the first hitting time to state $i$ of the continuous time chain associated with $(\mathcal{X}_n,K_n,\pi_n)$. Let $0<\lambda_{n,1}<\lambda_{n,2}<\cdots<\lambda_{n,n}$ be eigenvalues of $I-K_n$. It follows immediately from the central limit theorem that
\begin{equation}\label{eq-mna}
 n-M_n(a)\asymp \sqrt{n},\quad\forall a\in(0,1).
\end{equation}
In what follows, we discuss the total variation cutoffs of $\mathcal{F}_c$, $\mathcal{F}_c^L$ and $\mathcal{F}_c^R$ with specific $c_n$ and $i_n$.

First, assume that $c_n=1$ for all $n$. In this setting, the chain $(\mathcal{X}_n,K_n,\pi_n)$ is exactly the collapsed chain of the Ehrenfest model on $\{0,1,...,2n\}$ obtained by combining states $\{i,2n-i\}$ into a new state for $0\le i<n$. The spectral information of the Ehrenfest model is well-studied and this implies
\[
 \lambda_{n,i}=\frac{2i}{n},\quad\forall 1\le i\le n.
\]
By Theorem \ref{t-sep}, $\mathcal{F}_c$ has a maximum separation cutoff with cutoff time $\frac{1}{2}n\log n$ and, thus, has a maximum total variation cutoff. A simple computation with the Stirling formula gives
\[
 \pi_n(i)\asymp \frac{1}{\sqrt{n}},\quad\text{uniformly for }M_n(a)\le i\le n.
\]
By Lemma \ref{l-meanvar2}, this implies that, for $a\in(0,1)$,
\[
 \mathbb{E}_n\widetilde{\tau}^{(n)}_{M_n(a)}\asymp n,\quad \text{Var}_n\widetilde{\tau}_{M_n(a)}\asymp n^2,
\]
and, by Theorem \ref{t-sep2}, we have $\mathbb{E}_0\widetilde{\tau}^{(n)}_{M_n(a)}\sim \frac{1}{2}n\log n$ for any $a\in(0,1)$. As a consequence of Theorems \ref{t-tv3} and \ref{t-tv5}(2), $\mathcal{F}_c^R$ has no total variation cutoff, but $\mathcal{F}_c^L$ has with cutoff time $\frac{1}{2}n\log n$. Furthermore, by Theorem \ref{t-tv5}(1), the total variation cutoff time for $\mathcal{F}_c$ can be $\frac{1}{2}n\log n$. This gives a counterexample to the converse of Theorem \ref{t-tv5}(1).

Next, we consider the case $n-i_n=o(\sqrt{n})$ and $c_n$ is small. The second assumption means that a bottleneck arises between states $i_n$ and $i_n+1$. Under the first assumption, (\ref{eq-mna}) implies that, for $a\in(0,1)$, both $\mathbb{E}_0\widetilde{\tau}^{(n)}_{M_n(a)}$ and $\text{Var}_0\widetilde{\tau}^{(n)}_{M_n(a)}$ remain the same as in the case $c_n=1$. This implies that $\mathcal{F}_c^L$ has a total variation cutoff with cutoff time $\frac{1}{2}n\log n$. For the cutoff of $\mathcal{F}_c^R$, one may compute using the formula in Lemma \ref{l-meanvar2} that, for any $a\in(0,1)$,
\[
 \mathbb{E}_n\widetilde{\tau}^{(n)}_{M_n(a)}\asymp n+\frac{n-i_n}{c_n},\quad \text{Var}_n\widetilde{\tau}_{M_n(a)}\asymp
 \left(n+\frac{n-i_n}{c_n}\right)^2.
\]
Consequently, Theorem \ref{t-tv3} implies that $\mathcal{F}_c^R$ has no cutoff in total variation. Moreover, Theorem \ref{t-tv2} implies that if $(n-i_n)/c_n=o(n\log n)$, then $\mathcal{F}_c$ has a maximum total variation cutoff. If $n\log n=O((n-i_n)/c_n)$, then $\mathcal{F}_c$ has no maximum total variation cutoff, which gives a counterexample to the converse of Theorem \ref{t-tv5}(2).
\end{ex}

The next theorem provides more information on the comparison of cutoffs and should be regarded as a complement to Theorem \ref{t-tv5}.

\begin{thm}\label{t-comp}
Let $\mathcal{F}=\{(\mathcal{X}_n,K_n,\pi_n)_{n=1}^\infty$ be a family of birth and death chains with $\mathcal{X}_n=\{0,1,...,n\}$ and $\mathcal{F}_c$ be the family of continuous time chains associated with $\mathcal{F}$. Suppose that, in total variation, $\mathcal{F}_c^L$ has a cutoff with cutoff time $t_n$ but no subsequence of $\mathcal{F}_c^R$ has a cutoff. Let $M_n$ be a state in $\mathcal{X}_n$ and set
\[
 R=\limsup_{n\ra\infty}\frac{\mathbb{E}_n\widetilde{\tau}^{(n)}_{M_n}}{t_n},\quad\forall a\in(0,1).
\]
Then, the following are equivalent.
\begin{itemize}
\item[(1)] $\mathcal{F}_c$ has a maximum total variation cutoff. In particular, $t_n$ is a cutoff time.

\item[(2)] $R=0$ for some sequence $(M_n)_{n=1}^\infty$ satisfying
\begin{equation}\label{eq-mn}
 \inf_{n\ge 1}\pi_n([0,M_n])>0,\quad\inf_{n\ge 1}\pi_n([M_n,n])>0.
\end{equation}

\item[(3)] $R=0$ for any sequence $(M_n)_{n=1}^\infty$ satisfying \textnormal{(\ref{eq-mn})}.
\end{itemize}

The above statement also holds for $\mathcal{F}$ provided $\inf_{n,i}K_n(i,i)>0$.
\end{thm}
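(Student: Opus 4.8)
The plan is to deduce the theorem from Theorems~\ref{t-tv2}, \ref{t-tv3} and~\ref{t-tv5}. For a state $M$ write $\theta_n(M)=\max\{\mathbb{E}_0\widetilde{\tau}^{(n)}_M,\mathbb{E}_n\widetilde{\tau}^{(n)}_M\}$ and $\alpha_n(M)^2=\max\{\mathrm{Var}_0\widetilde{\tau}^{(n)}_M,\mathrm{Var}_n\widetilde{\tau}^{(n)}_M\}$, so that Theorem~\ref{t-tv2} says: for any $(M_n)$ satisfying~(\ref{eq-mn}), $\mathcal{F}_c$ has a maximum total variation cutoff iff $\theta_n(M_n)/\alpha_n(M_n)\ra\infty$, in which case $\theta_n(M_n)$ is a cutoff time. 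Three elementary facts will be used repeatedly. (a) By Lemma~\ref{l-meanvar} and the Cauchy--Schwarz inequality applied to the eigenvalue sums, $\mathrm{Var}_j\widetilde{\tau}^{(n)}_M\le(\mathbb{E}_j\widetilde{\tau}^{(n)}_M)^2$, and both $\mathbb{E}_j\widetilde{\tau}^{(n)}_{\cdot}$ and $\mathrm{Var}_j\widetilde{\tau}^{(n)}_{\cdot}$ increase as the target recedes from $j$ (strong Markov property). (b) Since $\mathcal{F}_c^L$ has a cutoff, $\pi_n(0)\ra 0$, so Theorem~\ref{t-tv3} is available; Remark~\ref{r-cutoffitme} gives $\mathbb{E}_0\widetilde{\tau}^{(n)}_{M_n}\sim t_n$ for every $(M_n)$ satisfying~(\ref{eq-mn}), and combining Theorem~\ref{t-tv3}(2) with the sandwich $M_n(a)\le M_n\le M_n(b)$ of such an $M_n$ between quantile states and the monotonicity in~(a) gives $\mathrm{Var}_0\widetilde{\tau}^{(n)}_{M_n}=o(t_n^2)$. (c) A subfamily of a family with a cutoff has a cutoff, and a median of $\pi_n$ satisfies~(\ref{eq-mn}).

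Given (b) and (c), $(3)\Ra(2)$ is immediate. For $(2)\Ra(1)$: if $R=0$ for some $(M_n)$ satisfying~(\ref{eq-mn}), then $\mathbb{E}_n\widetilde{\tau}^{(n)}_{M_n}=o(t_n)$, hence $\theta_n(M_n)=\mathbb{E}_0\widetilde{\tau}^{(n)}_{M_n}\sim t_n$ eventually, while $\mathrm{Var}_n\widetilde{\tau}^{(n)}_{M_n}\le(\mathbb{E}_n\widetilde{\tau}^{(n)}_{M_n})^2=o(t_n^2)$ and $\mathrm{Var}_0\widetilde{\tau}^{(n)}_{M_n}=o(t_n^2)$, so $\alpha_n(M_n)=o(\theta_n(M_n))$ and Theorem~\ref{t-tv2} yields the maximum total variation cutoff of $\mathcal{F}_c$, with cutoff time $\theta_n(M_n)\sim t_n$; this is~(1). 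For $(1)\Ra(3)$, assume $\mathcal{F}_c$ has a maximum total variation cutoff and suppose, for contradiction, that $R>0$ for some $(M_n)$ satisfying~(\ref{eq-mn}); pass to a subsequence along which $\mathbb{E}_n\widetilde{\tau}^{(n)}_{M_n}\ge c\,t_n$ for a fixed $c>0$. If along a further subsequence $\mathbb{E}_n\widetilde{\tau}^{(n)}_{M_n}/t_n\ra\infty$, then, since $\mathbb{E}_0\widetilde{\tau}^{(n)}_{M_n}\sim t_n$, the boundary state $x_n$ realizing $\theta_n(M_n)$ equals $n$ there, so by Theorem~\ref{t-tv5}(2) the family $\mathcal{F}_c^S$ with $S=(x_n)$ --- hence the subfamily of $\mathcal{F}_c^R$ indexed by $\{n:x_n=n\}$ --- has a total variation cutoff, contradicting the hypothesis. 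Otherwise we may pass to a subsequence with $\mathbb{E}_n\widetilde{\tau}^{(n)}_{M_n}\asymp t_n$, so $\theta_n(M_n)\asymp t_n$, and Theorem~\ref{t-tv2} forces $\mathrm{Var}_n\widetilde{\tau}^{(n)}_{M_n}\le\alpha_n(M_n)^2=o(t_n^2)=o\bigl((\mathbb{E}_n\widetilde{\tau}^{(n)}_{M_n})^2\bigr)$ there. It remains to show that this relation forces a total variation cutoff of $\mathcal{F}_c^R$ along that subsequence, again contradicting the hypothesis; then $R=0$, giving~(3), and $\theta_n(M_n)=\mathbb{E}_0\widetilde{\tau}^{(n)}_{M_n}\sim t_n$ confirms that $t_n$ is a cutoff time for $\mathcal{F}_c$.

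For that last step, let $\gamma^{(n)}_1,\dots,\gamma^{(n)}_{n-M_n}$ be the eigenvalues of the submatrix of $I-K_n$ indexed by $\{M_n+1,\dots,n\}$; by Lemma~\ref{l-meanvar} read in the reflected state variable, $\mathbb{E}_n\widetilde{\tau}^{(n)}_{M_n}=\sum_j 1/\gamma^{(n)}_j$ and $\mathrm{Var}_n\widetilde{\tau}^{(n)}_{M_n}=\sum_j 1/(\gamma^{(n)}_j)^2$, so $\mathrm{Var}_n\widetilde{\tau}^{(n)}_{M_n}=o((\mathbb{E}_n\widetilde{\tau}^{(n)}_{M_n})^2)$ is equivalent to $(\min_j\gamma^{(n)}_j)\,\mathbb{E}_n\widetilde{\tau}^{(n)}_{M_n}\ra\infty$. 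The plan is to transfer this, using the monotonicity and sub-sum comparisons of fact~(a) applied to the $\pi_n$-quantile targets surrounding $M_n$, to the analogous statement for every quantile target, and then to invoke the reflected form of Theorem~\ref{t-tv3} (the equivalence of conditions (1) and (3) there) to obtain the desired cutoff of $\mathcal{F}_c^R$ along the subsequence.

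The discrete-time assertion is obtained by the same scheme, with Theorem~\ref{t-tv2}(2), Theorem~\ref{t-tv4}, the discrete part of Theorem~\ref{t-tv5}, and the comparisons $\delta\,\mathrm{Var}_0\widetilde{\tau}^{(n)}_M\le\mathrm{Var}_0\tau^{(n)}_M\le\mathrm{Var}_0\widetilde{\tau}^{(n)}_M$ together with its analogue from $n$ (Remarks~\ref{r-wcomp}--\ref{r-wcomp2}), valid when $\delta=\inf_{n,i}K_n(i,i)>0$, which make replacing $\widetilde{\tau}$-variances by $\tau$-variances cost only universal constants. I expect the transfer in the previous paragraph to be the main obstacle: one must upgrade the spectral/variance condition for the hitting time of a single good target $M_n$ to the same condition for every $\pi_n$-quantile target (handling the non-uniqueness of quantile states), and must first secure $\pi_n(n)\ra 0$ along the offending subsequence, which I would argue holds because $(\min_j\gamma^{(n)}_j)\,\mathbb{E}_n\widetilde{\tau}^{(n)}_{M_n}\ra\infty$ forces the descent from $n$ to $M_n$ to be genuinely diffusive --- not dominated by a single bottleneck --- and, for a good target $M_n$, this is incompatible with $\pi_n(n)$ staying bounded away from $0$.
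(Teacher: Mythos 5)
Your treatment of $(3)\Ra(2)$ and $(2)\Ra(1)$ matches the paper's: sandwich $M_n$ between quantile states, use Theorem \ref{t-tv3} and Remark \ref{r-cutoffitme} to get $\mathbb{E}_0\widetilde{\tau}^{(n)}_{M_n}\sim t_n$ and $\textnormal{Var}_0\widetilde{\tau}^{(n)}_{M_n}=o(t_n^2)$, use $\textnormal{Var}\le(\mathbb{E})^2$ on the right side, and conclude by Theorem \ref{t-tv2}. The problem is $(1)\Ra(3)$. Your Case A (where $\mathbb{E}_n\widetilde{\tau}^{(n)}_{M_n}/t_n\ra\infty$, so $x_n=n$ and Theorem \ref{t-tv5}(2) produces a cutoff for a subfamily of $\mathcal{F}_c^R$) is fine. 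But in Case B you reduce the contradiction to the claim that $\mathbb{E}_n\widetilde{\tau}^{(n)}_{M_n}\asymp t_n$ together with $\textnormal{Var}_n\widetilde{\tau}^{(n)}_{M_n}=o(t_n^2)$ \emph{for the single sequence} $M_n$ forces a total variation cutoff of $\mathcal{F}_c^R$ along the subsequence, and you then only announce a ``plan'' for this, explicitly flagging as unresolved both the upgrade to \emph{every} right-quantile target (needed for condition (2) of the reflected Theorem \ref{t-tv3}) and the verification that $\pi_n(n)\ra 0$ (needed as the standing hypothesis of that theorem); your argument for the latter is a heuristic about the descent being ``genuinely diffusive,'' not a proof. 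As written, Case B is therefore incomplete. Both gaps can in fact be closed with the reflected Lemma \ref{l-hit2}: the lower bound $\textnormal{Var}_n\widetilde{\tau}^{(n)}_{M_n}\ge\tfrac{\pi_n(n)}{2\pi_n([M_n+1,n])}(\mathbb{E}_n\widetilde{\tau}^{(n)}_{M_n})^2$ gives $\pi_n(n)\ra 0$ at once, and for a right-quantile $N$ between $M_n$ and $n$ the same lemma gives $\mathbb{E}_{N}\widetilde{\tau}^{(n)}_{M_n}=O\bigl(\sqrt{\textnormal{Var}_{N}\widetilde{\tau}^{(n)}_{M_n}}\bigr)=o(t_n)$, whence all quantile expectations are $\asymp t_n$ while all quantile variances are $o(t_n^2)$; but none of this is in your text.

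It is also worth noting that the paper avoids this entire difficulty by arguing the contrapositive in the other logical direction: assuming $R>0$, it extracts a subsequence with $t_{k_n}=O(\mathbb{E}_{k_n}\widetilde{\tau}^{(k_n)}_{M_{k_n}})$, then uses the hypothesis that \emph{no} subsequence of $\mathcal{F}_c^R$ has a cutoff (via Theorem \ref{t-tv3}) to refine so that $\sqrt{\textnormal{Var}_{k_n}\widetilde{\tau}^{(k_n)}_{M_{k_n}}}\asymp\mathbb{E}_{k_n}\widetilde{\tau}^{(k_n)}_{M_{k_n}}$, and concludes from Theorem \ref{t-tv2} that the corresponding subfamily of $\mathcal{F}_c$ has no maximum cutoff. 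That route only needs to \emph{negate} one instance of the cutoff criterion rather than \emph{establish} a cutoff for $\mathcal{F}_c^R$, so it never has to check $\pi_n(n)\ra 0$ or the all-quantiles condition. Your discrete-time paragraph inherits the same gap; the paper additionally splits on whether $t_{k_n}$ stays bounded, using $\mathbb{E}_0\tau^{(n)}_{M_n}+\mathbb{E}_n\tau^{(n)}_{M_n}\ge n$, although your Case A would absorb that situation.
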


\begin{proof}
We first consider the continuous time case. Since $\mathcal{F}_c^L$ has a total variation cutoff with cutoff time $t_n$, Theorem \ref{t-tv3} implies
\begin{equation}\label{eq-tn}
 \mathbb{E}_0\widetilde{\tau}^{(n)}_{M_n(a)}\sim t_n,\quad \text{Var}_0\widetilde{\tau}^{(n)}_{M_n(a)}=o(t_n^2),\quad\forall a\in(0,1).
\end{equation}
Under the assumption of (\ref{eq-mn}), one may choose $0<a<b<1$ such that $M_n(a)\le M_n\le M_n(b)$. By (\ref{eq-tn}), this implies
\begin{equation}\label{eq-tn2}
 \mathbb{E}_0\widetilde{\tau}^{(n)}_{M_n}\sim t_n,\quad \text{Var}_0\widetilde{\tau}^{(n)}_{M_n}=o(t_n^2).
\end{equation}

(3)$\Ra$(2) is obvious. Now, we prove (2)$\Ra$(1) and assume that (2) holds.
Note that $R=0$ is equivalent to $\mathbb{E}_n\widetilde{\tau}^{(n)}_{M_n}=o(t_n)$. This implies $\text{Var}_n\widetilde{\tau}^{(n)}_{M_n}=o(t_n^2)$ using the fact $\text{Var}_n\widetilde{\tau}^{(n)}_i\le(\mathbb{E}_n\widetilde{\tau}^{(n)}_i)^2$. Along with (\ref{eq-tn2}), we may conclude
\begin{equation}\label{eq-tn3}
 \sqrt{\max\left\{\text{Var}_0\widetilde{\tau}^{(n)}_{M_n},\text{Var}_n\widetilde{\tau}^{(n)}_{M_n}\right\}}
 =o\left(\max\left\{\mathbb{E}_0\widetilde{\tau}^{(n)}_{M_n},\mathbb{E}_n\widetilde{\tau}^{(n)}_{M_n}\right\}\right).
\end{equation}
By Theorem \ref{t-tv2}, $\mathcal{F}_c$ has a maximum total variation cutoff with cutoff time $t_n$.

For (1)$\Ra$(3), we prove the equivalent implication by assuming that $R>0$ for some sequence $(M_n)_{n=1}^\infty$ satisfying (\ref{eq-mn}). Since $R>0$, we may choose a subsequence $(k_n)_{n=1}^\infty$ such that
\begin{equation}\label{eq-tn4}
 t_{k_n}=O\left(\mathbb{E}_{k_n}\widetilde{\tau}^{(k_n)}_{M_{k_n}}\right).
\end{equation}
As the subfamily of $\mathcal{F}_c^R$ indexed by $(k_n)_{n=1}^\infty$ is assumed to have no total variation cutoff, we may refine, by Theorem \ref{t-tv3}, the selection of $k_n$ such that
\begin{equation}\label{eq-tn5}
 \sqrt{\text{Var}_{k_n}\widetilde{\tau}^{(k_n)}_{M_{k_n}}}\asymp\mathbb{E}_{k_n}\widetilde{\tau}^{(k_n)}_{M_{k_n}}.
\end{equation}
Combining (\ref{eq-tn2}) with the above discussion leads to
\[
 \sqrt{\max\left\{\text{Var}_0\widetilde{\tau}^{(k_n)}_{M_{k_n}},\text{Var}_{k_n}\widetilde{\tau}^{(k_n)}_{M_{k_n}}\right\}}
 \asymp\max\left\{\mathbb{E}_0\widetilde{\tau}^{(k_n)}_{M_{k_n}},\mathbb{E}_{k_n}\widetilde{\tau}^{(k_n)}_{M_{k_n}}\right\}.
\]
By Theorem \ref{t-tv2}, the subfamily of $\mathcal{F}_c$ indexed by $(k_n)$ has no maximum total variation cutoff.

Next, we consider the discrete time case. (3)$\Ra$(2) is clear. For (2)$\Ra$(1), assume that $R=0$ for some sequence $M_n$ satisfying (\ref{eq-mn}). Observe that
\begin{equation}\label{eq->n}
 \mathbb{E}_0\tau^{(n)}_{M_n}+\mathbb{E}_n\tau^{(n)}_{M_n}\ge n.
\end{equation}
By Remark \ref{r-cutoffitme}, (\ref{eq->n}) implies $t_n\ra\infty$. Using Theorem \ref{t-tv4}, one may derive a discrete time version of (\ref{eq-tn}), (\ref{eq-tn2}) and (\ref{eq-tn3}). As a consequence of Theorem \ref{t-tv2}, $\mathcal{F}$ has a maximum total variation cutoff with cutoff time $t_n$.

For (1)$\Ra$(3), we assume the inverse of (3) that $R>0$ for some sequence $M_n$ satisfying (\ref{eq-mn}). Consider the following two cases.

{\bf Case 1:} $t_{k_n}\ra\infty$ for some subsequence $k_n$.

{\bf Case 2:} $t_{k_n}=O(1)$ for some subsequence $k_n$.

The proof of Case 1 is the same as the continuous time case. For Case 2, since the subfamily of $\mathcal{F}^L$ indexed by $(k_n)$ has a cutoff with cutoff time $t_{k_n}$, Remark \ref{r-cutoffitme} implies that
\[
 \mathbb{E}_0\tau^{(k_n)}_{M_{k_n}}=O(1),\quad \text{Var}_0\tau^{(k_n)}_{M_{k_n}}=O(1).
\]
By (\ref{eq->n}), we have $\mathbb{E}_{k_n}\tau^{(k_n)}_{M_{k_n}}\ra\infty$ and, by Theorem \ref{t-tv4}, we obtain a discrete version of (\ref{eq-tn4}) and then (\ref{eq-tn5}). Consequently, Theorem \ref{t-tv2} implies that $\mathcal{F}$ has no maximum total variation cutoff.
\end{proof}

The next theorem is a special version of Theorem \ref{t-tv5} which identifies two different cutoffs discussed in this section.
\begin{thm}\label{t-tv6}
Let $\mathcal{F}=(\mathcal{X}_n,K_n,\pi_n)_{n=1}^\infty$ be a family of irreducible birth and death chains with $\mathcal{X}_n=\{0,...,n\}$ and $\mathcal{F}_c$ be the families of continuous time chains associated with $\mathcal{F}$. Assume that $K_n(i,j)=K_n(n-i,n-j)$ for all $i,j\in\mathcal{X}_n$ and $n\ge 1$.
\begin{itemize}
\item[(1)] $\mathcal{F}^L_c$ has a total variation cutoff with cutoff time $t_n$ if and only if $\mathcal{F}_c$ has a maximum total variation cutoff with cutoff time $t_n$.

\item[(2)] Under the assumption that $\inf_{n,i}K_n(i,i)>0$, $\mathcal{F}^L$ has a total variation cutoff with cutoff time $t_n$ if and only if $\mathcal{F}$ has a maximum total variation cutoff with cutoff time $t_n$.
\end{itemize}
\end{thm}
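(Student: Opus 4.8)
The plan is to exploit the hypothesis $K_n(i,j)=K_n(n-i,n-j)$, which says that the reflection $\phi_n\colon\mathcal{X}_n\to\mathcal{X}_n$ given by $\phi_n(i)=n-i$ is an automorphism of the $n$th chain; in particular the chain started at $0$ and the chain started at $n$ are isomorphic. I would first record the consequences of this symmetry. Since $\phi_n$ is an involution with $K_n(\phi_n(i),\phi_n(j))=K_n(i,j)$, it follows that $K_n^m(\phi_n(i),\phi_n(j))=K_n^m(i,j)$ for every $m\ge 0$, and, by uniqueness of the stationary distribution, $\pi_n(\phi_n(i))=\pi_n(i)$; the same intertwining passes to the continuous-time semigroup $e^{-t(I-K_n)}$. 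Because $A\mapsto\phi_n(A)$ is a $\pi_n$-preserving bijection of the subsets of $\mathcal{X}_n$, one gets $d_{\textnormal{\tiny TV}}(0,m)=d_{\textnormal{\tiny TV}}(n,m)$ and $d^{(c)}_{\textnormal{\tiny TV}}(0,t)=d^{(c)}_{\textnormal{\tiny TV}}(n,t)$ for all $n$, hence $T_{n,\textnormal{\tiny TV}}(0,\epsilon)=T_{n,\textnormal{\tiny TV}}(n,\epsilon)$ and likewise in continuous time. Consequently, for \emph{any} sequence $S=(x_n)_{n=1}^\infty$ with $x_n\in\{0,n\}$, the families $\mathcal{F}_c^S$ and $\mathcal{F}_c^L$ (resp.\ $\mathcal{F}^S$ and $\mathcal{F}^L$) have the same mixing-time function, so $\mathcal{F}_c^S$ presents a cutoff with a prescribed cutoff time $t_n$ if and only if $\mathcal{F}_c^L$ does.

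Granting this, part (1) is immediate from Theorem \ref{t-tv5}. If $\mathcal{F}_c^L$ has a total variation cutoff with cutoff time $t_n$, then by the previous paragraph so does $\mathcal{F}_c^R$, and Theorem \ref{t-tv5}(1) applied with $r_n=s_n=t_n$ gives a maximum total variation cutoff for $\mathcal{F}_c$ with cutoff time $\max\{t_n,t_n\}=t_n$. Conversely, if $\mathcal{F}_c$ has a maximum total variation cutoff with cutoff time $t_n$, fix any sequence $M_n\in\mathcal{X}_n$ with $\inf_n\pi_n([0,M_n])>0$ and $\inf_n\pi_n([M_n,n])>0$ (for instance a median of $\pi_n$), and let $S=(x_n)$ with $x_n\in\{0,n\}$ realizing $\max\{\mathbb{E}_0\widetilde{\tau}^{(n)}_{M_n},\mathbb{E}_n\widetilde{\tau}^{(n)}_{M_n}\}$. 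Then Theorem \ref{t-tv5}(2) gives that $\mathcal{F}_c^S$ has a total variation cutoff with cutoff time $t_n$, and by the first paragraph this transfers to $\mathcal{F}_c^L$. Part (2) is proved by the identical argument with the discrete-time quantities, using that Theorem \ref{t-tv5} applies to $\mathcal{F}$ under the assumption $\inf_{n,i}K_n(i,i)>0$; the intertwining $\phi_nK_n=K_n\phi_n$ is unaffected and still forces $d_{\textnormal{\tiny TV}}(0,m)=d_{\textnormal{\tiny TV}}(n,m)$.

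I expect the only delicate point to be the transfer step in the converse direction: Theorem \ref{t-tv5}(2) only produces a cutoff for the particular boundary sequence $S$ singled out by the hitting-time comparison, not \emph{a priori} for the left boundary. It is precisely the reflection symmetry, which makes the mixing-time functions from $0$ and from $n$ coincide, that allows one to replace $x_n$ by $0$ for free. Everything else reduces to a direct appeal to Theorem \ref{t-tv5} together with the standard observation that the cutoff time of a family is unique up to asymptotic equivalence, so that ``has a cutoff with cutoff time $t_n$'' is an unambiguous assertion.
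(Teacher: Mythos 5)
Your proposal is correct and follows exactly the route the paper intends: the paper states Theorem \ref{t-tv6} as an immediate specialization of Theorem \ref{t-tv5} and gives no separate proof, and your argument supplies precisely the missing details — the reflection $i\mapsto n-i$ is an automorphism preserving $\pi_n$, so the mixing-time functions from $0$ and from $n$ coincide, which makes Theorem \ref{t-tv5}(1) applicable with $r_n=s_n=t_n$ in one direction and lets you replace the distinguished boundary sequence $S$ of Theorem \ref{t-tv5}(2) by the left boundary in the other.
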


\begin{proof}[Proof of Theorem \ref{t-tv5}(Continuous time case)]
As before, we use $\widetilde{\tau}^{(n)}_i$ to denote the first hitting time to state $i$ of the $n$th chain in $\mathcal{F}_c$ and use the notation $M_n(a)$ with $a\in(0,1)$ to denote a state in $\mathcal{X}_n$ satisfying $\pi_n([0,M_n(a)])\ge a$ and $\pi_n([M_n(a),n])\ge 1-a$.

For (1), assume that $\mathcal{F}_c^L,\mathcal{F}_c^R$ have total variation cutoffs with cutoff times $r_n,s_n$. By Theorem \ref{t-tv3}, we have
\[
 \sqrt{\text{Var}_0\widetilde{\tau}^{(n)}_{M_{n}(1/2)}}
 =o\left(\mathbb{E}_0\widetilde{\tau}^{(n)}_{M_{n}(1/2)}\right),
 \quad \mathbb{E}_0\widetilde{\tau}^{(n)}_{M_{n}(1/2)}\sim r_{n},
\]
and
\[
 \sqrt{\text{Var}_{n}\widetilde{\tau}^{(n)}_{M_{n}(1/2)}}
 =o\left(\mathbb{E}_{n}\widetilde{\tau}^{(n)}_{M_{n}(1/2)}\right),
 \quad \mathbb{E}_{n}\widetilde{\tau}^{(n)}_{M_{n}(1/2)}\sim s_{n}.
\]
Clearly, this implies
\[
 \sqrt{\max\left\{\text{Var}_0\widetilde{\tau}^{(n)}_{M_{n}(1/2)},\text{Var}_{n}\widetilde{\tau}^{(n)}_{M_{n}(1/2)}\right\}}
 =o\left(\max\left\{\mathbb{E}_0\widetilde{\tau}^{(n)}_{M_{n}(1/2)},\mathbb{E}_{n}\widetilde{\tau}^{(n)}_{M_{n}(1/2)}\right\}\right)
\]
and
\[
 \max\left\{\mathbb{E}_0\widetilde{\tau}^{(n)}_{M_{n}(1/2)},\mathbb{E}_{n}\widetilde{\tau}^{(n)}_{M_{n}(1/2)}\right\}
 \sim \max\{r_{n},s_n\}=t_n.
\]
By Theorem \ref{t-tv2}, $\mathcal{F}_c$ has a maximum total variation cutoff with cutoff time $t_n$.

For (2), let $\widehat{\mathcal{F}}=(\mathcal{X}_n,\widehat{K}_n,\widehat{\pi}_n)_{n=1}^\infty$ be a family given by
\[
 \widehat{K}_n=K_n,\quad\widehat{\pi}_n=\pi_n\quad\text{if }x_n=0,
\]
and
\[
 \widehat{K}_n(i,j)=K_n(n-i,n-j),\quad\widehat{\pi}_n(i)=\pi_n(n-i),\quad\forall i,j\in\mathcal{X}_n\quad\text{if }x_n=n.
\]
Let $\widehat{\mathcal{F}}_c$ be the family of continuous time chains associated with $\widehat{\mathcal{F}}$. Suppose that $\mathcal{F}_c$ has a maximum total variation cutoff with cutoff time $t_n$. It is obvious that $\widehat{\mathcal{F}}_c$ also has a maximum total variation cutoff with cutoff time $t_n$ and, to show that $\mathcal{F}_c^S$ has a total variation cutoff with cutoff time $t_n$, it is equivalent to prove that $\widehat{\mathcal{F}}_c^L$ has a total variation cutoff with cutoff time $t_n$.

Let $\widehat{\tau}^{(n)}_i$ be the first hitting time to state $i$ of the continuous time chain associated with $(\mathcal{X}_n,\widehat{K}_n,\widehat{\pi}_n)$ and set $\widehat{M}_n$ be a state defined by
\[
 \widehat{M}_n=\begin{cases}M_n&\text{if }x_n=0\\n-M_n&\text{if }x_n=n\end{cases}.
\]
We use $\widehat{M}_n(a)$ to denote a state such that
\[
 \widehat{\pi}_n([0,\widehat{M}_n(a)])\ge a,\quad \widehat{\pi}_n([\widehat{M}_n(a),n])\ge 1-a.
\]
By Theorem \ref{t-tv2}, the total variation cutoff of $\widehat{\mathcal{F}}_c$ with cutoff time $t_n$ implies
\[
 t_n\sim\max\left\{\mathbb{E}_0\widehat{\tau}^{(n)}_{\widehat{M}_n},
 \mathbb{E}_n\widehat{\tau}^{(n)}_{\widehat{M}_n}\right\}=\mathbb{E}_0\widehat{\tau}^{(n)}_{\widehat{M}_n}
\]
and, for any $a\in(0,1)$,
\begin{equation}\label{eq-cutoff}
 \sqrt{\max\left\{\text{Var}_0\widehat{\tau}^{(n)}_{\widehat{M}_n(a)},
 \text{Var}_{n}\widehat{\tau}^{(n)}_{\widehat{M}_{n}(a)}\right\}}=o(t_n)
 =o\left(\mathbb{E}_0\widehat{\tau}^{(n)}_{\widehat{M}_n}\right).
\end{equation}
As a result of Lemma \ref{l-hit2} and (\ref{eq-cutoff}), we have, for $0<b<a<1$,
\[
 \mathbb{E}_{\widehat{M}_n(b)}\widehat{\tau}^{(n)}_{\widehat{M}_n(a)}
 =O\left(\sqrt{\text{Var}_{\widehat{M}_n(b)}\widehat{\tau}^{(n)}_{\widehat{M}_n(a)}}\right)
 =o\left(\mathbb{E}_0\widehat{\tau}^{(n)}_{\widehat{M}_n}\right),
\]
which leads to
\[
 \mathbb{E}_0\widehat{\tau}^{(n)}_{\widehat{M}_n(a)}\sim \mathbb{E}_0\widehat{\tau}^{(n)}_{\widehat{M}_n},\quad\forall a\in(0,1).
\]
Applying the last identity to (\ref{eq-cutoff}) yields
\[
 \sqrt{\text{Var}_0\widehat{\tau}^{(n)}_{\widehat{M}_n(a)}}
 =o\left(\mathbb{E}_{0}\widehat{\tau}^{(n)}_{\widehat{M}_n(a)}\right),\quad\forall a\in(0,1).
\]
By Theorem \ref{t-tv3}, $\widehat{\mathcal{F}}_c^L$ has a total variation cutoff with cutoff time $t_n$. The precise description of the cutoff time and window is given by Theorem \ref{t-tv2}, Corollary \ref{c-tv} and Remark \ref{r-sep}.
\end{proof}

\begin{proof}[Proof of Theorem \ref{t-tv5}(Discrete time case)]
We use $\tau^{(n)}_i$ to denote the first hitting time to state $i$ of the $n$th chain in $\mathcal{F}$ and $M_n(a)$ for a state in $\mathcal{X}_n$ satisfying $\pi_n([0,M_n(a)])\ge a$ and $\pi_n([M_n(a),n])\ge 1-a$.

For (1), assume that $\mathcal{F}^L,\mathcal{F}^R$ have cutoffs with respective cutoff times $r_n,s_n$. Given an increasing sequence $\mathcal{K}=(k_n)_{n=1}^\infty$ in $\{1,2,...\}$, let $\mathcal{F}(\mathcal{K})$ be the family of chains in $\mathcal{F}$ indexed by the sequence $\mathcal{K}$. By Proposition 2.1 in \cite{CSal10}, to prove $\mathcal{F}$ has a maximum total variation cutoff, it suffices to show that, for any increasing sequence of positive integers, there is a subsequence, say $\mathcal{K}$, such that $\mathcal{F}(\mathcal{K})$ has a maximum total variation cutoff. Note that, by Remark \ref{r-cutoffitme}, $r_n+s_n$ must tend to infinity. This implies that $\mathcal{K}$ can be chosen to satisfy one of the following cases.

{\bf Case 1:} $r_{k_n}\ra\infty$ and $s_{k_n}\ra\infty$.

{\bf Case 2:} $r_{k_n}\ra\infty$ and $s_{k_n}=O(1)$.

{\bf Case 3:} $r_{k_n}=O(1)$ and $s_{k_n}\ra\infty$.

The proof for Case 1 is the same as the continuous time case. The proofs of Case 2 and Case 3 are similar and we discuss Case 2, here. By Theorem \ref{t-tv4} and Remark \ref{r-cutoffitme}, the cutoffs of $\mathcal{F}^L,\mathcal{F}^R$ imply that, for $a\in(0,1)$,
\[
 \mathbb{E}_0\tau^{(k_n)}_{M_{k_n}(a)}\sim r_{k_n},\quad \sqrt{\text{Var}_0\tau^{(k_n)}_{M_{k_n}(a)}}=o(r_{k_n}),
\]
and
\[
 \sqrt{\text{Var}_{k_n}\tau^{(k_n)}_{M_{k_n}(a)}}
 \le\mathbb{E}_{k_n}\tau^{(k_n)}_{M_{k_n}(a)}=O(1).
\]
This implies, for $a\in(0,1)$,
\[
 \sqrt{\max\left\{\text{Var}_0\tau^{(k_n)}_{M_{k_n}(a)},
 \text{Var}_{k_n}\tau^{(k_n)}_{M_{k_n}(a)}\right\}}
 =o\left(\max\left\{\mathbb{E}_0\tau^{(k_n)}_{M_{k_n}(a)},
 \mathbb{E}_{k_n}\tau^{(k_n)}_{M_{k_n}(a)}\right\}\right)
\]
and
\[
 \max\left\{\mathbb{E}_0\tau^{(k_n)}_{M_{k_n}(a)},
 \mathbb{E}_{k_n}\tau^{(k_n)}_{M_{k_n}(a)}\right\}\sim \max\{r_{k_n},s_{k_n}\}=t_{k_n}.
\]
By Theorem \ref{t-tv2}, $\mathcal{F}(\mathcal{K})$ has a maximum total variation cutoff with cutoff time $t_{k_n}$.

For (2), based on the following observation
\[
 n\le \mathbb{E}_0\tau^{(n)}_i+\mathbb{E}_n\tau^{(n)}_i,\quad\forall 0\le i\le n,
\]
we have $\mathbb{E}_{x_n}\tau^{(n)}_{M_n}\ra \infty$. The remaining proof is similar to the continuous time case and is skipped.
\end{proof}

\section{Proof of Theorems \ref{t-tv3}, \ref{t-tv4} and Corollary \ref{c-tv}}\label{s-proof}

This section is dedicated to the proof of Theorems \ref{t-tv3} and \ref{t-tv4} and we need the following lemmas.

\begin{lem}\label{l-hit2}
Let $(\mathcal{X},K,\pi)$ be an irreducible birth and death chain on $\{0,1,...,n\}$ and $\tau_i,\widetilde{\tau}_i$ be the first hitting times to state $i$ of the discrete time chain and the associated continuous time chain. Let $\lambda_i$ be the smallest eigenvalue of the submatrix of $I-K$ indexed by $0,...,i-1$. Then, for $i<j$,
\[
 \frac{\pi([0,i])}{2\pi([0,j-1])}(\mathbb{E}_i\widetilde{\tau_j})^2\le
 \textnormal{Var}_i(\widetilde{\tau}_j)\le\frac{2}{\lambda_j}\mathbb{E}_i\widetilde{\tau}_j
\]
and
\[
 \frac{\delta\pi([0,i])}{2\pi([0,j-1])}(\mathbb{E}_i\tau_j)^2\le
 \textnormal{Var}_i(\tau_j)\le\frac{2}{\lambda_j}\mathbb{E}_i\tau_j,
\]
where $\delta=\min_iK(i,i)$. In particular,
\[
 \mathbb{E}_i\tau_j=\mathbb{E}_i\widetilde{\tau}_j\le\frac{4\pi([0,j-1])}{\pi([0,i])\lambda_j}.
\]
\end{lem}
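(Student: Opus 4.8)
The plan is to reduce every claim to the independent single-step passage times $T_k$ = (first time the chain reaches $k+1$, starting from $k$), for $i\le k<j$. By the strong Markov property at the successive first-visit times to $i+1,\dots,j$ these are independent, and under $\mathbb{P}_i$ one has $\widetilde\tau_j=\sum_{k=i}^{j-1}T_k$ (and likewise $\tau_j=\sum_{k=i}^{j-1}T_k^{\mathrm d}$ for the discrete passage times); hence $\mathbb{E}_i\widetilde\tau_j,\ \textnormal{Var}_i\widetilde\tau_j,\ \mathbb{E}_i\tau_j,\ \textnormal{Var}_i\tau_j$ are sums over $k$ of the corresponding quantities for the $T_k$, and $\mathbb{E}_0\widetilde\tau_j=\mathbb{E}_0\widetilde\tau_i+\mathbb{E}_i\widetilde\tau_j$ and the same for variances (legitimate because, for $i<j$, every trajectory from $0$ to $j$ visits $i$). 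I will feed in two inputs. First, Lemma \ref{l-meanvar} applied to the restrictions to $\{0,\dots,m-1\}$ gives $\mathbb{E}_0\widetilde\tau_m=\sum_\ell 1/\beta^{(m)}_\ell$, $\textnormal{Var}_0\widetilde\tau_m=\sum_\ell 1/(\beta^{(m)}_\ell)^2$, $\textnormal{Var}_0\tau_m=\sum_\ell (1-\beta^{(m)}_\ell)/(\beta^{(m)}_\ell)^2$, whence the clean identity $\textnormal{Var}_i\tau_j=\textnormal{Var}_i\widetilde\tau_j-\mathbb{E}_i\widetilde\tau_j$ (also evident from the fact that, given $\tau_j$, $\widetilde\tau_j$ is $\mathrm{Gamma}(\tau_j,1)$), and hence $\textnormal{Var}(T_k^{\mathrm d})=\textnormal{Var}(T_k)-\mathbb{E}(T_k)$. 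Second, with $c_a:=1/(\pi(a)p_a)$ and $m_k:=\mathbb{E}_k\widetilde\tau_{k+1}=\pi([0,k])c_k$, the explicit formula of Lemma \ref{l-meanvar2} — equivalently the occupation-time identity $\mathbb{E}_k[\widetilde\tau_{k+1}^2]=2\sum_{0\le m'\le k}G_k(m')\,\mathbb{E}_{m'}\widetilde\tau_{k+1}$ together with the classical birth-and-death Green's function $G_k(m')=\pi(m')c_k$ (the expected time at $m'$ before reaching $k+1$, started from $k$; valid for $m'\le k$) — evaluates to $\textnormal{Var}(T_k)=m_k^2+2c_k\sum_{a=0}^{k-1}\pi([0,a])^2c_a$.

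For the upper bounds I would use Cauchy interlacing: since the block indexed by $\{0,\dots,i-1\}$ is a principal submatrix of the block indexed by $\{0,\dots,j-1\}$, and both are similar (via $\mathrm{diag}(\pi)^{1/2}$) to symmetric matrices, one gets $\beta^{(j)}_\ell\le\beta^{(i)}_\ell$ for $1\le\ell\le i$, and in particular $\lambda_j\le\beta^{(j)}_\ell$ and $\lambda_j\le\lambda_i\le\beta^{(i)}_\ell$ throughout. Splitting $\textnormal{Var}_i\widetilde\tau_j=\sum_{\ell=i+1}^{j}(\beta^{(j)}_\ell)^{-2}+\sum_{\ell=1}^{i}((\beta^{(j)}_\ell)^{-2}-(\beta^{(i)}_\ell)^{-2})$, bounding the first sum by $\lambda_j^{-1}\sum_{\ell>i}(\beta^{(j)}_\ell)^{-1}$, and on the second sum using $x^{-2}-y^{-2}=(x^{-1}-y^{-1})(x^{-1}+y^{-1})\le 2\lambda_j^{-1}(x^{-1}-y^{-1})$ term by term, the estimate telescopes (using $\mathbb{E}_0\widetilde\tau_j-\mathbb{E}_0\widetilde\tau_i=\mathbb{E}_i\widetilde\tau_j$) to $\textnormal{Var}_i\widetilde\tau_j\le\tfrac2{\lambda_j}\mathbb{E}_i\widetilde\tau_j-\tfrac1{\lambda_j}\sum_{\ell>i}(\beta^{(j)}_\ell)^{-1}\le\tfrac2{\lambda_j}\mathbb{E}_i\widetilde\tau_j$; the discrete version is then immediate from $\textnormal{Var}_i\tau_j\le\textnormal{Var}_i\widetilde\tau_j$. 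The last displayed inequality is obtained by chaining the two main estimates, $\tfrac{\pi([0,i])}{2\pi([0,j-1])}(\mathbb{E}_i\widetilde\tau_j)^2\le\textnormal{Var}_i\widetilde\tau_j\le\tfrac2{\lambda_j}\mathbb{E}_i\widetilde\tau_j$, and solving for $\mathbb{E}_i\widetilde\tau_j$.

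The lower bound is the crux. Starting from $\textnormal{Var}_i\widetilde\tau_j=\sum_{k=i}^{j-1}(m_k^2+2c_k\sum_{a<k}\pi([0,a])^2c_a)$, I would use monotonicity of $a\mapsto\pi([0,a])$ in the forms $\pi([0,a])^2c_a\ge\pi([0,i])\,m_a$ (for $a\ge i$) and $c_k\ge m_k/\pi([0,j-1])$ (for $k\le j-1$) to get $\textnormal{Var}_i\widetilde\tau_j\ge\sum_k m_k^2+\tfrac{2\pi([0,i])}{\pi([0,j-1])}\sum_{i\le a<k\le j-1}m_am_k$; writing $r:=\pi([0,i])/\pi([0,j-1])\in(0,1]$, the right-hand side equals $r\,(\sum_k m_k)^2+(1-r)\sum_k m_k^2\ge r(\mathbb{E}_i\widetilde\tau_j)^2$, slightly sharper than the stated bound. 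For the discrete chain, $\textnormal{Var}_i\tau_j=\textnormal{Var}_i\widetilde\tau_j-\mathbb{E}_i\widetilde\tau_j\ge r(\mathbb{E}_i\tau_j)^2+(1-r)\sum_k m_k^2-\mathbb{E}_i\tau_j$; here I keep the slack term, note that $m_k\ge 1/(1-r_k)\ge 1/(1-\delta)$ (one needs at least a $\mathrm{Geom}(1-r_k)$ number of holding steps to leave $k$), so $\sum_k m_k^2\ge\mathbb{E}_i\tau_j/(1-\delta)$, and finish with a short dichotomy: if $r\le\delta$ then $(1-r)\sum_k m_k^2-\mathbb{E}_i\tau_j\ge0$ already, while if $r>\delta$ one uses in addition $\mathbb{E}_i\tau_j\ge(j-i)/(1-\delta)\ge1/(1-\delta)$; either way this yields $\textnormal{Var}_i\tau_j\ge\tfrac{\delta r}{2}(\mathbb{E}_i\tau_j)^2$, which is the assertion.

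The step I expect to require the most care is establishing the closed form $\textnormal{Var}(T_k)=m_k^2+2c_k\sum_{a<k}\pi([0,a])^2c_a$, i.e.\ the identity $\mathbb{E}_k[\widetilde\tau_{k+1}^2]=2c_k\sum_{a=0}^{k}\pi([0,a])^2c_a$: this needs the Green's-function formula $G_k(m')=\pi(m')c_k$ and an interchange of the order of summation, after which the remaining estimates are elementary. A second point to watch is that naively bounding $\textnormal{Var}_i\tau_j\le\textnormal{Var}_i\widetilde\tau_j$ (or going through the rescaled chain $K^{(\delta)}$) loses the factor $\delta$ in the lower bound, so in the discrete case one must retain the term $(1-r)\sum_k m_k^2$ as above.
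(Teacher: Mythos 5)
Your proof is correct, and for the continuous-time bounds it is essentially the paper's argument: the lower bound comes from decomposing $\widetilde\tau_j$ into independent single-step passage times and expanding $\textnormal{Var}_k(\widetilde\tau_{k+1})$ via Lemma \ref{l-meanvar2} (your closed form $\textnormal{Var}(T_k)=m_k^2+2c_k\sum_{a<k}\pi([0,a])^2c_a$ is exactly what that lemma gives, and your retention of the diagonal terms yields the slightly sharper constant $r$ in place of $r/2$), while the upper bound is the same spectral/interlacing telescoping the paper uses, citing Theorem 4.3.8 of \cite{HJ90}. The one genuine divergence is the discrete-time lower bound: the paper passes to the lazy chain $K^{(\delta)}=(K-\delta I)/(1-\delta)$ and uses the exact decomposition $\textnormal{Var}_i\tau_j=\delta(1-\delta)^{-2}\textnormal{Var}_i\widetilde\tau_j^{(\delta)}+(1-\delta)^{-1}\textnormal{Var}_i\tau_j^{(\delta)}$, dropping the second (nonnegative) term and applying the continuous bound to the lazy chain, which produces the factor $\delta$ in one line; you instead work from $\textnormal{Var}_i\tau_j=\textnormal{Var}_i\widetilde\tau_j-\mathbb{E}_i\widetilde\tau_j$, keep the surplus term $(1-r)\sum_k m_k^2$, and absorb the subtracted mean via $m_k\ge 1/(1-\delta)$. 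I checked your case analysis: in the case $r>\delta$ one must indeed retain $(1-r)\sum_k m_k^2\ge\frac{1-r}{1-\delta}\mathbb{E}_i\tau_j$ alongside $\mathbb{E}_i\tau_j\ge 1/(1-\delta)$ (dropping the surplus term there does not close the estimate), and with both in hand the required inequality reduces to $r\le 2$, so your argument goes through; the paper's lazy-chain route is shorter and avoids the dichotomy, but yours has the merit of not leaving the original chain.
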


\begin{lem}\label{l-hit1}
Let $K$ be the transition matrix of an irreducible birth and death chain on $\{0,1,...,n\}$ and $\widetilde{\tau}_i$ be the first hitting time to state $i$ for the continuous time chain associated with $K$. For $0<i\le n$ and $a\in(0,1)$,
\[
 \mathbb{P}_0(\widetilde{\tau}_i>a\mathbb{E}_0\widetilde{\tau}_i)\ge \min\left\{e^{-\sqrt{a}},\frac{(1-a)^2}{\sqrt{a}+(1-a)^2}\right\}.
\]
\end{lem}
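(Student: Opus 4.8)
The plan is to exploit the classical description of the hitting time distribution recorded (via Brown and Shao \cite{BS87}) in the proof of Lemma \ref{l-meanvar}: started from $0$, the continuous time hitting time $\widetilde\tau_i$ is equal in law to a sum $\sum_{j=1}^i E_j$ of independent exponential variables with parameters $\beta^{(i)}_1,\dots,\beta^{(i)}_i>0$, so that
\[
 \mu:=\mathbb{E}_0\widetilde\tau_i=\sum_{j=1}^i\frac{1}{\beta^{(i)}_j},\qquad
 v:=\textnormal{Var}_0(\widetilde\tau_i)=\sum_{j=1}^i\frac{1}{(\beta^{(i)}_j)^2}.
\]
Writing $m:=\max_{1\le j\le i}1/\beta^{(i)}_j$ for the largest of the individual means, the only structural input I need is the elementary inequality $v\le m\mu$, obtained by pulling one factor $1/\beta^{(i)}_j\le m$ out of each summand of $v$. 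I would then split into two regimes according to whether a single exponential clock carries a substantial share of $\mu$: either $m\ge\sqrt a\,\mu$, or $m<\sqrt a\,\mu$.

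In the first regime I would take $j^*$ attaining the maximum and use $\widetilde\tau_i\ge E_{j^*}$, where $E_{j^*}$ is exponential with rate $1/m$; since $a\mu/m\le a/\sqrt a=\sqrt a$ this gives
\[
 \mathbb{P}_0(\widetilde\tau_i>a\mu)\ge\mathbb{P}(E_{j^*}>a\mu)=e^{-a\mu/m}\ge e^{-\sqrt a}.
\]
In the second regime one has $v\le m\mu<\sqrt a\,\mu^2$, and I would apply the one-sided Chebyshev (Cantelli) inequality to $\widetilde\tau_i$ with deviation $(1-a)\mu>0$:
\[
 \mathbb{P}_0(\widetilde\tau_i\le a\mu)=\mathbb{P}_0\big(\widetilde\tau_i-\mu\le-(1-a)\mu\big)\le\frac{v}{v+(1-a)^2\mu^2},
\]
whence $\mathbb{P}_0(\widetilde\tau_i>a\mu)\ge(1-a)^2\mu^2/\big(v+(1-a)^2\mu^2\big)$. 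As $x\mapsto(1-a)^2\mu^2/(x+(1-a)^2\mu^2)$ is decreasing, substituting the bound $v<\sqrt a\,\mu^2$ produces $\mathbb{P}_0(\widetilde\tau_i>a\mu)\ge(1-a)^2/(\sqrt a+(1-a)^2)$. In either regime the probability is at least one of the two quantities inside the claimed minimum, which finishes the proof.

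I do not expect a genuine obstacle here; the only delicate choices are the split into ``one dominant clock'' versus ``many small clocks'', and the use of the Cantelli bound rather than a Paley--Zygmund estimate (the latter yields only the weaker constant $(1-a)^2/(1+\sqrt a)$, which falls short of the stated bound). Degenerate small-$i$ cases are harmless: for $i\ge1$ the variable $\sum_{j=1}^iE_j$ is positive with a density, so the strictness of the event $\{\widetilde\tau_i>a\mu\}$ and the positivity of $\mu$ require no separate treatment.
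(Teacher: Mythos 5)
Your proof is correct and follows essentially the same route as the paper's: both use the Brown--Shao representation of $\widetilde\tau_i$ as a sum of independent exponentials, the bound $\textnormal{Var}_0\widetilde\tau_i\le\mathbb{E}_0\widetilde\tau_i/\lambda$ (your $v\le m\mu$ with $m=1/\lambda$), Cantelli's inequality in the ``many small clocks'' regime, and the single dominant exponential clock in the other regime; your threshold $m\ge\sqrt a\,\mu$ is exactly the paper's case split $\lambda\mathbb{E}_0\widetilde\tau_i\le b$ with $b=1/\sqrt a$. No gaps.
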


\begin{lem}\label{l-hit3}
Let $K$ be the transition matrix of an irreducible birth and death chain on $\mathcal{X}=\{0,1,...,n\}$ with transition rates $p_i,q_i,r_i$ and stationary distribution $\pi$. Let $\tau_i,\widetilde{\tau}_i$ be as in Lemma \ref{l-hit2}. Then, for $i<j<k$,
\[
 \mathbb{E}_j\min\{\tau_i,\tau_k\}=\mathbb{E}_j\min\{\widetilde{\tau}_i,\widetilde{\tau}_k\}=A/B,
\]
where
\[
 A=\sum_{\begin{subarray}{c}i+1\le \ell_1\le j\\ j\le\ell_2\le k-1 \end{subarray}}\frac{\pi([\ell_1,\ell_2])}
 {\pi(\ell_1)q_{\ell_1}\pi(\ell_2)p_{\ell_2}},\quad B=\sum_{\ell=i}^{k-1}\frac{1}{\pi(\ell)p_{\ell}}.
\]
\end{lem}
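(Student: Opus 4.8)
The plan is to realize both expectations as the value at $j$ of the solution of a single Dirichlet problem on $\{i,\dots,k\}$ and then to solve the resulting second order difference equation in closed form using reversibility. First I would note that $\min\{\tau_i,\tau_k\}$ and $\min\{\widetilde{\tau}_i,\widetilde{\tau}_k\}$ are the hitting times of the set $\{i,k\}$ for, respectively, the discrete and the continuous time chain. The continuous time chain reaches $\{i,k\}$ at the $\tau_{\{i,k\}}$-th jump of the driving rate-$1$ Poisson process, so by Wald's identity (the interarrival times being i.i.d.\ with mean $1$ and independent of $(X_m)$) one has $\mathbb{E}_j\min\{\widetilde{\tau}_i,\widetilde{\tau}_k\}=\mathbb{E}_j\min\{\tau_i,\tau_k\}$, by the same reasoning as in the remark following \eqref{eq-tauc}. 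Hence it suffices to compute $h(j):=\mathbb{E}_j\tau_{\{i,k\}}$ for the discrete time chain. Restricted to $\{i,\dots,k\}$, the function $h$ is the unique solution of $(K-I)h=-1$ on $\{i+1,\dots,k-1\}$ with $h(i)=h(k)=0$; writing this out at a state $i<\ell<k$ and using $p_\ell+q_\ell+r_\ell=1$ (the holding rate cancels) gives
\[
 p_\ell\bigl(h(\ell)-h(\ell+1)\bigr)-q_\ell\bigl(h(\ell-1)-h(\ell)\bigr)=1 .
\]

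Next I would pass to the first differences $D(\ell):=h(\ell)-h(\ell+1)$ for $i\le\ell\le k-1$, so the recursion becomes $p_\ell D(\ell)-q_\ell D(\ell-1)=1$. Multiplying by $\pi(\ell)$ and using detailed balance in the form $\pi(\ell)q_\ell=\pi(\ell-1)p_{\ell-1}$ makes the left-hand side telescope: with $u(\ell):=\pi(\ell)p_\ell D(\ell)$ it reads $u(\ell)-u(\ell-1)=\pi(\ell)$, so $u(m)=u(i)+\pi([i+1,m])$ and
\[
 D(m)=\frac{u(i)+\pi([i+1,m])}{\pi(m)p_m},\qquad i\le m\le k-1 .
\]
The free constant $u(i)$ is then pinned down by the telescoping identity $\sum_{m=i}^{k-1}D(m)=h(i)-h(k)=0$, which forces
\[
 u(i)=-\frac1B\sum_{\ell=i}^{k-1}\frac{\pi([i+1,\ell])}{\pi(\ell)p_\ell},\qquad B=\sum_{\ell=i}^{k-1}\frac1{\pi(\ell)p_\ell}.
\]

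Finally, since $h(j)=-\sum_{m=i}^{j-1}D(m)$, I would substitute the last two displays and, abbreviating $a_\ell=1/(\pi(\ell)p_\ell)$ and $P_\ell=\pi([i+1,\ell])$, obtain
\[
 B\,h(j)=\sum_{\ell=i}^{k-1}\sum_{m=i}^{j-1}a_\ell a_m\bigl(P_\ell-P_m\bigr) .
\]
The sub-block $i\le\ell\le j-1$ contributes $0$ by antisymmetry of the summand under $\ell\leftrightarrow m$, and in the sub-block $j\le\ell\le k-1$ one has $P_\ell-P_m=\pi([m+1,\ell])$ because $m\le j-1<j\le\ell$. Reindexing $\ell_1=m+1$, $\ell_2=\ell$ and rewriting $\pi(m)p_m=\pi(\ell_1-1)p_{\ell_1-1}=\pi(\ell_1)q_{\ell_1}$ then turns the remaining double sum into precisely $A$, giving $h(j)=A/B$. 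I expect the only genuinely fussy point to be this last bookkeeping step — lining up the index ranges and, via reversibility, the denominators $\pi(\ell_1)q_{\ell_1}$ appearing in the definition of $A$ — but it is purely mechanical; everything upstream is the standard first-passage computation for a birth and death chain, in the spirit of Lemma~\ref{l-meanvar}.
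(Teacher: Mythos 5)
Your proof is correct, and it takes a genuinely different route from the paper's. The paper starts from the closed-form expression for $\mathbb{E}_j\min\{\tau_i,\tau_k\}$ in terms of the quantities $\gamma_\ell=(q_{i+1}\cdots q_{i+\ell})/(p_{i+1}\cdots p_{i+\ell})$ and $\kappa_\ell$, which it explicitly declines to prove ("somewhat complicated") and instead cites from Pinsky--Karlin, Equation (3.66); the remainder of the paper's argument is the algebraic identification $\gamma_\ell=\pi(i)p_i/(\pi(i+\ell)p_{i+\ell})$, $\kappa_\ell=\pi([i+1,i+\ell])/(\pi(i+\ell)p_{i+\ell})$ and a manipulation of the cross terms $\gamma_{\ell_1}\kappa_{\ell_2}-\kappa_{\ell_1}\gamma_{\ell_2}$ that closely parallels your final double-sum bookkeeping. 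What you do differently is to derive the starting formula yourself: you solve the discrete Dirichlet problem $(K-I)h=-1$, $h(i)=h(k)=0$, by passing to first differences, telescoping via detailed balance, and fixing the integration constant with $\sum_m D(m)=0$. This makes the lemma self-contained (no appeal to an external two-boundary first-passage formula) at the cost of a few extra lines; each step you flag as routine (Wald's identity for the continuous/discrete equality, uniqueness of the Dirichlet solution, the antisymmetry cancellation of the block $i\le\ell\le j-1$, and the reindexing $\pi(m)p_m=\pi(\ell_1)q_{\ell_1}$) checks out. I verified the endpoint conventions: $\pi([i+1,i])=0$ so $D(i)=u(i)/(\pi(i)p_i)$ is consistent, and the reindexed ranges $i+1\le\ell_1\le j$, $j\le\ell_2\le k-1$ match the definition of $A$ exactly.
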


\begin{lem}\label{l-unimodal}
Let $(\mathcal{X},K,\pi)$ be an irreducible birth and death chain on $\{0,1,...,n\}$ and $H_t=e^{-t(I-K)}$. Then,
\begin{itemize}
\item[(1)] $H_t(0,i)/\pi(i)\ge H_t(0,i+1)/\pi(i+1)$ for $0\le i<n$ and $t\ge 0$,

\item[(2)] Assume that $\min_iK(i,i)\ge 1/2$. Then, $K^m(0,i)/\pi(i)\ge K^m(0,i+1)/\pi(i+1)$ for $0\le i<n$ and $m\ge 0$.
\end{itemize}
\end{lem}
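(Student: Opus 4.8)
The plan is to establish both parts by tracking the \emph{relative density} $f_i := H_t(0,i)/\pi(i)$ (respectively $K^m(0,i)/\pi(i)$) through its discrete gradient and showing this gradient stays non‑negative. By reversibility of $H_t$ and of $K$ with respect to $\pi$, one has $H_t(0,i)/\pi(i) = H_t(i,0)/\pi(0)$ and $K^m(0,i)/\pi(i) = K^m(i,0)/\pi(0)$, so it is enough to prove that $i \mapsto H_t(i,0)$ and $i \mapsto K^m(i,0)$ are non‑increasing. Writing these as the column vectors $u(t) = H_t e_0$ and $u^{(m)} = K^m e_0$, where $e_0$ denotes the indicator of state $0$, I would record that $u(0) = u^{(0)} = e_0$, that $\dot u = -(I-K)u$, and that $u^{(m+1)} = K u^{(m)}$.

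Next I would introduce $w_i := u_i - u_{i+1}$ for $0 \le i < n$ and compute how it evolves, using $K(i,i+1) = p_i$, $K(i,i-1) = q_i$, $K(i,i) = r_i$ together with the conventions $p_n = q_0 = 0$. A short calculation gives, in continuous time, $\dot w = A w$ with $A$ tridiagonal, $A_{i,i-1} = q_i$, $A_{i,i} = -(p_i + q_{i+1})$, $A_{i,i+1} = p_{i+1}$; and, in discrete time, $w^{(m+1)} = B w^{(m)}$ with $B_{i,i-1} = q_i$, $B_{i,i} = 1 - p_i - q_{i+1}$, $B_{i,i+1} = p_{i+1}$. The boundary rows $i = 0$ and $i = n-1$ close up correctly exactly because $p_n = q_0 = 0$. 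In both settings the initial gradient is $w(0) = e_0 \ge 0$, its only nonzero entry being $w_0(0) = 1$.

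To finish I would invoke positivity. The matrix $A$ has non‑negative off‑diagonal entries, i.e. it is a Metzler matrix, so $e^{tA} \ge 0$ entrywise for every $t \ge 0$: pick $c > 0$ with $A + cI \ge 0$ and expand $e^{tA} = e^{-ct}\sum_{k \ge 0} t^k (A+cI)^k / k!$ as a convergent series of non‑negative matrices. Hence $w(t) = e^{tA} w(0) \ge 0$, which is part (1); note that no laziness hypothesis enters. For part (2), the assumption $\min_i K(i,i) = \min_i r_i \ge 1/2$ forces $p_i + q_{i+1} \le (1 - r_i) + (1 - r_{i+1}) \le 1$, so $B$ itself has non‑negative entries and $w^{(m)} = B^m w^{(0)} \ge 0$.

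I do not expect a genuinely hard step here — the content is the classical monotonicity of birth and death processes started from an endpoint — so the work is mostly bookkeeping: getting the boundary rows of $A$ and $B$ right, and justifying that a Metzler generator produces a positivity‑preserving semigroup. The one structural point worth stating is the asymmetry between (1) and (2): in continuous time the Poissonization supplies enough holding automatically, while for $K^m$ the relative density can genuinely oscillate when some $r_i < 1/2$ (already for the two‑state chain with $r_0 = r_1 = 0$), which is why (2) needs the hypothesis. As an alternative to the direct continuous‑time argument, one could also deduce (1) from (2) by writing $H_t = \lim_{N \to \infty}(I + \tfrac{t}{N}(K - I))^N$ and observing that, for $N > 2t$, the matrix $I + \tfrac{t}{N}(K - I)$ is a birth and death transition matrix with stationary distribution $\pi$ and all holding probabilities $\ge 1 - t/N \ge 1/2$.
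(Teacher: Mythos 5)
Your argument is correct. Note that the paper does not actually prove this lemma: it simply refers the reader to Lemma 4.1 of \cite{DLP10}, so there is no in-paper proof to compare against. Your self-contained derivation is the standard one and it checks out: reversibility reduces the claim to monotonicity of $i\mapsto H_t(i,0)$ (resp.\ $K^m(i,0)$); the difference vector $w_i=u_i-u_{i+1}$ satisfies the tridiagonal recursions you state (I verified the coefficients, including the boundary rows via $p_n=q_0=0$ and the identity $r_i=1-p_i-q_i$); the initial gradient is $e_0\ge 0$; and positivity is preserved because $A$ is Metzler in the continuous-time case and $B$ is entrywise non-negative under $\min_i r_i\ge 1/2$ in the discrete-time case. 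Your remarks on why (2) genuinely needs the laziness hypothesis (the two-state chain with $r_0=r_1=0$) and on deducing (1) from (2) via $H_t=\lim_N(I+\tfrac{t}{N}(K-I))^N$ are also correct, and the latter is a nice way to see that Poissonization supplies the laziness for free.
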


We relegate the proofs of Lemmas \ref{l-hit2}, \ref{l-hit1} and \ref{l-hit3} to the appendix and refer the reader to Lemma 4.1 in \cite{DLP10} for a proof of Lemma \ref{l-unimodal}.

\begin{proof}[Proof of Theorem \ref{t-tv3}]
We first prove the equivalence for cutoffs. Note that $\pi_n(0)\ra 0$ is necessary for the total variation cutoff since
\[
 \liminf_{n\ra\infty}d_{n,\text{\tiny TV}}^{(c)}(0,t)\le \liminf_{n\ra\infty}d_{n,\text{\tiny TV}}^{(c)}(0,0)=1-\limsup_{n\ra\infty}\pi_n(0).
\]
Under the assumption that $\pi_n(0)\ra 0$, it is easy to see that, for any $a\in(0,1)$, $M_n(a)\ge 1$ if $n$ is large enough. For $a\in(0,1)$ and $n\ge 1$ such that $M_n(a)\ge 1$, we let
\[
 \lambda_{n,1}(a)<\cdots<\lambda_{n,M_n(a)}(a)
\]
be the eigenvalues of the submatrix of $I-K_n$ indexed by $0,1,...,M_n(a)-1$. Clearly, $\lambda_n(a)=\lambda_{n,1}(a)$ and, by Lemma \ref{l-meanvar},
\[
 u_n(a)=\sum_{i=1}^{M_n(a)}\frac{1}{\lambda_{n,i}(a)},\quad v_n^2(a)=\sum_{i=1}^{M_n(a)}\frac{1}{\lambda^2_{n,i}(a)}.
\]
As in the proof of (\ref{eq-comp}), we have
\[
 \sqrt{u_n(a)\lambda_n(a)}\le\frac{u_n(a)}{v_n(a)}\le u_n(a)\lambda_n(a).
\]
This implies the equivalence of (2) and (3).

To prove the remaining equivalences, we let $d_{n,\text{\tiny TV}}^{(c)}$ be the total variation distance of the $n$th chains. By Lemma \ref{l-tv}, one has
\begin{equation}\label{eq-tv}
 d_{n,\text{\tiny TV}}^{(c)}(0,t)\begin{cases}\le\mathbb{P}_0(\widetilde{\tau}^{(n)}_i>t)+\pi_n([i+1,n]),\\
 \ge\mathbb{P}_0(\widetilde{\tau}^{(n)}_i>t)-\pi_n([0,i-1]).\end{cases}
\end{equation}
As a result of the one-sided Chebyshev inequality, this implies
\begin{equation}\label{eq-mixing}
 T_{n,\text{\tiny TV}}^{(c)}(0,\epsilon)\begin{cases}\le\mathbb{E}_0\widetilde{\tau}^{(n)}_i
 +\sqrt{(\tfrac{1-\delta}{\delta})\text{Var}_0(\widetilde{\tau}^{(n)}_i)}&\text{for } \epsilon=\delta+\pi_n([i+1,n]),\\
 \ge\mathbb{E}_0\widetilde{\tau}^{(n)}_i-\sqrt{(\tfrac{\delta}{1-\delta})\text{Var}_0(\widetilde{\tau}^{(n)}_i)}&
 \text{for }\epsilon=\delta-\pi_n([0,i-1]),\end{cases}
\end{equation}
where $\delta\in(0,1)$.

Now, we prove (2)$\Ra$(1) and assume that (2) holds. By the last inequality of Lemma \ref{l-hit2}, we have, for $0<\delta<\epsilon<1$,
\begin{equation}\label{eq-unvn}
 0\le u_n(\epsilon)-u_n(\delta)\le\frac{4\epsilon}{\delta\lambda_n(\epsilon)}\le\frac{4\epsilon v_n(\epsilon)}{\delta}=o(u_n(\epsilon)).
\end{equation}
Fix $\epsilon\in(0,1)$ and let $0<\epsilon_1<\epsilon<\epsilon_2<1$. By (\ref{eq-mixing}), the replacement of $i=M_n(\epsilon_2)$, $\delta=\epsilon_2-\epsilon$ in the first inequality and the replacement of $i=M_n(\epsilon_1)$, $\delta=1-\epsilon+\epsilon_1$ in the second inequality yield
\[
 \begin{cases}T_{n,\text{\tiny TV}}^{(c)}(0,1-\epsilon)\le u_n(\epsilon_2)+\sqrt{(\frac{1}{\epsilon_2-\epsilon}-1)}v_n(\epsilon_2)=(1+o(1))u_n(\epsilon_2),\\
 T_{n,\text{\tiny TV}}^{(c)}(0,1-\epsilon)\ge u_n(\epsilon_1)-\sqrt{(\frac{1}{\epsilon-\epsilon_1}-1)}v_n(\epsilon_1)=(1+o(1))u_n(\epsilon_1).\end{cases}
\]
As a result of (\ref{eq-unvn}), we obtain that $T_{n,\text{\tiny TV}}^{(c)}(0,\epsilon)=(1+o(1))u_n(\eta)$ for any $\epsilon,\eta\in(0,1)$,
which proves (1).

Next, we prove (4)$\Ra$(3). Assume that $(t_n)_{n=0}^\infty$ is a positive sequence satisfying $t_n=O(u_n(c))$ for all $c\in(0,1)$ and $a\in(0,1)$ is a constant such that
\begin{equation}\label{eq-lim1}
 \lim_{n\ra\infty}\mathbb{P}_0\left(\widetilde{\tau}^{(n)}_{M_n(b)}>(1-\epsilon)t_n\right)=1,\quad\forall b\in(a,1),
\end{equation}
and, for any $b\in(a,1)$, there corresponds a constant $\alpha_b\in(0,1)$ such that
\begin{equation}\label{eq-lim2} \limsup_{n\ra\infty}\mathbb{P}_0\left(\widetilde{\tau}^{(n)}_{M_n(b)}>(1+\epsilon)t_n\right)\le\alpha_b,
\end{equation}
for all $\epsilon\in(0,1)$. Note that $\lambda_n(a_2)\le\lambda_n(a_1)$ for $0<a_1<a_2<1$. To prove (3), it suffices to show that $t_n\lambda_n(b)\ra\infty$ for all $b\in(a,1)$. Now, we fix $b\in(a,1)$. Since $\pi_n(0)\ra 0$, it is clear that $M_n(b)\ge 1$ for $n$ large enough. By \cite{BS87}, if $M_n(b)\ge 1$, we may write $\widetilde{\tau}^{(n)}_{M_n(b)}=T_n(b)+S_n(b)$, where $T_n(b)$ and $S_n(b)$ are independent, $T_n(b)$ is an exponential random variable with parameter $\lambda_n(b)$ and $S_n(b)$ is a sum of independent exponential random variables with parameters $\lambda_{n,2}(b),...,\lambda_{n,M_n(b)}(b)$. Note that
\begin{align}
 \mathbb{P}_0\left(\widetilde{\tau}^{(n)}_{M_n(b)}>(1-\epsilon)t_n\right)&=\int_0^\infty
\lambda_n(b)e^{-\lambda_n(b)s}\mathbb{P}_0(S_n(b)>(1-\epsilon)t_n-s)ds\notag\\
&\le (1-e^{-\lambda_n(b)t})\mathbb{P}_0(S_n(b)>(1-\epsilon)t_n-t)+e^{-\lambda_n(b)t},\notag
\end{align}
where the inequality is obtained by separating the region of integration into $(0,t)$ and $[t,\infty)$,
and
\begin{align}
 \mathbb{P}_0\left(\widetilde{\tau}^{(n)}_{M_n(b)}>(1+\epsilon)t_n\right)&=\int_0^\infty
\lambda_n(b)e^{-\lambda_n(b)s}\mathbb{P}_0(S_n(b)>(1-\epsilon)t_n-s)ds\notag\\
&\ge\mathbb{P}_0(S_n(b)>(1+\epsilon)t_n-r)e^{-\lambda_n(b)r}.\notag
\end{align}
By (\ref{eq-lim1}) and (\ref{eq-lim2}), the replacement of $t=C/\lambda_n(b)$ and $r=2C/\lambda_n(b)$ with $C=\frac{1}{4}\log\frac{1}{\alpha_b}$ in the above inequalities yields that, for all $\epsilon\in(0,1)$,
\[
 \lim_{n\ra\infty}\mathbb{P}_0(S_n(b)>(1-\epsilon)t_n-C/\lambda_n(b))=1
\]
and
\[
 \limsup_{n\ra\infty}\mathbb{P}_0(S_n(b)>(1+\epsilon)t_n-2C/\lambda_n(b))\le\sqrt{\alpha_b}<1.
\]
As a consequence, for $\epsilon\in(0,1)$, if $n$ is large enough, one has
\[
 (1+\epsilon)t_n-2C/\lambda_n(b)\ge (1-\epsilon)t_n-C/\lambda_n(b),
\]
which implies $t_n\lambda_n(b)\ge C/(2\epsilon)$. This proves $t_n\lambda_n(b)\ra \infty$.

To finish the proof of those equivalences, it remains to show (1)$\Ra$(4). Assume that $\mathcal{F}_c$ has a cutoff with cutoff time $t_n$. The replacement of $i=M_n(a)$ in (\ref{eq-tv}) implies that, for all $\epsilon\in(0,1)$,
\begin{equation}\label{eq-liminf}
 \liminf_{n\ra\infty}\mathbb{P}_0\left(\widetilde{\tau}^{(n)}_{M_n(a)}>(1-\epsilon)t_n\right)\ge a
\end{equation}
and
\begin{equation}\label{eq-limsup}
\limsup_{n\ra\infty}
\mathbb{P}_0\left(\widetilde{\tau}^{(n)}_{M_n(a)}>(1+\epsilon)t_n\right)\le a.
\end{equation}
By the Markov inequality, (\ref{eq-liminf}) implies that $t_n=O(u_n(a))$ for all $a\in(0,1)$. As a result of Lemma \ref{l-hit1}, (\ref{eq-limsup}) implies that $u_n(a)=O(t_n)$ for all $a\in(0,1)$, which leads to $t_n\asymp u_n(a)$ for all $a\in(0,1)$.

To fulfill the requirement in (4), one has to prove that there is $a\in(0,1)$ such that
\begin{equation}\label{eq-convto1}
 \lim_{n\ra\infty}\mathbb{P}_0\left(\widetilde{\tau}^{(n)}_{M_n(a)}>(1-\epsilon)t_n\right)=1,\quad\forall \epsilon\in(0,1).
\end{equation}
To see the above limit, we fix $\epsilon\in(0,1)$ and show that, for any subsequence of positive integers, there is a further subsequence satisfying (\ref{eq-convto1}). Let $k_n$ be a subsequence of positive integers and set
\[
 R(a):=\lim_{b\ra 1}\liminf_{n\ra\infty}\frac{\mathbb{E}_{M_{k_n}(a)}\widetilde{\tau}^{(k_n)}_{M_{k_n}(b)}}{t_{k_n}}.
\]
Clearly, $R(a)$ is nonnegative and non-increasing in $a$.

We consider the following two cases of $R(a)$. First, assume that $R(a)=0$ for some $a\in(0,1)$ and let $b_n$ be a sequence in $(a,1)$ that converges to $1$. Since $R(b_1)=0$, we may choose $\ell_1\in\{k_1,k_2,...\}$ such that $\mathbb{E}_{M_{\ell_1}(a)}\widetilde{\tau}^{(\ell_1)}_{M_{\ell_1}(b_1)}<t_{\ell_1}/2$. Inductively, for $n\ge 1$, we may select, according to the fact $R(b_{n+1})=0$, a constant $\ell_{n+1}\in\{k_1,k_2,...\}$ satisfying $\ell_{n+1}>\ell_n$ and
\[
 \mathbb{E}_{M_{\ell_{n+1}}(a)}\widetilde{\tau}^{(\ell_{n+1})}_{M_{\ell_{n+1}}(b_{n+1})}<t_{\ell_{n+1}}/2^{n+1}.
\]
This implies
\[
 \mathbb{E}_{M_{\ell_n}(a)}\widetilde{\tau}^{(\ell_n)}_{M_{\ell_n}(b)}=o(t_{\ell_n}),\quad\forall b\in(a,1).
\]
By Lemma \ref{l-meanvar}, $u_n(a)\asymp t_n$ implies $1/\lambda_n(a)=O(t_n)$ and, by Lemma \ref{l-hit2}, this yields $\text{Var}_{M_{\ell_n}(a)}\widetilde{\tau}^{(\ell_n)}_{M_{\ell_n}(b)}=o(t_{\ell_n}^2)$ for all $b\in(a,1)$. As a consequence of the one-sided Chebyshev inequality, we obtain
\[
 \lim_{n\ra\infty}\mathbb{P}_{M_{\ell_n}(a)}\left(\widetilde{\tau}^{(\ell_n)}_{M_{\ell_n}(b)}\le\eta t_{\ell_n}\right)=1,\quad\forall b\in(a,1),\,\eta>0.
\]
This leads to
\begin{align}
 &\liminf_{n\ra\infty}\mathbb{P}_0\left(\widetilde{\tau}^{(\ell_n)}_{M_{\ell_n}(a)}
 >(1-\epsilon)t_{\ell_n}\right)\notag\\
 \ge&\liminf_{n\ra\infty}\mathbb{P}_0\left(\widetilde{\tau}^{(\ell_n)}_{M_{\ell_n}(b)}
 >(1-\epsilon/2)t_{\ell_n},
 \widetilde{\tau}^{(\ell_n)}_{M_{\ell_n}(b)}-\widetilde{\tau}^{(\ell_n)}_{M_{\ell_n}(a)}
 \le\epsilon t_{\ell_n}/2\right)\notag\\
 =&\liminf_{n\ra\infty}\mathbb{P}_0\left(\widetilde{\tau}^{(\ell_n)}_{M_{\ell_n}(b)}
 >(1-\epsilon/2)t_{\ell_n}\right)\ge b,\notag
\end{align}
for all $b\in(a,1)$, where the last inequality uses (\ref{eq-liminf}). Letting $b$ tend to $1$ gives the desired limit.

Next, we assume that $R(a)>0$ for all $a\in(0,1)$. Along with this fact $u_n(a)\asymp t_n$ for all $a\in(0,1)$, it is easy to see that, for any $a\in(0,1)$, there is $b\in(a,1)$ such that $\mathbb{E}_{M_{k_n}(a)}\widetilde{\tau}^{(k_n)}_{M_{k_n}(b)}\asymp t_{k_n}$. To prove (\ref{eq-convto1}) for the subsequence $k_n$, we need the following discussion. For $n\ge 1$, set $H_{n,t}=e^{-t(I-K_n)}$ and let $(X_{n,t})_{t\ge 0}$ be a realization of the semigroup $H_{n,t}$ and, for $\eta\in(0,1)$, let
\[
 N_n(\eta)=\max\{0\le i\le n|H_{n,(1-\eta)t_n}(0,i)>\pi_n(i)\}.
\]
By Lemma \ref{l-unimodal}, we have
\[
 d_{n,\text{\tiny TV}}^{(c)}(0,(1-\eta)t_n)=H_{n,(1-\eta)t_n}(0,[0,N_n(\eta)])-\pi_n([0,N_n(\eta)]).
\]
Since $\mathcal{F}_c$ has a cutoff with cutoff time $(t_n)_{n=1}^\infty$, this implies
\[
 \lim_{n\ra\infty}H_{n,(1-\eta)t_n}(0,[0,N_n(\eta)])=1,\quad \lim_{n\ra\infty}\pi_n([0,N_n(\eta)])=0.
\]
Obviously, this yields
\begin{equation}\label{eq-convto1-2}
 \lim_{n\ra\infty}\mathbb{P}_0\left(X_{n,(1-\eta)t_n}\le M_n(a)\right)=1,\quad\forall a,\eta\in(0,1).
\end{equation}

Back to the case that $R(a)>0$ for all $a\in(0,1)$, one may choose $0<b<a^-<a<a^+<c<1$ such that
\begin{equation}\label{eq-sub}
 \mathbb{E}_{M_{k_n}(b)}\widetilde{\tau}^{(k_n)}_{M_{k_n}(a^-)}\asymp t_{k_n}\asymp
 \mathbb{E}_{M_{k_n}(a^+)}\widetilde{\tau}^{(k_n)}_{M_{k_n}(c)}.
\end{equation}
This implies that $M_{k_n}(b)<M_{k_n}(a^-)$ and $M_{k_n}(a^+)<M_{k_n}(c)$ for $n$ large enough. Next, let $L$ be a positive integer and set
\[
 \Delta_n=\Delta_n(L):=\frac{(1-\epsilon)t_n}{L}.
\]
Note that, for $0\le j\le L-1$,
\begin{align}
 &\mathbb{P}_0\left(\widetilde{\tau}^{(k_n)}_{M_{k_n}(a)}\in(j\Delta_{k_n},(j+1)\Delta_{k_n}],
 X_{k_n,(j+1)\Delta_{k_n}}\le M_{k_n}(b)\right)\notag\\
 \le&\mathbb{P}_0\left(\widetilde{\tau}^{(k_n)}_{M_{k_n}(a)}\in(j\Delta_{k_n},(j+1)\Delta_{k_n}]\right)
 \mathbb{P}_{M_{k_n}(a)}\left(\widetilde{\tau}^{(k_n)}_{M_{k_n}(b)}\le\Delta_{k_n}\right).\notag
\end{align}
By (\ref{eq-convto1-2}), summing up the above inequalities over $j$ and then passing $n$ to the infinity yields
\begin{align}
 &\limsup_{n\ra\infty}\mathbb{P}_0\left(\widetilde{\tau}^{(k_n)}_{M_{k_n}(a)}\le(1-\epsilon)t_{k_n}\right)\notag\\
 \le&\limsup_{n\ra\infty}\mathbb{P}_0\left(\widetilde{\tau}^{(k_n)}_{M_{k_n}(a)}\le(1-\epsilon)t_{k_n}\right)
 \times\limsup_{n\ra\infty}\mathbb{P}_{M_{k_n}(a)}\left(\widetilde{\tau}^{(k_n)}_{M_{k_n}(b)}\le\Delta_{k_n}\right).\notag
\end{align}
Observe that if there is $L>0$ such that
\begin{equation}\label{eq-convto<1}
 \limsup_{n\ra\infty}\mathbb{P}_{M_{k_n}(a)}\left(\widetilde{\tau}^{(k_n)}_{M_{k_n}(b)}\le\Delta_{k_n}\right)<1,
\end{equation}
then
\[
 \limsup_{n\ra\infty}\mathbb{P}_0\left(\widetilde{\tau}^{(k_n)}_{M_{k_n}(a)}\le(1-\epsilon)t_{k_n}\right)=0,
\]
as desired.  To get the limit in (\ref{eq-convto<1}), it suffices to show that there is $L>0$ such that
\[
 \limsup_{n\ra\infty}\mathbb{P}_{M_{k_n}(a)}\left(T_{k_n}\le\Delta_{k_n}\right)<1,
\]
where $T_n=\min\{\widetilde{\tau}^{(n)}_{M_{n}(b)},\widetilde{\tau}^{(n)}_{M_{n}(c)}\}$. By Lemma \ref{l-hit3}, $\mathbb{E}_{M_{k_n}(a)}T_{k_n}=A_{k_n}/B_{k_n}$, where
\[
 A_n=\sum_{\begin{subarray}{c}M_n(b)+1\le \ell_1\le M_n(a)\\ M_n(a)\le\ell_2\le M_n(c)-1  \end{subarray}}
 \frac{\pi_n([\ell_1,\ell_2])}{\pi_n(\ell_1)q_{n,\ell_1}\pi_n(\ell_2)p_{n,\ell_2}},\quad B_n=\sum_{\ell=M_n(b)}^{M_n(c)-1}\frac{1}{\pi_n(\ell)p_{n,\ell}}.
\]
It is easy to see from the first identity in Lemma \ref{l-meanvar2} that
\[
 A_n
 \ge(a^+-a^-)\mathbb{E}_{M_n(b)}\widetilde{\tau}^{(n)}_{M_n(a^-)}
 \mathbb{E}_{M_n(a^+)}\widetilde{\tau}^{(n)}_{M_n(c)},\quad B_n\le\mathbb{E}_{M_n(b)}\widetilde{\tau}^{(n)}_{M_n(c)}/b.
\]
Along with the fact that $u_n(a)\asymp t_n$ for all $a\in(0,1)$, one may apply (\ref{eq-sub}) to the above inequalities  to get $\mathbb{E}_{M_{k_n}(a)}T_{k_n}\asymp t_{k_n}$. Now, we choose $L>0$ such that
\[
 0<\mathbb{E}_{M_{k_n}(a)}T_{k_n}-\Delta_{k_n}\asymp t_{k_n},
\]
where the first inequality holds for $n$ large enough. Since $T_n\le \widetilde{\tau}^{(n)}_{M_n(c)}$, one also has
\begin{align}
 \text{Var}_{M_n(a)}T_n&\le\mathbb{E}_{M_n(a)}T_n^2\le\mathbb{E}_{M_n(a)}(\widetilde{\tau}^{(n)}_{M_n(c)})^2
 =\text{Var}_{M_n(a)}\widetilde{\tau}^{(n)}_{M_n(c)}+(\mathbb{E}_{M_n(a)}(\widetilde{\tau}^{(n)}_{M_n(c)})^2\notag\\
 &\le\text{Var}_{0}\widetilde{\tau}^{(n)}_{M_n(c)}+(\mathbb{E}_{0}\widetilde{\tau}^{(n)}_{M_n(c)})^2
 \le 2(\mathbb{E}_{0}\widetilde{\tau}^{(n)}_{M_n(c)})^2=2u_n(c)^2\asymp t_n^2.\notag
\end{align}
As a result of the one-sided Chebyshev inequality, this implies
\[
 \limsup_{n\ra\infty}\mathbb{P}_{M_{k_n}(a)}\left(T_{k_n}\le\Delta_{k_n}\right)\le\limsup_{n\ra\infty}
 \left(1+\frac{(\mathbb{E}_{M_{k_n}(a)}T_{k_n}-\Delta_{k_n})^2}{\text{Var}_{M_{k_n}(a)}T_{k_n}}\right)^{-1}<1.
\]

In the assumption of (2) and (3), the proof for choosing $(u_n(a))_{n=1}^\infty$ as a cutoff time is given in the proof for (2)$\Ra$(1). In the assumption of (4), the equivalence of cutoffs implies that $v_n(a)=o(u_n(a))$ for all $a\in(0,1)$. As a consequence of the Chebyshev inequality, this yields that, for all $a\in(0,1)$ and $c>0$,
\[
 \limsup_{n\ra\infty}\mathbb{P}_0\left(\left|\widetilde{\tau}^{(n)}_{M_n(a)}-u_n(a)\right|
 >cu_n(a)\right)\le\limsup_{n\ra\infty}\frac{v_n(a)^2}{c^2u_n(a)^2}=0.
\]
Along with the assumption of (4), one has $t_n\sim\mathbb{E}_0\widetilde{\tau}^{(n)}_{M_n(a)}$ for all $a\in(0,1)$, as desired.
\end{proof}

\begin{proof}[Proof of Theorem \ref{t-tv4}]
Set
\[
 \delta=\inf_{i,n}K_n(i,i),\quad K_n^{(\delta)}=(K_n-\delta I)/(1-\delta),\quad H^{(\delta)}_{n,t}=e^{t(K_n^{(\delta)}-I)}.
\]
It is easy to see that $(\mathcal{X}_n,K_n,\pi_n)$ and $(\mathcal{X}_n,H_{n,t}^{(\delta)},\pi_n)$ are respectively the $\delta$-lazy walk and the continuous time chain associated with $(\mathcal{X}_n,K_n^{(\delta)},\pi_n)$. Let $d_{n,\text{\tiny TV}},d_{n,\text{\tiny TV}}^{(c,\delta)}$ and $T_{n,\text{\tiny TV}},T_{n,\text{\tiny TV}}^{(c,\delta)}$ and $\tau_i^{(n)},\tau_i^{(n,\delta)}$ be respectively the total variation distances, the total variation mixing times and the first hitting times to state $i$ of chains $(\mathcal{X}_n,K_n,\pi_n)$ and $(\mathcal{X}_n,H_{n,t}^{(\delta)},\pi_n)$. As a result of the following observation
\begin{equation}\label{eq-kernelcomp}
 H_{n,t}^{(\delta)}=e^{t(K_n^{(\delta)}-I)}=e^{t(K_n-I)/(1-\delta)},
\end{equation}
it is easy to see that the ratio of the spectral gaps of $(\mathcal{X}_n,K_n,\pi_n)$ and $(\mathcal{X}_n,H_{n,t}^{(\delta)},\pi_n)$ is constant in $n$ and, further,
\begin{equation}\label{eq-comp3}
 \mathbb{E}_0\widetilde{\tau}^{(n,\delta)}_{M_n(a)}=(1-\delta)u_n(a),\quad
 \text{Var}_0\widetilde{\tau}^{(n,\delta)}_{M_n(a)}\asymp w_n(a),
\end{equation}
where the latter also uses Remark \ref{r-wcomp}. This is consistent with (\ref{eq-cut3}).

Set $\mathcal{F}^{(\delta)}_c=(\mathcal{X}_n,H_{n,t}^{(\delta)},\pi_n)_{n=1}^\infty$ and let $\mathcal{F}_c^{(\delta,L)}$ denote the family of chains in $\mathcal{F}_c^{(\delta)}$ started at the left boundary points. The remaining proof for the equivalence of (1), (2) and (3) is very similar to the proof of the discrete time case in Theorem \ref{t-tv2} if (\ref{eq-cut1}) and (\ref{eq-cut2}) hold under the replacement of $\mathcal{F},\mathcal{F}_c^{(\delta)}$ by $\mathcal{F}^L,\mathcal{F}_c^{(\delta,L)}$. These two equivalences are given by Theorem 3.4 in \cite{CSal13-1} but the prerequisite of this theorem asks the existence of some $\epsilon\in(0,1)$ such that $T_{n,\text{\tiny TV}}(0,\epsilon)\ra\infty$ and $T_{n,\text{\tiny TV}}^{(c,\delta)}(0,\epsilon)\ra\infty$. (The authors of \cite{CSal13-1} point out the observation that such a requirement is missed in their article.)
First, consider the requirement $T_{n,\text{\tiny TV}}^{(c,\delta)}(0,\epsilon)\ra\infty$. Recall the second inequality in Lemma \ref{l-tv} in the following
\[
 d_{n,\text{\tiny TV}}^{(c,\delta)}(0,t)\ge \mathbb{P}_0\left(\widetilde{\tau}_{M_n(a)}^{(n,\delta)}>t\right)-a.
\]
By Lemma \ref{l-hit1}, (\ref{eq-comp3}) and the fact $\text{Var}_0\widetilde{\tau}^{(n,\delta)}_{M_n(a)}
\le(\mathbb{E}_0\widetilde{\tau}^{(n,\delta)}_{M_n(a)})^2$, the above inequality implies
\[
 d_{n,\text{\tiny TV}}^{(c,\delta)}(0,\alpha(1-\delta)u_n(a))
 \ge\min\left\{e^{-\sqrt{\alpha}},\frac{(1-\alpha)^2}{\sqrt{\alpha}+(1-\alpha)^2}\right\}
 -a,\quad\forall \alpha\in(0,1).
\]
This yields that
\begin{equation}\label{eq-liminf2}
 \liminf_{n\ra\infty}\frac{T_{n,\text{\tiny TV}}^{(c,\delta)}(0,\epsilon)}{u_n(a)}>0\quad\text{for $\epsilon$ small enough}.
\end{equation}
Since $u_n(a)\ra\infty$, we have $T_{n,\text{\tiny TV}}^{(c,\delta)}(0,\epsilon)\ra\infty$ for $\epsilon$ small enough.

Next, we prove $T_{n,\text{\tiny TV}}(0,\epsilon)\ra\infty$. Note that one may use (\ref{eq-kernelcomp}) and the triangle inequality to derive
\begin{equation}\label{eq-comp2}
 d_{n,\text{\tiny TV}}^{(c,\delta)}(0,t)\le\mathbb{P}(N_t\le m)+\mathbb{P}(N_t>m)d_{n,\text{\tiny TV}}(0,m),
\end{equation}
where $(N_t)_{t\ge 0}$ is a Poisson process with parameter $1/(1-\delta)$. A simple application of the weak law of large numbers says that $N_t/t$ converges to $1/(1-\delta)$ in probability as $t$ tends to infinity. By (\ref{eq-liminf2}) and the assumption $u_n(a)\ra\infty$, the replacement of $t=\beta u_n(a)$ and $m=\lceil \beta u_n(a)\rceil$ in (\ref{eq-comp2}) with small $\beta$ implies that
\begin{equation}\label{eq-liminf3}
 \liminf_{n\ra\infty}\frac{T_{n,\text{\tiny TV}}(0,\epsilon)}{u_n(a)}>0\quad\text{for $\epsilon$ small enough}.
\end{equation}
This yields that $T_{n,\text{\tiny TV}}(0,\epsilon)\ra\infty$ for $\epsilon$ small enough.

To show (1)$\Lra$(4), let $(N_t)_{t\ge 0}$ be the Poisson process as before. It is easy to see from (\ref{eq-kernelcomp}) that if $(X_m^{(n)})_{m=0}^\infty$ is a realization of $(\mathcal{X}_n,K_n,\pi_n)$, then $(X_{N_t}^{(n)})_{t\ge 0}$ is a realization of $(\mathcal{X}_n,H_{n,t}^{(\delta)},\pi_n)$. This implies
\begin{equation}\label{eq-discts}
\begin{aligned}
 \mathbb{P}_0\left(\widetilde{\tau}^{(n,\delta)}_i>s\right)&=\mathbb{P}_0\left(X_{N_r}^{(n)}<i,\,\forall
 0\le r\le s\right)\\
 &=\mathbb{P}_0(X_m^{(n)}<i,\,\forall m\le N_s)=\mathbb{P}_0\left(\tau^{(n)}_i>N_s\right).
\end{aligned}
\end{equation}
Since $u_n(a)\ra\infty$ for some $a\in(0,1)$, we obtain
\[
 \mathcal{F}^L\text{ has a cutoff}\quad\Lra\quad\mathcal{F}_c^{(\delta,L)}\text{ has a cutoff}.
\]
By Theorem \ref{t-tv3}, the latter is equivalent to the existence of a sequence $t_n>0$ and a constant $a\in(0,1)$ satisfying
\begin{equation}\label{eq-2-4-1}
 t_n=O\left(\mathbb{E}_0\widetilde{\tau}^{(n,\delta)}_{M_n(c)}\right),\quad\forall c\in(0,1)
\end{equation}
and
\begin{equation}\label{eq-2-4-2}
 \lim_{n\ra\infty}\mathbb{P}_0\left(\widetilde{\tau}^{(n,\delta)}_{M_n(a)}>(1-\epsilon)t_n\right)=1,\quad\forall
 \epsilon\in(0,1)
\end{equation}
and, for any $b\in(a,1)$, there is $\alpha_b\in(0,1)$ such that
\begin{equation}\label{eq-2-4-3}
 \limsup_{n\ra\infty}\mathbb{P}_0\left(\widetilde{\tau}^{(n,\delta)}_{M_n(b)}>(1+\epsilon)t_n\right)\le\alpha_b,
 \quad\epsilon\in(0,1).
\end{equation}
As a result of (\ref{eq-comp3}), one can see that (\ref{eq-2-4-1}) is equivalent to $t_n=O(u_n(a))$ and further, by (\ref{eq-discts}), (\ref{eq-2-4-2}) implies
\begin{align}
 \liminf_{n\ra\infty}\mathbb{P}_0\left(\tau^{(n)}_{M_n(a)}>\frac{(1-\epsilon)t_n}{1-\delta}\right)
 &\ge\liminf_{n\ra\infty}\mathbb{P}_0\left(\tau^{(n)}_{M_n(a)}>N_{(1-\epsilon/2)t_n}\right)\notag\\
 &=\liminf_{n\ra\infty}\mathbb{P}_0\left(\widetilde{\tau}^{(n,\delta)}_{M_n(a)}>(1-\epsilon/2)t_n\right)=1.\notag
\end{align}
and (\ref{eq-2-4-3}) implies
\begin{align}
 \limsup_{n\ra\infty}\mathbb{P}_0\left(\tau^{(n)}_{M_n(b)}>\frac{(1+\epsilon)t_n}{1-\delta}\right)&
 \le\limsup_{n\ra\infty}\mathbb{P}_0\left(\tau^{(n)}_{M_n(b)}>N_{(1+\epsilon/2)t_n}\right)\notag\\
 &=\limsup_{n\ra\infty}\mathbb{P}_0\left(\widetilde{\tau}^{(n,\delta)}_{M_n(b)}>(1+\epsilon/2)t_n\right)\le\alpha_b,
 \notag
\end{align}
for all $\epsilon\in(0,1)$. This gives the desired properties in (4). Conversely, one may use a similar statement to prove (\ref{eq-2-4-2}) and (\ref{eq-2-4-3}) based on the observation of (4) and this part is omitted.

For a choice of the cutoff time, if (2) or (3) holds, the proof for the selected cutoff time is given by (\ref{eq-comp3}) and Theorem 3.4 in \cite{CSal13-1}. If (4) holds, the proof is exactly the same as that of Theorem \ref{t-tv3} and we skip it here.
\end{proof}

\begin{proof}[Proof of Corollary \ref{c-tv}]
The $(u_n(a),b_n)$ cutoff of $\mathcal{F}_c^L$ is immediately from (\ref{eq-mixing}) and Lemma \ref{l-hit2}. For the $(u_n(a),b_n)$ cutoff of $\mathcal{F}^L$, the assumption $\inf_nb_n>0$ and $b_n=o(u_n(a))$ implies that $u_n(a)\ra\infty$ for all $a\in(0,1)$, which means that the cutoff time tends to infinity. The remaining proof also uses Theorem 3.4 in \cite{CSal13-1} and is similar to the proof of the discrete time case in Theorem \ref{t-tv2}. We refer the reader to Section \ref{s-tv} for details.

\end{proof}

\section{Examples} \label{s-ex}

In this section, we consider some classical examples and use the developed theory to examine the existence of cutoff and, in particular, compute the cutoff time. First, we write $\mathcal{F}=(\mathcal{X}_n,K_n,\pi_n)_{n=1}^\infty$ for a family of irreducible birth and death chains with $\mathcal{X}_n=\{0,1,...,n\}$ and write $\mathcal{F}^L,\mathcal{F}^R$ for families of chains in $\mathcal{F}$ started at the left and right boundary states. For the continuous time case, those families are written as $\mathcal{F}_c,\mathcal{F}_c^L,\mathcal{F}_c^R$ instead. For $n\ge 1$, let $p_{n,i},q_{n,i},r_{n,i}$ be the birth, death and holding rates in $K_n$ and $\tau_i^{(n)},\widetilde{\tau}_i^{(n)}$ be the first hitting times to state $i$ of the $n$th chains in $\mathcal{F},\mathcal{F}_c$. For $a\in(0,1)$, $M_n(a)$ denotes a state in $\mathcal{X}_n$ satisfying $\pi_n([0,M_n(a)])\ge a$ and $\pi_n([M_n(a),n])\ge 1-a$.\vskip2mm

{\it (1) Biased random walk.} For $n\in\mathbb{N}$, let
\[
 p_{n,i}=r_{n,n}=p,\quad q_{n,i+1}=r_{n,0}=q,\quad\forall 0\le i<n,\,n\ge 1,
\]
with $q=1-p\in(0,1/2)$. Note that the stationary distribution satisfies
\[
 \pi_n(i)=\frac{p/q-1}{(p/q)^{n+1}-1}\left(\frac{p}{q}\right)^i,\quad\forall 0\le i\le n.
\]
This implies
\begin{equation}\label{eq-pq}
 \frac{\pi_n([0,i])}{\pi_n(i)}=\frac{p/q-(p/q)^{-i}}{p/q-1},\quad\forall 0\le i\le n.
\end{equation}
By Lemma \ref{l-meanvar2}, one has
\[
 \mathbb{E}_0\widetilde{\tau}^{(n)}_{n}=\sum_{i=0}^{n-1}\frac{\pi_n([0,i])}{p\pi_n(i)},\quad
 \zeta_{n,i}\le\text{Var}_i\widetilde{\tau}_{i+1}^{(n)}\le 2\zeta_{n,i},
\]
where
\[
 \zeta_{n,i}=\frac{1}{p^2\pi_n(i)}\sum_{\ell=0}^i\left(\frac{\pi_n([0,\ell])}{\pi_n(\ell)}\right)^2
 \pi_n(\ell).
\]
Applying (\ref{eq-pq}) to the computation of $\mathbb{E}_0\widetilde{\tau}^{(n)}_n$ and $\zeta_{n,i}$ yields
\[
 \mathbb{E}_0\widetilde{\tau}^{(n)}_{n}=\frac{n}{p-q}-\frac{p^2}{q(p-q)^2}
 \left(1-\left(\frac{q}{p}\right)^n\right),\quad
 \frac{1}{p^2}\le \zeta_{n,i}\le\frac{p}{(p-q)^3},\quad\forall i,
\]
where the bound of $\zeta_{n,i}$ leads to $\text{Var}_0\widetilde{\tau}^{(n)}_n\asymp n$.
Observe that $\pi_n([0,n])=1$ and $\pi_n(n)\ra 1-q/p$. As a consequence of Theorems \ref{t-sep2}, \ref{t-tv2} and \ref{t-tv5} with $M_n=n$, the families $\mathcal{F}_c,\mathcal{F}_c^L$ have a $(\frac{n}{p-q},\sqrt{n})$ cutoff in total variation and separation. To examine the existence of cutoff for $\mathcal{F}_c^R$, we fix $a\in(q/p^2-1,q/p)$. Based on the observation that $\pi_n(n-1)\ra(p-q)/p^2$, one has $M_n(a)=n-1$ for $n$ large enough and this implies $\text{Var}_n\widetilde{\tau}^{(n)}_{M_n(a)}=(\mathbb{E}_n\widetilde{\tau}^{(n)}_{M_n(a)})^2$. By Theorem \ref{t-tv3}, $\mathcal{F}_c^R$ has no cutoff in total variation.\vskip2mm

{\it (2) Metropolis chains for exponential distributions} Consider an increasing positive function $f$ on $(0,\infty)$. For $n\ge 1$, let $\pi_n(i)=\pi_n(0)f(i)$ and
\begin{equation}\label{eq-rates1}
 p_{n,i}=r_{n,0}=1/2,\quad q_{n,i+1}=\frac{f(i)}{2f(i+1)},\quad\forall 0\le i<n,
\end{equation}
and
\begin{equation}\label{eq-rates2}
 r_{n,i+1}=\frac{1}{2}-\frac{f(i)}{2f(i+1)},\quad\forall 0\le i<n-1,\quad r_{n,n}=1-\frac{f(n-1)}{f(n)}.
\end{equation}
One can check that the $n$th chain is the Metropolis chain for $\pi_n$ with base chain the simple random walk on $\mathcal{X}_n$ with holding probability $1/2$ at boundaries. We refer the reader to \cite{DS98} for details of Metropolis chains.

It is worthwhile to note that $K_n$ is monotonic, i.e. $p_{n,i}+q_{n,i+1}\le 1$ for all $0\le i<n$. By Corollary 4.2 in \cite{DLP10}, separation of the $n$th chain in $\mathcal{F},\mathcal{F}^L,\mathcal{F}^R$ (and respectively in $\mathcal{F}_c,\mathcal{F}_c^L,\mathcal{F}_c^R$) is the same. As a result of Theorem \ref{t-sep}, the existence of separation cutoff of $\mathcal{F}$ is equivalent to that of $\mathcal{F}_c$ and the cutoff time and window for $\mathcal{F}_c$ given by Theorem \ref{t-sep} is applicable to $\mathcal{F}$. For the total variation distance, if $\inf_{n,i}r_{n,i}>0$ is assumed, then Theorems \ref{t-tv2}, \ref{t-tv3} and \ref{t-tv4} and Remarks \ref{r-wcomp2} and \ref{r-wcomp} imply that the existence of cutoff of $\mathcal{F}$ (respectively $\mathcal{F}^L,\mathcal{F}^R$) is equivalent to that of $\mathcal{F}_c$ (respectively $\mathcal{F}_c^L,\mathcal{F}_c^R$). Furthermore, the cutoff times and windows for $\mathcal{F},\mathcal{F}_c$ given by Theorem \ref{t-tv2} (respectively for $\mathcal{F}^L,\mathcal{F}_c^L$ and for $\mathcal{F}^R,\mathcal{F}_c^R$ given by Theorems \ref{t-tv3} and \ref{t-tv4}) are consistent in the way that the cutoff times are equal and the cutoff windows are of the same order.

In this example, $f(x)=\exp\{\alpha x^\beta\}$ with $\alpha>0$ and $\beta>0$. Note that $\inf_{n,i}r_{n,i}>0$ if $\beta\ge 1$ and $\inf_{n,i}r_{n,i}=0$ if $\beta\in(0,1)$. In what follows, the cutoff phenomenon is discussed case by case according to $\beta$.

{\bf Case 1: $\beta>1$.} We first make some computations. Note that
\[
 \frac{d}{dx}f(x)=\alpha\beta x^{\beta-1}f(x)\ge\alpha\beta f(x)\quad\forall x\ge 1.
\]
This implies
\[
 \sum_{j=0}^if(j)\le 1+f(i)+\int_1^if(x)dx\le 1+f(i)+\frac{f(i)-f(1)}{\alpha\beta}\le\left(2+\frac{1}{\alpha\beta}\right)f(i).
\]
When $i$ tends to infinity, one has
\[
 \left(1-\frac{1}{i}\right)^\beta=1-\frac{\beta}{i}+O\left(\frac{1}{i^2}\right).
\]
This leads to
\[
 \frac{f(i-1)}{f(i)}=\exp\left\{-\alpha\beta i^{\beta-1}\left(1+O\left(\frac{1}{i}\right)\right)\right\}
 =O\left(\frac{1}{i^2}\right).
\]
As a result, we obtain
\[
 1\le \frac{\pi_n([0,i])}{\pi_n(i)}=1+\frac{\pi_n([0,i-1])}{\pi_n(i)}
 \le 1+\left(2+\frac{1}{\alpha\beta}\right)\frac{f(i-1)}{f(i)}
 =1+O\left(\frac{1}{i^2}\right).
\]
Replacing $i$ with $n$ gives $\pi_n(n)\ra 1$ and, by Lemma \ref{l-meanvar2}, one has
\[
 \mathbb{E}_0\widetilde{\tau}^{(n)}_n=2\sum_{i=0}^{n-1}\frac{\pi_n([0,i])}{\pi_n(i)}
 =2n+O(1)
\]
and
\[
 \text{Var}_i\widetilde{\tau}^{(n)}_{i+1}\asymp\frac{1}{\pi_n(i)}
 \sum_{\ell=0}^i\left(\frac{\pi_n([0,\ell])}{\pi_n(\ell)}\right)^2\pi_n(\ell)\asymp 1\quad\text{uniformly for }0\le i<n.
\]
The estimation of the variance implies $\text{Var}_0\widetilde{\tau}^{(n)}_n\asymp n$. By Theorem \ref{t-sep2}, \ref{t-tv2} and Theorem \ref{t-tv5}, both $\mathcal{F}_c$ and $\mathcal{F}_c^L$ have a $(2n,\sqrt{n})$ cutoff in total variation and separation. For the family $\mathcal{F}_c^R$, the observation, $\pi_n(n)\ra 1$, implies that the total variation mixing time of the $n$th chain is equal to $0$ when $n$ is large enough.

{\bf Case 2: $\beta=1$.} Set $\delta=(1-e^{-\alpha})/2$. Note that $(K_n-\delta I)/(1-\delta)$ is the biased random walk on $\mathcal{X}_n$ with $p=1/(1+e^{-\alpha})$. The result for biased random walks implies that $\mathcal{F}_c$ and $\mathcal{F}_c^L$ have a $(\frac{2n}{1-e^{-\alpha}},\sqrt{n})$ cutoff in total variation and separation but $\mathcal{F}_c^R$ has no total variation cutoff.

In Cases 1 and 2, one has $\inf_{n,i}r_{n,i}>0$. This implies that, in the total variation distance, the conclusion on the existence of cutoff, the cutoff time and the cutoff window also applies to $\mathcal{F}_c,\mathcal{F}_c^L,\mathcal{F}_c^R$.

{\bf Case 3: $0<\beta<1$.} First, observe that
\[
 \frac{d}{dx}(x^{1-\beta}f(x))=\alpha\beta f(x)+(1-\beta)x^{-\beta}f(x).
\]
This implies
\[
 \frac{i^{1-\beta}f(i)-j^{1-\beta}f(j)}{\alpha\beta+(1-\beta)j^{-\beta}}\le\int_j^if(x)dx\le\frac{i^{1-\beta}f(i)}{\alpha\beta},
 \quad\forall 1\le j<i.
\]
and then
\begin{equation}\label{eq-alphabeta1}
 \frac{1}{\alpha\beta+(1-\beta)j^{-\beta}}\left(1-\frac{j^{1-\beta} f(j)}{i^{1-\beta}f(i)}\right)\le\frac{f(1)+\cdots+f(i)}{i^{1-\beta}f(i)}
 \le \frac{1}{\alpha\beta}+i^{\beta-1}
\end{equation}
When $i\ge 2j$ and $j\ra \infty$, one has
\[
 \frac{f(j)}{f(i)}=\exp\left\{-\alpha i^\beta\left(1-\left(\frac{j}{i}\right)^\beta\right)\right\}=o\left(\frac{1}{i}\right).
\]
Consequently, we obtain, as $j\ra\infty$,
\begin{equation}\label{eq-alphabeta2}
 f(0)+\cdots+f(i)=i^{1-\beta}f(i)\left(\frac{1}{\alpha\beta}+O(j^{-\beta}+i^{\beta-1})\right)\quad\text{uniformly for $i\ge 2j$}.
\end{equation}
Replacing $i,j$ with $n,\lfloor n/2\rfloor$ in (\ref{eq-alphabeta2}) gives
\[
 \frac{1}{\pi_n(0)}=n^{1-\beta}f(n)\left(\frac{1}{\alpha\beta}+o(1)\right)\quad\text{as }n\ra\infty.
\]

Next, we fix $c>0$ and let $c_n$ be a sequence converging to $c$ such that $c_nn^{1-\beta}\in\mathcal{X}_n$. Set $M_n=n-c_nn^{1-\beta}$. Replacing $i,j$ with $M_n,\lfloor M_n/2\rfloor$ in (\ref{eq-alphabeta2}) yields
\[
 \lim_{n\ra\infty}\pi_n([0,M_n])=\lim_{n\ra\infty}\pi_n(0)\sum_{\ell=0}^{M_n}f(\ell)=e^{-c\alpha\beta}\in(0,1).
\]
By Lemma \ref{l-meanvar2}, one has, when $2j_n\le i_n\le M_n$ and $j_n\ra\infty$,
\[
 \mathbb{E}_0\widetilde{\tau}^{(n)}_{M_n}=2\sum_{\ell=0}^{M_n-1}\frac{f(0)+\cdots+f(\ell)}{f(\ell)}
 =\frac{2n^{2-\beta}}{\alpha\beta(2-\beta)}+O\left(n^{2-\beta} j_n^{-\beta}+i_n^{2-\beta}+n\right),
\]
where the second equality is given by separating $\sum_{\ell<M_n}$ into $\sum_{\ell<i_n}$ and $\sum_{i_n\le \ell<M_n}$ and then applying (\ref{eq-alphabeta1}) and (\ref{eq-alphabeta2}) respectively, and
\[
 \text{Var}_0\widetilde{\tau}^{(n)}_{M_n}\asymp\sum_{0\le\ell\le i<M_n}\frac{(f(0)+\cdots+f(\ell))^2}{f(i)f(\ell)}
 \asymp\sum_{i=0}^{M_n-1}\frac{1}{f(i)}\sum_{\ell=0}^i\ell^{2-2\beta}f(\ell),
\]
where the computation uses (\ref{eq-alphabeta1}). Observe that $4-3\beta>2-\beta$. Setting $j_n=\lfloor n^{1/2}\rfloor$ and $i_n=\lfloor n^{\frac{4-3\beta}{4-2\beta}}\rfloor$. Clearly, $i_n\ge 2j_n$ for $n$ large enough and, in the computation of expectation, this leads to
\[
 \mathbb{E}_0\widetilde{\tau}^{(n)}_{M_n}=\frac{2n^{2-\beta}}{\alpha\beta(2-\beta)}+O\left(n^{2-\frac{3}{2}\beta}+n\right).
\]
Applying the following fact
\[
 \frac{d}{dx}(x^{3-3\beta}f(x))=[(3-3\beta)x^{2-3\beta}+\alpha\beta x^{2-2\beta}]f(x)\asymp x^{2-2\beta}f(x),\quad\forall x\ge 1,
\]
to the computation of the variance yields
\[
 \text{Var}_0\widetilde{\tau}^{(n)}_{M_n}\asymp\sum_{i=0}^{M_n-1}i^{3-3\beta}\asymp n^{4-3\beta}.
\]
Similarly, one may use the observation that $\frac{f(M_n)}{f(n)}=\ra e^{-c\alpha\beta}$ to derive
\[
 q_{n,i}\asymp 1\quad\text{uniformly for $M_n\le i\le n$}.
\]
By Lemma \ref{l-meanvar2}, this implies
\[
 \mathbb{E}_n\widetilde{\tau}^{(n)}_{M_n}\asymp\sum_{i=M_n+1}^n\frac{f(i)+\cdots+f(n)}{f(i)}\asymp n^{2-2\beta}
\]
and
\[
 \text{Var}_n\widetilde{\tau}^{(n)}_{M_n}\asymp\sum_{M_n<i\le \ell\le n}\frac{(f(\ell)+\cdots+f(n))^2}{f(i)f(\ell)}\asymp n^{4-4\beta}.
\]
As a consequence of Theorem \ref{t-sep2}, \ref{t-tv2} and \ref{t-tv5}, $\mathcal{F}_c$ and $\mathcal{F}_c^L$ have a $(\frac{2n^{2-\beta}}{\alpha\beta(2-\beta)},n^{2-\frac{3}{2}\beta}+n)$ cutoff in total variation and separation but, by Theorem \ref{t-tv3}, $\mathcal{F}_c^R$ has no total variation cutoff. Note that, when $\beta\in(2/3,1)$, a better choice of the cutoff window is $n^{2-\frac{3}{2}\beta}$. To have this cutoff window, a more subtle estimation of the cutoff time is required.

We summarize the above results in the following theorem.

\begin{thm}\label{t-metro}
Let $f(x)=\exp\{\alpha x^\beta\}$ with $\alpha>0,\beta>0$. Consider the family $\mathcal{F}=(\mathcal{X}_n,K_n,\pi_n)_{n=1}^\infty$, where $\mathcal{X}_n=\{0,1,...,n\}$, $\pi_n(i)=\pi(0)f(i)$ and $K_n$ is a birth and death chain with transition rates
\[
 p_{n,i}=r_{n,0}=1/2,\quad q_{n,i+1}=\frac{f(i)}{2f(i+1)},\quad r_{n,i+1}=\frac{1}{2}-\frac{f(i)}{2f(i+1)},\quad\forall 0\le i<n.
\]
Then, $\mathcal{F}_c$ and $\mathcal{F}_c^L$ have a $(t_n,b_n)$ cutoff in total variation and separation but $\mathcal{F}_c^R$ has no total variation cutoff, where
\[
 t_n=\begin{cases}2n&\text{for }\beta>1\\\frac{2n}{1-e^{-\alpha}}&\text{for }\beta=1\\\frac{2n^{2-\beta}}{\alpha\beta(2-\beta)}&\text{for }0<\beta<1\end{cases},\quad
 b_n=\begin{cases}\sqrt{n}&\text{for }\beta\ge 1\\n^{2-\frac{3}{2}\beta}+n&\text{for }0<\beta<1\end{cases}.
\]
\end{thm}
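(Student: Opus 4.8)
The plan is to obtain Theorem~\ref{t-metro} directly from the general theorems of Sections~\ref{s-sep}--\ref{s-comp}, the only genuine work being the asymptotic evaluation of the mean and variance of $\widetilde{\tau}^{(n)}_{M_n}$ started from the two boundary states, which I would carry out via Lemma~\ref{l-meanvar2} together with the partial-sum estimates for $f(x)=\exp\{\alpha x^\beta\}$ developed above. First I would fix $a\in(0,1)$ and choose a cut point $M_n=M_n(a)$: the Stirling-type computations show that $M_n=n$ works when $\beta\ge 1$ (indeed $\pi_n([0,n-1])\to 0$ there, so $M_n(a)=n$ for all $a$ once $n$ is large), while $M_n=n-c_nn^{1-\beta}$ with $c_n\to c>0$ works when $0<\beta<1$, and in either case $\pi_n([0,M_n])$ and $\pi_n([M_n,n])$ stay bounded away from $0$, so (\ref{eq-pin}) and (\ref{eq-pin2}) hold. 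Lemma~\ref{l-meanvar2} then expresses $\mathbb{E}_0\widetilde{\tau}^{(n)}_{M_n}$, $\mathbb{E}_n\widetilde{\tau}^{(n)}_{M_n}$ and the two variances as explicit weighted sums of the ratios $\pi_n([0,\ell])/\pi_n(\ell)$ and $\pi_n([\ell,n])/\pi_n(\ell)$, which I would estimate by comparing $\sum_{j\le i}f(j)$ with $\int^i f$, using the derivative identities for $x^{1-\beta}f(x)$ and $x^{3-3\beta}f(x)$ and the bound $f(j)/f(i)=o(1/i)$ valid for $i\ge 2j$.

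The computation then breaks into the three regimes. For $\beta>1$ one has $\pi_n(n)\to 1$, and with $M_n=n$ the contributions from $n$ vanish while $\mathbb{E}_0\widetilde{\tau}^{(n)}_n=2n+O(1)$ and $\text{Var}_0\widetilde{\tau}^{(n)}_n\asymp n$, so $s_n=2n+O(1)$, $b_n\asymp\sqrt n$ in Theorem~\ref{t-sep2} and $\theta_n\sim 2n$, $\alpha_n\asymp\sqrt n$ in Theorem~\ref{t-tv2}; hence $s_n/b_n\to\infty$ and Theorems~\ref{t-sep2}, \ref{t-tv2}, together with Remark~\ref{r-sep2}, Corollary~\ref{c-sep}, Theorem~\ref{t-tv3} and Corollary~\ref{c-tv}, give the $(2n,\sqrt n)$ cutoffs of $\mathcal{F}_c$ and $\mathcal{F}_c^L$ in separation and total variation. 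For $\beta=1$ I would instead note that $K_n^{(\delta)}=(K_n-\delta I)/(1-\delta)$ with $\delta=(1-e^{-\alpha})/2$ is the biased walk of example~(1) with $p/q=e^{\alpha}$, and that passing from the continuous-time semigroup of $K_n$ to that of $K_n^{(\delta)}$ is a deterministic time change; example~(1) then transfers to give the $(\tfrac{2n}{1-e^{-\alpha}},\sqrt n)$ cutoffs. For $0<\beta<1$ one gets $\mathbb{E}_0\widetilde{\tau}^{(n)}_{M_n}=\tfrac{2n^{2-\beta}}{\alpha\beta(2-\beta)}+O(n^{2-\frac{3}{2}\beta}+n)$, $\text{Var}_0\widetilde{\tau}^{(n)}_{M_n}\asymp n^{4-3\beta}$, $\mathbb{E}_n\widetilde{\tau}^{(n)}_{M_n}\asymp n^{2-2\beta}$ and $\text{Var}_n\widetilde{\tau}^{(n)}_{M_n}\asymp n^{4-4\beta}$, the last two of lower order, so $s_n\sim t_n$ and $\sqrt{\text{Var}_0+\text{Var}_n}\asymp n^{2-\frac{3}{2}\beta}$; Theorems~\ref{t-sep2} and \ref{t-tv2} give cutoffs with time $s_n\sim t_n$, and the window-change principle (Corollary~2.5(v) of \cite{CSal08}) upgrades these to the clean $(t_n,n^{2-\frac{3}{2}\beta}+n)$ form, since $|t_n-s_n|=O(n^{2-\frac{3}{2}\beta}+n)$ and $n^{2-\frac{3}{2}\beta}+n=o(n^{2-\beta})$. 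For the negative statement on $\mathcal{F}_c^R$: when $\beta\ge 1$, $\pi_n(n)$ converges to the positive constant $1$ (for $\beta>1$) or $1-e^{-\alpha}$ (for $\beta=1$), so the necessary condition $\pi_n(n)\to 0$ for a cutoff of $\mathcal{F}_c^R$ fails; when $0<\beta<1$, $\pi_n(n)\to 0$ but one checks, using the mirrored form of Lemma~\ref{l-hit2}, that $\text{Var}_n\widetilde{\tau}^{(n)}_{M_n(a)}\asymp n^{4-4\beta}\asymp(\mathbb{E}_n\widetilde{\tau}^{(n)}_{M_n(a)})^2$ for every $a$, so the mirrored version of condition~(2) of Theorem~\ref{t-tv3} fails.

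The step I expect to be the main obstacle is the sharp variance bound $\text{Var}_0\widetilde{\tau}^{(n)}_{M_n}\asymp n^{4-3\beta}$ in the regime $0<\beta<1$, carried out in tandem with the control of the remainder in $\mathbb{E}_0\widetilde{\tau}^{(n)}_{M_n}$ down to $O(n^{2-\frac{3}{2}\beta}+n)$ so that this error does not drown the intrinsic window $n^{2-\frac{3}{2}\beta}$. This forces a two-scale splitting of the defining (double) sums at intermediate indices $j_n=\lfloor n^{1/2}\rfloor$ and $i_n=\lfloor n^{(4-3\beta)/(4-2\beta)}\rfloor$, chosen exactly so that the remainders $n^{2-\beta}j_n^{-\beta}$ and $i_n^{2-\beta}$ both fall below $n^{2-\frac{3}{2}\beta}$, together with the identity $\tfrac{d}{dx}(x^{3-3\beta}f(x))\asymp x^{2-2\beta}f(x)$ for the inner sum in the variance. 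A secondary, essentially bookkeeping, point is that the separation and total variation cutoffs must be shown to share the same cutoff time and to have windows of the same order: this follows from Corollary~\ref{c-sep} and Remark~\ref{r-sep2} once one observes that $\mathbb{E}_n\widetilde{\tau}^{(n)}_{M_n}$ and $\text{Var}_n\widetilde{\tau}^{(n)}_{M_n}$ are negligible, so that the maximal quantities $\theta_n,\alpha_n$ of Theorem~\ref{t-tv2} are asymptotically the symmetric quantities $s_n,b_n$ of Theorem~\ref{t-sep2}.
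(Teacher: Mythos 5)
Your proposal is correct and follows essentially the same route as the paper: the same choice of cut points ($M_n=n$ for $\beta\ge 1$, $M_n=n-c_nn^{1-\beta}$ for $0<\beta<1$), the same reduction of $\beta=1$ to the biased walk via $K_n^{(\delta)}$ with $\delta=(1-e^{-\alpha})/2$, the same two-scale splitting at $j_n=\lfloor n^{1/2}\rfloor$, $i_n=\lfloor n^{(4-3\beta)/(4-2\beta)}\rfloor$ for the mean and variance when $0<\beta<1$, and the same invocation of Theorems \ref{t-sep2}, \ref{t-tv2}, \ref{t-tv3} and \ref{t-tv5}. The only slip is immaterial: for $\beta=1$ one has $\pi_n([0,n-1])\to e^{-\alpha}>0$ rather than $0$, but $M_n=n$ still satisfies (\ref{eq-pin}) since $\pi_n(n)\to 1-e^{-\alpha}$, and your argument that $\pi_n(n)\not\to 0$ kills any cutoff for $\mathcal{F}_c^R$ is a valid (and slightly cleaner) substitute for the paper's treatment of that case.
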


\vskip2mm

{\it (3) Metropolis chains for polynomial distributions} In this example, we consider the family of Metropolis chains given by (\ref{eq-rates1}) and (\ref{eq-rates2}) with the replacement of $f(x)$ by $g(x)=\exp\{\alpha(\log(x+1))^\beta\}$, where $\alpha,\beta$ are positive. It has been shown in \cite{CSal13-2} that $\mathcal{F}_c$ has a cutoff in total variation and separation when $\beta>1$ but has no cutoff when $0<\beta\le 1$. The following theorem provides a cutoff time and a cutoff window when $\beta>1$.

\begin{thm}\label{t-metro2}
Let $g(x)=\exp\{\alpha(\log(x+1))^\beta\}$ with $\alpha>0$ and $\beta>1$. Consider the family $\mathcal{F}=(\mathcal{X}_n,K_n,\pi_n)_{n=1}^\infty$, where $\mathcal{X}_n=\{0,1,...,n\}$, $\pi_n(i)=\pi(0)g(i)$ and $K_n$ is a birth and death chain with transition rates
\[
 p_{n,i}=r_{n,0}=1/2,\quad q_{n,i+1}=\frac{g(i)}{2g(i+1)},\quad r_{n,i+1}=\frac{1}{2}-\frac{g(i)}{2g(i+1)},\quad\forall 0\le i<n.
\]
Then, $\mathcal{F}_c$ and $\mathcal{F}_c^L$ have a $(t_n,b_n)$ cutoff in total variation and separation but $\mathcal{F}_c^R$ has no total variation cutoff, where
\[
 t_n=\sum_{\ell=0}^N\frac{n^2}{\alpha\beta B_\ell(\log n)^{\beta+\ell-1}},\quad
 b_n=\frac{n^2}{(\log n)^{\frac{3}{2}(\beta-1)}}
\]
and $B_0=1$, $B_\ell=2^\ell(\beta-1)\beta\cdots(\beta+\ell-2)$, $N=\lceil\frac{\beta-3}{2}\rceil\ge 0$.
\end{thm}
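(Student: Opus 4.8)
The plan is to follow the template of the proof of Theorem~\ref{t-metro}: once the moments $\mathbb{E}_0\widetilde{\tau}^{(n)}_{M_n(a)}$, $\mathrm{Var}_0\widetilde{\tau}^{(n)}_{M_n(a)}$, $\mathbb{E}_n\widetilde{\tau}^{(n)}_{M_n(a)}$, $\mathrm{Var}_n\widetilde{\tau}^{(n)}_{M_n(a)}$ are pinned down for every $a\in(0,1)$, the cutoff assertions for $\mathcal{F}_c$ and $\mathcal{F}_c^L$ follow from Theorems~\ref{t-tv2} and \ref{t-tv5} (together with Remark~\ref{r-sep2} and Theorem~\ref{t-sep2} for separation), and the absence of a total variation cutoff for $\mathcal{F}_c^R$ follows from Theorem~\ref{t-tv3} applied to the reflected family. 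Write $G(\ell)=\sum_{j=0}^{\ell}g(j)$, so $\pi_n(i)=g(i)/G(n)$ and, since $p_{n,\ell}\equiv 1/2$ for $\ell<n$, the first identity of Lemma~\ref{l-meanvar2} gives $\mathbb{E}_0\widetilde{\tau}^{(n)}_{M}=2\sum_{\ell=0}^{M-1}G(\ell)/g(\ell)$. I would begin with the elementary estimates for $g$: from $g'(x)/g(x)=\alpha\beta(\log(x+1))^{\beta-1}/(x+1)$, repeated integration by parts at the endpoint yields $G(\ell)/g(\ell)=\frac{\ell}{\alpha\beta(\log\ell)^{\beta-1}}\bigl(1+O((\log\ell)^{-(\beta-1)})\bigr)$; in particular $\pi_n(0)=1/G(n)\to 0$ and $\pi_n(n)=g(n)/G(n)\asymp(\log n)^{\beta-1}/n\to0$, so the hypothesis $\pi_n(0)\to0$ of Theorem~\ref{t-tv3} (and of its reflected version) is met. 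Summing $\pi_n$ and using that $\alpha[(\log(i+1))^{\beta}-(\log(j+1))^{\beta}]$ is of order $(\log n)^{\beta-1}\log(i/j)$ when $i,j\asymp n$, I would show $n-M_n(a)\asymp n/(\log n)^{\beta-1}$ and, crucially, the uniform two-sided bound $g(i)/g(j)\asymp 1$ for $M_n(a)\le j\le i\le n$, with constants depending only on $a$.

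Next come the moment computations. For $\mathbb{E}_0\widetilde{\tau}^{(n)}_{M_n(a)}=2\sum_{\ell<M_n(a)}G(\ell)/g(\ell)$, the correction inside $G(\ell)/g(\ell)$ contributes only $O(n^2/(\log n)^{2(\beta-1)})$ and replacing $M_n(a)$ by $n$ costs $O((n-M_n(a))\cdot n/(\log n)^{\beta-1})=O(n^2/(\log n)^{2(\beta-1)})$; both errors are $O(b_n)$ since $2(\beta-1)\ge\tfrac32(\beta-1)$. It remains to expand $\frac{2}{\alpha\beta}\sum_{\ell<n}\ell/(\log\ell)^{\beta-1}$, which after the substitution $x=e^{s}$ equals $\frac{2}{\alpha\beta}\int^{\log n}e^{2s}s^{-(\beta-1)}\,ds$ up to an Euler--Maclaurin remainder of lower order; repeated integration by parts produces an asymptotic series $\frac{n^2}{\alpha\beta(\log n)^{\beta-1}}\sum_{\ell\ge0}c_\ell(\log n)^{-\ell}$ whose initial segment of length $N+1$ is precisely $t_n$ --- this is where the constants $B_\ell$ and the index $N=\lceil(\beta-3)/2\rceil$ are read off, $N+1$ being the smallest index for which the next term, of order $n^2(\log n)^{-(\beta-1)-N-1}$, has size $O(b_n)$. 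Hence $\mathbb{E}_0\widetilde{\tau}^{(n)}_{M_n(a)}=t_n+O(b_n)$ for all $a\in(0,1)$. For the variance, the variance formula of Lemma~\ref{l-meanvar2} gives $\mathrm{Var}_0\widetilde{\tau}^{(n)}_{M}\asymp\sum_{\ell<M}g(\ell)^{-1}\sum_{j\le\ell}G(j)^2/g(j)$; using $G(j)/g(j)\asymp j/(\log j)^{\beta-1}$ and once more integrating by parts, $\sum_{j\le\ell}G(j)^2/g(j)\asymp\ell^3 g(\ell)/(\log\ell)^{3(\beta-1)}$, whence $\mathrm{Var}_0\widetilde{\tau}^{(n)}_{M_n(a)}\asymp\sum_{\ell<M_n(a)}\ell^3/(\log\ell)^{3(\beta-1)}\asymp n^4/(\log n)^{3(\beta-1)}=b_n^2$.

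For the right boundary, $q_{n,i}=g(i-1)/(2g(i))\to1/2$ uniformly and $g$ has bounded ratios on $[M_n(a),n]$, so Lemma~\ref{l-meanvar2} yields $\mathbb{E}_n\widetilde{\tau}^{(n)}_{M_n(a)}\asymp\sum_{i=M_n(a)+1}^n(n-i)\asymp(n-M_n(a))^2\asymp n^2/(\log n)^{2(\beta-1)}$ and, crucially, $\mathrm{Var}_n\widetilde{\tau}^{(n)}_{M_n(a)}\asymp\sum_{M_n(a)<i\le\ell\le n}\frac{(\sum_{k\ge\ell}g(k))^2}{g(i)g(\ell)}\asymp\sum_{M_n(a)<\ell\le n}(n-\ell)^2(\ell-M_n(a))\asymp(n-M_n(a))^4\asymp\bigl(\mathbb{E}_n\widetilde{\tau}^{(n)}_{M_n(a)}\bigr)^2$. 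Feeding this into Theorem~\ref{t-tv3} applied to the reflection $\widehat K_n(i,j)=K_n(n-i,n-j)$ (whose left boundary is the right boundary of $\mathcal{F}_c$) shows that condition~(2) of that theorem fails, so $\mathcal{F}_c^R$ has no total variation cutoff. On the other hand $\mathbb{E}_n\widetilde{\tau}^{(n)}_{M_n(a)}=O(b_n)=o(t_n)$ and $\mathrm{Var}_n\widetilde{\tau}^{(n)}_{M_n(a)}=o(b_n^2)$, so the constants of Theorem~\ref{t-tv2} are $\theta_n=\mathbb{E}_0\widetilde{\tau}^{(n)}_{M_n}=t_n+O(b_n)$ and $\alpha_n=(\mathrm{Var}_0\widetilde{\tau}^{(n)}_{M_n})^{1/2}\asymp b_n$, so $\theta_n/\alpha_n\asymp(\log n)^{(\beta-1)/2}\to\infty$; Theorem~\ref{t-tv2} then gives a maximum total variation cutoff for $\mathcal{F}_c$, Theorem~\ref{t-tv5}(2) transfers it to $\mathcal{F}_c^L$ with the same cutoff time, Remark~\ref{r-sep2} and Theorem~\ref{t-sep2} supply the matching separation cutoffs (here $s_n=\mathbb{E}_0\widetilde{\tau}^{(n)}_{M_n}+\mathbb{E}_n\widetilde{\tau}^{(n)}_{M_n}=t_n+O(b_n)$ and $s_n/b_n\to\infty$), and a window adjustment via Corollary~\ref{c-tv} (or Corollary~2.5(v) of \cite{CSal08}) replaces $\theta_n$ by $t_n$ and $\alpha_n$ by $b_n$, using $b_n=o(t_n)$ and $\alpha_n=O(b_n)$. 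This gives the claimed $(t_n,b_n)$ cutoffs.

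The main obstacle is the expansion in the second paragraph: carrying $2\sum_{\ell<M_n(a)}G(\ell)/g(\ell)$ to the exact order $b_n$ needs two nested endpoint expansions --- one for $G(\ell)/g(\ell)$, one for the outer sum via $x=e^{s}$ --- and then a careful check that each of the three error sources (the tail of the series beyond index $N$, the $M_n(a)$-versus-$n$ discrepancy, and the Euler--Maclaurin remainder) is $O(b_n)$; this bookkeeping is where the precise exponents $\beta+\ell-1$, the constants $B_\ell$, and the truncation $N=\lceil(\beta-3)/2\rceil$ get fixed. A secondary technical point, underlying all four of the $\mathbb{E}_n$/$\mathrm{Var}_n$ estimates and in particular the lower bound $\mathrm{Var}_n\widetilde{\tau}^{(n)}_{M_n(a)}\gtrsim(\mathbb{E}_n\widetilde{\tau}^{(n)}_{M_n(a)})^2$ behind the ``no cutoff'' conclusion, is the uniform estimate $g(i)/g(j)\asymp1$ over the top $\Theta(n/(\log n)^{\beta-1})$ states, which follows from the slow growth of $x\mapsto(\log x)^{\beta}$.
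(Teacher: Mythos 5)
Your proposal follows essentially the same route as the paper's appendix proof: the endpoint expansion of $\sum_{j\le \ell}g(j)/g(\ell)\sim \ell/(\alpha\beta(\log \ell)^{\beta-1})$ with relative error $O((\log\ell)^{1-\beta})$, the iterated integration by parts generating the truncated series that defines $t_n$ (with the truncation index $N$ fixed by requiring the first omitted term to be $O(b_n)$), the variance estimates $\mathrm{Var}_0\widetilde{\tau}^{(n)}_{M_n(a)}\asymp n^4/(\log n)^{3(\beta-1)}$ and $\mathrm{Var}_n\widetilde{\tau}^{(n)}_{M_n(a)}\asymp(\mathbb{E}_n\widetilde{\tau}^{(n)}_{M_n(a)})^2$ via the bounded-ratio property of $g$ on the top $\Theta(n/(\log n)^{\beta-1})$ states, and the final appeal to Theorems \ref{t-sep2}, \ref{t-tv2}, \ref{t-tv3} and \ref{t-tv5}. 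The only cosmetic differences are your substitution $x=e^{s}$ in place of the paper's direct integration by parts on $\int (x+1)(\log(x+1))^{1-\beta}\,dx$ (equation (\ref{eq-int})), and your explicit three-way error bookkeeping where the paper instead chooses auxiliary indices $j_n=\lfloor e^{\sqrt{\log n}}\rfloor-1$ and $i_n=\lceil e^{\sqrt{2\log n}}\rceil-1$.
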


\begin{rem}
Note that, in Theorem \ref{t-metro2}, $\beta+N-1<\frac{3}{2}(\beta-1)\le\beta+N$.
\end{rem}

The proof of Theorem \ref{t-metro2} is similar to the proof of the case $\beta\in(0,1)$ in Theorem \ref{t-metro} and is placed in the appendix.\vskip2mm

{\it (4) Metropolis chains for binomial distributions} For $n\ge 1$, let $\pi_n(i)=2^{-n}\binom{n}{i}$ and
\[
 p_{n,i}=q_{n,n-i}=\frac{1}{2},\quad q_{n,i+1}=p_{n,n-i-1}=\frac{i+1}{2(n-i)},\quad\forall 0\le i<n/2,
\]
and $r_{n,i}=1-p_{n,i}-q_{n,i}$ for $0\le i\le n$. It is easy to check that $K_n$ is the Metropolis chain for $\pi_n$ with base chain the simple random walk on $\mathcal{X}_n$ with holding probability $1/2$ at the boundary states. The separation cutoff of this family is proved in \cite{DS06} and we will discuss the cutoff time and the cutoff window in this example. First, one may use Lemma \ref{l-meanvar2} and (\ref{eq-ehren}) to derive
\begin{equation}\label{eq-ehrenmeanvar}
 \sum_{i=0}^{M_n-1}\frac{\pi_n([0,i])}{\pi_n(i)(1-\frac{i}{n})}=\frac{n\log n }{4}+O(n),\quad
 \sum_{0\le \ell\le i<M_n}\frac{\pi_n([0,\ell])^2}{\pi_n(\ell)\pi_n(i)}\asymp n^2,
\end{equation}
for any sequence $M_n\in\mathcal{X}_n$ satisfying $|M_n-\frac{n}{2}|=O(\sqrt{n})$. Note that
\[
 \pi_n(i)\left(1-\frac{i}{n}\right)=\frac{\pi_{n-1}(i)}{2},\quad \pi_n([0,i])=\pi_{n-1}([0,i])-\frac{\pi_{n-1}(i)}{2}.
\]
This implies
\begin{equation}\label{eq-pin3}
 \frac{\pi_n([0,i])}{\pi_n(i)(1-\frac{i}{n})}=\frac{2\pi_{n-1}([0,i])}{\pi_{n-1}(i)}-1.
\end{equation}
Set $M_n=\lfloor n/2\rfloor$. By Lemma \ref{l-meanvar2}, (\ref{eq-ehrenmeanvar}) and (\ref{eq-pin3}), we obtain
\[
 \mathbb{E}_0\widetilde{\tau}^{(n)}_{M_n}=\sum_{i=0}^{M_n-1}\frac{2\pi_n([0,i])}{\pi_n(i)}=
 \sum_{i=0}^{M_n-1}\left(\frac{\pi_{n+1}([0,i])}{\pi_{n+1}(i)(1-\frac{i}{n+1})}+1\right)=\frac{n\log n}{4}+O(n)
\]
and
\[
 \text{Var}_0\widetilde{\tau}^{(n)}_{M_n}\asymp \sum_{0\le \ell\le i<M_n}\frac{\pi_n([0,\ell])^2}{\pi_n(\ell)\pi_n(i)}\asymp n^2.
\]
In a similar way, one has
\[
 \mathbb{E}_n\widetilde{\tau}^{(n)}_{M_n}=\frac{n\log n}{4}+O(n),\quad \text{Var}_n\widetilde{\tau}^{(n)}_{M_n}\asymp n^2.
\]
As a consequence of Theorems \ref{t-sep2}, \ref{t-tv2} and \ref{t-tv5}, $\mathcal{F}_c$ has a $(\frac{1}{2}n\log n,n)$ separation cutoff and $\mathcal{F}_c,\mathcal{F}_c^L$ have a $(\frac{1}{4}n\log n,n)$ total variation cutoff.


\appendix

\section{Auxiliary results and proofs}

\begin{lem}\label{l-meanvar2}
Consider an irreducible birth and death chain on $\{0,1,...,n\}$ with transition rates $p_i,q_i,r_i$ and stationary distribution $\pi$. Let $\tau_i,\widetilde{\tau}_i$ be the hitting times in \textnormal{(\ref{eq-taui})}. Then, one has
\[
 \mathbb{E}_i\tau_{i+1}=\mathbb{E}_i\widetilde{\tau}_{i+1}=\frac{\pi([0,i])}{\pi(i)p_i},
\]
and
\[
 \textnormal{Var}_i(\tau_{i+1})=\frac{1}{p_i\pi(i)}\sum_{\ell=0}^i\pi(\ell)
 [\mathbb{E}_\ell\tau_{i+1}+\mathbb{E}_\ell\tau_i-1],
\]
and
\[
 \textnormal{Var}_i(\widetilde{\tau}_{i+1})=\frac{1}{p_i\pi(i)}\sum_{\ell=0}^i\pi(\ell)
 [\mathbb{E}_\ell\widetilde{\tau}_{i+1}+\mathbb{E}_\ell\widetilde{\tau}_i].
\]
\end{lem}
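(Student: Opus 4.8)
The plan is to handle both hitting times simultaneously by a one-step (first-step) analysis, deriving recursions in $i$ and then solving them, and to obtain the continuous-time statements from the discrete-time ones via a random-sum identity. I would begin with the \emph{means}. The key structural fact is that in a birth and death chain one must pass through $i$ to go from any $\ell\le i$ to $i+1$; by the strong Markov property this gives $\mathbb{E}_\ell\tau_{i+1}=\mathbb{E}_\ell\tau_i+\mathbb{E}_i\tau_{i+1}$ for all $\ell\le i$. Writing $h_i:=\mathbb{E}_i\tau_{i+1}$ and conditioning on the first step at $i$ (at $i$ one either jumps to $i+1$, or stays, or goes to $i-1$ and must return to $i$ before reaching $i+1$) yields $p_i h_i = 1 + q_i h_{i-1}$, with $h_{-1}$ read as $0$ since $q_0=0$. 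Solving this recursion by induction on $i$, using detailed balance $p_{i-1}\pi(i-1)=q_i\pi(i)$, gives $h_i=\pi([0,i])/(\pi(i)p_i)$. For the continuized chain I would use the representation $\widetilde{\tau}_{i+1}=\sum_{j=1}^{\tau_{i+1}}\xi_j$, where $\tau_{i+1}$ is the embedded-chain hitting time and $(\xi_j)$ is an i.i.d.\ sequence of mean-one exponentials independent of the embedded chain, and invoke Wald's identity to get $\mathbb{E}_i\widetilde{\tau}_{i+1}=\mathbb{E}_i\tau_{i+1}$ (this identity is also the one recorded in the Remark following (\ref{eq-tauc})).

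The heart of the argument is the discrete-time \emph{variance} formula. Let $g_i:=\mathbb{E}_i\tau_{i+1}^2$. The same first-step analysis (expanding $(1+\cdot)^2$ and $(1+\sigma+\tau')^2$, with independence of the summands from the strong Markov property) produces
\[
 p_i g_i = 1 + 2h_i(1-p_i) + q_i g_{i-1} + 2 q_i h_{i-1}(1+h_i),
\]
and after substituting $q_i h_{i-1}=p_i h_i-1$ this collapses to $p_i g_i = 2 p_i h_i^2 - 1 + q_i g_{i-1}$, equivalently, writing $v_i$ for the variance,
\[
 p_i v_i = p_i h_i^2 - 1 + q_i\bigl(v_{i-1}+h_{i-1}^2\bigr).
\]
I would then prove the closed form by induction on $i$: using $\mathbb{E}_\ell\tau_{i+1}=\mathbb{E}_\ell\tau_i+h_i$ for $\ell\le i$, the claimed right-hand side can be rewritten as $h_i^2 - h_i + \tfrac{2}{p_i\pi(i)}W_i$ with $W_i:=\sum_{\ell=0}^{i-1}\pi(\ell)\mathbb{E}_\ell\tau_i$. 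One checks directly that $W_i-W_{i-1}=h_{i-1}\pi([0,i-1])$, which by the mean formula equals $p_{i-1}\pi(i-1)h_{i-1}^2$; feeding this, together with detailed balance and $q_ih_{i-1}=p_ih_i-1$, into the recursion above shows that $h_i^2-h_i+\tfrac{2}{p_i\pi(i)}W_i$ solves it, and the base case $i=0$ is just the variance of a geometric random variable. This establishes the discrete-time formula.

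Finally, for the continuized chain the random-sum decomposition $\widetilde{\tau}_{i+1}=\sum_{j=1}^{\tau_{i+1}}\xi_j$ with $(\xi_j)$ i.i.d.\ exponential of mean one and independent of $\tau_{i+1}$ gives $\textnormal{Var}_i(\widetilde{\tau}_{i+1})=\mathbb{E}_i\tau_{i+1}+\textnormal{Var}_i(\tau_{i+1})$; combining this with the discrete formula and the mean formula $\mathbb{E}_i\tau_{i+1}=\pi([0,i])/(\pi(i)p_i)=\tfrac{1}{p_i\pi(i)}\sum_{\ell=0}^i\pi(\ell)$ absorbs the extra $\mathbb{E}_i\tau_{i+1}$ exactly into turning each bracket $[\mathbb{E}_\ell\tau_{i+1}+\mathbb{E}_\ell\tau_i-1]$ into $[\mathbb{E}_\ell\widetilde{\tau}_{i+1}+\mathbb{E}_\ell\widetilde{\tau}_i]$, which is the asserted form. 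The main obstacle is essentially just the bookkeeping in the inductive step of the discrete variance identity — verifying that the explicit expression satisfies the second-moment recursion, where the mean formula and detailed balance have to be combined in precisely the right way; once the variance recursion is in hand everything else, including the passage to continuous time, is routine.
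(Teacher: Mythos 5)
Your proposal is correct, and it is in fact more than the paper provides: the paper's own ``proof'' of this lemma consists of citing \cite{BBF09} for the discrete-time identities and asserting that the continuous-time case is a simple corollary. Your argument supplies the missing details in full. I checked the key computations: the first-step recursions $p_ih_i=1+q_ih_{i-1}$ and $p_ig_i=1+2h_i(1-p_i)+q_ig_{i-1}+2q_ih_{i-1}(1+h_i)$ are right, the simplification to $p_iv_i=p_ih_i^2-1+q_i(v_{i-1}+h_{i-1}^2)$ via $q_ih_{i-1}=p_ih_i-1$ is right, the rewriting of the claimed right-hand side as $h_i^2-h_i+2W_i/(p_i\pi(i))$ together with $W_i-W_{i-1}=p_{i-1}\pi(i-1)h_{i-1}^2$ does verify the induction (with the geometric base case), and the passage to continuous time via $\widetilde{\tau}_{i+1}=\sum_{j=1}^{\tau_{i+1}}\xi_j$ with $\textnormal{Var}_i(\widetilde{\tau}_{i+1})=\mathbb{E}_i\tau_{i+1}+\textnormal{Var}_i(\tau_{i+1})$ is exactly the ``simple corollary'' the paper alludes to, with the extra $\mathbb{E}_i\tau_{i+1}=\tfrac{1}{p_i\pi(i)}\sum_{\ell=0}^i\pi(\ell)$ absorbing the $-1$ in each bracket. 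The method (first-step analysis plus the strong Markov decomposition of $\tau_{i+1}$ through $\tau_i$) is the standard route and is essentially what the cited reference does, so there is no conceptual divergence — only the welcome fact that your write-up is self-contained.
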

\begin{proof}
See \cite{BBF09} for a proof of the discrete time case. The continuous time case is a simple corollary of the discrete time case.
\end{proof}

\begin{proof}[Proof of Remark \ref{r-t-sep}]
Let $t_n,\lambda_{n,i},\lambda_n,\sigma_n,\rho_n$ be the notations in Theorem \ref{t-sep}. It has been proved in \cite{DS06} that
\begin{equation}\label{eq-sepcut1}
 \mathcal{F} \text{ has a separation cutoff} \quad \Lra\quad t_n\lambda_n\ra \infty
\end{equation}
and
\[
 \lfloor t_n-(1/\epsilon-1)^{1/2}\rho_n\rfloor\le T_{n,\text{\tiny sep}}(0,\epsilon)\le\lceil t_n+(1/\epsilon-1)^{1/2}\rho_n\rceil,\quad\forall\epsilon\in(0,1).
\]
These inequalities imply
\[
 |T_{n,\text{\tiny sep}}(0,\epsilon)-t_n|\le (1/\epsilon-1)^{1/2}\rho_n+1,\quad\forall\epsilon\in(0,1).
\]
Note that $\lambda_{n,i}\le 2$ for $1\le i\le n$. Clearly, this yields $t_n\ge n/2$.
As a consequence, if $\rho_n=o(t_n)$ or equivalently $\max\{\rho_n,1\}=o(t_n)$, then $\mathcal{F}$ has a $(t_n,\max\{\rho_n,1\})$ separation cutoff.

To see the inverse direction, note that
\[
 \max\{\rho_n^2,1/\lambda_n^2\}\le \frac{t_n}{\lambda_n}.
\]
This implies
\[
 \sqrt{t_n\lambda_n}\le\frac{t_n}{\max\{\rho_n,1/\lambda_n\}}\le t_n\lambda_n,
\]
and, as a result, we have
\begin{equation}\label{eq-sepcut2}
 t_n\lambda_n\ra\infty\quad\Lra\quad \max\{\rho_n,1/\lambda_n\}=o(t_n).
\end{equation}
By (\ref{eq-sepcut1}) and (\ref{eq-sepcut2}), $\mathcal{F}^L$ has a separation cutoff if and only if $\max\{\rho_n,1/\lambda_n\}=o(t_n)$. Further, if $\mathcal{F}$ has a separation cutoff, then $\rho_n=o(t_n)$.
\end{proof}

\begin{proof}[Proof of Lemma \ref{l-gap}]
Let $\pi$ be the stationary distribution of $K$. Since $\pi$ is a reversible measure for $K$, the spectra of $K,L_i$ are real. The interlacing property of $\lambda_j,\lambda^{(i)}_j$ is given by Theorem 4.3.8 of \cite{HJ90}. Clearly, this gives the first inequality $1/\lambda_1\le 1/\lambda^{(i)}_1$. Note that
\[
 \lambda_1^{(i)}=\min\left\{\frac{\langle (I-K)f,f\rangle_\pi}{\pi(f^2)}\bigg|f(i)=0\right\},
\]
where $\langle g,h\rangle_\pi=\sum_{j=0}^ng(j)h(j)\pi(j)$. By Proposition A.2 and Theorem 3.8 of \cite{CSal13-2}, one has
\[
 \frac{1}{4C(i)}\le\lambda_1^{(i)}\le\frac{1}{C(i)},\quad \frac{1}{4C(i)}\le\lambda_1\le\frac{1}{\min\{\pi([0,i]),\pi([i,n])\}C(i)},
\]
where
\[
 C(i)=\max\left\{\max_{0\le j<i}\sum_{\ell=j}^{i-1}\frac{\pi([0,j])}{\pi(\ell)K(\ell,\ell+1)},\max_{i<j\le n}\sum_{\ell=i+1}^j\frac{\pi([j,n])}{\pi(\ell)K(\ell,\ell-1)}\right\}.
\]
This gives the second inequality $1/\lambda^{(i)}\le(4/\min\{\pi([0,i]),\pi([i,n])\})/\lambda_1$.
\end{proof}

\begin{proof}[Proof of Lemma \ref{l-hit2}]
Let $p_k,q_k,r_k$ be the transition rates of $K$. We first consider the continuous time case. By Lemma \ref{l-meanvar2}, we have, for $0\le i<j\le n$,
\begin{align}
 \textnormal{Var}_i\widetilde{\tau}_j&\ge\sum_{k=i}^{j-1}\frac{1}{p_k\pi(k)}
 \sum_{\ell=0}^k\pi(\ell)\sum_{m=\ell}^k\mathbb{E}_m\widetilde{\tau}_{m+1}
 =\sum_{k=i}^{j-1}\frac{1}{p_k\pi(k)}\sum_{m=0}^k\pi([0,m])\mathbb{E}_m\widetilde{\tau}_{m+1}\notag\\
 &\ge\frac{\pi([0,i])}{\pi([0,j-1])}\sum_{k=i}^{j-1}\frac{\pi([0,k])}{p_k\pi(k)}
 \sum_{m=i}^k\mathbb{E}_m\widetilde{\tau}_{m+1}\notag\\
 &=\frac{\pi([0,i])}{\pi([0,j-1])}\sum_{k=i}^{j-1}
 \sum_{m=i}^k\mathbb{E}_k\widetilde{\tau}_{k+1}\mathbb{E}_m\widetilde{\tau}_{m+1}
 \ge\frac{\pi([0,i])}{2\pi([0,j-1])}(\mathbb{E}_i\widetilde{\tau}_j)^2.\notag
\end{align}
This proves the lower bound.

For the upper bound, let $a_1<\cdots<a_i$ and $b_1<\cdots<b_j$ be the eigenvalues of the submatrices of $I-K$ indexed respectively by $0,...,.i-1$ and $0,...,j-1$. By the strong Markov property, the first hitting time to state $i$ started at $0$ and the first hitting time to state $j$ started at $i$ are independent. By Lemma \ref{l-meanvar}, this implies
\[
 \mathbb{E}_i\widetilde{\tau}_j=\sum_{k=1}^j\frac{1}{b_k}-\sum_{k=1}^i\frac{1}{a_k},\quad
 \text{Var}_i\widetilde{\tau}_j=\text{Var}_0\widetilde{\tau}_j-\text{Var}_0\widetilde{\tau}_i=\sum_{k=1}^j\frac{1}{b_k^2}
 -\sum_{k=1}^i\frac{1}{a_k^2}.
\]
Inductively applying Theorem 4.3.8 of \cite{HJ90} yields the fact that
\[
 a_k>b_k,\quad\forall 1\le k\le i.
\]
As a result, we have
\begin{align}
 \text{Var}_i\widetilde{\tau}_j&=\sum_{k=1}^i\left(\frac{1}{b_k}-\frac{1}{a_k}\right)
 \left(\frac{1}{b_k}+\frac{1}{a_k}\right)+\sum_{k=i+1}^j\frac{1}{b_k^2}\notag\\
 &\le\frac{2}{b_1}\sum_{k=1}^i\left(\frac{1}{b_k}-\frac{1}{a_k}\right)+\frac{2}{b_1}\sum_{k=i+1}^j\frac{1}{b_k}
 =\frac{2}{b_1}\mathbb{E}_i\widetilde{\tau}_j.\notag
\end{align}

For the discrete time case, let $\delta=\min_iK(i,i)$ and set $K^{(\delta)}=(K-\delta I)/(1-\delta)$. Let $\tau^{(\delta)}_i,\widetilde{\tau}^{(\delta)}_i$ be the first hitting times to state $i$ of the discrete time and continuous time chains associated with $K^{(\delta)}$. Let $c_1<\cdots<c_i$ and $d_1<\cdots<d_j$ be the eigenvalues of the submatrices of $I-K^{(\delta)}$ indexed respectively by $0,...,.i-1$ and $0,...,j-1$. It is clear that $(1-\delta)c_1,...,(1-\delta)c_i$ and $(1-\delta)d_1,...,(1-\delta)d_j$ are the eigenvalues of the submatrices of $I-K$ indexed respectively by $0,...,.i-1$ and $0,...,j-1$. By Lemma \ref{l-meanvar}, we have
\[
 \mathbb{E}_i\tau_j=\sum_{k=1}^j\frac{1}{(1-\delta)d_k}-\sum_{k=1}^i\frac{1}{(1-\delta)c_k}
 =\frac{\mathbb{E}_i\widetilde{\tau}^{(\delta)}_j}{(1-\delta)}
\]
and
\[
 \text{Var}_i\tau_j=\sum_{k=1}^j\frac{1-(1-\delta)d_k}{(1-\delta)^2d_k^2}
 -\sum_{k=1}^i\frac{1-(1-\delta)c_k}{(1-\delta)^2c_k}
\]
The bounds for $\text{Var}_i\tau_j$ are immediately obtained by the result in the continuous time case and the following equalities.
\[
 \text{Var}_i\tau_j=\frac{\text{Var}_i\widetilde{\tau}^{(\delta)}_j}{(1-\delta)^2}
 -\frac{\mathbb{E}_i\widetilde{\tau}^{(\delta)}_j}{1-\delta}
 =\frac{\delta\text{Var}_i\widetilde{\tau}^{(\delta)}_j}{(1-\delta)^2}+\frac{\text{Var}_i\tau^{(\delta)}_j}{1-\delta}.
\]
\end{proof}

\begin{proof}[Proof of Lemma \ref{l-hit1}]
Let $\lambda_1,...,\lambda_i$ be the eigenvalues of the submatrix of $I-K$ indexed by $\{0,...,i-1\}$. By Lemma \ref{l-meanvar}, one has
\[
 \mathbb{E}_0\widetilde{\tau}_i=\sum_{k=1}^i\frac{1}{\lambda_k},
 \quad\textnormal{Var}_0\widetilde{\tau}_i=\sum_{k=1}^i\frac{1}{\lambda_k^2}.
\]
These identities imply $\textnormal{Var}_0\widetilde{\tau}_i\le\mathbb{E}_0\widetilde{\tau}_i/\lambda$, where $\lambda=\min\{\lambda_k|1\le k\le i\}$. As a result of the one-sided Chebyshev inequality, we have, for $a\in(0,1)$,
\[
 \mathbb{P}_0(\widetilde{\tau}_i>a\mathbb{E}_0\widetilde{\tau}_i)\ge 1-\frac{1}{1+(1-a)^2(\mathbb{E}_0\widetilde{\tau}_i)^2/\textnormal{Var}_0\widetilde{\tau}_i}\ge 1-\frac{1}{1+(1-a)^2\lambda\mathbb{E}_0\widetilde{\tau}_i}.
\]
Let $b$ be a positive constant. If $\lambda\mathbb{E}_0\widetilde{\tau}_i\ge b$, then
\[
 \mathbb{P}_0(\widetilde{\tau}_i>a\mathbb{E}_0\widetilde{\tau}_i)\ge 1-\frac{1}{1+(1-a)^2b}.
\]
Brown and Shao proved in \cite{BS87} that, under $\mathbb{P}_0$, $\widetilde{\tau}_i$ has the distribution as the sum of exponential random variables with parameters $\lambda_1,...,\lambda_i$. In the case of $\lambda\mathbb{E}_0\widetilde{\tau}_i\le b$, this leads to
\[
 \mathbb{P}_0(\widetilde{\tau}_i>a\mathbb{E}_0\widetilde{\tau}_i)\ge\exp\{-a\lambda\mathbb{E}_0\widetilde{\tau}_i\}\ge
 e^{-ab}.
\]
Summarizing both cases yields
\[
 \mathbb{P}_0(\widetilde{\tau}_i>a\mathbb{E}_0\widetilde{\tau}_i)\ge\min\left\{e^{-ab},1-\frac{1}{1+(1-a)^2b}\right\}.
\]
Taking $b=1/\sqrt{a}$ gives the desired inequality.
\end{proof}

\begin{proof}[Proof of Lemma \ref{l-hit3}]
For simplicity, we set $\tau=\min\{\tau_i,\tau_k\}$. The first equality is clear from the definition. To see the second equality, note that it follows immediately from the Markov property that
\[
 \mathbb{E}_j\tau=(\kappa_1+\cdots+\kappa_{k-i-1})
 \left(\frac{1+\gamma_1+\cdots+\gamma_{j-i-1}}{1+\gamma_1+\cdots+\gamma_{k-i-1}}\right)-(\kappa_1+\cdots+\kappa_{j-i-1}),
\]
where
\[
 \gamma_\ell=\frac{q_{i+1}q_{i+2}\cdots q_{i+\ell}}{p_{i+1}p_{i+2}\cdots p_{i+\ell}},\quad
 \kappa_\ell=\left(\frac{1}{q_{i+1}}+\frac{1}{q_{i+2}\gamma_1}+\cdots+\frac{1}{q_{i+\ell}\gamma_{\ell-1}}\right)\gamma_{\ell}.
\]
The proof of the above identity is somewhat complicated and we refer the reader to Equation (3.66) in \cite{PK11} for a proof. Observe that
\begin{equation}\label{eq-hittime4}
\begin{aligned}
 \mathbb{E}_j\tau(1+\gamma_1+\cdots+\gamma_{k-i-1})=&(\kappa_{j-i}+\cdots+\kappa_{k-i-1})\\
 &\qquad+\sum_{\begin{subarray}{c}1\le \ell_1\le j-i-1\\ j-i\le\ell_2\le k-i-1 \end{subarray}}(\gamma_{\ell_1}\kappa_{\ell_2}-\kappa_{\ell_1}\gamma_{\ell_2})
\end{aligned}
\end{equation}
and
\[
 \gamma_{\ell_1}\kappa_{\ell_2}-\kappa_{\ell_1}\gamma_{\ell_2}=\left(\frac{1}{q_{i+\ell_1+1}\gamma_{\ell_1}}
 +\cdots+\frac{1}{q_{i+\ell_2}\gamma_{\ell_2-1}}\right)\gamma_{\ell_1}\gamma_{\ell_2}
\]
In some computations, one can see that $\gamma_\ell=(\pi(i)p_i)/(\pi(i+\ell)p_{i+\ell})$. This implies
\[
 \kappa_{\ell}=\frac{\pi([i+1,i+\ell])}{\pi(i+\ell)p_{i+\ell}},\quad\sum_{\ell=\ell_1+1}^{\ell_2}
 \frac{1}{q_{i+\ell}\gamma_{\ell-1}}=\frac{\pi([i+\ell_1+1,i+\ell_2])}{\pi(i)p_i},
\]
and
\[
 \gamma_{\ell_1}\kappa_{\ell_2}-\kappa_{\ell_1}\gamma_{\ell_2}=\frac{\pi([i+\ell_1+1,i+\ell_2])\pi(i)p_i}
 {\pi(i+\ell_1)p_{i+\ell_1}\pi(i+\ell_2)p_{i+\ell_2}}.
\]
Putting the above identities back to (\ref{eq-hittime4}) gives
\begin{align}
 \mathbb{E}_j\tau\left(\pi(i)p_i\sum_{\ell=i}^{k-1}\frac{1}{\pi(\ell)p_\ell}\right)
 &=\sum_{\ell=j}^{k-1}\frac{\pi([i+1,\ell])}
 {\pi(\ell)p_\ell}+\sum_{\begin{subarray}{c}i+1\le \ell_1\le j-1\\ j\le\ell_2\le k-1 \end{subarray}}
 \frac{\pi([\ell_1+1,\ell_2])\pi(i)p_i}{\pi(\ell_1)p_{\ell_1}\pi(\ell_2)p_{\ell_2}}\notag\\
 &=\pi(i)p_i\sum_{\begin{subarray}{c}i\le \ell_1\le j-1\\ j\le\ell_2\le k-1 \end{subarray}}
 \frac{\pi([\ell_1+1,\ell_2])}{\pi(\ell_1)p_{\ell_1}\pi(\ell_2)p_{\ell_2}}\notag\\
 &=\pi(i)p_i\sum_{\begin{subarray}{c}i+1\le \ell_1\le j\\ j\le\ell_2\le k-1 \end{subarray}}
 \frac{\pi([\ell_1,\ell_2])}{\pi(\ell_1)q_{\ell_1}\pi(\ell_2)p_{\ell_2}},\notag
\end{align}
where the last equality uses the fact $\pi(i-1)p_{i-1}=\pi(i)q_i$.

\end{proof}

\begin{proof}[Proof of Theorem \ref{t-metro2}]
First, observe that
\[
 \frac{d}{dx}\left(\frac{(x+1)g(x)}{(\log(x+1))^{\beta-1}}\right)=\left(\alpha\beta+\frac{1-(\beta-1)/\log(x+1)}
 {(\log(x+1))^{\beta-1}}\right)g(x).
\]
Set $i_0=e^{\beta-1}$. For $i>j\ge i_0-1$, one has
\[
 \frac{A_{i,j}}{\alpha\beta+(\log(j+1))^{1-\beta}}\le\frac{(\log(i+1))^{\beta-1}}{(i+1)g(i)}\int_j^ig(x)dx\le\frac{1}{\alpha\beta},
\]
where
\[
 A_{i,j}=1-\frac{(j+1)(\log(j+1))^{1-\beta}g(j)}{(i+1)(\log(i+1))^{1-\beta}g(i)}>0.
\]
This implies, for $i>j\ge i_0-1$,
\begin{equation}\label{eq-alphabeta3}
 \frac{(\log(i+1))^{\beta-1}[g(0)+\cdots+g(i)]}{(i+1)g(i)}\begin{cases}
 \le\frac{1}{\alpha\beta}+\frac{i_0(\log(i+1))^{\beta-1}}{i+1},\\
 \ge\frac{A_{i,j}}{\alpha\beta+(\log(j+1))^{1-\beta}}.\end{cases}
\end{equation}
Note that, for $\log(i+1)\ge 2\log(j+1)$ and $i\ra\infty$,
\[
 \frac{g(j)}{g(i)}=\exp\left\{-\alpha(\log(i+1))^\beta
 \left(1-\frac{\log(j+1)}{\log(i+1)}\right)^\beta\right\}
 =o\left(\frac{1}{(\log(i+1))^{\beta-1}}\right).
\]
This implies that, when $\log(i+1)\ge 2\log(j+1)$ and $j\ra\infty$,
\[
 A_{i,j}=1+o\left((\log(j+1))^{1-\beta}\right).
\]
By (\ref{eq-alphabeta3}), one has, as $j\ra \infty$,
\begin{equation}\label{eq-alphabeta4}
 \frac{(\log(i+1))^{\beta-1}[g(0)+\cdots+g(i)]}{(i+1)g(i)}=\frac{1}{\alpha\beta}
 +O\left((\log(j+1))^{1-\beta}\right),
\end{equation}
uniformly for $\log(i+1)\ge 2\log(j+1)$.

Let $c_n$ be a sequence such that $c_nn(\log(n+1))^{1-\beta}\in\mathcal{X}_n$ and set $M_n=n[1-c_n(\log(n+1))^{1-\beta}]$. Suppose that $c_n$ converges to some positive constant $c$. Replacing $i,j$ with $n,\lfloor\sqrt{n}-1\rfloor$ and then with $M_n,\lfloor\sqrt{M_n}-1\rfloor$ in (\ref{eq-alphabeta4}) gives
\[
 \frac{1}{\pi_n(0)}\sim\frac{(n+1)g(n)}{\alpha\beta(\log(n+1))^{\beta-1}},\quad
 \lim_{n\ra\infty}\pi_n([0,M_n])=e^{-\alpha\beta c}.
\]
Next, we compute the expectation and variance of the first hitting time with initial state $0$. By Lemma \ref{l-meanvar2}, one has
\[
 \mathbb{E}_0\widetilde{\tau}^{(n)}_{M_n}=2\sum_{\ell=0}^{M_n-1}\frac{g(0)+\cdots+g(\ell)}{g(\ell)},\quad
 \text{Var}_0\widetilde{\tau}^{(n)}_{M_n}\asymp\sum_{0\le \ell\le i<M_n}\frac{(g(0)+\cdots+g(\ell))^2}{g(i)g(\ell)}.
\]
To sum up the right side of the above identities, we need the following computations. An application of the integration by parts gives that, for $k\in\{0,1,2,...\}$,
\begin{equation}\label{eq-int}
\begin{aligned}
 \int \frac{x+1}{(\log(x+1))^{\beta-1}}dx=&\sum_{\ell=0}^k\frac{(x+1)^2}{2B_\ell (\log(x+1))^{\beta+\ell-1}}\\
 &\qquad+\int\frac{x+1}{B_{k+1}(\log(x+1))^{\beta+k}}dx,
 \end{aligned}
\end{equation}
where $B_0=1$ and $B_\ell=2^\ell(\beta-1)\beta\cdots(\beta+\ell-2)$.
This implies, for $\ell_n\ra\infty$ and $\log(M_n)/\log(\ell_n)\ra\infty$,
\begin{equation}\label{eq-lnmn}
\begin{aligned}
 \int_{\ell_n}^{M_n} \frac{x+1}{(\log(x+1))^{\beta-1}}dx=&\sum_{\ell=0}^k\frac{(M_n+1)^2 }{2B_\ell (\log(M_n+1))^{\beta+\ell-1}}\\
 &\qquad+O\left(\frac{M_n^2}{(\log M_n)^{\beta+k}}\right).
\end{aligned}
\end{equation}
Using the following computations,
\[
 (M_n+1)^2=n^2\left(1+O\left(\frac{1}{(\log n)^{\beta-1}}\right)\right),
\]
and
\[
 \frac{1}{(\log(M_n+1))^{p}}
 =\frac{1}{(\log n)^{p}}\left(1+O\left(\frac{1}{(\log n)^\beta}\right)\right),\quad\forall p>0,
\]
one may rewrite (\ref{eq-lnmn}) as
\begin{equation}\label{eq-lnmn2}
 \int_{\ell_n}^{M_n} \frac{x+1}{(\log(x+1))^{\beta-1}}dx=\sum_{\ell=0}^k\frac{n^2}{2B_\ell(\log n)^{\beta+\ell-1}}
 +O\left(\frac{n^2}{(\log n)^{2\beta-2}}\right).
\end{equation}

Set $N=\lceil\frac{\beta-3}{2}\rceil\ge 0$ and let $i_n,j_n\in\mathcal{X}_n$ be states satisfying $\log(i_n+1)\ge 2\log(j_n+1)$, $j_n\ra\infty$ and $\log M_n/\log i_n\ra\infty$. By (\ref{eq-alphabeta3}) and (\ref{eq-int}) with $k=0$, one has
\[
 \sum_{\ell<i_n}\frac{g(0)+\cdots+g(\ell)}{g(\ell)}\asymp \int_1^{i_n}\frac{x+1}{(\log(x+1))^{\beta-1}}dx\asymp
 \frac{i_n^2}{(\log i_n)^{\beta-1}}=o(\log n),
\]
and, by (\ref{eq-alphabeta4}) and (\ref{eq-lnmn2}) with $k=N$, we get
\[
 \sum_{i_n\le\ell<M_n}\frac{g(0)+\cdots+g(\ell)}{g(\ell)}=\sum_{\ell=0}^N\frac{n^2}{2\alpha\beta B_\ell(\log n)^{\beta+\ell-1}}
 +O\left(\frac{n^2}{(\log n\log j_n)^{\beta-1}}\right).
\]
Putting both summations together and applying the setting, $j_n=\left\lfloor e^{\sqrt{\log n}}\}\right\rfloor-1$ and $i_n=\left\lceil e^{\sqrt{2(\log n)}}\right\rceil-1$, yields
\[
 \mathbb{E}_0\widetilde{\tau}^{(n)}_{M_n}=\sum_{\ell=0}^N\frac{n^2}{\alpha\beta B_\ell(\log n)^{\beta+\ell-1}}
 +O\left(\frac{n^2}{(\log n)^{\frac{3}{2}(\beta-1)}}\right).
\]
For the variance, note that
\begin{align}
 &\frac{d}{dx}\left(\frac{(x+1)^3}{(\log(x+1))^{3\beta-3}}g(x)\right)\notag\\
 =&\frac{(x+1)^2g(x)}{(\log(x+1))^{2\beta-2}}\left(\alpha\beta+\frac{3}
 {(\log(x+1))^{\beta-1}}-\frac{3(\beta-1)}{(\log(x+1))^\beta}\right)\notag
\end{align}
and
\[
 \frac{d}{dx}\left(\frac{(x+1)^4}{(\log(x+1))^{3\beta-3}}\right)=
 \frac{(x+1)^3}{(\log(x+1))^{3\beta-3}}\left(4-\frac{3(\beta-1)}{\log(x+1)}\right).
\]
By (\ref{eq-alphabeta3}), this implies
\[
 \sum_{0\le\ell\le i}\frac{(g(0)+\cdots+g(\ell))^2}{g(\ell)}\asymp\int_{i_0}^i
 \frac{(x+1)^2g(x)}{(\log(x+1))^{2\beta-2}}dx\asymp\frac{(i+1)^3g(i)}
 {(\log(i+1))^{3\beta-3}}
\]
and then
\begin{align}
 \text{Var}_0\widetilde{\tau}^{(n)}_{M_n}&\asymp\sum_{0\le i<M_n}\frac{(i+1)^3}{(\log(i+1))^{3\beta-3}}\asymp\int_{i_0}^{M_n}
 \frac{(x+1)^3}{(\log(x+1))^{3\beta-3}}dx\notag\\
 &\asymp\frac{(M_n+1)^4}{(\log(M_n+1))^{3\beta-3}}\asymp\frac{n^4}{(\log n)^{3\beta-3}}.\notag
\end{align}

Now, we compute the expectation and variance of the first hitting with initial state $n$. Note that
\[
 \lim_{n\ra\infty}\frac{g(M_n)}{g(n)}=e^{-\alpha\beta c}.
\]
This implies $\inf\{q_{n,i}|M_n<i<n,n\ge 1\}>0$ and, by Lemma \ref{l-meanvar2},
\[
 \mathbb{E}_n\widetilde{\tau}^{(n)}_{M_n}\asymp\sum_{M_n<i\le n}\frac{g(i)+\cdots+g(n)}{g(i)}\asymp (n-M_n)^2\asymp
 \frac{n^2}{(\log n)^{2\beta-2}}
\]
and
\[
 \text{Var}_n\widetilde{\tau}^{(n)}_{M_n}\asymp\sum_{M_n<i\le\ell\le n}\frac{(g(\ell)+\cdots+g(n))^2}{g(i)g(\ell)}\asymp
 (n-M_n)^4\asymp\frac{n^4}{(\log n)^{4\beta-4}}.
\]
The desired cutoff time and cutoff window are given by Theorems \ref{t-sep2}, \ref{t-tv2}, \ref{t-tv3} and \ref{t-tv5}.
\end{proof}


\begin{thebibliography}{10}

\bibitem{AF}
D.~Aldous and J.~A. Fill.
\newblock Reversible markov chains and random walks on graphs.
\newblock Monograph at http://www.stat.berkeley.edu/users/aldous/RWG/book.html.

\bibitem{BBF09}
J.~Barrera, O.~Bertoncini, and R.~Fern{\'a}ndez.
\newblock Abrupt convergence and escape behavior for birth and death chains.
\newblock {\em J. Stat. Phys.}, 137(4):595--623, 2009.

\bibitem{BS87}
M.~Brown and Y.-S. Shao.
\newblock Identifying coefficients in the spectral representation for first
  passage time distributions.
\newblock {\em Probab. Engrg. Inform. Sci.}, 1:69--74, 1987.

\bibitem{CSal08}
Guan-Yu Chen and Laurent Saloff-Coste.
\newblock The cutoff phenomenon for ergodic markov processes.
\newblock {\em Electron. J. Probab.}, 13:26--78, 2008.

\bibitem{CSal10}
Guan-Yu Chen and Laurent Saloff-Coste.
\newblock The {$L^2$}-cutoff for reversible {M}arkov processes.
\newblock {\em J. Funct. Anal.}, 258(7):2246--2315, 2010.

\bibitem{CSal13-1}
Guan-Yu Chen and Laurent Saloff-Coste.
\newblock Comparison of cutoffs between lazy walks and {M}arkovian semigroups.
\newblock {\em J. Appl. Probab.}, 50(4):943--959, 2013.

\bibitem{CSal13-2}
Guan-Yu Chen and Laurent Saloff-Coste.
\newblock On the mixing time and spectral gap for birth and death chains.
\newblock {\em ALEA Lat. Am. J. Probab. Math. Stat.}, 10(1):293--321, 2013.

\bibitem{DS98}
P.~Diaconis and L.~Saloff-Coste.
\newblock What do we know about the {M}etropolis algorithm?
\newblock {\em J. Comput. System Sci.}, 57(1):20--36, 1998.
\newblock 27th Annual ACM Symposium on the Theory of Computing (STOC'95) (Las
  Vegas, NV).

\bibitem{D96cutoff}
Persi Diaconis.
\newblock The cutoff phenomenon in finite {M}arkov chains.
\newblock {\em Proc. Nat. Acad. Sci. U.S.A.}, 93(4):1659--1664, 1996.

\bibitem{DS06}
Persi Diaconis and Laurent Saloff-Coste.
\newblock Separation cut-offs for birth and death chains.
\newblock {\em Ann. Appl. Probab.}, 16(4):2098--2122, 2006.

\bibitem{DW13}
Persi Diaconis and Philip~Matchett Wood.
\newblock Random doubly stochastic tridiagonal matrices.
\newblock {\em Random Structures Algorithms}, 42(4):403--437, 2013.

\bibitem{DLP10}
Jian Ding, Eyal Lubetzky, and Yuval Peres.
\newblock Total variation cutoff in birth-and-death chains.
\newblock {\em Probab. Theory Related Fields}, 146(1-2):61--85, 2010.

\bibitem{HJ90}
Roger~A. Horn and Charles~R. Johnson.
\newblock {\em Matrix analysis}.
\newblock Cambridge University Press, Cambridge, 1990.
\newblock Corrected reprint of the 1985 original.

\bibitem{LPW08}
David~A. Levin, Yuval Peres, and Elizabeth~L. Wilmer.
\newblock {\em Markov chains and mixing times}.
\newblock American Mathematical Society, Providence, RI, 2009.
\newblock With a chapter by James G. Propp and David B. Wilson.

\bibitem{PK11}
Mark Pinsky and Samuel Karlin.
\newblock {\em An intorduction to stochastic modeling}.
\newblock Academic Press, fourth edition, 2011.

\end{thebibliography}

\end{document}